\documentclass[11pt]{article}

\usepackage[margin=4cm]{geometry}

\usepackage{comment}

\usepackage{hyperref}       %
\usepackage{url}            %
\usepackage{mathtools} %
\usepackage{longtable}
\usepackage{amsfonts,amsmath,amssymb,graphicx,color}

\usepackage{amsthm}

\usepackage{psfrag}
\usepackage{epstopdf}
\usepackage{algorithm, algorithmic}
\usepackage{psfrag}
\usepackage{blindtext} 
\graphicspath{{Plots/}}
\hypersetup{colorlinks=true,citecolor=blue}
\usepackage{framed}
\usepackage{footnote}

\usepackage[dvipsnames]{xcolor}
\usepackage{soul}
\usepackage{grffile}
\usepackage{nicefrac}
\usepackage{doi}

\newtheorem{thm}{Theorem}
\newtheorem{lem}[thm]{Lemma}

\newtheorem{assum}{Assumption}

\DeclareMathOperator{\sgn}{sgn}

\newcommand{\E}[2][]{\operatorname{\mathbb{E}}_{#1}\left[#2\right]} %
\newcommand{\Var}[2][]{\operatorname{\mbox{Var}}_{#1}\left[#2\right]} %

\newcommand{\defeq}{:=}

\newcommand{\R}{{\mathbb R}}        %
\newcommand{\Z}{{\mathbb Z}}        %
\newcommand{\LFg}{L_{\nabla F}}		%
\newcommand{\Lfg}{L_{\nabla f}}		%
\newcommand{\tLg}{\tilde{L}_{\nabla F}}	%
\newcommand{\hLg}{\hat{L}_k}	%
\newcommand{\epsm}{\epsilon_m}	%

\newcommand{\nablafd}{\nabla^\textrm{FD}} %

\def\noprint#1{}

\newcommand{\SWITCH}[1]{\STATE \textbf{switch} (#1)}
\newcommand{\ENDSWITCH}{\STATE \textbf{end switch}}
\newcommand{\CASE}[1]{\STATE \textbf{case} #1\textbf{:} \begin{ALC@g}}
	\newcommand{\ENDCASE}{\end{ALC@g}}

\newcommand{\DEFAULT}{\STATE \textbf{default:} \begin{ALC@g}}
	\newcommand{\ENDDEFAULT}{\end{ALC@g}}
\newcommand{\DEFAULTLINE}[1]{\STATE \textbf{default:} }

\title{Adaptive Sampling Quasi-Newton Methods for Zeroth-Order Stochastic Optimization}

\author{%
  Raghu Bollapragada\thanks{Operations Research and Industrial Engineering, The University of Texas at Austin, Austin, TX 78712.
  \texttt{raghu.bollapragada@utexas.edu}}  
  \and
  Stefan M.\ Wild\thanks{Mathematics and Computer Science Division,
  Argonne National Laboratory,
  Lemont, IL 60439,
  \texttt{wild@anl.gov}} 
}

\begin{document}

\maketitle

\setcounter{tocdepth}{3}

\begin{abstract}
We consider unconstrained stochastic optimization problems with no available gradient information. Such problems 
arise in settings from derivative-free simulation optimization to reinforcement learning. 
We propose an adaptive sampling quasi-Newton method where we 
estimate the gradients of a stochastic function using finite differences
within a common random number framework. 
We develop modified versions of a \emph{norm test} and an \emph{inner product quasi-Newton test} to control the sample sizes used in the stochastic approximations and provide global convergence results to the neighborhood of the optimal solution. We present numerical experiments on simulation optimization problems to illustrate the performance of the proposed algorithm. 
When compared with classical zeroth-order stochastic gradient methods, we observe that our strategies of adapting the sample sizes significantly improve performance in terms of the number of stochastic function evaluations required.
\end{abstract}

\section{Introduction}
We consider unconstrained stochastic optimization problems of the form 
\begin{equation}
\label{eq:prob}
\min_{x \in \R^d} F(x) = \E[\zeta]{f(x,\zeta)},
\end{equation}
where one  has access only to an oracle or a black-box procedure that outputs realizations of the stochastic function values $f(x,\zeta)$ and cannot access explicit estimates of the gradient $\nabla F(x)$. Such stochastic optimization problems arise in myriad science and engineering applications, from simulation optimization \cite{BCMS2018,Fu2005,Kim2014,Pasupathy2013,Pasupathy2018} 
to reinforcement learning \cite{Bertsimas2019b,Mania2018,Salimans2017}.
Several methods have been proposed to solve such derivative-free stochastic optimization problems, and we refer the reader to \cite{AudetHare2017,LMW2019AN} for surveys of these methods. A popular class of these methods estimate the gradients using function values and employ standard gradient-based optimization methods using these estimators.

Quasi-Newton methods are recognized as one of the most powerful methods for solving deterministic optimization problems. These methods build quadratic models of the objective information using only gradient information. Recently, researchers have been adapting these methods for stochastic settings when the gradient information is available. The empirical results in \cite{BollapragadaICML18} indicate that a careful implementation of these methods can be efficient compared with the popular stochastic gradient methods. We adapt these methods to make them suitable for situations where the gradients are estimated using function values. 

We propose finite-difference derivative-free stochastic quasi-Newton methods for solving \eqref{eq:prob} 
by exploiting \emph{common random number (CRN)} evaluations of $f$.
The CRN setting allows us to define subsampled gradient estimators
\begin{eqnarray}
\left[\nablafd F_{\zeta_i}(x)\right]_j 
&\defeq& \frac{f(x + \nu e_j, \zeta_i) - f(x, \zeta_i)}{\nu}, 
\; j=1,\ldots,d \label{eq:individualgradest}
\\ 
\label{eq: batch_FD}
\nablafd F_{S_k}(x) &\defeq& \frac{1}{|S_k|}\sum_{\zeta_i \in S_k}\nablafd F_{\zeta_i}(x),
\label{eq:sampledgradest}
\end{eqnarray}
which employ forward differences for the independent and identically distributed (i.i.d.) samples of $\zeta$ in the set $S_k$ along each canonical direction $e_j \in \R^d$.
CRN-based gradient estimates possess lower variance than do independent-sample-based gradient estimates. Moreover, CRNs can be employed in many practical settings, including policy optimization problems in reinforcement learning.

The performance of stochastic quasi-Newton methods is highly dependent on the quality of the gradient approximations. The gradient estimation considered in this work has two sources of error: error due to the finite-difference approximation and error due to the stochastic approximation. The latter error depends on the number of samples 
$|S_k|$ used in the estimation. Using too few samples affects the stability of a method using the estimates; using a large number of samples results in computational inefficiency. For settings where gradient information is available, researchers have developed practical tests to adaptively increase the sample sizes used in the stochastic approximations and have supported these tests with global convergence results  \cite{Bollapragada2018,BollapragadaICML18,Byrd2012}
to the optimal solution. In this paper we modify these tests to address the challenges associated with the finite-difference approximation errors, and we demonstrate the resulting method on simulation optimization problems.

The paper is organized into five sections. A brief literature review and notation are provided in the rest of this section. Section~\ref{sec:alg} describes the components of our algorithm, and Section~\ref{sec:analysis} establishes theoretical convergence results. Section~\ref{sec:nonsmooth} describes the algorithmic components for handling nonsmooth subsampled functions. Numerical experiments are provided in Section~\ref{sec:numerical}, and concluding remarks are provided in Section~\ref{sec:discussion}.

\subsection{Literature Review}

Finite-difference-based versions of the standard stochastic gradient method (``stochastic approximation'') of Robbins and Monro \cite{RobbinsMonro1951} soon followed that work, in both univariate \cite{KieferWolfowitz} and multivariate \cite{Blum1954} settings. 
Stochastic approximation methods based on CRNs were analyzed in \cite{Kleinman1999,LEcuyerYin1998}.  

Kelley \cite{kelley2005ugi} proposed and analyzed quasi-Newton methods for solving noisy problems with noise decaying as the iterates approach the solution. Berahas et al.~\cite{BBN2018} proposed a quasi-Newton method for solving noisy problems using finite-difference gradient estimators where the finite-difference parameter is carefully chosen based on the mechanism proposed by Mor\'e and Wild \cite{more2011ecn} to ensure stability in the search directions. They considered the settings where the noise is assumed to be bounded and cannot be controlled. In our settings, the noise is stochastic, can be unbounded, and is controlled within the CRN framework.

Different forms of gradient estimators \cite{Balasubramanian2018}, in addition to the finite-difference-based estimators, can be employed in solving derivative-free optimization problems. Recently, Berahas et al.~\cite{BCCS2021a} analyzed methods that employ various forms of gradient estimators in solving noisy derivative-free optimization problems. They established conditions on the gradient estimation errors that guarantee convergence to a neighborhood of the optimal solution.

Another class of methods that exploit CRN settings is that of two-point (or multipoint)  bandit feedback. These methods include variants of mirror descent and random search and were originally motivated by and analyzed for convex objectives \cite{Agarwal2010,ChenNeurIPS2019,Duchi2015,Gasnikov2017,Ghadimi2013,SLiu2018,Nesterov2015,Sahu19a,Shamir2017,Wibisono2015}. 

Related classes of methods for nonconvex stochastic optimization include zeroth-order extensions of both conditional gradient methods \cite{Balasubramanian2018,Balasubramanian2019,Ghadimi2019} and other proximal-point approaches
\cite{Huang2019,pmlr-v119-huang20j}.

Model-based trust-region methods \cite{BCMS2018,Chen2017,Deng2006aua,Deng09,Larson2016,Shashaani2018,Shashaani2016} and direct search methods \cite{StoMADS2021,Chang2012,Chen2016a,Chen2018} are alternative approaches to gradient estimation-based methods. 

\subsection{Notation and Subsampled Gradient Estimator Preliminaries}
Although we focus here on subsampled gradient estimators of the form in \eqref{eq:sampledgradest}, our algorithmic framework and analysis extend to other settings, which we formalize here.

Given samples $S_k = \{\zeta_1, \ldots, \zeta_{|S_k|}\}$, we define a subsampled function by
\begin{equation}
\label{eq:subfuncdef}
\begin{aligned}
F_{S_k}(x) &\defeq \frac{1}{|S_k|} \sum_{\zeta_i \in S_k} f(x, \zeta_i).
\end{aligned}
\end{equation}
Our primary algorithmic assumption concerns the form of the randomized sampling 
performed to obtain $\{S_k\}_k$ and hence the subsampled functions $F_{S_0}, F_{S_1}, \ldots$. 
\begin{assum}\label{assum:sampling}
	At every iteration $k$, the sample set $S_k$ consists of i.i.d.\ samples of $\zeta$. That is, for all $x \in \R^d$ and $k\in \Z_+$, 
	\begin{equation*}
		\E[\zeta_i]{f(x,\zeta_i)} = F(x), \qquad \forall \zeta_i \in S_k.
	\end{equation*}
\end{assum}
From Assumption~\ref{assum:sampling}, for any subsampled function $F_{S_k}(x)$ of the form \eqref{eq:subfuncdef}, 
we have that $\E[S_k]{F_{S_k}(x)} = F(x)$. 
Also from this assumption, we have that for the gradient estimator in \eqref{eq:sampledgradest} and any $x_k \in \R^d$,
\begin{equation}\label{eq:unbias}
\begin{array}{ll}
\E[S_k]{\nablafd F_{S_k}(x_k)} 
&=\E[S_k]{\frac{1}{|S_k|}\sum_{\zeta_i \in S_k}\left[\frac{f(x_k + \nu e_j, \zeta_i) - f(x_k, \zeta_i)}{\nu}\right]_{j=1}^{d}}
\\
&= \nablafd F(x_k),
\end{array}
\end{equation}
where $\nablafd F(x)$ is the zeroth-order quantity based on deterministic forward differences:  
\begin{equation}
\label{eq:FFD}
\nablafd F(x) \defeq \left[\frac{F(x + \nu e_j) - F(x)}{\nu}\right]_{j=1}^{d}.
\end{equation}

We also make  assumptions about the smoothness of the expected function $F$ and the stochastic function $f$. The first such assumption concerns the smoothness of the objective function $F$. 
We note that this assumption is slightly weaker than the next assumption requiring differentiability of the stochastic functions $f(\cdot,\zeta)$.

\begin{assum}\label{assum:lipschitz}
	The function $F$ in \eqref{eq:prob} is continuously 
	differentiable and has Lipschitz continuous gradients with Lipschitz constant 
	$\LFg>0$.
\end{assum}

When combined with Assumption~\ref{assum:sampling}, Assumption~\ref{assum:lipschitz} implies that 
$\nablafd F_{S_k}(x_k)$ is a biased estimator of the gradient $\nabla F(x_k)$ and that the bias can be deterministically quantified by
\begin{align}
\left\|\nablafd F(x_k) - \nabla F(x_k)\right \|^2 &=\sum_{j=1}^{d} \left( \frac{F(x_k + \nu e_j) - F(x_k)}{\nu} - \left[\nabla F(x_k)\right]_j \right)^2 \nonumber \\
&\leq \sum_{j=1}^{d} \left( \frac{\LFg\nu}{2} \right)^2 \nonumber \\
&= \left(\frac{\LFg\nu \sqrt{d}}{2}\right)^2, \label{eq:fdbound}
\end{align}
where the inequality follows from the following result, which holds for functions $F$ with $\LFg$-Lipschitz continuous gradients.

\begin{lem}[Descent Lemma \cite{Bertsekas2003convex}]
\label{lem:descentlemma}
If $F:\R^d\mapsto\R$ is continuously differentiable with a $\LFg$-Lipschitz continuous gradient on $\R^d$, then 	
\begin{align*}
F(y) \leq F(x) + (y-x)^T\nabla F(x) + \frac{\LFg}{2}\|y - x\|^2 \qquad \mbox{for all } x, y \in \R^d.
\end{align*}
\end{lem}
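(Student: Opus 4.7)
The plan is to prove this via the fundamental theorem of calculus applied along the line segment connecting $x$ and $y$, combined with the Lipschitz continuity of the gradient. This is the standard route, and since $F$ is continuously differentiable on all of $\R^d$, integrating directional derivatives is justified without technical issues.

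First I would parametrize the segment by $\phi(t) = F(x + t(y-x))$ for $t \in [0,1]$, which is continuously differentiable with $\phi'(t) = \nabla F(x + t(y-x))^T (y-x)$. Then by the fundamental theorem of calculus,
\begin{equation*}
F(y) - F(x) = \int_0^1 \nabla F(x + t(y-x))^T (y-x)\, dt.
\end{equation*}
Subtracting the linear term $(y-x)^T \nabla F(x) = \int_0^1 \nabla F(x)^T (y-x)\, dt$ from both sides yields
\begin{equation*}
F(y) - F(x) - (y-x)^T\nabla F(x) = \int_0^1 \bigl[\nabla F(x + t(y-x)) - \nabla F(x)\bigr]^T (y-x)\, dt.
\end{equation*}

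Next I would bound the integrand using the Cauchy--Schwarz inequality followed by the $\LFg$-Lipschitz continuity of $\nabla F$, which gives $\|\nabla F(x + t(y-x)) - \nabla F(x)\| \leq \LFg \, t\, \|y-x\|$. Plugging this in and evaluating the elementary integral $\int_0^1 t\, dt = 1/2$ produces the bound $\tfrac{\LFg}{2}\|y-x\|^2$ on the right-hand side, which is exactly the desired inequality.

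No step here is genuinely hard; the only subtlety is remembering to subtract off $(y-x)^T \nabla F(x)$ \emph{inside} the integral so that the Lipschitz estimate can be applied to $\nabla F(x+t(y-x)) - \nabla F(x)$ rather than to $\nabla F$ itself. Everything else is routine application of Cauchy--Schwarz and a one-dimensional integral.
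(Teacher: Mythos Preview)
Your proof is correct and is exactly the standard argument for the descent lemma. The paper does not actually prove this statement; it simply cites it from \cite{Bertsekas2003convex}, so there is nothing to compare against beyond noting that your integral-along-the-segment derivation is the textbook proof one would find in that reference.
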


The bias term in \eqref{eq:fdbound} is a direct result of the absence of gradient information (and thus the derivative-free estimation), and we 
design the components of our proposed algorithm accordingly. 

Our sample size selection techniques in Section~\ref{sec:samplesize} will rely on Assumption~\ref{assum:sampling} and thus
do not require the subsampled gradients to exist. That is, the sampling procedure works even when the individual or subsampled functions are nondifferentiable as long as the expected function $F$ is differentiable. 

For deriving the remaining components of the algorithm, we will make use of the additional assumption that the subsampled gradients exist and are Lipschitz continuous. 
\begin{assum} \label{assum:subgrad}
	For every $\zeta$, 
	the stochastic function $f(\cdot,\zeta)$ in \eqref{eq:prob} is continuously 
	differentiable and has Lipschitz continuous gradients with Lipschitz constant 
	$\Lfg>0$.
\end{assum}	
Assumption~\ref{assum:subgrad} implies that any subsampled gradient 
\begin{equation*}
\nabla F_{S_k}(x) \defeq \frac{1}{|S_k|} \sum_{\zeta_i \in S_k} \nabla_x f(x,\zeta_i)
\end{equation*}
is Lipschitz continuous with Lipschitz constant $\Lfg$. Assumption~\ref{assum:subgrad} is strictly stronger than Assumption~\ref{assum:lipschitz} since the former ensures that $\LFg=\Lfg$ is a Lipschitz constant for $\nabla F$. 
In Section~\ref{sec:nonsmooth}, we employ the weaker Assumption~\ref{assum:lipschitz} and modify the algorithmic components accordingly.

Our final general-purpose assumption concerns the variance in the stochastic functions $f$. 
We note that this assumption is weaker than requiring that the variance be bounded uniformly. 
\begin{assum} \label{assum:varbd}
	The variance in the stochastic functions is bounded by the norm of the gradient of the expected function. That is, there exist  scalars $\omega_1, \omega_2 \geq 0$ such that
	\begin{equation*}
	\E[\zeta]{\big(f(x, \zeta) - F(x)\big)^2 } \leq \omega_1^2 + \omega_2^2\|\nabla F(x)\|^2 \qquad \forall x \in \R^d.
	\end{equation*}
\end{assum}

Before proceeding, we note that the generated $x_{k+1}$ is a random variable for $k\in \Z_+$; however, when conditioned on $x_k$, the only remaining source of randomness is from the sample set $S_k$. For ease of exposition, we drop this conditional dependence on $x_k$ and hence expectations are shown with respect to only the sampling until the analysis of Section~\ref{sec:convergence}.

\section{A Zeroth-Order Stochastic Quasi-Newton Algorithm}
\label{sec:alg}

The update form of a finite-difference, 
zeroth-order stochastic quasi-Newton method is given by  
\begin{equation}
\label{eq:iter}
x_{k+1} = x_k - \alpha_k H_k \nablafd F_{S_k}(x_k),
\end{equation}
where $\alpha_k > 0$ is the step length,  
$H_k$ is a positive-definite quasi-Newton matrix, and $ \nablafd F_{S_k}(x_k)$ is a 
finite-difference, subsampled (or batch) gradient estimate defined by 
\eqref{eq: batch_FD}.
While we consider here forward finite differences to estimate the subsampled gradient, 
we note that other derivative-free techniques (e.g., central finite differences, polynomial interpolation; see \cite{LMW2019AN}) can be employed to estimate the gradient.

We now discuss the algorithmic components consisting of sample size selection (Section~\ref{sec:samplesize}), finite-difference parameter and step-length selection (Sections~\ref{sec:fdselection}~and~\ref{sec:stepselection}, respectively),  and quasi-Newton updates (Section~\ref{sec:qnupdate}). The complete algorithm is formally stated as Algorithm~\ref{alg:fd_L-BFGS}.

\subsection{Sample Size Selection}
\label{sec:samplesize} 

We propose to control the sample sizes $|S_k|$ 
used in the gradient estimation  
in order to achieve fast convergence.
We explore two different strategies to control the sample sizes in settings where no gradient information is available (i.e., based only on zeroth-order information). We note that the resulting strategies are useful in settings beyond derivative-free ones; they can be applied in any setting where biased gradient estimators are found. 

\subsubsection{Norm Test}
A popular deterministic condition (see, e.g., Equation (3.2) in \cite{Byrd2012}, Equation (15) in \cite{Cartis2018global}) 
for gradient estimators $g_k$ 
to satisfy 
is the \emph{norm condition} 
given by
\begin{equation}
\label{eq:norm-condition}
\|g_k - \nabla F(x_k)\|^2 \leq \theta^2 \|\nabla F(x_k)\|^2,   \quad \theta > 0.
\end{equation}
Satisfying \eqref{eq:norm-condition} in expectation is the basis for controlling the sample sizes used in subsampled gradient methods; that is,
\begin{equation*}
\E[S_k]{\left\|g_k - \nabla F(x_k)\right\|^2} \leq \theta^2 \|\nabla F(x_k)\|^2,   \quad \theta > 0.
\end{equation*}
One can employ this condition on a finite-difference subsampled gradient estimator such as \eqref{eq:sampledgradest}; that is,
\begin{equation}\label{eq:ideal_norm}
\E[S_k]{\left\|\nablafd F_{S_k}(x_k) - \nabla F(x_k)\right\|^2} \leq \theta^2 \left\|\nabla F(x_k)\right\|^2,   \quad \theta > 0.
\end{equation}
However, it is not always possible to satisfy this condition because of the inherent bias in the finite-difference subsampled gradient estimator:
\begin{align}
\label{eq:bias}
\nablafd F_{S_k}(x_k) - \nabla F(x_k) =  \underbrace{\nablafd F_{S_k}(x_k) - \nablafd F(x_k)}_{\rm \text{sampling error}} + \underbrace{\nablafd F(x_k) - \nabla F(x_k)}_{\rm bias},
\end{align}
where $\nablafd F$ is the deterministic finite-difference estimator in \eqref{eq:FFD}.

For any finite-difference parameter $\nu > 0$, the second term in \eqref{eq:bias} can be nonzero, and thus condition \eqref{eq:ideal_norm} may not be satisfied (e.g., at points where $\nabla F(x_k)$ is close to zero). Moreover, sample selection will affect only the first term in \eqref{eq:bias}. Therefore, we propose to look at the norm condition on the finite-difference subsampled gradient estimation error.
In particular, we use the condition
\begin{equation}
 \label{eq:ideal_normFD}
 \E[S_k]{\left\|\nablafd F_{S_k}(x_k) - \nablafd F(x_k)\right\|^2} \leq \theta^2 \left\|\nablafd F(x_k)\right\|^2, \quad \theta > 0.
\end{equation}  
  This condition relaxes the right-hand side of \eqref{eq:ideal_norm}. That is,
\begin{eqnarray*}
\lefteqn{
  \E[S_k]{\left\|\nablafd F_{S_k}(x_k) - \nabla F(x_k)\right\|^2}
}
\\ 
	&\leq& \E[S_k]{\left\|\nablafd F_{S_k}(x_k) - \nablafd F(x_k)\right\|^2} + \left\|\nablafd F(x_k) - \nabla F(x_k)\right\|^2 \nonumber \\
  & \leq&  \theta^2\left\|\nablafd F(x_k)\right\|^2 + \left\|\nablafd F(x_k) - \nabla F(x_k)\right\|^2 \nonumber \\
  &\leq& 2\theta^2\left\|\nabla F(x_k)\right\|^2 + (1 + 2\theta^2)\left\|\nablafd F(x_k) - \nabla F(x_k)\right\|^2 \nonumber \\
  &\leq& 2\theta^2\left\|\nabla F(x_k)\right\|^2 + \frac{(1 + 2\theta^2)\LFg^2 \nu^2 d}{4}, \nonumber 
\end{eqnarray*}
  where the first inequality is due to expansion of the square term and \eqref{eq:unbias}, 
the second inequality is due to \eqref{eq:ideal_normFD}, 
the third inequality is due to the fact that $(a+b)^2 \leq 2(a^2 + b^2)$, 
and the last inequality is due to \eqref{eq:fdbound}. Therefore, our condition \eqref{eq:ideal_normFD} is less restrictive than \eqref{eq:ideal_norm} and can be satisfied at all $x_k$. 
  
The left-hand side of \eqref{eq:ideal_normFD} is difficult to compute but can be bounded by the true variance of individual finite-difference gradient estimators ($\nablafd F_{\zeta_i}$; recall \eqref{eq:individualgradest}). That is,
 \begin{equation}\label{eq:ideal_normFD_test}
	  \E[S_k]{\left\|\nablafd F_{S_k}(x_k) - \nablafd F(x_k)\right\|^2} \leq \frac{\E[\zeta_i]{\left\|\nablafd F_{\zeta_i}(x_k) - \nablafd F(x_k)\right\|^2}}{|S_k|}.
 \end{equation} 
To be meaningful, such a bound requires that the true variance be bounded, which is guaranteed by Assumption~\ref{assum:varbd}; the proof is given in Appendix~\ref{sec:boundedvar}. Consequently, the condition 
 \begin{equation}
 \label{eq:popl_normFD}
 \frac{\E[\zeta_i]{\left\|\nablafd F_{\zeta_i}(x_k) - \nablafd F(x_k)\right\|^2}}{|S_k|} 
  \leq \theta^2 \|\nablafd F(x_k)\|^2
 \end{equation}  
is sufficient for ensuring  that \eqref{eq:ideal_normFD} holds.
 The condition \eqref{eq:popl_normFD} involves the true expected gradient and variance, but these can be approximated with sample gradient and sample variance estimates, respectively, yielding the \emph{practical finite-difference norm test} 
\begin{equation}\label{eq:sample_normFD}
\tag{Norm}
 \frac{\Var[\zeta_i \in S_k^v]{\nablafd F_{\zeta_i}(x_k)}}{|S_k|} \leq \theta^2  \|\nablafd F_{S_k}(x_k)\|^2, 
\end{equation}
 where $S_k^v \subseteq S_k$ is a subset of the current sample and the variance term is defined as 
\begin{equation*}
 \Var[\zeta_i \in S_k^v]{\nablafd F_{\zeta_i}(x_k)} \defeq \frac{1}{|S_k^v| - 1} \sum_{\zeta_i \in S_k^v} \left\|\nablafd F_{\zeta_i}(x_k) - \nablafd F_{S_k}(x_k)\right\|^2.
\end{equation*}
In our algorithm, we test condition \eqref{eq:sample_normFD}; and whenever it is not satisfied, we increase $|S_k|$ until \eqref{eq:sample_normFD} is satisfied.

\subsubsection{Inner Product Quasi-Newton Test} 
The norm condition \eqref{eq:sample_normFD} controls the variance in the gradient estimation but does not utilize observed quasi-Newton information to control the sample sizes. 
Bollapragada et al.\ \cite{BollapragadaICML18} 
proposed to control the sample sizes used in the gradient estimation by ensuring that the stochastic quasi-Newton directions make an acute angle with the true quasi-Newton direction with high probability. That is,
\begin{equation} \label{eq:ip}
\left(H_k \nablafd F_{S_k}(x_k)\right)^T H_k \nabla F(x_k)  > 0
\end{equation}
holds with high probability. 
However, one cannot always  satisfy this condition, even in expectation, because of the inherent bias in the gradient estimator. We observe that the left-hand side of \eqref{eq:ip} is
\begin{equation}
(H_k \nablafd F_{S_k}(x_k))^T H_k \nablafd F(x_k) 
+ (H_k \nablafd F_{S_k}(x_k))^T(H_k \nabla F(x_k) - H_k \nablafd F(x_k)) ,
\label{eq:ip_bias}
\end{equation}
and, taking an expectation, we obtain
\begin{align*}
&\E[S_k]{(H_k \nablafd F_{S_k}(x_k))^T H_k \nabla F(x_k)}  \\
&~~~~~~= \left\|H_k \nablafd F(x_k)\right\|^2 + (H_k \nablafd F(x_k))^T \left(H_k \nabla F(x_k) - H_k \nablafd F(x_k)\right)    \\
&~~~~~~\geq  \left\|H_k \nablafd F(x_k)\right\|^2 - \left\|H_k\nablafd F(x_k)\right\|\left\|H_k \nabla F(x_k) - H_k \nablafd F(x_k)\right\|  \\ 
&~~~~~~\geq \left\|H_k\nablafd F(x_k)\right\|\left(\left\|H_k\nabla F(x_k)\right\| - 2\left\|H_k \nabla F(x_k) - H_k \nablafd F(x_k)\right\|\right) \\
&~~~~~~\geq \left\|H_k\nablafd F(x_k)\right\|\left(\left\|H_k\nabla F(x_k)\right\| - 2\left\|H_k\right\|\left\|\nabla F(x_k) - \nablafd F(x_k)\right\|\right) \\
&~~~~~~\geq \left\|H_k\nablafd F(x_k)\right\|\left(\left\|H_k\nabla F(x_k)\right\| - \left\|H_k\right\|\LFg\nu\sqrt{d}\right), 
\end{align*}
where the second inequality is due to the fact that $\|a\| \geq \|b\| - \|a - b\|$ and the last inequality is due to \eqref{eq:fdbound}. 

When $x_k$ is nearly stationary in the sense that $\|\nabla F(x_k)\| < \frac{\lambda_{\max}(H_k) \LFg \nu\sqrt{d}}{\lambda_{\min}(H_k)}$, where $\lambda_{\max}(H_k)$ and $\lambda_{\min}(H_k) > 0$ are the largest and smallest eigenvalues of $H_k$, respectively, it is not guaranteed that the inequality in \eqref{eq:ip} can be satisfied in expectation. Moreover, in the derivative-free setting we do not have access to direct estimates of $\nabla F(x_k)$ to control the quantity \eqref{eq:ip}. Therefore, we propose to consider only the first term in \eqref{eq:ip_bias}---the inner product between the finite-difference stochastic quasi-Newton direction and the true finite-difference quasi-Newton direction---to control the sample sizes. We ensure that this quantity is close to its expected value by controlling the variance in this quantity. That is, the condition is given by  
\begin{multline}
\E[S_k]{\left(
\left(H_k \nablafd F_{S_k}(x_k)\right)^T H_k \nablafd F(x_k) 
- 
\left\|H_k \nablafd F(x_k)\right\|^2
\right)^2}  \\
\leq \theta^2 \|H_k \nablafd F(x_k)\|^4,
\label{eq:ideal_ipqnFD}
\end{multline}
where $\E[S_k]{H_k \nablafd F_{S_k}(x_k)}=H_k\nablafd F(x_k)$ by Assumption~\ref{assum:sampling}. 
The left-hand side of \eqref{eq:ideal_ipqnFD} can be bounded by the true variance as done above; the proof that the true variance is bounded is given in Appendix~\ref{sec:boundedvar}. 
Therefore, for ensuring \eqref{eq:ideal_ipqnFD}, it is sufficient for
\begin{equation*}
\frac{1}{|S_k|}\E[\zeta_i]{\left(\left(H_k \nablafd F_{\zeta_i}(x_k)\right)^T H_k \nablafd F(x_k) - \left\|H_k \nablafd F(x_k)\right\|^2\right)^2} 
\end{equation*}
to be bounded by the right-hand side of  \eqref{eq:ideal_ipqnFD}.
Approximating the true expected gradient and variance with sample gradient and variance estimates results in the \emph{practical finite-difference inner product quasi-Newton test}
\begin{equation}\label{eq:sample_ipqnFD}
\tag{IPQN}
\frac{\Var[\zeta_i \in S_k^v]{\left(H_k\nablafd F_{\zeta_i}(x_k)\right)^T H_k\nablafd F_{S_k}(x_k)}}{|S_k|} \leq \theta^2  \left\|H_k\nablafd F_{S_k}(x_k)\right\|^4,
\end{equation}
where $S_k^v \subseteq S_k$ is a subset of the current sample and the variance term  is defined as 
\begin{multline*}
\Var[\zeta_i \in S_k^v]{\left(H_k\nablafd F_{\zeta_i}(x_k)\right)^T H_k\nablafd F_{S_k}(x_k)} \nonumber \\
\defeq \frac{1}{|S_k^v| - 1} \sum_{\zeta_i \in S_k^v} \left(\left(H_k\nablafd F_{S_k}(x_k)\right)^T H_k\nablafd F_{\zeta_i}(x_k) - \left\|H_k\nablafd F_{S_k}\right\|^2\right)^2.
\end{multline*}
This variance computation requires only one additional Hessian-vector product (i.e., the product of $H_k$ with $H_k \nablafd F_{S_k}(x_k)$).
In our algorithm we test the condition \eqref{eq:sample_ipqnFD}; whenever it is not satisfied, we increase 
 $|S_k|$ until the condition is satisfied.

\subsection{Finite-Difference Parameter Selection}
\label{sec:fdselection}
The finite-difference parameter $\nu>0$ plays a significant role in the performance of optimization methods. Here we select the parameter by minimizing an upper bound on the  gradient estimation error
\begin{align}
\nablafd F_{S_k}(x_k) - \nabla F(x_k) = \underbrace{\nablafd F_{S_k}(x_k) - \nabla F_{S_k}(x_k)}_{\rm Term\, 1} + \underbrace{\nabla F_{S_k}(x_k) - \nabla F(x_k)}_{\rm Term\, 2}. \label{eq:fd_error} 
\end{align}
We observe that Term 2 in  \eqref{eq:fd_error} is independent of the parameter $\nu$. 
Using Assumption~\ref{assum:subgrad} on the sample path functions, we can bound Term 1  by 
\begin{align}
\MoveEqLeft[5] \left\|\nablafd F_{S_k}(x_k) - \nabla F_{S_k}(x_k)\right\|^2 & \nonumber \\
&=\sum_{j=1}^{d}\left(\frac{1}{|S_k|}\sum_{\zeta_i \in S_k} \left(\frac{f(x_k + \nu e_j, \zeta_i) - f(x_k,\zeta_i)}{\nu} - \left[\nabla_x f(x_k,\zeta_i)\right]_j \right) \right)^2 \nonumber \\
&\leq \left(\frac{\Lfg\nu \sqrt{d}}{2}\right)^2, \label{eq:fdbound_sample} 
\end{align}
which decreases as $\nu$ decreases. 
In any practical implementation, however, one has to account for the numerical errors associated with the numerical evaluation of the function values. We employ the following assumption on a uniform bound for these errors. 

\begin{assum}\label{assum:numerrors}
The function  values $f(x,\zeta)$ in \eqref{eq:prob} are corrupted by numerical noise $\epsilon (x, \zeta)$ uniformly bounded by $\epsm>0$; that is,
\[|\epsilon (x, \zeta)| \leq \epsm \qquad \mbox{for all } x, \zeta.\]
\end{assum}

Applying Assumption~\ref{assum:numerrors}, we get the corrupted gradient estimator 
\begin{align}
\nablafd \hat{F}_{S_k}(x_k) \defeq & \frac{1}{|S_k|}\sum_{\zeta_i \in S_k}\left[\frac{f(x + \nu e_j, \zeta_i) + \epsilon (x + \nu e_j, \zeta_i) - f(x, \zeta_i) - \epsilon(x,\zeta_i )}{\nu}\right]_{j=1}^{d} \label{eq:implmnt_subgrad} \\
=& \nablafd F_{S_k}(x_k) + \frac{1}{|S_k|}\sum_{\zeta_i \in S_k}\left[\frac{\epsilon (x + \nu e_j, \zeta_i) - \epsilon(x,\zeta_i )}{\nu}\right]_{j=1}^{d}, \nonumber
\end{align}  
and hence 
\begin{align}
\|\nablafd \hat{F}_{S_k}(x_k) - \nablafd F_{S_k}(x_k)\| & \leq \frac{2\epsm\sqrt{d}}{\nu}. \label{eq:implmnt_error}
\end{align}
Combining this with \eqref{eq:fd_error} and minimizing the resulting upper bound, we get the parameter value
\begin{equation*}
\nu^* \defeq 2\sqrt{\frac{\epsm}{\Lfg}}.
\end{equation*}
This optimal finite-difference parameter is analogous to the one derived in \cite{more2011edn}, which depends on the variance in stochastic models of the numerical noise.
We note that because we assume that one can employ CRNs in the stochastic function estimations, this leads to lower variance in the gradient estimators and makes the parameter selection independent of the variance from the random variable $\zeta$.

\subsection{Step-Length Selection}
\label{sec:stepselection}
We employ a stochastic line search to choose the step length $\alpha_k$ in \eqref{eq:iter} by using a sufficient decrease condition on the subsampled function.  In particular, we would like $\alpha_k$ to satisfy
\begin{equation}
\label{eq:suff_decrease}
F_{S_k}\left(x_k - \alpha_k H_k \nablafd F_{S_k} (x_k)\right) \leq F_{S_k}(x_k) - c_1 \alpha_k (\nablafd F_{S_k}(x_k))^TH_k\nablafd F_{S_k}(x_k) + c_2,
\end{equation}
where $c_1 \in (0,0.5)$ and $c_2 > 0$ are user-specified parameters. We employ a backtracking procedure wherein a trial step length $\alpha_k$ that does not satisfy \eqref{eq:suff_decrease} is reduced by a fixed fraction $\tau < 1$ (i.e., $\alpha_k \gets \tau \alpha_k$). 
In Theorem~\ref{thm:alpha_exists}, we establish that there exists a nontrivial interval for $\alpha_k$ such that the condition \eqref{eq:suff_decrease} is always satisfied. 
\begin{thm}
\label{thm:alpha_exists}
If Assumption~\ref{assum:subgrad} is satisfied, 
$c_1 \in (0,0.5)$, 
$c_2 > 0$, and
$\lambda_{\min}(H_k)>0$, then 
\eqref{eq:suff_decrease} holds
for any 
\begin{equation}
\label{eq:alpha_int}
 	\alpha_k \in  \left(0, \, \min\left\{\frac{1-2c_1}{\Lfg\lambda_{\max}(H_k)}, \frac{8c_2}{\lambda_{\max}(H_k)\Lfg^2\nu^2d}\right\}\right).
\end{equation}
\end{thm}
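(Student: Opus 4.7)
The plan is to apply the descent lemma (Lemma~\ref{lem:descentlemma}) to the subsampled function $F_{S_k}$, which is $\Lfg$-gradient-Lipschitz by Assumption~\ref{assum:subgrad}, with the step $\Delta = -\alpha_k H_k \nablafd F_{S_k}(x_k)$. This yields
\begin{equation*}
F_{S_k}(x_k - \alpha_k H_k \nablafd F_{S_k}(x_k)) \leq F_{S_k}(x_k) - \alpha_k \nabla F_{S_k}(x_k)^T H_k \nablafd F_{S_k}(x_k) + \tfrac{\Lfg \alpha_k^2}{2}\|H_k \nablafd F_{S_k}(x_k)\|^2,
\end{equation*}
which is phrased in terms of the true subsampled gradient, whereas \eqref{eq:suff_decrease} involves the finite-difference approximation. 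I would decompose $\nabla F_{S_k}(x_k) = \nablafd F_{S_k}(x_k) + e_k$ with $e_k \defeq \nabla F_{S_k}(x_k) - \nablafd F_{S_k}(x_k)$ satisfying $\|e_k\| \leq \Lfg\nu\sqrt{d}/2$ by \eqref{eq:fdbound_sample}. Substituting this split produces the familiar leading term $-\alpha_k g^T H_k g$ (writing $g \defeq \nablafd F_{S_k}(x_k)$) plus a cross term $-\alpha_k e_k^T H_k g$ carrying the finite-difference bias.

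The main obstacle is controlling this cross term with constants tight enough to reproduce the precise bounds in \eqref{eq:alpha_int}. A generic scalar Young's inequality on $|e_k^T H_k g| \leq \|e_k\|\|H_k g\|$ leaves spurious factors. The key observation is that since $H_k \succ 0$, one can apply AM--GM in the $H_k$-induced inner product:
\begin{equation*}
|e_k^T H_k g| = \bigl|(H_k^{1/2} e_k)^T(H_k^{1/2} g)\bigr| \leq \tfrac{1}{2} e_k^T H_k e_k + \tfrac{1}{2} g^T H_k g.
\end{equation*}
The first summand is a pure constant: $e_k^T H_k e_k \leq \lambda_{\max}(H_k)\|e_k\|^2 \leq \lambda_{\max}(H_k)\Lfg^2 \nu^2 d/4$. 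The second summand has exactly the right shape to merge with the leading term. For the quadratic contribution from the descent lemma, I use the standard identity $\|H_k g\|^2 = g^T H_k^2 g \leq \lambda_{\max}(H_k)\,g^T H_k g$.

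Putting everything together, the sufficient-decrease condition \eqref{eq:suff_decrease} reduces to
\begin{equation*}
\alpha_k \,g^T H_k g\left[c_1 - \tfrac{1}{2} + \tfrac{\Lfg \alpha_k \lambda_{\max}(H_k)}{2}\right] + \tfrac{\alpha_k \lambda_{\max}(H_k)\Lfg^2 \nu^2 d}{8} \leq c_2.
\end{equation*}
The bracketed coefficient is non-positive precisely when $\alpha_k \leq (1-2c_1)/(\Lfg\lambda_{\max}(H_k))$, which is where the constraint $c_1 \in (0,0.5)$ is used; the residual constant term is at most $c_2$ precisely when $\alpha_k \leq 8c_2/(\lambda_{\max}(H_k)\Lfg^2 \nu^2 d)$. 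Requiring both simultaneously yields the interval in \eqref{eq:alpha_int}, and the proof concludes. The remaining steps are routine algebraic manipulation; the only conceptual step is the $H_k$-weighted AM--GM split of the cross term.
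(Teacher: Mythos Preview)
Your proposal is correct and follows essentially the same route as the paper's proof: both apply Lemma~\ref{lem:descentlemma} to $F_{S_k}$, split off the finite-difference bias via the $H_k$-weighted AM--GM inequality $x^T H_k y \leq \tfrac{1}{2}(x^T H_k x + y^T H_k y)$, and bound $\|H_k g\|^2 \leq \lambda_{\max}(H_k)\,g^T H_k g$ to arrive at the same penultimate inequality before reading off the two thresholds in \eqref{eq:alpha_int}. The only difference is cosmetic---the paper groups the quadratic and leading terms through the factor $H_k^{1/2}(I-\Lfg\alpha_kH_k)H_k^{1/2}$ before extracting the $(1-\alpha_k\Lfg\lambda_{\max}(H_k))$ coefficient, while you manipulate the scalar coefficients directly.
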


\begin{proof}
We first note from \eqref{eq:alpha_int} that
\begin{align*}
	\alpha_k \leq \frac{1-2c_1}{\Lfg\lambda_{\max}(H_k)} \leq \frac{1}{\Lfg\lambda_{\min}(H_k)},
\end{align*}
since $c_1 > 0$ and $\lambda_{\max}(H_k) \geq \lambda_{\min}(H_k)>0$. 
By using this inequality and Lemma~\ref{lem:descentlemma} applied to $F_{S_k}$ (a consequence of Assumption~\ref{assum:subgrad}), we have that
\begin{align*}
	&F_{S_k}\left(x_k - \alpha_k H_k \nablafd F_{S_k} (x_k)\right) \nonumber \\
	&~~~~\leq F_{S_k}(x_k) - \alpha_k \nabla F_{S_k}(x_k)^TH_k \nablafd F_{S_k} (x_k) + \frac{\Lfg\alpha_k^2}{2}\|H_k\nablafd F_{S_k} (x_k)\|^2 \nonumber \\ 
	&~~~~= F_{S_k}(x_k) - \alpha_k \nablafd F_{S_k}(x_k)^TH_k \nablafd F_{S_k} (x_k)  \nonumber \\
	&~~~~\quad+ \alpha_k(\nablafd F_{S_k}(x_k) - \nabla F_{S_k}(x_k))^TH_k\nablafd F_{S_k}(x_k)+  \frac{\Lfg\alpha_k^2}{2}\|H_k\nablafd F_{S_k} (x_k)\|^2 \nonumber \\
	&~~~~\leq F_{S_k}(x_k) - \alpha_k \nablafd F_{S_k}(x_k)^TH_k \nablafd F_{S_k} (x_k) + \frac{\alpha_k}{2}\nablafd F_{S_k}(x_k)^TH_k \nablafd F_{S_k} (x_k) \nonumber \\
	&~~~~\quad+ \frac{\alpha_k}{2}(\nablafd F_{S_k}(x_k) - \nabla F_{S_k}(x_k))^TH_k(\nablafd F_{S_k}(x_k) - \nabla F_{S_k}(x_k)) \nonumber \\
	&~~~~\quad+ \frac{\Lfg\alpha_k^2}{2}\|H_k\nablafd F_{S_k} (x_k)\|^2 \nonumber \\
	&~~~~= F_{S_k}(x_k) - \frac{\alpha_k}{2}\nablafd F_{S_k}(x_k)^TH_k^{1/2}\left(I - \Lfg\alpha_kH_k\right)H_k^{1/2}\nablafd F_{S_k} (x_k) \nonumber \\
	&~~~~\quad +  \frac{\alpha_k}{2}(\nablafd F_{S_k}(x_k) - \nabla F_{S_k}(x_k))^TH_k(\nablafd F_{S_k}(x_k) - \nabla F_{S_k}(x_k)) \nonumber \\
	&~~~~\leq F_{S_k}(x_k) - \frac{\alpha_k\left(1 - \alpha_k\Lfg\lambda_{\max}(H_k)\right)}{2}\nablafd F_{S_k}(x_k)^TH_k \nablafd F_{S_k} (x_k) \nonumber \\
	&~~~~\quad+ \frac{\alpha_k \lambda_{\max}(H_k)}{2}\|\nablafd F_{S_k}(x_k) - \nabla F_{S_k}(x_k)\|^2 \nonumber \\
	&~~~~\leq F_{S_k}(x_k) - \frac{\alpha_k\left(1 - \alpha_k\Lfg\lambda_{\max}(H_k)\right)}{2}\nablafd F_{S_k}(x_k)^TH_k \nablafd F_{S_k} (x_k) \nonumber \\
	&~~~~\quad+ \frac{\alpha_k\lambda_{\max}(H_k)\Lfg^2\nu^2d}{8} \nonumber \\
	&~~~~\leq F_{S_k}(x_k) - c_1 \alpha_k (\nablafd F_{S_k}(x_k))^TH_k\nablafd F_{S_k}(x_k) + c_2,
\end{align*} 
where the second inequality is because $H_k$ is positive definite and because, for any positive-definite matrix $A$, $x^TAy \leq \frac{x^TAx + y^TAy}{2}$;  the fourth inequality is due to \eqref{eq:fdbound_sample} (Assumption~\ref{assum:subgrad}); and the last inequality is due to \eqref{eq:alpha_int}. 
\end{proof}		 
 
We also note that because of the stochasticity in the function values, it is not guaranteed that a decrease in stochastic function realizations $f$ can ensure decrease in the expected function $F$. A conservative strategy to address this issue is to choose the initial trial step length to be small enough to control the potential increase in $F$ values when the stochastic estimations are not good. Bollapragada et al.\ \cite{BollapragadaICML18} 
proposed a heuristic to choose the initial trial estimate for $\alpha_k$ 
such that there is 
a decrease in the expected function value. Following a similar strategy, we derive a heuristic to choose the initial trial step length as 
\begin{equation}
\label{eq:stepinitial}
 \hat \alpha_k = \left(1 + \frac{\Var[\zeta_i \in S_k^v]{\nablafd F_{\zeta_i}(x_k)}}{|S_k|\|\nablafd F_{S_k}(x_k)\|^2}\right)^{-1}.
\end{equation}
The formal reasoning for this choice is provided in Appendix~\ref{app:steplength}.

\subsection{Stable Quasi-Newton Update}
\label{sec:qnupdate}
In the BFGS and L-BFGS methods, the inverse Hessian approximation is updated by using the formulae
\begin{equation*}
\begin{aligned}
H_{k+1} & = V_k^T H_k V_k + \rho_k s_k s_k^T, \qquad 
\rho_k  = (y_k^T s_k)^{-1}, \qquad 
V_k  = I - \rho_k y_k s_k^T,
\end{aligned}
\end{equation*}
where $s_k = x_{k+1} - x_k$ and $y_k$ is  the difference in the gradients at $x_{k+1}$ and $x_k$. In stochastic settings, $y_k$ is typically defined as the difference in gradients measured on the same sample $S_k$ to ensure stability in the quasi-Newton approximation \cite{BollapragadaICML18}. We follow the same approach and define
\begin{equation}
\label{full-overlap}
y_k \defeq \nablafd F_{S_k}(x_{k+1}) - \nablafd F_{S_k}(x_k).
\end{equation}
However, even though computing gradient differences on common sample sets can improve stability,  the curvature pair $(y_k,s_k)$ still may not satisfy the condition $y_k^Ts_k > 0$ 
required to ensure positive definiteness of the quasi-Newton matrix $H_k$. In particular, for any $\mu$-strongly convex function $F_{S_k}$, we have that
\begin{align}
y_k^Ts_k  &= \left(\nablafd F_{S_k}(x_{k+1}) - \nablafd F_{S_k}(x_k)\right)^Ts_k \nonumber\\
&= \left(\nabla F_{S_k}(x_{k+1}) - \nabla F_{S_k}(x_k)\right)^Ts_k \nonumber \\
&\quad + \left(\nablafd F_{S_k}(x_{k+1}) - \nabla F_{S_k}(x_{k+1}) + \nabla F_{S_k}(x_k) - \nablafd F_{S_k}(x_k)\right)^Ts_k \nonumber \\
&\geq \mu\|s_k\|^2 \nonumber \\
&\quad - \left(\|\nablafd F_{S_k}(x_{k+1}) - \nabla F_{S_k}(x_{k+1})\| + \|\nabla F_{S_k}(x_k) - \nablafd F_{S_k}(x_k)\|\right)\|s_k\| \nonumber \\
&\geq \mu\|s_k\|^2 - \Lfg\nu\sqrt{d}\|s_k\|  =  \|s_k\| \left(\mu \|s_k\| -  \Lfg\nu\sqrt{d}\right), \nonumber
\end{align}
where the first inequality is due to strong convexity and the last inequality is due to \eqref{eq:fdbound_sample} (by Assumption~\ref{assum:subgrad}).
Therefore, the condition $y_k^Ts_k > 0$ is guaranteed to be satisfied when $\|s_k\| > \frac{\Lfg\nu\sqrt{d}}{\mu}$. Recently, Xie et al.\ \cite{Xie2020analysis} proposed modifying the curvature pair update whenever the step $s_k$ is too small so that $y_k^Ts_k > 0$. However, this modification requires knowledge of some unknown problem parameters and may not provide guarantees in the case when $F_{S_k}$ is  nonconvex. Therefore, we skip the quasi-Newton update if the following curvature condition is not satisfied:
\begin{equation}\label{eq:curv}
y_k^T s_k > \beta_1 \|s_k\|^2, %
\end{equation}
where $\beta_1 > 0$ is a predetermined constant. 

Moreover, to ensure that the eigenvalues of the quasi-Newton matrix are bounded, we require the ratio $\frac{y_k^Ty_k}{y_k^Ts_k}$ to be bounded. We note, however, that this requirement may not always be possible to satisfy because of the presence of the bias term. That is, 
\begin{align}
\frac{y_k^Ty_k}{y_k^Ts_k} &= \frac{\|\nablafd F_{S_k}(x_{k+1}) - \nablafd F_{S_k}(x_k)\|^2}{y_k^Ts_k} \nonumber \\
&\leq 3\frac{\|\nabla F_{S_k}(x_{k+1}) - \nabla F_{S_k}(x_k)\|^2}{\beta_1 \|s_k\|^2} + 3\frac{\|\nablafd F_{S_k}(x_{k+1}) - \nabla F_{S_k}(x_{k+1})\|^2}{\beta_1 \|s_k\|^2}
\nonumber \\
&\quad + 3\frac{\|\nablafd F_{S_k}(x_{k}) - \nabla F_{S_k}(x_{k})\|^2}{\beta_1 \|s_k\|^2} \nonumber \\
&\leq \frac{3\Lfg^2}{\beta_1} + \frac{3\Lfg^2\nu^2d}{2\beta_1\|s_k\|^2}, \label{eq:curvbound}
\end{align}
where the first inequality is due to the fact that $(a+b+c)^2 \leq 3(a^2 + b^2 + c^2)$ and \eqref{eq:curv} and the last inequality is due to Assumption~\ref{assum:subgrad} and \eqref{eq:fdbound_sample}. Therefore, for $\|s_k\|$ arbitrarily close to zero, this fraction may not be bounded. 
 Thus, to ensure the eigenvalues are bounded, we skip the update whenever $\|s_k\|$ is too small. That is, we skip the update whenever the following lengthening condition is not satisfied:
\begin{equation} \label{eq:length}
\|s_k\| > \beta_2 > 0,
\end{equation} 
where $\beta_2> 0$ is a small predetermined constant.

\subsection{The Complete Algorithm}
\label{sec:algorithm}
We use L-BFGS as the method for incorporating quasi-Newton information. 
The pseudocode of the resulting finite-difference stochastic L-BFGS method is given in Algorithm~\ref{alg:fd_L-BFGS}. 
We summarize the assumptions on the algorithmic parameters in
Assumption~\ref{assum:algconstants}. 
The initial Hessian matrix $H_0^k$ in the L-BFGS recursion at each iteration is chosen as $\kappa_k I$, where $\kappa_k = \frac{y_k^Ts_k}{y_k^Ty_k}$.

\begin{assum}\label{assum:algconstants}
The algorithmic parameters satisfy 
$\tau \in (0,1)$, 
$c_1 \in (0,0.5)$, 
$c_2 > 0$,
$\theta_0>0$.
$\gamma<1$, 
$m\in \Z_{++}$, 
$|S_0|\in \Z_{++}$, 
$\beta_1>0$, and 
$\beta_2>0$.
\end{assum}

\begin{algorithm}[h!]
	\caption{Finite-Difference Stochastic L-BFGS Method}
	\label{alg:fd_L-BFGS}
	\textbf{Input:} Initial iterate $x_0$, initial sample size $|S_0|$, L-BFGS memory $m$, finite-difference parameter $\nu$\\
	line search parameters $(c_1, c_2, \tau)$, sample test parameters $\theta_0, \gamma$.\\
	\textbf{Initialization:} Set $k \leftarrow 0$; $\theta=\theta_0$ 
	
	\textbf{Repeat} until convergence:
	\begin{algorithmic}[1]
		\STATE Choose a set $S_k$ consisting of $|S_k|$ i.i.d.\ realizations of $\zeta$ 
		\SWITCH{Sample Selection:}
		\CASE{Finite-Difference Norm Test}
		\IF{\eqref{eq:sample_normFD} is not satisfied}
		\STATE Choose least $|S_k|$ such that the inequality in \eqref{eq:sample_normFD} is satisfied 
		\ENDIF
		\ENDCASE
		\CASE{Finite-Difference Inner Product Quasi-Newton Test}
		\IF{\eqref{eq:sample_ipqnFD} is not satisfied}
		\STATE Choose least $|S_k|$ such that the inequality in \eqref{eq:sample_ipqnFD} is satisfied
		\ENDIF
		\ENDCASE
		\ENDSWITCH
		\IF{$|S_k| = |S_{k-1}|$}
		\STATE Set $\theta \leftarrow\theta\gamma$
		\ELSE
		\STATE Set $\theta \leftarrow \theta_0$
		\ENDIF
		\STATE Compute $\nablafd F_{S_k}(x_k)$
		\STATE Compute $p_k = -H_k \nablafd F_{S_k}(x_k)$ using L-BFGS 
		two-loop recursion in \cite{nocedal2006no}
		\STATE Compute $\alpha_k$ using \eqref{eq:stepinitial}
		\WHILE {Armijo condition \eqref{eq:suff_decrease} is not satisfied} %
		\STATE Set $\alpha_k \leftarrow \alpha_k \tau$ %
		\ENDWHILE
		\STATE Compute $x_{k+1} = x_k + \alpha_k p_k$ %
		
		\STATE Compute $y_k$ using 
		(\ref{full-overlap}) and set $s_k = x_{k+1} - x_k$
		
		\IF{$y_k^T s_k > \beta_1 \|s_k\|^2$ and $\|s_k\| > \beta_2$} %
		\IF{number of stored $(y_j, s_j)$ exceeds $m$}
		\STATE Discard oldest curvature pair $(y_j, s_j)$
		\ENDIF
		\STATE Store new curvature pair $(y_k, s_k)$
		\ENDIF	
		\STATE Set $k \leftarrow k + 1$ %
		\STATE Set $|S_k| = |S_{k-1}|$
	\end{algorithmic}
\end{algorithm}

In the sampling tests, we employ sample approximations to compute the sample size. These sample estimates are sufficiently accurate except if the sample size is too small. To avoid the scenario of not increasing the sample sizes at all, we employ the following strategy. Instead of choosing the parameter $\theta$ to be a fixed parameter, we make it iteration dependent and control it adaptively. 

The parameter $\theta$ %
controls the probability of satisfying the underlying deterministic condition. For example, in the inner product quasi-Newton test, $\theta$ controls the probability of generating a quasi-Newton direction that makes an acute angle with the true quasi-Newton direction. Smaller $\theta$ values increase the probability of satisfying the underlying conditions and promote large sample sizes. Motivated by this property, we propose to increase the probability of satisfying the deterministic conditions when the approximations are not reliable. Although it is hard to identify whether the approximations are accurate or not solely based on sample sizes, we can monitor the potential ill effects of such scenarios. In particular, whenever the sample sizes remain constant, it is either because the current sample size is large enough to satisfy the true condition or because the approximations are not accurate. Therefore, in this scenario we decrease the $\theta$ value in the next iteration. If the sample size has increased in the next iteration, we reset the value to its default value $\theta_0$. Otherwise, we continue to decrease its value until the sample sizes are increased. More precisely, at each iteration $k$ we set $\theta_k=\theta_{k-1}\gamma$ if $|S_k| = |S_{k-1}|$, where $\gamma < 1$; otherwise we reset its value to a default value $\theta_0$.

\section{Analysis of Algorithm~\ref{alg:fd_L-BFGS}}
\label{sec:analysis}
We now establish convergence results for the finite-difference quasi-Newton 
methods with the norm test and inner product quasi-Newton test. We make use of the following additional assumption for the analysis. 
\begin{assum}\label{assum:eigs}
	For all $k$, the eigenvalues of $H_k$ are contained in an interval in 
	$\mathbb{R}_{++}$; that is, there exist constants 
	$\Lambda_2 \geq \Lambda_1 > 0$ 
	such that
	\begin{equation*}
	\Lambda_1 I \preceq H_k \preceq \Lambda_2 I, \qquad \forall k.
	\end{equation*}
\end{assum}

Assumption~\ref{assum:eigs} can be shown to hold for both convex and nonconvex 
twice-differentiable functions by updating $H_k$ only when $y_k^T s_k \geq \beta_1 \|s_k\|_2^2$, where 
$\beta_1 > 0$ is a predetermined constant \cite{Berahas2016}. We provide the 
proof for the sake of completeness in Appendix~\ref{app:Hcondition}. We note that as a consequence of this assumption, the analysis provided here is more general and can be used for a method with any positive-definite matrix $H_k$.

We now establish technical lemmas for both the norm and the inner product quasi-Newton tests.

\subsection{Norm Test}
We begin in Lemma~\ref{lem:norm} by establishing a descent result for cases where the sample size $|S_k|$ satisfies the norm test. 
\begin{lem}
	\label{lem:norm}
	For any $x_0$, let $\{x_k: k\in \Z_{++}\}$ be generated by iteration \eqref{eq:iter} with $|S_{k}|$ chosen by the (exact variance) finite-difference 
	norm test \eqref{eq:ideal_normFD_test} for a given constant $\theta > 0$, and 
	suppose that Assumptions~\ref{assum:sampling}, \ref{assum:lipschitz}, 
	and \ref{assum:eigs} 
	hold. 
	Then, for any $k$ where $\alpha_k$ satisfies 
	\begin{equation}   
	\label{c-stepnorm}
	0< \alpha_k \leq  
		\frac{\Lambda_1}{4(1 + \theta^2)\LFg\Lambda_2^2},
	\end{equation}
	we have that 
	\begin{equation}
	\label{eq:c-lin-eq-norm}
	\begin{aligned} 
	\E[S_k]{F(x_{k+1})}  
	& \leq F(x_k)  - \frac{\alpha_k\Lambda_1}{4} \|\nabla F(x_k)\|^2 
	 \\
	 & \quad +  \frac{\alpha_k (\Lambda_1 + 2\Lambda_2)}{4}\|\nablafd F(x_k)
	 - \nabla F(x_k)\|^2.
	\end{aligned}   
	\end{equation}
\end{lem}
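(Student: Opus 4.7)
The plan is to apply the descent lemma (Lemma~\ref{lem:descentlemma}) to $F$ itself at $y = x_{k+1} = x_k - \alpha_k H_k \nablafd F_{S_k}(x_k)$ and $x = x_k$, then take conditional expectation over $S_k$. Using the unbiasedness relation \eqref{eq:unbias}, namely $\E[S_k]{\nablafd F_{S_k}(x_k)} = \nablafd F(x_k)$, the linear term becomes $-\alpha_k \nabla F(x_k)^T H_k \nablafd F(x_k)$, and the quadratic term splits via the bias--variance identity
\[
\E[S_k]{\|H_k \nablafd F_{S_k}(x_k)\|^2} = \|H_k \nablafd F(x_k)\|^2 + \E[S_k]{\|H_k(\nablafd F_{S_k}(x_k) - \nablafd F(x_k))\|^2}.
\]
I would bound the variance contribution using $H_k \preceq \Lambda_2 I$ (Assumption~\ref{assum:eigs}) together with the norm-test hypothesis \eqref{eq:ideal_normFD}, giving a factor $\Lambda_2^2 \theta^2 \|\nablafd F(x_k)\|^2$.

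Next I would handle the linear term by writing $\nablafd F(x_k) = \nabla F(x_k) + (\nablafd F(x_k) - \nabla F(x_k))$ and applying the weighted AM--GM inequality $x^T H_k y \leq \tfrac{1}{2}(x^T H_k x + y^T H_k y)$, valid because $H_k \succ 0$. This yields
\[
-\alpha_k \nabla F(x_k)^T H_k \nablafd F(x_k) \leq -\frac{\alpha_k}{2}\nabla F(x_k)^T H_k \nabla F(x_k) + \frac{\alpha_k}{2}(\nablafd F(x_k) - \nabla F(x_k))^T H_k (\nablafd F(x_k) - \nabla F(x_k)),
\]
and then applying the eigenvalue bounds from Assumption~\ref{assum:eigs} converts this into $-\tfrac{\alpha_k\Lambda_1}{2}\|\nabla F(x_k)\|^2 + \tfrac{\alpha_k\Lambda_2}{2}\|\nablafd F(x_k) - \nabla F(x_k)\|^2$.

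For the $O(\alpha_k^2)$ contribution, I would use $\|\nablafd F(x_k)\|^2 \leq 2\|\nabla F(x_k)\|^2 + 2\|\nablafd F(x_k) - \nabla F(x_k)\|^2$ to bound the combined $(1+\theta^2)\Lambda_2^2 \|\nablafd F(x_k)\|^2$ factor, producing a quadratic-in-$\alpha_k$ term of the form $\LFg \alpha_k^2 \Lambda_2^2 (1+\theta^2)\bigl(\|\nabla F(x_k)\|^2 + \|\nablafd F(x_k) - \nabla F(x_k)\|^2\bigr)$. Then the step-length restriction \eqref{c-stepnorm}, $\alpha_k \leq \Lambda_1/[4(1+\theta^2)\LFg\Lambda_2^2]$, is exactly what is needed so that $\LFg \alpha_k^2 \Lambda_2^2(1+\theta^2) \leq \tfrac{\alpha_k \Lambda_1}{4}$, absorbing half of the $-\tfrac{\alpha_k \Lambda_1}{2}\|\nabla F(x_k)\|^2$ term and leaving the clean $-\tfrac{\alpha_k \Lambda_1}{4}\|\nabla F(x_k)\|^2$. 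The same step-size bound contributes an extra $\tfrac{\alpha_k \Lambda_1}{4}\|\nablafd F(x_k)-\nabla F(x_k)\|^2$ on the bias side, which combines with $\tfrac{\alpha_k\Lambda_2}{2}\|\nablafd F(x_k)-\nabla F(x_k)\|^2$ into the asserted coefficient $\tfrac{\alpha_k(\Lambda_1 + 2\Lambda_2)}{4}$.

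The main obstacle is purely bookkeeping: one must split the cross term in exactly the right way so that the $\|\nabla F(x_k)\|^2$ coefficient coming out of the linear part is $\Lambda_1/2$ (not $\Lambda_1$), leaving sufficient slack for the $O(\alpha_k^2)$ contribution to be absorbed under the step-length condition \eqref{c-stepnorm}. If one instead used a looser AM--GM splitting or bounded $\nabla F^T H_k \nablafd F$ by Cauchy--Schwarz, the final constants would not match. Everything else (descent lemma, eigenvalue bound on $H_k$, the norm-test variance bound, and the elementary $(a+b)^2 \leq 2a^2+2b^2$) slots in mechanically.
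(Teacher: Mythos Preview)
Your proposal is correct and follows essentially the same route as the paper's proof: descent lemma on $F$, unbiasedness \eqref{eq:unbias} to reduce the linear term, the PD inequality $2|x^TH_ky|\le x^TH_kx+y^TH_ky$ to split the cross term into $-\tfrac{\alpha_k}{2}\nabla F^TH_k\nabla F+\tfrac{\alpha_k}{2}\delta_k^TH_k\delta_k$, and then the norm test together with $(a+b)^2\le 2a^2+2b^2$ and the step bound \eqref{c-stepnorm} to control the $O(\alpha_k^2)$ contribution. The only cosmetic difference is that the paper bounds $\E[S_k]{\|H_k\nablafd F_{S_k}(x_k)\|^2}$ in one shot rather than via your explicit bias--variance split, but the computation and constants are identical.
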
	
\begin{proof}
	By Assumption~\ref{assum:lipschitz} and Lemma~\ref{lem:descentlemma}, we have that
	\begin{equation*}
	\begin{aligned} 
	\E[S_k]{F(x_{k+1})}
	& \leq F(x_k)  - \E[S_k]{\alpha_k 
	 \left(H_k \nablafd F_{S_k}(x_k)\right)^T\nabla F(x_k)} 
	 \\
	 & \quad + \E[S_k]{\frac{\LFg\alpha_k^2}{2} \|H_k 
	 \nablafd F_{S_k}(x_k)\|^2} \nonumber\\
	& = F(x_k)  -\alpha_k \nablafd F(x_k)^TH_k\nabla F(x_k) 
	 \\
	 & \quad + \frac{\LFg\alpha_k^2}{2} \E[S_k]{\|H_k \nablafd F_{S_k}(x_k)\|^2}, \nonumber
	\end{aligned}
    \end{equation*}
	where the equality follows from Assumption~\ref{assum:sampling}.
	Defining 
	\begin{align} 
	\delta_k & \defeq \nablafd F(x_k) - \nabla F(x_k) \label{eq:deltadef} \\
	T_k &\defeq \frac{\LFg\alpha_k^2}{2}\E[S_k]{\|H_k \nablafd F_{S_k}(x_k)\|^2}, \nonumber
	\end{align}
	we have that 
	\begin{align}	
	\E[S_k]{F(x_{k+1})} 
	&\leq F(x_k)  
	 - \alpha_k\left(\nabla F(x_k) + \delta_k\right)^T H_k \nabla F(x_k) 
	 + T_k \nonumber \\
	&= F(x_k)  - \alpha_k \nabla F(x_k)^TH_k \nabla F(x_k) 
	 - \alpha_k \delta_k^T H_k \nabla F(x_k) + T_k \nonumber \\
	&\leq F(x_k)  - \alpha_k \nabla F(x_k)^TH_k \nabla F(x_k) 
	 \nonumber \\
	 & \quad + \frac{\alpha_k}{2}\left( \nabla F(x_k)^T H_k \nabla F(x_k) 
	   +  \delta_k^T H_k  \delta_k\right)+ T_k\nonumber \\
	&= F(x_k)  - \frac{\alpha_k}{2}\nabla F(x_k)^T H_k \nabla F(x_k) 
	 + \frac{\alpha_k}{2}\delta_k^T H_k  \delta_k + T_k,  
	 \label{eq:c-norm-prerecursion}
	\end{align}
	where the second inequality is obtained by using the fact that  $2|x^TAy| \leq x^TAx + y^TAy$ for any positive-definite matrix $A$. 

	Now, using \eqref{eq:ideal_normFD} and Assumption \ref{assum:eigs}, 
	we have that
	\begin{eqnarray*}
\lefteqn{
	\E[S_k]{\left\|H_k \nablafd F_{S_k}(x_k)\right\|^2}}\\ 
	&&= \E[S_k]{\left\|H_k \left(\nablafd F_{S_k}(x_k) 
	 - \nablafd F(x_k)\right)\right\|^2} 
	 + \left\|H_k\nablafd F(x_k)\right\|^2 \\
	&&\leq \Lambda_2^2\E[S_k]{\left\|\nablafd F_{S_k}(x_k) 
	 - \nablafd F(x_k)\right\|^2} 
	 + \Lambda_2^2\left\|\nablafd F(x_k)\right\|^2 \\
	&&\leq \Lambda_2^2(1+\theta^2)\left\|\nablafd F(x_k)\right\|^2 \\
	&&\leq 2 \Lambda_2^2(1+\theta^2) \left(\left\|\nablafd F(x_k) 
    	 - \nabla F(x_k)\right\|^2 + \|\nabla F(x_k)\|^2\right) \\
	&&=2\Lambda_2^2(1+\theta^2)\|\delta_k\|^2 
	 + 2\Lambda_2^2 (1+\theta^2)\|\nabla F(x_k)\|^2. 
	\end{eqnarray*}
	Substituting this into \eqref{eq:c-norm-prerecursion} and using 
	\eqref{c-stepnorm} and Assumption~\ref{assum:eigs}, we obtain
	\begin{align*}
	\E[S_k]{F(x_{k+1})} 
	&\leq F(x_k) - \frac{\alpha_k}{2}\nabla F(x_k)^TH_k \nabla F(x_k) 
	 + \frac{\alpha_k}{2}\delta_k^TH_k\delta_k 
	  \\
	&\quad + \LFg\alpha_k^2\Lambda_2^2(1+\theta^2)\|\delta_k\|^2 + \LFg\alpha_k^2\Lambda_2^2(1+\theta^2) \|\nabla F(x_k)\|^2 \\
	&\leq F(x_k) - \frac{\alpha_k \Lambda_1}{2}\|\nabla F(x_k)\|^2 
	 + \frac{\alpha_k \Lambda_2}{2}\|\delta_k\|^2 
	  \\
	&\quad + \LFg\alpha_k^2\Lambda_2^2(1+\theta^2)\|\delta_k\|^2 + \LFg\alpha_k^2\Lambda_2^2(1+\theta^2) \|\nabla F(x_k)\|^2  \\
	&\leq F(x_k) - \frac{\alpha_k \Lambda_1}{2}\|\nabla F(x_k)\|^2 
	 + \frac{\alpha_k \Lambda_2}{2}\|\delta_k\|^2 
	 \\
	 &\quad + \frac{\alpha_k \Lambda_1}{4}\|\delta_k\|^2 + \frac{\alpha_k \Lambda_1}{4}\|\nabla F(x_k)\|^2 \\
	&=F(x_k) - \frac{\alpha_k \Lambda_1}{4}\|\nabla F(x_k)\|^2 
	 + \frac{\alpha_k (\Lambda_1 + 2\Lambda_2)}{4}\|\delta_k\|^2,
	\end{align*}
	which establishes \eqref{eq:c-lin-eq-norm}.
\end{proof}

\subsection{Inner Product Quasi-Newton Test}
We now consider the case where the sample size $|S_k|$ satisfies the inner 
product quasi-Newton test. Following the strategy provided in \cite{BollapragadaICML18}, we assume that the orthogonality condition is satisfied 
by the stochastic finite-difference quasi-Newton directions. 
\begin{assum}\label{assum:orth}
	For 
	\[
	U_{i,k} \defeq 
	\left\|H_k \nablafd F_{\zeta_i}(x_k) 
	 - \frac{(H_k \nablafd F_{\zeta_i})^T \left(H_k\nablafd F(x_k)\right)}
	{\|H_k\nablafd F(x_k)\|^2}H_k\nablafd F(x_k)\right\|^2,
	\]
	there exists $\psi > 0$ such that
\begin{equation*} 
	\frac{\E[\zeta_i]{U_{i,k}}}
	{|S_k|} \leq \psi^2 \left\|H_k\nablafd F(x_k)\right\|^2 \qquad \forall k.
\end{equation*} 
\end{assum}
Using the proof techniques in \cite[Lemma 1]{BollapragadaICML18}, we thus have the following bound on the length of the search direction:
\begin{equation}
\label{eq:c-boundvariance}
\E[S_k]{\|H_k \nablafd F_{S_k}(x_k)\|^2} \leq(1 + \theta^2 + \psi^2)\|H_k\nablafd F(x_k)\|^2.
\end{equation}
Using this bound, we first establish a technical lemma.
\begin{lem}
	\label{lem:ipqn}
	For any $x_0$, let $\{x_k: k\in \Z_{++}\}$ be generated by iteration \eqref{eq:iter}  
	with $|S_{k}|$ chosen by the (exact variance) finite-difference 
	inner product quasi-Newton test \eqref{eq:ideal_ipqnFD}, 
	and suppose that Assumptions~\ref{assum:sampling}, \ref{assum:lipschitz}, \ref{assum:eigs}, and 
 	\ref{assum:orth} hold. 
	Then, for any $k$ where $\alpha_k$ satisfies 
	\begin{equation}   
	\label{eq:c-stepform}
	0< \alpha_k  <  \frac{1}{\left(1 + \theta^2+\psi^2\right)\LFg\Lambda_2},
	\end{equation}
	we have that 
	\begin{align} 
	\E[S_k]{F(x_{k+1})} 
	& \leq F(x_k)  
	 - \frac{\alpha_k\Lambda_1}{2} \|\nabla F(x_k)\|^2   \label{eq:c-lin-ineq} 
	 + \frac{\alpha_k\Lambda_2}{2}\left\|\nablafd F(x_k) 
	  - \nabla F(x_k)\right\|^2.
	\end{align}    	
\end{lem}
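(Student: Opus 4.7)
The proof closely parallels that of Lemma~\ref{lem:norm}, with the role of the norm test \eqref{eq:ideal_normFD} now played by the second-moment bound \eqref{eq:c-boundvariance} that Assumption~\ref{assum:orth} together with the inner product quasi-Newton test \eqref{eq:ideal_ipqnFD} supplies. My plan is to apply the Descent Lemma (Lemma~\ref{lem:descentlemma}) to $F$ at the update $x_{k+1} = x_k - \alpha_k H_k \nablafd F_{S_k}(x_k)$, take the expectation over $S_k$ using Assumption~\ref{assum:sampling} and \eqref{eq:unbias} to reduce the linear term to $-\alpha_k\, \nablafd F(x_k)^T H_k \nabla F(x_k)$, bound the resulting quadratic term via \eqref{eq:c-boundvariance} and the step-size hypothesis \eqref{eq:c-stepform}, and finally substitute the decomposition $\nablafd F(x_k) = \nabla F(x_k) + \delta_k$ with $\delta_k \defeq \nablafd F(x_k) - \nabla F(x_k)$.

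After the first two reductions, the right-hand side takes the form
\[
F(x_k) - \alpha_k\, \nablafd F(x_k)^T H_k \nabla F(x_k) + \frac{\LFg \alpha_k^2}{2}\E[S_k]{\|H_k \nablafd F_{S_k}(x_k)\|^2}.
\]
To handle the final term, I would combine \eqref{eq:c-boundvariance} with the bound $\|H_k v\|^2 \leq \Lambda_2\, v^T H_k v$ (which follows from $H_k^2 \preceq \Lambda_2 H_k$ under Assumption~\ref{assum:eigs}) to obtain the upper estimate $\tfrac{\LFg \alpha_k^2 \Lambda_2 (1+\theta^2+\psi^2)}{2}\, \nablafd F(x_k)^T H_k \nablafd F(x_k)$. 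The step-size hypothesis \eqref{eq:c-stepform} makes the scalar prefactor strictly less than $\alpha_k/2$, so the quadratic term is strictly bounded above by $\tfrac{\alpha_k}{2}\, \nablafd F(x_k)^T H_k \nablafd F(x_k)$. Substituting $\nablafd F(x_k) = \nabla F(x_k) + \delta_k$ and expanding, the two cross contributions $\pm\alpha_k\, \delta_k^T H_k \nabla F(x_k)$ cancel exactly, leaving $-\tfrac{\alpha_k}{2}\nabla F(x_k)^T H_k \nabla F(x_k) + \tfrac{\alpha_k}{2}\delta_k^T H_k \delta_k$. Applying the eigenvalue bounds of Assumption~\ref{assum:eigs} to each $H_k$-quadratic form then yields exactly \eqref{eq:c-lin-ineq}.

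The main obstacle is using a tight enough quadratic bound. The naive route of replacing $\|H_k v\|^2$ by $\Lambda_2^2 \|v\|^2$ and then $\|\nablafd F(x_k)\|^2 \leq 2(\|\nabla F(x_k)\|^2 + \|\delta_k\|^2)$—as in Lemma~\ref{lem:norm}—would lose a factor of two and produce weaker $(\Lambda_1 + 2\Lambda_2)/4$ coefficients. Preserving the $H_k$-weighted quadratic form through the step-size absorption is what enables the exact cross-term cancellation, which in turn delivers the clean $\Lambda_1/2$ and $\Lambda_2/2$ constants in \eqref{eq:c-lin-ineq}.
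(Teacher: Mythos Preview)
Your argument is correct and reaches the same intermediate identity
\[
\E[S_k]{F(x_{k+1})}\leq F(x_k) - \tfrac{\alpha_k}{2}\nabla F(x_k)^T H_k \nabla F(x_k) + \tfrac{\alpha_k}{2}\delta_k^T H_k \delta_k
\]
as the paper, but by a different manipulation. After invoking \eqref{eq:c-boundvariance}, the paper keeps the term $\tfrac{\tLg\alpha_k^2}{2}\|H_k\nablafd F(x_k)\|^2$ with $\tLg=\LFg(1+\theta^2+\psi^2)$, substitutes $\nablafd F(x_k)=\nabla F(x_k)+\delta_k$ \emph{into both} the linear and quadratic parts, and collects the resulting cross contribution as $-\alpha_k(H_k^{1/2}\delta_k)^T(I-\tLg\alpha_k H_k)(H_k^{1/2}\nabla F(x_k))$; positive definiteness of $I-\tLg\alpha_k H_k$ (from \eqref{eq:c-stepform}) together with $2|x^TAy|\leq x^TAx+y^TAy$ then absorbs this cross term. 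Your route instead inserts the eigenvalue inequality $\|H_kv\|^2\leq\Lambda_2\,v^TH_kv$ (equivalently $H_k^2\preceq\Lambda_2H_k$) before substituting, which converts the quadratic part to $\tfrac{\alpha_k}{2}\nablafd F(x_k)^T H_k\nablafd F(x_k)$ and makes the cross terms cancel \emph{exactly}, so no Young-type inequality is needed. Both routes use \eqref{eq:c-stepform} in the same place and yield the same constants; yours is slightly more streamlined at the cost of the extra operator inequality $H_k^2\preceq\Lambda_2H_k$, while the paper's version makes the role of the matrix $I-\tLg\alpha_k H_k$ explicit.
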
	
\begin{proof}
	By Assumptions~\ref{assum:sampling}~and~\ref{assum:lipschitz}  
	and Lemma~\ref{lem:descentlemma}, we have that
	\begin{align} 
	\small
	\MoveEqLeft
	\E[S_k]{F(x_{k+1}) } \nonumber \\
	& \leq F(x_k) - \E[S_k]{\alpha_k \left(H_k 
	 \nablafd F_{S_k}(x_k)\right)^T\nabla F(x_k)} 
	 \nonumber\\
	 &\qquad + \E[S_k]{\frac{\LFg\alpha_k^2}{2} 
	 \left\|H_k \nablafd F_{S_k}(x_k)\right\|^2} \nonumber\\
	& = F(x_k) - \alpha_k \nablafd F(x_k)^TH_k\nabla F(x_k) 
	 + \frac{\LFg\alpha_k^2}{2} \E[S_k]{\left\|H_k \nablafd F_{S_k}(x_k)\right\|^2} \nonumber\\
	&\leq F(x_k)  - \alpha_k \nablafd F(x_k)^TH_k\nabla F(x_k) 
	  \nonumber\\
	 &\qquad +\frac{\LFg \alpha_k^2}{2} \left(1 + \theta^2+\psi^2\right)
	 \left\|H_k\nablafd F(x_k)\right\|^2,\nonumber
	\end{align}
	where the last inequality is due to Assumption~\ref{assum:orth} and \eqref{eq:c-boundvariance}. 

	By using $\delta_k$ from \eqref{eq:deltadef}, 
	$\tLg \defeq \LFg(1+\theta^2 + \psi^2)$, and 		
	Assumption~\ref{assum:eigs}, we have that 
\begin{align*}	
\small
	\MoveEqLeft\E[S_k]{F(x_{k+1})} \\
	&\leq F(x_k)  - \alpha_k (\nabla F(x_k) + \delta_k)^T H_k \nabla F(x_k) 
	 + \frac{\tLg \alpha_k^2}{2} \|H_k(\nabla F(x_k) + \delta_k)\|^2\nonumber \\
	&= F(x_k)  - \alpha_k \nabla F(x_k)^TH_k \nabla F(x_k) 
	 + \frac{\tLg \alpha_k^2}{2}(\|H_k\nabla F(x_k)\|^2 
	 + \|H_k\delta_k\|^2) \nonumber \\
	&\quad -\alpha_k (H_k^{1/2}\delta_k)^T(I 
	 - \tLg\alpha_k H_k)(H_k^{1/2}\nabla F(x_k)) \nonumber \\
	&\leq F(x_k)  - \alpha_k \nabla F(x_k)^TH_k \nabla F(x_k) 
	 + \frac{\tLg \alpha_k^2}{2}(\|H_k\nabla F(x_k)\|^2 
	 + \|H_k\delta_k\|^2) \nonumber \\
	&\quad + \frac{\alpha_k}{2} \left((H_k^{1/2}\nabla F(x_k))^T(I 
	 - \tLg\alpha_k H_k)(H_k^{1/2}\nabla F(x_k))\right) \nonumber \\
	&\quad + \frac{\alpha_k}{2}\left((H_k^{1/2}\delta_k)^T(I 
	 - \tLg\alpha_k H_k)(H_k^{1/2}\delta_k)\right) \nonumber \\
	&= F(x_k) - \frac{\alpha_k}{2}\nabla F(x_k)^TH_k \nabla F(x_k) 
	 + \frac{\alpha_k}{2}\delta_k^TH_k\delta_k \nonumber  \\
	& \leq F(x_k) - \frac{\alpha_k\Lambda_1}{2}\|\nabla F(x_k)\|^2 
	 + \frac{\alpha_k\Lambda_2}{2}\|\delta_k\|^2,
\end{align*}
	where the second inequality is obtained by using the fact that 
	$I - \tLg\alpha_k H_k$ is a positive-definite matrix due to 
	\eqref{eq:c-stepform} and Assumption~\ref{assum:eigs}, 
	and $2|x^TAy| \leq x^TAx + y^TAy$ for any positive-definite matrix $A$, 
	and the last inequality is due to Assumption~\ref{assum:eigs}. 
	Substituting $\delta_k$ with its definition in \eqref{eq:deltadef} completes the proof. 
\end{proof}	

\subsection{Convergence Results}
\label{sec:convergence}
We now show that the finite-difference stochastic quasi-Newton iteration 
\eqref{eq:iter} with a fixed step length $\alpha_k=\alpha$ is convergent to a 
neighborhood of a stationary point $x^*$ when the sample sizes $|S_k|$ satisfy either the norm 
test or the inner product quasi-Newton test. 

Throughout this section we let $\E{\cdot}$ denote the total expectation, which can be obtained by integrating all random variables $x_k, \ldots, x_1$ obtained through $k$ iterations of the form \eqref{eq:iter}.

\subsubsection{Strongly Convex Functions}

We first consider strongly convex functions $F$ with $x^*$ denoting the unique minimizer of $F$. This is formalized in the following assumption, which supposes that $\nabla F$ exists (as is the case under either Assumption~\ref{assum:lipschitz} or Assumption~\ref{assum:subgrad}).

\begin{assum}\label{assum:strnglycnvx}
	There exists a parameter $\mu > 0$ such that 
	\begin{equation*}
	\|\nabla F(x)\|^2 \geq 2\mu \left(F(x) - F(x^*)\right) 
	 \qquad \forall x \in \R^d.
	\end{equation*}
\end{assum}
We first establish a general lemma whose result can be used in proving convergence results for both the tests.

\begin{lem} \label{lem:linear} 
	Suppose Assumption~\ref{assum:strnglycnvx} is satisfied. 
	For any $x_0$, let $\{x_k: k\in \Z_{++}\}$ be generated by iteration \eqref{eq:iter}, with $|S_{k}|$ chosen such that 
	\begin{align}
	 \E[S_k]{F(x_{k+1})} & \leq F(x_k)  - \frac{a_1}{2}\|\nabla F(x_k)\|^2  
	  + a_2  \label{eq:lem_term}
	\end{align}
	for some constants $a_1>0$ and $a_2 > 0$. Then,
	\begin{equation*} 
	\E{F(x_k) - F(x^*)} \leq (1 -\mu a_1)^k \left(F(x_0) 
	- F(x^*) - \frac{a_2}{\mu a_1}\right) + \frac{a_2}{\mu a_1} 
	\qquad \forall k \in \Z_+.
	\end{equation*}
\end{lem}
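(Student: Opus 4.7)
The plan is to combine the per-step bound \eqref{eq:lem_term} with the strong-convexity assumption to obtain a linear contraction in the expected optimality gap, then unroll the resulting one-step recursion. First, I would apply Assumption~\ref{assum:strnglycnvx} to replace $\|\nabla F(x_k)\|^2$ in \eqref{eq:lem_term} by the lower bound $2\mu(F(x_k) - F(x^*))$, and then subtract $F(x^*)$ from both sides. This produces the conditional one-step inequality
\[
\E[S_k]{F(x_{k+1}) - F(x^*)} \;\leq\; (1 - \mu a_1)\bigl(F(x_k) - F(x^*)\bigr) + a_2.
\]
Taking total expectation (integrating over the randomness in the history $S_0, \ldots, S_{k-1}$ that generates $x_k$) and invoking the tower property yields the unconditional recursion
\[
\E{F(x_{k+1}) - F(x^*)} \;\leq\; (1 - \mu a_1)\,\E{F(x_k) - F(x^*)} + a_2.
\]

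Next, I would recast this around the fixed point $c \defeq a_2/(\mu a_1)$ of the affine map $y \mapsto (1-\mu a_1) y + a_2$. Subtracting $c$ from both sides produces the pure contraction
\[
\E{F(x_{k+1}) - F(x^*)} - c \;\leq\; (1 - \mu a_1)\Bigl(\E{F(x_k) - F(x^*)} - c\Bigr),
\]
and a straightforward induction on $k$, with deterministic base case $\E{F(x_0) - F(x^*)} = F(x_0) - F(x^*)$, gives
\[
\E{F(x_k) - F(x^*)} - c \;\leq\; (1-\mu a_1)^k \bigl(F(x_0) - F(x^*) - c\bigr).
\]
Adding $c = a_2/(\mu a_1)$ back to both sides yields exactly the claimed bound.

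There is no serious obstacle in this argument: it is a textbook linear-recursion argument once the strong-convexity inequality is used to close the right-hand side of \eqref{eq:lem_term} in terms of the optimality gap itself. The only point requiring care is the use of total expectation. Since $x_k$ is determined by $S_0, \ldots, S_{k-1}$, the conditional expectation $\E[S_k]{\cdot}$ in \eqref{eq:lem_term} is effectively conditional on $x_k$, and the tower property then delivers the unconditional recursion used above. In the natural regime $\mu a_1 \leq 1$ the contraction factor $1-\mu a_1$ is nonnegative, so the induction step preserves the inequality direction when the inductive bound is substituted; outside this regime the same closed-form identity is obtained by iterating the recursion directly, so the stated bound remains valid.
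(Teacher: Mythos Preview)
Your proposal is correct and follows essentially the same approach as the paper: apply Assumption~\ref{assum:strnglycnvx} to \eqref{eq:lem_term}, subtract $F(x^*)$, shift by the fixed point $a_2/(\mu a_1)$, take total expectation, and unroll the resulting contraction. The only cosmetic difference is that the paper subtracts the constant and takes total expectation in a single combined step, whereas you take total expectation first and then subtract.
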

\begin{proof}
	Employing Assumption~\ref{assum:strnglycnvx} at iteration $k$, 
	substituting into \eqref{eq:lem_term}, and subtracting $F(x^*)$ from 
	both sides, we obtain
	\begin{align*}
	\E[S_k]{F(x_{k+1}) - F(x^*)} &\leq F(x_k) - F(x^*) 
	 - \mu a_1(F(x_k) - F(x^*)) + a_2.
	\end{align*}
	Subtracting the constant $\frac{a_2}{\mu a_1}$ from both sides and 	
	taking total expectation, we obtain
	\begin{align}
	\E{F(x_{k+1}) - F(x^*)} - \frac{a_2}{\mu a_1}
	&\leq 	(1 -\mu a_1 ) \E{F(x_k) - F(x^*)} +  a_2 - \frac{a_2}{\mu a_1}
	 \nonumber\\
	&= (1 -\mu a_1 ) \left(\E{F(x_k) - F(x^*)} -  \frac{a_2}{\mu a_1}\right). 	
	 \label{eq:c-lin-final}
	\end{align} 
	The lemma follows by applying \eqref{eq:c-lin-final} repeatedly through 
	iteration $k \in \Z_+$.	
\end{proof}	

We can now apply this general lemma to show results for sample sizes $|S_k|$ satisfying either the norm test (Theorem~\ref{thmlin-norm}) or the inner product quasi-Newton test (Theorem~\ref{thmlin-ipqn}). We note that in the remainder of this section we assume a constant step length, but this can  readily be generalized as established in Appendix~\ref{sec:generalsteps}.

\begin{thm}[Norm Test] \label{thmlin-norm} 
	For any $x_0$, let $\{x_k: k\in \Z_{++}\}$ be generated by iteration \eqref{eq:iter} with $|S_{k}|$ chosen by the (exact variance) finite-difference norm 
	test \eqref{eq:ideal_normFD}, and suppose that Assumptions 
	\ref{assum:sampling}, \ref{assum:lipschitz}, \ref{assum:eigs}, and 
	\ref{assum:strnglycnvx} hold. Then, if $\alpha_k=\alpha$ satisfies 
	\eqref{c-stepnorm}, we have that   
	\begin{equation} \label{eq:linear-norm}
	\E{F(x_k) - F(x^*)} 
	\leq \left(1 - \frac{\mu \Lambda_1\alpha}{2}\right)^k (F(x_0) - F(x^*))
	+ \frac{(\Lambda_1 + 2\Lambda_2)\LFg^2\nu^2 d}{8 \mu \Lambda_1}.
	\end{equation}	
\end{thm}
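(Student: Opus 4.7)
The plan is to combine Lemma~\ref{lem:norm}, the deterministic finite-difference bias bound \eqref{eq:fdbound}, and the general linear-recursion tool Lemma~\ref{lem:linear}. Since Theorem~\ref{thmlin-norm} fixes $\alpha_k = \alpha$ satisfying \eqref{c-stepnorm}, Lemma~\ref{lem:norm} applies at every iteration and gives a one-step descent inequality in conditional expectation, already in the right form to feed into Lemma~\ref{lem:linear}.

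First I would start from the conclusion \eqref{eq:c-lin-eq-norm} of Lemma~\ref{lem:norm}, specialized to $\alpha_k = \alpha$:
\begin{equation*}
\E[S_k]{F(x_{k+1})} \leq F(x_k) - \frac{\alpha \Lambda_1}{4}\|\nabla F(x_k)\|^2 + \frac{\alpha(\Lambda_1 + 2\Lambda_2)}{4}\|\nablafd F(x_k) - \nabla F(x_k)\|^2.
\end{equation*}
The deterministic bias bound \eqref{eq:fdbound} gives $\|\nablafd F(x_k) - \nabla F(x_k)\|^2 \leq \LFg^2 \nu^2 d / 4$, which turns the bias term into a constant. Substituting yields
\begin{equation*}
\E[S_k]{F(x_{k+1})} \leq F(x_k) - \frac{\alpha \Lambda_1}{4}\|\nabla F(x_k)\|^2 + \frac{\alpha(\Lambda_1 + 2\Lambda_2)\LFg^2 \nu^2 d}{16}.
\end{equation*}
This matches the hypothesis \eqref{eq:lem_term} of Lemma~\ref{lem:linear} with $a_1 = \alpha\Lambda_1/2$ and $a_2 = \alpha(\Lambda_1 + 2\Lambda_2)\LFg^2\nu^2 d / 16$.

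Next I would invoke Lemma~\ref{lem:linear} (which requires Assumption~\ref{assum:strnglycnvx}, assumed in the theorem). This produces
\begin{equation*}
\E{F(x_k) - F(x^*)} \leq \left(1 - \mu a_1\right)^k \left(F(x_0) - F(x^*) - \frac{a_2}{\mu a_1}\right) + \frac{a_2}{\mu a_1}.
\end{equation*}
A direct computation gives $\mu a_1 = \mu \alpha \Lambda_1 / 2$ and
\begin{equation*}
\frac{a_2}{\mu a_1} = \frac{\alpha(\Lambda_1 + 2\Lambda_2)\LFg^2\nu^2 d / 16}{\mu \alpha \Lambda_1 / 2} = \frac{(\Lambda_1 + 2\Lambda_2)\LFg^2\nu^2 d}{8\mu \Lambda_1}.
\end{equation*}
Finally, I would drop the negative $-a_2/(\mu a_1)$ contribution inside the parenthesis (which is valid since $(1-\mu a_1)^k \geq 0$, and $\alpha$ is small enough by \eqref{c-stepnorm} that $\mu a_1 \in (0,1)$) to obtain the stated bound \eqref{eq:linear-norm}.

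There is no substantive obstacle: all the technical content is packaged in Lemma~\ref{lem:norm} (the per-iteration descent accounting for both sampling and finite-difference bias) and Lemma~\ref{lem:linear} (the geometric-series unrolling under strong convexity). The only care needed is to verify that the step-size condition \eqref{c-stepnorm} is indeed what Lemma~\ref{lem:norm} required, and to note that the asymptotic neighborhood size $\frac{(\Lambda_1 + 2\Lambda_2)\LFg^2\nu^2 d}{8\mu\Lambda_1}$ is independent of $\alpha$ because the extra factor of $\alpha$ in $a_2$ cancels the $\alpha$ appearing in $a_1$, which is the natural phenomenon driving convergence only to a neighborhood whose radius is governed entirely by the finite-difference parameter $\nu$.
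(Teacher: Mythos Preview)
Your proposal is correct and follows essentially the same approach as the paper: apply Lemma~\ref{lem:norm}, substitute the bias bound \eqref{eq:fdbound}, and feed the resulting one-step inequality into Lemma~\ref{lem:linear} with $a_1=\alpha\Lambda_1/2$ and $a_2=\alpha(\Lambda_1+2\Lambda_2)\LFg^2\nu^2 d/16$. You merely spell out the final step of dropping the negative $-a_2/(\mu a_1)$ term, which the paper leaves implicit.
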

\begin{proof}
Applying Lemma~\ref{lem:norm} and substituting \eqref{eq:fdbound} into \eqref{eq:c-lin-eq-norm}, we obtain  
\begin{align} \label{c-recursion-norm}
\E[S_k]{F(x_{k+1})} & \leq F(x_k)  - \frac{\alpha\Lambda_1}{4} \|\nabla F(x_k)\|^2  +   \frac{\alpha(\Lambda_1 + 2 \Lambda_2)\LFg^2\nu^2 d}{16}.
\end{align} 
Applying Lemma \ref{lem:linear} with constants $a_1 = \frac{\alpha\Lambda_1}{2}$ and $a_2= \frac{\alpha(\Lambda_1 + 2 \Lambda_2)\LFg^2\nu^2 d}{16}$ yields \eqref{eq:linear-norm}. 
\end{proof}	

\begin{thm}[Inner Product Quasi-Newton Test] \label{thmlin-ipqn} 
	For any $x_0$, let $\{x_k: k\in \Z_{++}\}$ be generated by iteration \eqref{eq:iter} with $|S_{k}|$ chosen by the (exact variance) finite-difference inner product quasi-Newton test \eqref{eq:ideal_ipqnFD}, and suppose that the Assumptions \ref{assum:sampling}, \ref{assum:lipschitz}, \ref{assum:eigs}, \ref{assum:orth}, and \ref{assum:strnglycnvx} hold. Then, if $\alpha_k=\alpha$ satisfies \eqref{eq:c-stepform}
	we have that   
	\begin{equation*} %
	\E{F(x_k) - F(x^*)} \leq (1 -\mu \Lambda_1\alpha)^k (F(x_0) - F(x^*)) + \frac{ \Lambda_2\LFg^2\nu^2 d}{8\mu \Lambda_1}.
	\end{equation*}
\end{thm}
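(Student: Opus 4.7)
The plan is to follow precisely the same template used in the proof of Theorem~\ref{thmlin-norm}, but with Lemma~\ref{lem:ipqn} playing the role that Lemma~\ref{lem:norm} played there. All the hypotheses of Lemma~\ref{lem:ipqn} (Assumptions~\ref{assum:sampling}, \ref{assum:lipschitz}, \ref{assum:eigs}, \ref{assum:orth}) are among the assumptions of the theorem, and the step-length restriction \eqref{eq:c-stepform} is assumed, so the lemma applies at every iteration and yields the per-step descent bound
\begin{equation*}
\E[S_k]{F(x_{k+1})} \leq F(x_k) - \frac{\alpha\Lambda_1}{2}\|\nabla F(x_k)\|^2 + \frac{\alpha\Lambda_2}{2}\left\|\nablafd F(x_k) - \nabla F(x_k)\right\|^2.
\end{equation*}

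Next I would control the finite-difference bias term deterministically via \eqref{eq:fdbound}, which gives $\|\nablafd F(x_k) - \nabla F(x_k)\|^2 \leq \LFg^2 \nu^2 d / 4$. Substituting yields
\begin{equation*}
\E[S_k]{F(x_{k+1})} \leq F(x_k) - \frac{\alpha\Lambda_1}{2}\|\nabla F(x_k)\|^2 + \frac{\alpha\Lambda_2\LFg^2\nu^2 d}{8},
\end{equation*}
which is exactly the form \eqref{eq:lem_term} required by Lemma~\ref{lem:linear}, with $a_1 = \alpha\Lambda_1$ and $a_2 = \alpha\Lambda_2\LFg^2\nu^2 d/8$. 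Invoking the lemma produces
\begin{equation*}
\E{F(x_k) - F(x^*)} \leq (1 - \mu\Lambda_1\alpha)^k\left(F(x_0) - F(x^*) - \frac{\Lambda_2\LFg^2\nu^2 d}{8\mu\Lambda_1}\right) + \frac{\Lambda_2\LFg^2\nu^2 d}{8\mu\Lambda_1},
\end{equation*}
and upper-bounding the parenthetical by $F(x_0) - F(x^*)$ (valid since $-a_2/(\mu a_1) \leq 0$) gives the stated inequality.

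There is no real obstacle: the heavy work is already packaged in Lemma~\ref{lem:ipqn}, where the orthogonality assumption and the variance bound \eqref{eq:c-boundvariance} were used to convert the IPQN sample-size test into a descent inequality. The only sanity check worth recording is that the contraction factor in this theorem is $(1-\mu\Lambda_1\alpha)^k$ rather than the $(1-\mu\Lambda_1\alpha/2)^k$ appearing in Theorem~\ref{thmlin-norm}: this is consistent because the coefficient of $\|\nabla F(x_k)\|^2$ in Lemma~\ref{lem:ipqn} is $\alpha\Lambda_1/2$, twice the $\alpha\Lambda_1/4$ produced by Lemma~\ref{lem:norm}, so the linear-rate constant $\mu a_1$ doubles accordingly. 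Likewise the noise floor $a_2/(\mu a_1)$ simplifies to $\Lambda_2 \LFg^2 \nu^2 d/(8\mu \Lambda_1)$, matching the statement.
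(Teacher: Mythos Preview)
Your proposal is correct and follows essentially the same approach as the paper: apply Lemma~\ref{lem:ipqn}, bound the bias term via \eqref{eq:fdbound} to obtain inequality \eqref{c-recursion}, and then invoke Lemma~\ref{lem:linear} with $a_1=\alpha\Lambda_1$ and $a_2=\alpha\Lambda_2\LFg^2\nu^2 d/8$. The only cosmetic difference is that you spell out the final upper-bounding step (dropping the $-a_2/(\mu a_1)$ term), which the paper leaves implicit.
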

\begin{proof} 
Applying Lemma~\ref{lem:ipqn} and 
	substituting \eqref{eq:fdbound} into \eqref{eq:c-lin-ineq}, we obtain  
	\begin{align} \label{c-recursion}
	\E[S_k]{F(x_{k+1})} & \leq F(x_k)  - \frac{\alpha \Lambda_1}{2} \|\nabla F(x_k)\|^2  + \frac{\alpha \Lambda_2\LFg^2\nu^2 d}{8} .
	\end{align} 
	Applying Lemma \ref{lem:linear} with $a_1 = \alpha\Lambda_1$ and $a_2= \frac{\alpha \Lambda_2\LFg^2\nu^2 d}{8}$ completes the proof. 
\end{proof}

\subsubsection{Nonconvex Functions}
We now consider the case when $F$ is bounded below but not necessarily convex. In this setting, we replace Assumption~\ref{assum:strnglycnvx} and Lemma~\ref{lem:linear} as follows.
\begin{assum}\label{assum:nonconvx}
	There exists a constant $F_{\min}$ with
		$-\infty < F_{\min} \leq F(x) \quad \forall x \in \R^d$.
\end{assum}

\begin{lem} \label{lem:sublinear}
	Suppose Assumption \ref{assum:nonconvx} is satisfied. For any $x_0$, let $\{x_k: k\in \Z_{++}\}$ be generated by iteration \eqref{eq:iter} with $|S_{k}|$ chosen such that inequality \eqref{eq:lem_term} is satisfied with some constants $a_1, a_2 > 0$.
	Then, for any $T\in\Z_{++}$, we have that 
	\begin{align*}
	\min_{0\leq k \leq T-1} \E{\|\nabla F(x_k)\|^2} &\leq \frac{2}{ 
		Ta_1}  (F(x_0) - F_{\min}) + \frac{2a_2}{a_1}.
	\end{align*}
\end{lem}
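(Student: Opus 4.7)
The plan is the standard telescoping argument for nonconvex stochastic gradient-type methods, adapted to the per-iteration descent inequality \eqref{eq:lem_term}. First I would take total expectation of \eqref{eq:lem_term} (using the tower property to fold the conditional expectation on $S_k$ into the overall expectation over the history $x_1,\ldots,x_k$), obtaining
\[
\E{F(x_{k+1})} \leq \E{F(x_k)} - \frac{a_1}{2}\E{\|\nabla F(x_k)\|^2} + a_2.
\]

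Next I would rearrange to isolate the gradient term, $\frac{a_1}{2}\E{\|\nabla F(x_k)\|^2} \leq \E{F(x_k)} - \E{F(x_{k+1})} + a_2$, and then sum this bound over $k=0,1,\ldots,T-1$. The right-hand side telescopes to $F(x_0) - \E{F(x_T)} + T a_2$, and Assumption~\ref{assum:nonconvx} gives $\E{F(x_T)} \geq F_{\min}$, so
\[
\frac{a_1}{2}\sum_{k=0}^{T-1}\E{\|\nabla F(x_k)\|^2} \leq F(x_0) - F_{\min} + T a_2.
\]

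Finally I would divide both sides by $\tfrac{T a_1}{2}$ and use that the minimum over $k\in\{0,\ldots,T-1\}$ is at most the average, yielding the claimed bound. No obstacle is anticipated here: the descent inequality \eqref{eq:lem_term} is exactly the hypothesis, the telescoping is automatic, and the lower bound on $F$ is supplied by Assumption~\ref{assum:nonconvx}. The only subtlety worth flagging is the tower-property step that converts conditional expectations over $S_k$ into total expectation along the random trajectory $\{x_k\}$; this is routine under the i.i.d.\ sampling setup of Assumption~\ref{assum:sampling}.
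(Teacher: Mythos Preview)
Your proposal is correct and mirrors the paper's proof essentially step for step: take total expectation of \eqref{eq:lem_term}, rearrange to isolate $\E{\|\nabla F(x_k)\|^2}$, telescope over $k=0,\ldots,T-1$, invoke the lower bound $F_{\min}$ from Assumption~\ref{assum:nonconvx}, and finish with $\min \leq$ average. The only difference is cosmetic ordering (the paper divides by $a_1/2$ before summing, you sum first), and your explicit mention of the tower property is a small clarification the paper leaves implicit.
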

\begin{proof}
	Taking total expectation in \eqref{eq:lem_term}, we obtain
	\begin{align*}
	\E{F(x_{k+1})} &\leq \E{F(x_k)}  - \frac{a_1}{2}\E{\|\nabla F(x_k)\|^2} + a_2,
	\end{align*}	
	and hence
	\[
	\E{\|\nabla F(x_k)\|^2} \leq \frac{2}{a_1} \E{F(x_k) - F(x_{k+1})} + \frac{2a_2}{a_1}.
	\]
	Summing both sides of this inequality from $k= 0$ to $T-1$, and since
	$F$ is bounded below by  $F_{\min}$, we get
	\[
	\sum_{k=0}^{T-1}\E{\|\nabla F(x_k)\|^2} \leq \frac{2}{a_1} \E{F(x_0) - F(x_{\mbox{\sc t}})} + T\frac{2a_2}{a_1}
	\leq \frac{2}{a_1}  \left(F(x_0) - F_{\min} + Ta_2\right).
	\]
	Therefore, we can conclude that
	\begin{align*}
	\min_{0\leq k \leq T-1} \E{\|\nabla F(x_k)\|^2} \leq & \frac{1}{T}\sum_{k=0}^{T}\E{\|\nabla F(x_k)\|^2} 
	\leq \frac{2}{ Ta_1}  (F(x_0) - F_{\min}) + \frac{2a_2}{a_1}.
	\end{align*}
\end{proof}	

We can now apply this general lemma to show results for sample sizes $|S_k|$ satisfying either the norm test (Theorem~\ref{thmsublin-norm}) or the inner product quasi-Newton test (Theorem~\ref{thmsublin-ipqn}). 

\begin{thm}[Norm Test] \label{thmsublin-norm} 
	For any $x_0$, let $\{x_k: k\in \Z_{++}\}$ be generated by iteration \eqref{eq:iter} with $|S_{k}|$ chosen by the (exact variance) finite-difference norm test \eqref{eq:ideal_normFD}, and suppose that Assumptions \ref{assum:sampling}, \ref{assum:lipschitz},  \ref{assum:eigs}, and \ref{assum:nonconvx} hold. Then, if $\alpha_k=\alpha$ satisfies \eqref{c-stepnorm}, for any $T\in \Z_{++}$ we have that  
	\begin{align*}
	\min_{0\leq k \leq T-1} \E{\|\nabla F(x_k)\|^2} &\leq \frac{4}{\alpha 
		T\Lambda_1}  (F(x_0) - F_{\min}) + \frac{ (\Lambda_1 + 2 \Lambda_2)\LFg^2\nu^2 d}{4\Lambda_1} .
	\end{align*}
\end{thm}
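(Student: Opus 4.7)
The plan is to mirror the structure of the strongly convex case (Theorem~\ref{thmlin-norm}), replacing the use of Lemma~\ref{lem:linear} with its nonconvex analogue, Lemma~\ref{lem:sublinear}. All the heavy lifting has already been done: Lemma~\ref{lem:norm} gives a one-step descent inequality on $\E[S_k]{F(x_{k+1})}$ whenever $\alpha_k$ satisfies \eqref{c-stepnorm}, and the deterministic finite-difference bias bound \eqref{eq:fdbound} converts the $\|\nablafd F(x_k) - \nabla F(x_k)\|^2$ term into a problem-data constant.

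Concretely, I would first invoke Lemma~\ref{lem:norm} with $\alpha_k = \alpha$ to obtain
\[
\E[S_k]{F(x_{k+1})} \leq F(x_k) - \frac{\alpha\Lambda_1}{4}\|\nabla F(x_k)\|^2 + \frac{\alpha(\Lambda_1+2\Lambda_2)}{4}\|\nablafd F(x_k) - \nabla F(x_k)\|^2,
\]
which requires only the assumptions being hypothesized (note that Assumption~\ref{assum:strnglycnvx} was not used in Lemma~\ref{lem:norm}). Then I would substitute \eqref{eq:fdbound} to get exactly the recursion \eqref{c-recursion-norm} already derived in the proof of Theorem~\ref{thmlin-norm}, namely
\[
\E[S_k]{F(x_{k+1})} \leq F(x_k) - \frac{\alpha\Lambda_1}{4}\|\nabla F(x_k)\|^2 + \frac{\alpha(\Lambda_1+2\Lambda_2)\LFg^2\nu^2 d}{16}.
\]

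This is precisely inequality \eqref{eq:lem_term} with constants $a_1 = \alpha\Lambda_1/2$ and $a_2 = \alpha(\Lambda_1+2\Lambda_2)\LFg^2\nu^2 d/16$. Applying Lemma~\ref{lem:sublinear} then yields
\[
\min_{0\leq k \leq T-1}\E{\|\nabla F(x_k)\|^2} \leq \frac{2}{Ta_1}(F(x_0) - F_{\min}) + \frac{2a_2}{a_1},
\]
and a quick simplification of the two ratios gives $4/(\alpha T \Lambda_1)\,(F(x_0)-F_{\min})$ and $(\Lambda_1+2\Lambda_2)\LFg^2\nu^2 d/(4\Lambda_1)$, matching the stated bound.

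There is essentially no obstacle here: the nonconvex theorem is a direct bookkeeping consequence of the same one-step descent lemma that powered the strongly convex case, combined with the telescoping argument in Lemma~\ref{lem:sublinear}. The only minor care is confirming that the constants $a_1,a_2$ are identified correctly so that the $2/a_1$ and $2a_2/a_1$ factors in Lemma~\ref{lem:sublinear} reproduce the coefficients $4/(\alpha T \Lambda_1)$ and $(\Lambda_1+2\Lambda_2)\LFg^2\nu^2 d/(4\Lambda_1)$ appearing in the theorem.
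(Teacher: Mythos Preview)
Your proposal is correct and follows essentially the same approach as the paper: invoke Lemma~\ref{lem:norm} (together with \eqref{eq:fdbound}) to obtain the recursion \eqref{c-recursion-norm}, then apply Lemma~\ref{lem:sublinear} with $a_1 = \alpha\Lambda_1/2$ and $a_2 = \alpha(\Lambda_1+2\Lambda_2)\LFg^2\nu^2 d/16$.
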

\begin{proof}
	Applying Lemma~\ref{lem:norm}, from inequality \eqref{c-recursion-norm} we have that
	\[
	\E[S_k]{F(x_{k+1})} \leq F(x_k)  - \frac{\alpha\Lambda_1}{4} \|\nabla F(x_k)\|^2  +   \frac{\alpha(\Lambda_1 + 2 \Lambda_2)\LFg^2\nu^2 d}{16}.
	\]
	Applying Lemma~\ref{lem:sublinear} with constants $a_1 = \frac{\alpha\Lambda_1}{2}$ and $a_2= \frac{\alpha(\Lambda_1 + 2 \Lambda_2)\LFg^2\nu^2 d}{16}$	completes the proof. 
\end{proof}	
\begin{thm}[Inner Product Quasi-Newton Test] \label{thmsublin-ipqn} 
	For any $x_0$, let $\{x_k: k\in \Z_{++}\}$ be generated by iteration \eqref{eq:iter} with $|S_{k}|$ chosen by the (exact variance) finite-difference inner product quasi-Newton test \eqref{eq:ideal_ipqnFD}, and suppose that Assumptions \ref{assum:sampling}, \ref{assum:lipschitz}, \ref{assum:eigs}, \ref{assum:orth}, and \ref{assum:nonconvx} hold. Then, if $\alpha_k=\alpha$ satisfies \eqref{eq:c-stepform},
	for any $T\in \Z_{++}$, we have that 
	\begin{align*}
	\min_{0\leq k \leq T-1} \E{\|\nabla F(x_k)\|^2} &\leq \frac{2}{\alpha 
		T\Lambda_1}  (F(x_0) - F_{\min}) + \frac{\Lambda_2\LFg^2\nu^2 d}{4\Lambda_1}.
	\end{align*}
	
\end{thm}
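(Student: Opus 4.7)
The plan is to mirror the structure of the strongly convex analogue (Theorem~\ref{thmlin-ipqn}), but with Lemma~\ref{lem:sublinear} playing the role that Lemma~\ref{lem:linear} did there. The ingredients are already in place: Lemma~\ref{lem:ipqn} supplies a per-iteration descent inequality that holds for any step length $\alpha$ satisfying \eqref{eq:c-stepform}, and the deterministic bias bound \eqref{eq:fdbound} controls the finite-difference error term that appears on its right-hand side.

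First, I would invoke Lemma~\ref{lem:ipqn} to obtain
\begin{equation*}
\E[S_k]{F(x_{k+1})} \leq F(x_k) - \frac{\alpha\Lambda_1}{2}\|\nabla F(x_k)\|^2 + \frac{\alpha\Lambda_2}{2}\left\|\nablafd F(x_k) - \nabla F(x_k)\right\|^2,
\end{equation*}
which is legitimate under Assumptions~\ref{assum:sampling}, \ref{assum:lipschitz}, \ref{assum:eigs}, and \ref{assum:orth}, all of which are in force. Then I would substitute the deterministic finite-difference bias bound \eqref{eq:fdbound}, namely $\|\nablafd F(x_k) - \nabla F(x_k)\|^2 \leq \LFg^2\nu^2 d/4$, to arrive at exactly the recursion \eqref{c-recursion} used in the strongly convex proof:
\begin{equation*}
\E[S_k]{F(x_{k+1})} \leq F(x_k) - \frac{\alpha\Lambda_1}{2}\|\nabla F(x_k)\|^2 + \frac{\alpha\Lambda_2\LFg^2\nu^2 d}{8}.
\end{equation*}

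Next, I would apply Lemma~\ref{lem:sublinear} (whose hypothesis Assumption~\ref{assum:nonconvx} is given) with the identifications $a_1 = \alpha\Lambda_1$ and $a_2 = \alpha\Lambda_2\LFg^2\nu^2 d/8$. The lemma then immediately yields
\begin{equation*}
\min_{0\leq k \leq T-1} \E{\|\nabla F(x_k)\|^2} \leq \frac{2}{\alpha T\Lambda_1}(F(x_0) - F_{\min}) + \frac{2a_2}{a_1} = \frac{2}{\alpha T\Lambda_1}(F(x_0) - F_{\min}) + \frac{\Lambda_2\LFg^2\nu^2 d}{4\Lambda_1},
\end{equation*}
which is the desired conclusion.

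There is no real obstacle here; the proof is essentially a bookkeeping exercise assembling three prior results. The only thing to be careful about is verifying that the per-iteration bound produced by Lemma~\ref{lem:ipqn} is of the exact form \eqref{eq:lem_term} demanded by Lemma~\ref{lem:sublinear} (additive constant plus a $-\tfrac{a_1}{2}\|\nabla F(x_k)\|^2$ term), and that the constants $a_1, a_2$ are correctly identified so that the ratio $2a_2/a_1$ reproduces $\Lambda_2\LFg^2\nu^2 d/(4\Lambda_1)$ as claimed. As in the strongly convex case, the neighborhood-of-stationarity radius is governed solely by the finite-difference bias, not by the sampling variance, which is the structural payoff of adopting the test \eqref{eq:ideal_ipqnFD} rather than \eqref{eq:ideal_norm}.
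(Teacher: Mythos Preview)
Your proposal is correct and follows essentially the same approach as the paper: invoke Lemma~\ref{lem:ipqn}, substitute the bias bound \eqref{eq:fdbound} to obtain \eqref{c-recursion}, and then apply Lemma~\ref{lem:sublinear} with $a_1 = \alpha\Lambda_1$ and $a_2 = \alpha\Lambda_2\LFg^2\nu^2 d/8$. The paper's proof is exactly this two-line argument.
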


\begin{proof}
	Applying Lemma~\ref{lem:ipqn}, from inequality \eqref{c-recursion} we have that
	\[
	\E[S_k]{F(x_{k+1})} \leq F(x_k)  - \frac{\alpha \Lambda_1}{2} \|\nabla F(x_k)\|^2  + \frac{\alpha \Lambda_2\LFg^2\nu^2 d}{8}.
	\]
	Applying Lemma \ref{lem:sublinear} with $a_1 = \alpha\Lambda_1$ and $a_2= \frac{\alpha \Lambda_2\LFg^2\nu^2 d}{8}$ completes the proof. 
\end{proof}	

We conclude this section by noting that the conditions in  Theorems~\ref{thmlin-norm},~\ref{thmlin-ipqn},~\ref{thmsublin-norm},~and~\ref{thmsublin-ipqn} can be met and are well defined.
In particular, we recall that 
Assumption~\ref{assum:varbd} on the variance of the stochastic functions additionally ensures that a sample $S_k$ can be selected to satisfy
\eqref{eq:ideal_normFD}
and 
\eqref{eq:ideal_ipqnFD}.

\section{Nonsmooth Subsampled Functions}
\label{sec:nonsmooth}
In this section we consider the scenario where the subsampled functions are nonsmooth; that is, Assumption~\ref{assum:subgrad} is not satisfied. We note that the sample selection procedure and the convergence analysis are still valid in this case. Algorithm~\ref{alg:fd_L-BFGS} still works but requires some modifications tailored to this setting. 
 
\subsection{Finite-Difference Parameter Selection}
We choose the finite-difference parameter by minimizing an upper bound on the error in the gradient approximation. The subsampled gradients do not exist, however, and we need to consider a different gradient approximation error. Here, we consider the scaled gradient approximation error in terms of the true finite-difference gradient. That is, 
 \begin{align*}
r_k
\defeq &H_k\left(\nablafd F_{S_k}(x_k) - \nabla F(x_k)\right) \\ 
=&
H_k\left(\nablafd F_{S_k}(x_k) - \nablafd F(x_k)\right) + H_k\left(\nablafd F(x_k) - \nabla F(x_k)\right), 
 \end{align*}
 where we assume that $H_k$ satisfies Assumption~\ref{assum:eigs}.
 
If samples satisfy the norm test, we have
\begin{align*}
	\E[S_k]{\left\|H_k\left(\nablafd F_{S_k}(x_k) - \nablafd F(x_k)\right)\right\|} \leq \Lambda_2\theta\left\|\nablafd F(x_k)\right\|.
\end{align*}
If samples satisfy the inner product quasi-Newton test along with Assumption~\ref{assum:orth}, then from \eqref{eq:c-boundvariance} we have
\begin{align*}
\E[S_k]{\left\|H_k\left(\nablafd F_{S_k}(x_k) - \nablafd F(x_k)\right)\right\|} \leq \Lambda_2\sqrt{\theta^2 + \psi^2}\left\|\nablafd F(x_k)\right\|. 
\end{align*}
Therefore, in both these cases we have
\begin{align*}
\E[S_k]{\left\|H_k\left(\nablafd F_{S_k}(x_k) - \nablafd F(x_k)\right)\right\|} \leq \kappa\Lambda_2\|\nablafd F(x_k)\|,
\end{align*}
where $\kappa = \theta$ for the norm test and $\kappa = \sqrt{\theta^2 + \psi^2}$ for the inner product quasi-Newton test. Now, consider 
\begin{align}
	\E[S_k]{\left\|r_k\right\|} &\leq \kappa\Lambda_2\left\|\nablafd F(x_k)\right\| + \left\|H_k(\nablafd F(x_k) - \nabla F(x_k))\right\| \nonumber \\
	&\leq  \kappa\Lambda_2\left\|\nablafd F(x_k)\right\| + \Lambda_2\left\|\nablafd F(x_k) - \nabla F(x_k)\right\| \nonumber \\
	&\leq \kappa\Lambda_2\left\|\nabla F(x_k)\right\| + \Lambda_2(1 + \kappa)\left\|\nablafd F(x_k) - \nabla F(x_k)\right\| \nonumber \\
	&\leq \kappa\Lambda_2\left\|\nabla F(x_k)\right\| + \frac{\Lambda_2(1 + \kappa)\LFg\nu\sqrt{d}}{2}, \label{eq:fd_nserror}
\end{align}
where the third inequality is due to the fact that $\|a\| \leq \|a-b\| + \|b\|$ and the last inequality is due to \eqref{eq:fdbound}. We observe that the first term in the right-hand side of \eqref{eq:fd_nserror} is independent of the parameter $\nu$. As discussed in Section~\ref{sec:fdselection}, in any practical implementation one has to account for the numerical errors associated with numerical evaluations of the function values. Therefore, from \eqref{eq:implmnt_subgrad} and \eqref{eq:implmnt_error}, we have
\begin{align*}
\left\|H_k\left(\nablafd \hat{F}_{S_k}(x_k) - \nablafd F_{S_k}(x_k)\right)\right\| & \leq \frac{2\Lambda_2\epsm\sqrt{d}}{\nu}.
\end{align*}
Combining this with \eqref{eq:fd_nserror} and minimizing the resulting upper bound yields the optimal parameter as 
\begin{equation*}
\nu^* \defeq 2\sqrt{\frac{\epsm}{\LFg(1 + \kappa)}},
\end{equation*}
where $\kappa = \theta$ for the norm test and $\kappa = \sqrt{\theta^2 + \psi^2}$ for the inner product quasi-Newton test. We note that the only difference between the optimal parameters in the smooth and nonsmooth cases is the presence of $\kappa$ in the denominator and the use of the Lipschitz constant of the gradient of the expected function ($\LFg$) instead of the Lipschitz constant of the subsampled gradient ($\Lfg$).  

\subsection{Step-Length Selection}
In the smooth case we employed a stochastic line search to choose the step length $\alpha_k$ by using a sufficient decrease condition \eqref{eq:suff_decrease} based on the subsampled function. In the nonsmooth case, it is not guaranteed that such a step length always exists.  Intuitively, however, if the sample approximations are reasonably good, such a step length may exist since the expected function's gradient is Lipschitz continuous. Therefore, in the algorithm we can still employ the sufficient decrease condition with a safeguarding mechanism. That is, if the step length $\alpha_k$ falls below some threshold $\alpha_{\min} > 0$, then we ignore the sufficient decrease condition and choose $\alpha_k = \alpha_{\min}$. The initial trial step length \eqref{eq:stepinitial} is still valid here, and the reasoning behind this choice remains the same. 

As a result, we modify line 21 of Algorithm~\ref{alg:fd_L-BFGS} to break from the line search with $\alpha_k = \alpha_{\min}$ if $\alpha_k$ is attempted to be reduced below $\alpha_{\min}$. 

\subsection{Quasi-Newton Update}
In the smooth case we skip the update of quasi-Newton matrix whenever \eqref{eq:length} is not satisfied, to ensure that $\frac{y_k^Ty_k}{y_k^Ts_k}$ is bounded; doing so results in bounded eigenvalues. In the nonsmooth case condition \eqref{eq:length} does not guarantee that the $\frac{y_k^Ty_k}{y_k^Ts_k}$ is bounded. Instead, we impose the condition
\begin{equation}\label{eq:cuv_lipschitz}
\|y_k\| \leq M \|s_k\|.
\end{equation}
The condition \eqref{eq:cuv_lipschitz}, along with \eqref{eq:curv}, implies that
\begin{align*}
\frac{y_k^Ty_k}{y_k^Ts_k} \leq \frac{\|y_k\|^2}{\beta_1\|s_k\|^2} \leq \frac{M}{\beta_1},
\end{align*}
in which case Assumption \ref{assum:eigs} still holds. 

As a result, we modify line 25 of Algorithm~\ref{alg:fd_L-BFGS} to replace the 
condition $\|s_k\| > \beta_2$ with the condition \eqref{eq:cuv_lipschitz}.

\section{Numerical Experiments}
\label{sec:numerical}

We now examine empirical characteristics of our proposed algorithm in both smooth (Section~\ref{sec:smoothexp}) and nonsmooth (Section~\ref{sec:nonsmoothexp}) settings.

We implemented two variants, ``FD-Norm'' and ``FD-IPQN,'' of the proposed algorithm 
with the sample size $|S_k|$ update chosen based on the finite-difference norm test in \eqref{eq:sample_normFD} and the inner product quasi-Newton test in
\eqref{eq:sample_ipqnFD}, respectively. 
We used $\theta_0 = 0.9$, $|S_0| =2$, finite-difference parameter $\nu =10^{-8}$, L-BFGS memory parameter $m=10$, and line search parameters $c_1 = 10^{-4}$, $c_2=10^{-14}$, and $\tau = 0.5$. 
We used $\beta_1= 10^{-3}$ and did not use the condition with $\beta_2$ (effectively setting it to a smaller value than would ever been encountered). For the nonsmooth problems we used $\alpha_{\min}=10^{-8}$. 
None of these parameters have been tuned to the problems being considered. We chose $\gamma=0.99$ for smaller variance problems and $\gamma =0.9$ for larger variance problems.

We also implemented two 
stochastic methods of the form 
\begin{equation*}
x_{k+1} = x_k - \alpha_0 g_{k},
\end{equation*}  
where $g_k$ is an estimation of the gradient. The first method is based on a classical stochastic gradient algorithm where the gradients are estimated by using finite differences. This method is also referred as the Kiefer--Wolfowitz algorithm \cite{KieferWolfowitz}. We call the method here  the \emph{finite-difference stochastic gradient method}, ``FD-SG," and $g_k$ is chosen as $\nablafd F_{S_k}(x_k)$ defined in \eqref{eq: batch_FD}. The second method also estimates the stochastic gradient; however, instead of employing finite differences in all the coordinate directions, it estimates the gradients using a small number of random directions chosen within a unit sphere. We call this method the \emph{sphere smoothing stochastic gradient method}, ``SS-SG," and refer the reader to \cite{BCCS2021a} for further details. The gradient estimate at each iteration is given by
\begin{equation*}
g_k =  \frac{1}{|S_k|}\sum_{\zeta_i \in S_k} \frac{d}{T}\sum_{j =1}^{T} \frac{f\left(x + \nu u_j, \zeta_i\right) - f(x,\zeta_i)}{\nu}u_j,
\end{equation*}
where $\{u_j \in \R^d\}_{j=1}^{T}$ are i.i.d.\ random vectors following a uniform distribution on the unit sphere centered at $0$ of radius $1$ and $\nu$ is the standard difference parameter. We chose $T=5$ for all the problems. 

We report results for the best versions of FD-SG and SS-SG based on tuning the constant step length for each problem (i.e., by considering $\alpha_0 = 2^j$, for $j \in \{-20, -9, \ldots, 9, 10\}$). We chose $|S_k|=|S_0| =2$ for both these methods and again use the finite-difference parameter $\nu =10^{-8}$. For all the experiments we report the minimum, maximum, and mean results across $5$ different random runs.

We implemented all the algorithms and ran the experiments in MATLAB R2019a on a 64-bit machine (machine precision $\epsm = 10^{-16}$) with Intel Core i5@2.4 GHz and 8 GB of RAM.

\subsection{Smooth Problems}
\label{sec:smoothexp}
We conducted numerical experiments on stochastic nonlinear least squares problems based on a mapping $\phi:\R^d \rightarrow \R^p$ affected by
two forms of stochastic noise. Our functions affected by relative noise are of the form
\begin{equation*}
f_{\rm rel}(x,\zeta)\defeq \frac{1}{1 + \sigma^2} \sum_{j=1}^{p}\phi^2_j(x) \left(1 + \zeta_j\right)^2 ,
\end{equation*}
and our functions affected by absolute noise are of the form
\begin{equation*}
f_{\rm abs}(x,\zeta) \defeq \sum_{j=1}^{p}\left( \left(\phi_j(x) + \zeta_j\right)^2 - \sigma^2\right),
\end{equation*}
where $\sigma^2>0$ is a variance parameter
and
$\zeta \sim \mathcal{N}(0, \sigma^2 I_{p})$.  
We note that this form of noise results in both random functions satisfying
$\E[\zeta]{f(x,\zeta)} = \sum_{j=1}^{p}\phi^2_j(x)$. Furthermore, both functions are of unbounded support except when $f=f_{\rm rel}$ and 
$\sum_{j=1}^{p}\phi^2_j(x)=0$.
In both cases, the function $f(\cdot,\zeta)$ and the expected function $\E[\zeta]{f(\cdot,\zeta)}$ are twice continuously differentiable. 

We considered five different problems for $\phi$ from the CUTEr \cite{Gould:2003:CSC} collection of optimization problems and used two different $\sigma$ values $\{10^{-3}, 10^{-5}\}$. The details of these problems are given in Table~\ref{tb:data}. 
\begin{table}[htp]
	\centering
		\caption{Characteristics of the nonlinear least squares problems used in our experiments.\label{tb:data}}
	\begin{tabular}{|c|c|c|c|}
		\hline
		Function  & $p$ & $d$ \\ \hline
		Chebyquad & 45 & 30 \\
		Osborne & 65 & 11 \\
		Bdqrtic & 92 & 50 \\
		Cube & 30 & 20 \\
        Heart8ls & 8 & 8 \\ \hline
	\end{tabular}
\end{table} 

In all the experiments, we chose the initial starting point as $x_0 = 10 x_s$, where $x_s$ is the standard starting point for these problems given in \cite{JJMSMW09}. We computed the minimum function values $F^*$ by running the L-BFGS method on the noise-free (i.e., $\sigma=0$) problems until $\|\nabla F(x)\|_{\infty} \leq 10^{-10}$ or the maximum number of $2,000$ function evaluations is reached.

Figure~\ref{fig:Expt1_15} reports results on the chebyquad function with abs-normal noise and rel-normal noise for $\sigma$ values of $10^{-3}$ and $10^{-5}$. The vertical axis measures the error in the function $F(x) - F^*$, and the horizontal axis measures in terms of the total (i.e., including those in the gradient estimates, curvature pair updates, and line search) number of evaluations of $f(x,\zeta)$. The results show that both variants of our finite-difference quasi-Newton method are more efficient than the tuned finite-difference stochastic gradient method and the tuned sphere-smoothing stochastic gradient method. Furthermore, on three of the four problems, the stochastic gradient methods converged to a significantly larger neighborhood of the solution as compared with the quasi-Newton variants in the high-variance problems ($\sigma=10^{-3}$). 

\begin{figure}[!tb]
	\begin{centering}
		\includegraphics[width=0.495\linewidth]{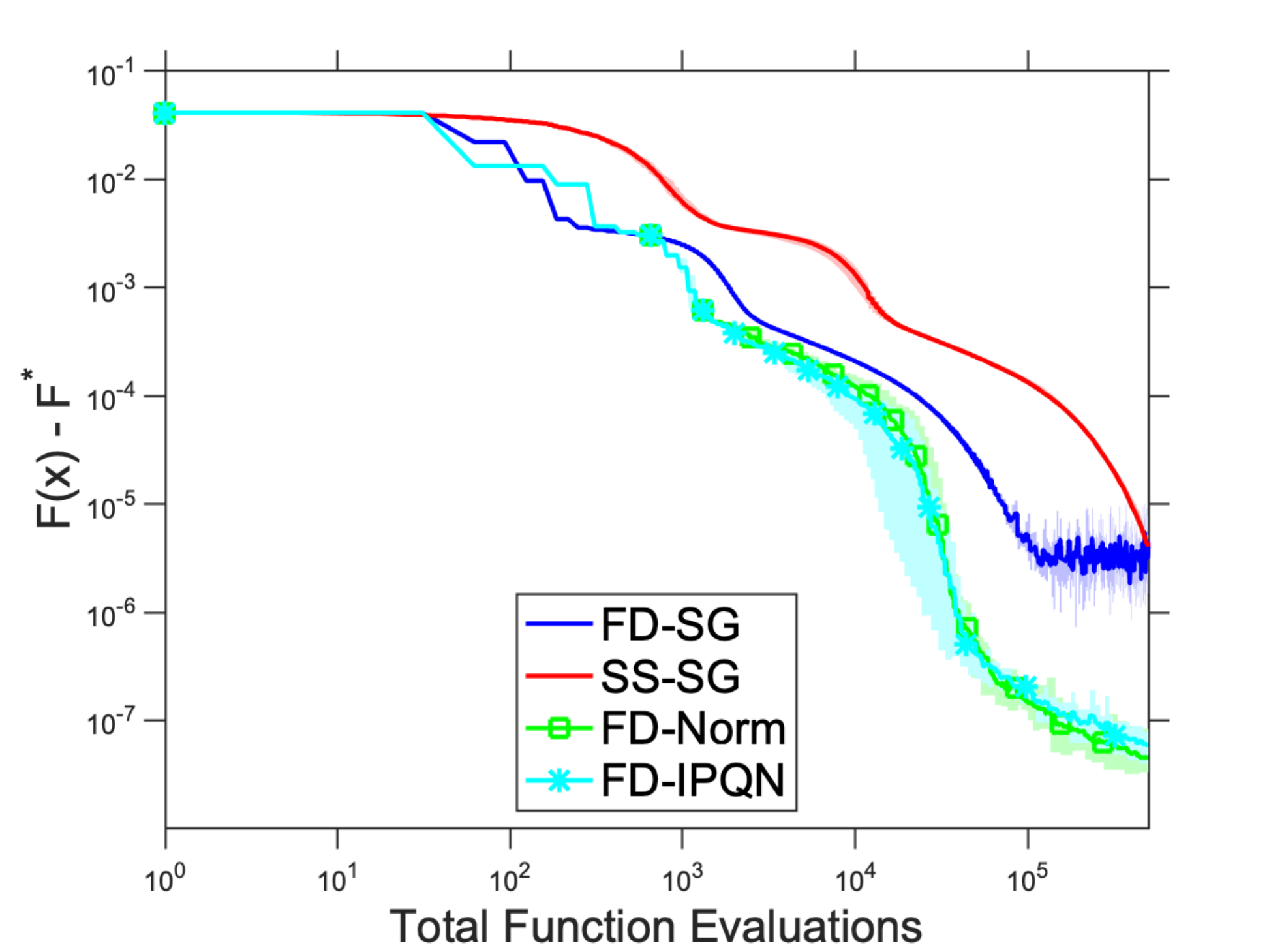} \hfill 
		\includegraphics[width=0.495\linewidth]{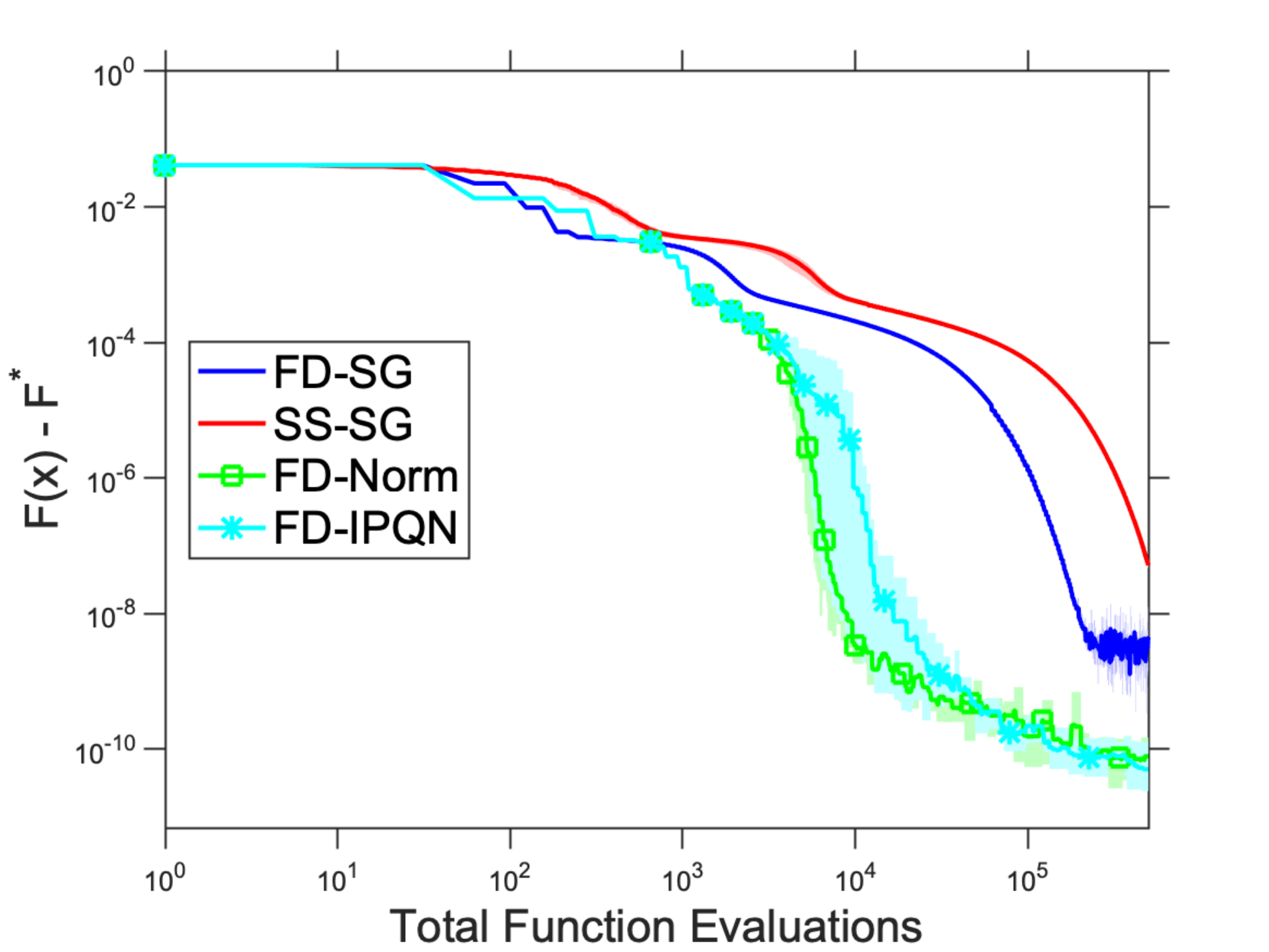}\\
		\includegraphics[width=0.495\linewidth]{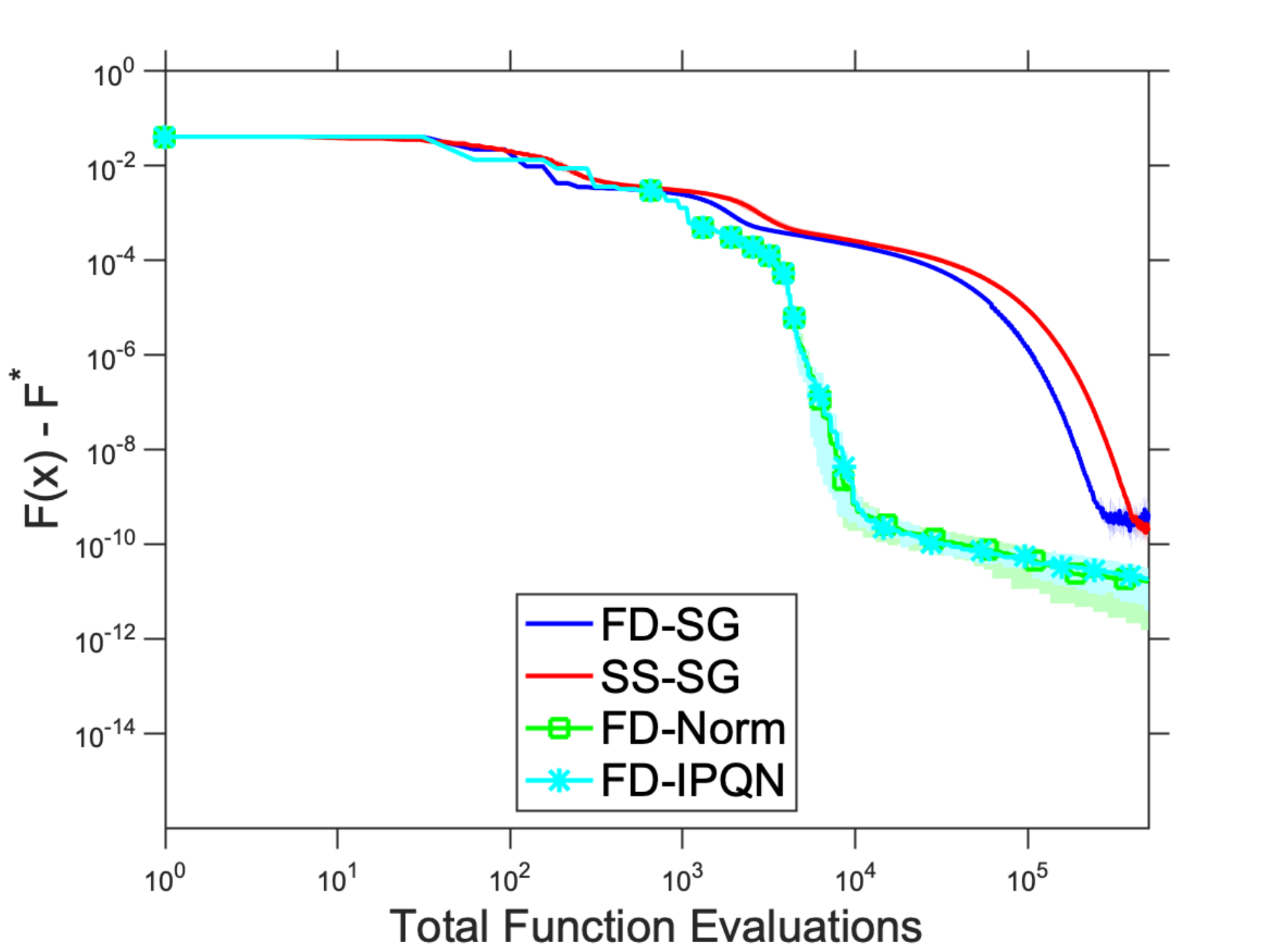} \hfill 
		\includegraphics[width=0.495\linewidth]{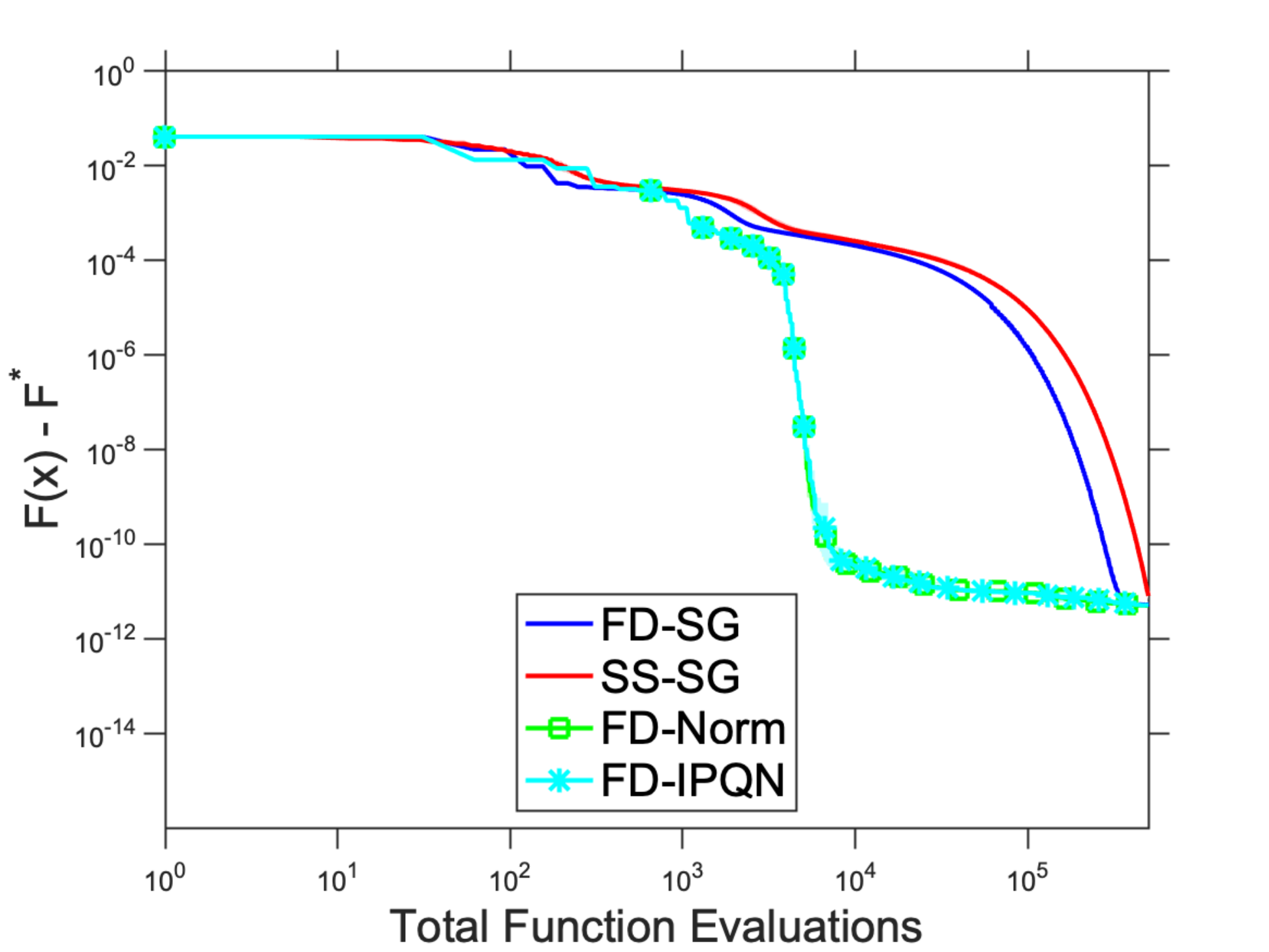}
		\par\end{centering}	
	\caption{Chebyquad function results based on the total number of $f$ evaluations: 
		Using $f_{\rm abs}$ (left column) and $f_{\rm rel}$ (right column) with $\sigma=10^{-3}$ (top row) and $\sigma=10^{-5}$ (bottom row). For each solver, the mean across five random trials is shown; the shaded region indicates the range of performance across these five trials. 
		\label{fig:Expt1_15}}
\end{figure}

Of the two stochastic gradient methods, we observe that FD-SG is more efficient than SS-SG. We suspect that this performance  might be attributed to the fact that these are low-dimensional problems and the computational savings obtained by sampling only few random directions (recall from
Table~\ref{tb:data} that $\frac{d}{T}$ ranges from 8/5 to 10) for estimating the stochastic gradient do not overweigh the benefits associated with estimating the stochastic gradient accurately. 

We also observe that both the variants of our algorithm have similar performance in terms of total function evaluations. This behavior is explained by the fact that both these variants increase the sample sizes in a similar manner for this problem, as seen in Figure~\ref{fig:Expt1_15_batch}.

\begin{figure}[!tb]
	\begin{centering}
		\includegraphics[width=0.495\linewidth]{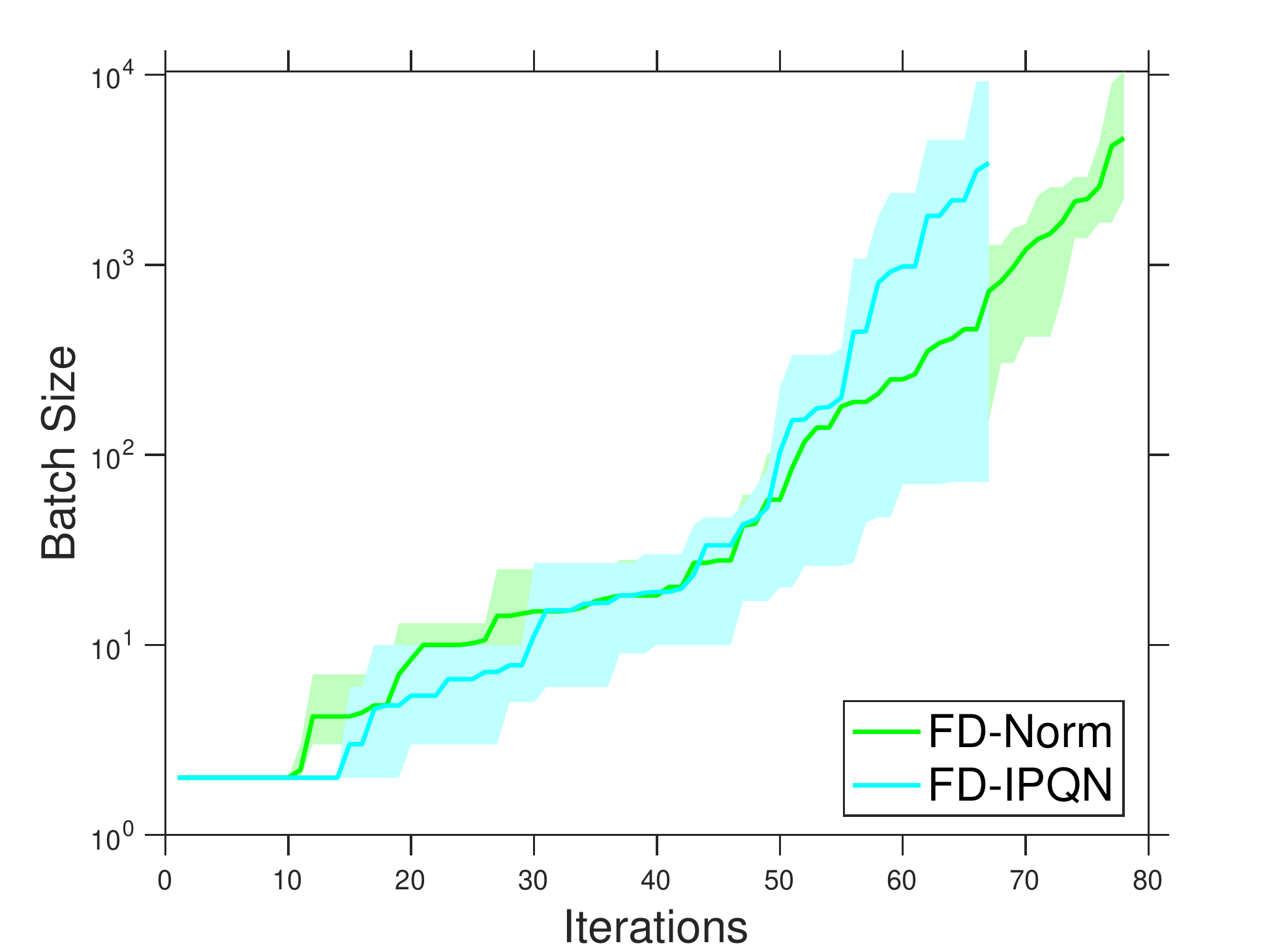} \hfill 
		\includegraphics[width=0.495\linewidth]{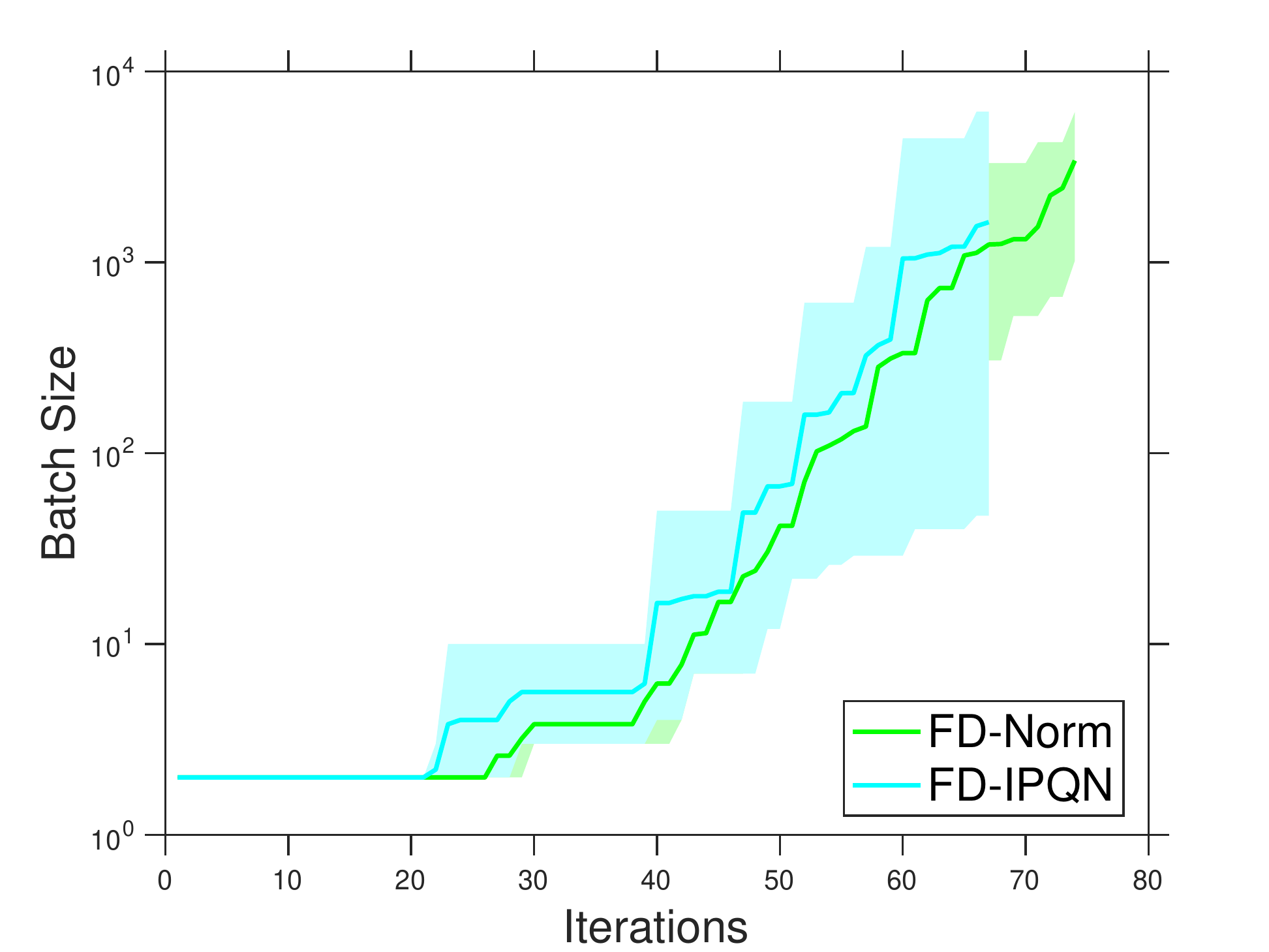}\\
		\includegraphics[width=0.495\linewidth]{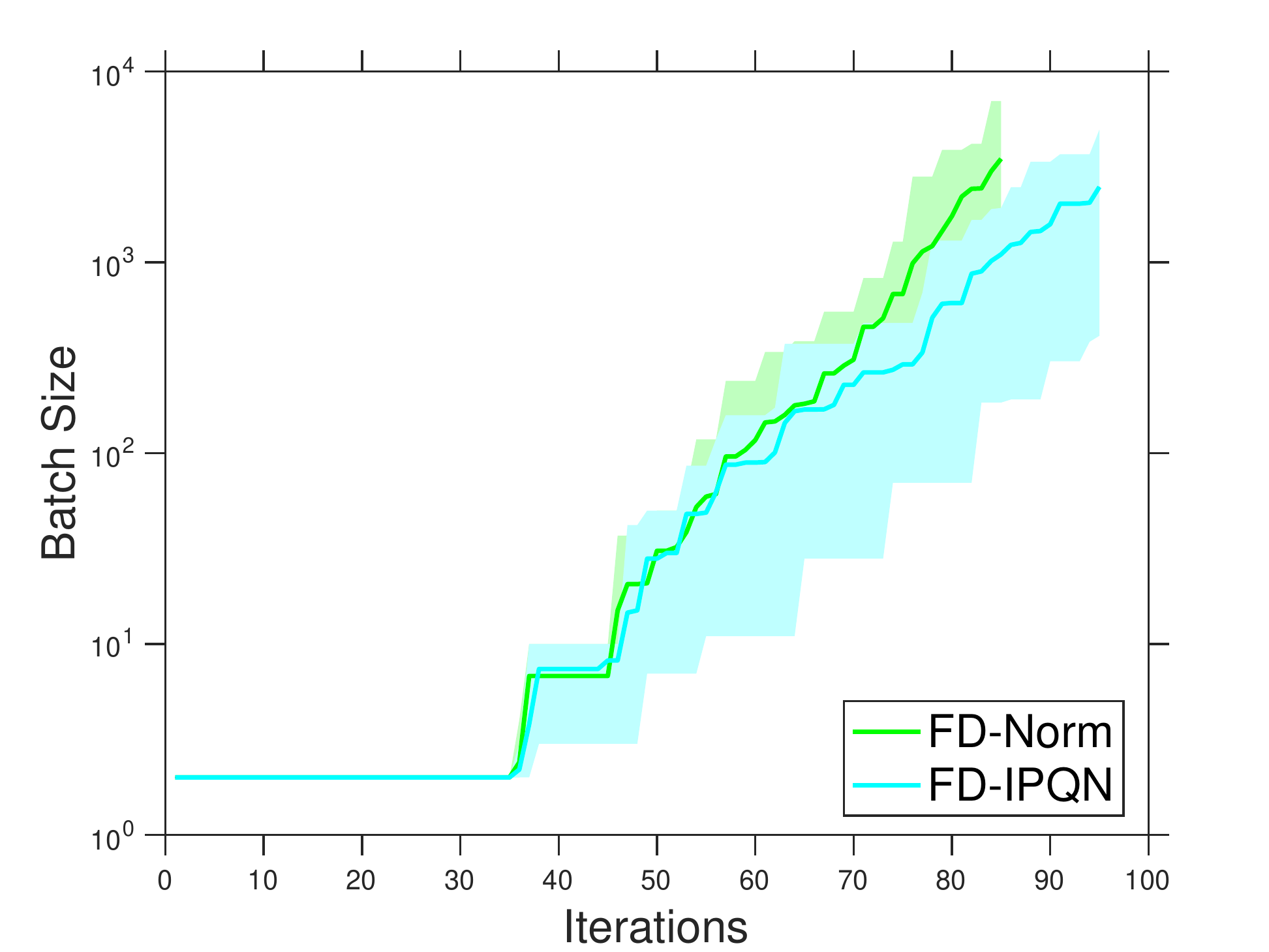} \hfill 
		\includegraphics[width=0.495\linewidth]{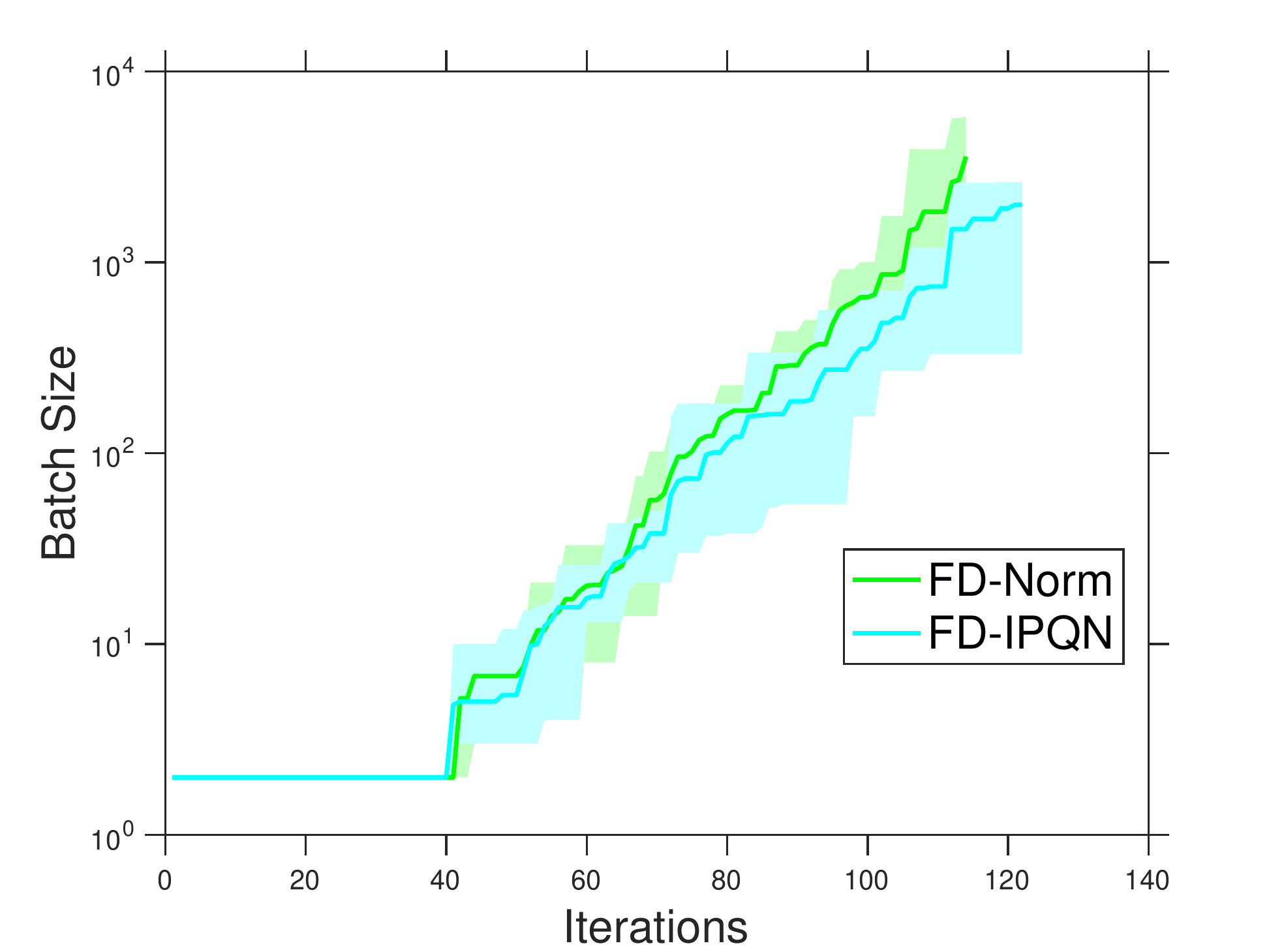}
		\par\end{centering}	
	\caption{Chebyquad function results showing how the batch size grows over the iterations for which all five trials were running: 
		Using $f_{\rm abs}$ (left column) and $f_{\rm rel}$ (right column) with $\sigma=10^{-3}$ (top row) and $\sigma=10^{-5}$ (bottom row). 
		\label{fig:Expt1_15_batch}}
\end{figure}

We also report the step lengths chosen at each iteration by the two variants of our algorithm in Figure~\ref{fig:Expt1_15_steps} to illustrate the performance of the line search mechanism. We note that initially the step lengths are chosen to be small but they quickly go to a larger step length and stay around $1$ until they converge to the neighborhood of the solution. 

\begin{figure}[!tb]
	\begin{centering}
		\includegraphics[width=0.495\linewidth]{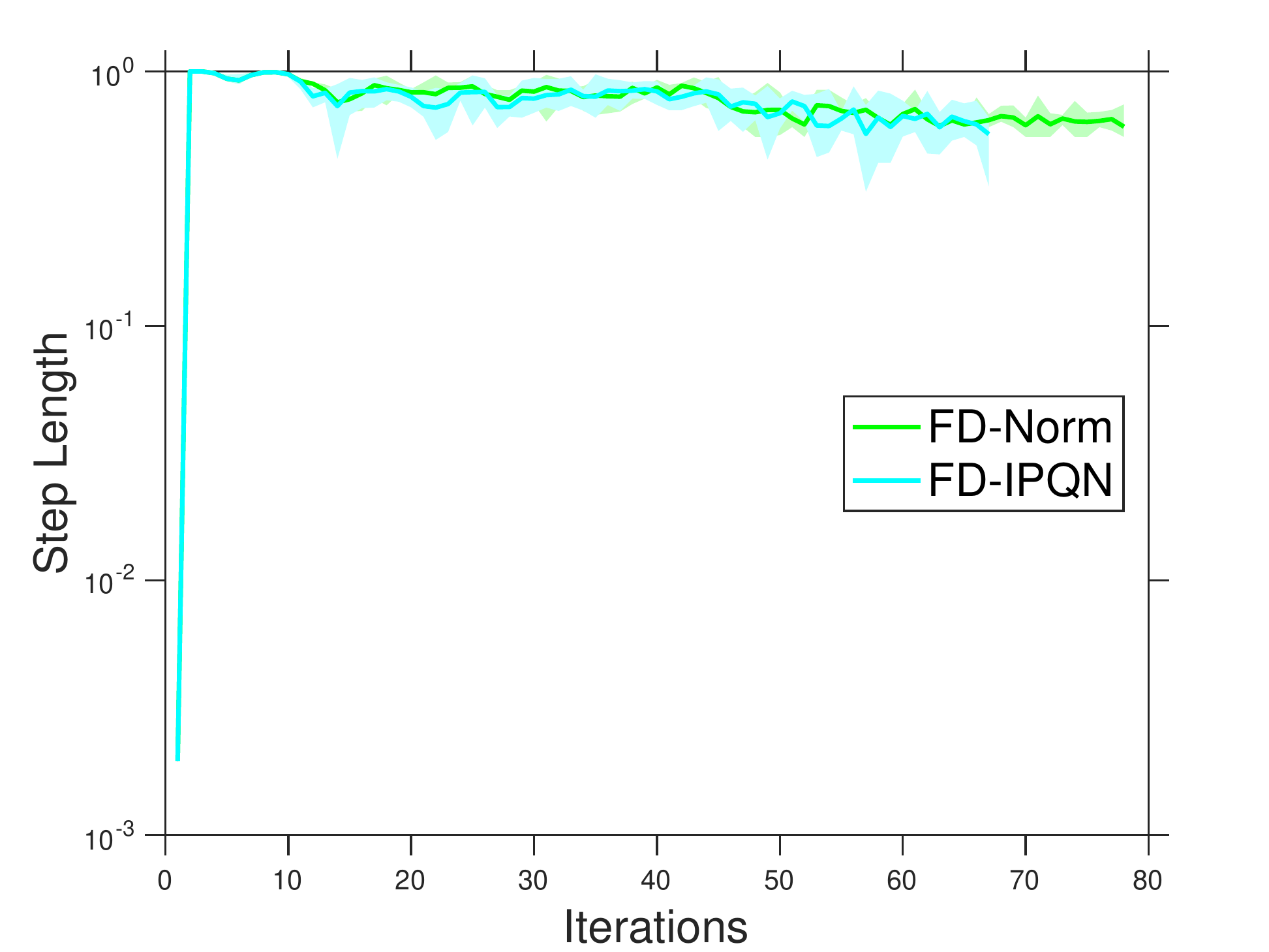} \hfill 
		\includegraphics[width=0.495\linewidth]{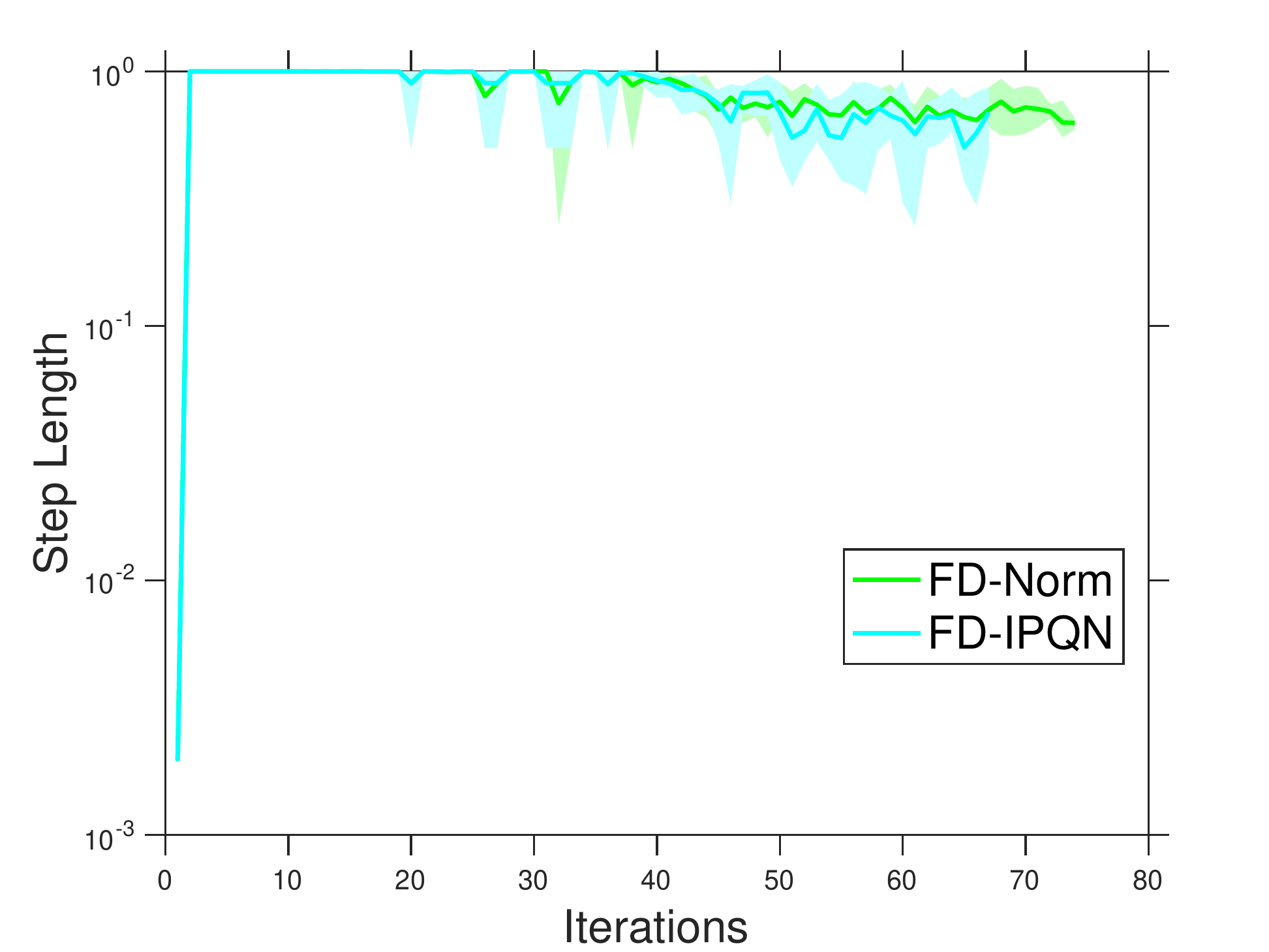}\\
		\includegraphics[width=0.495\linewidth]{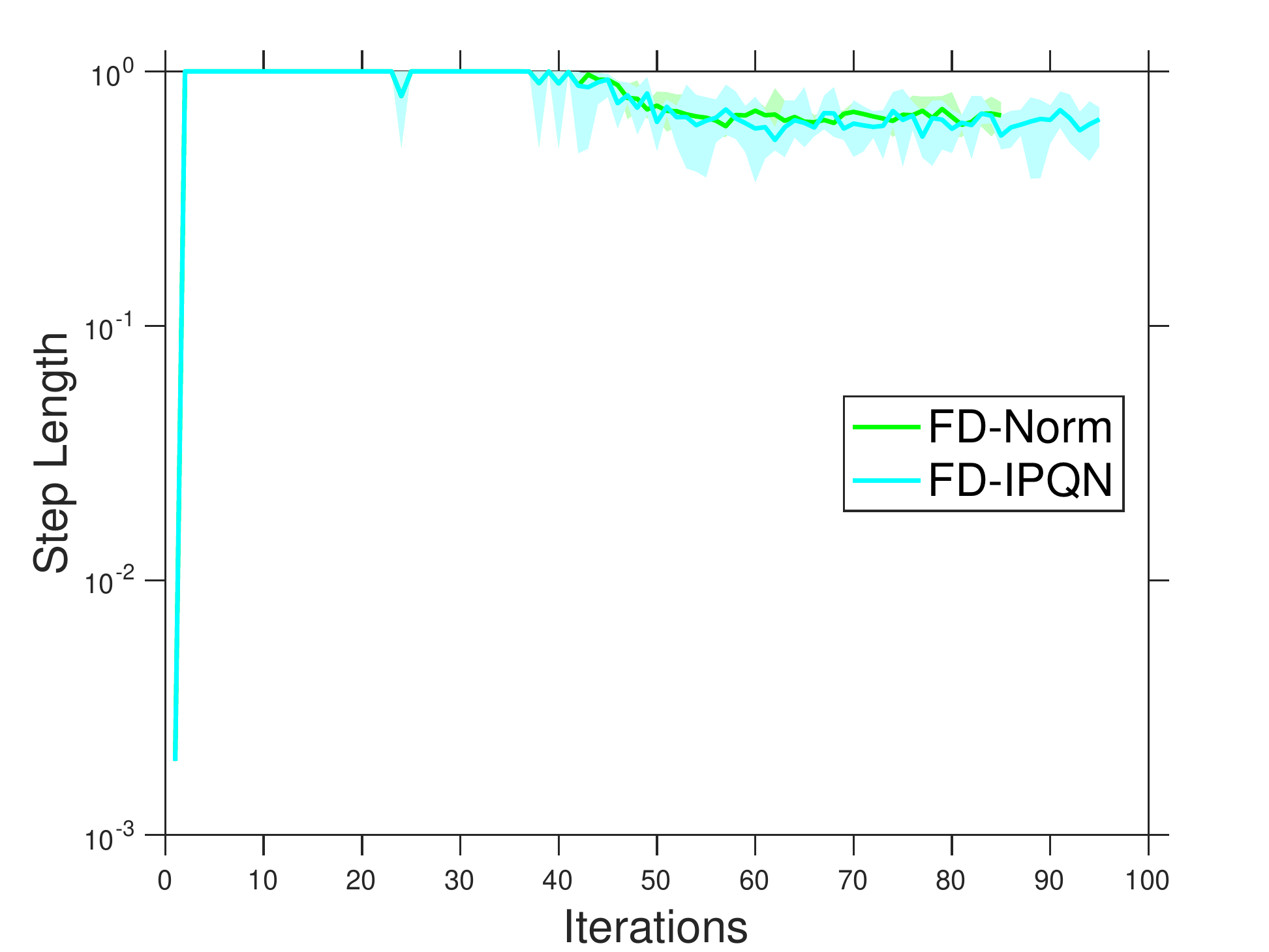} \hfill 
		\includegraphics[width=0.495\linewidth]{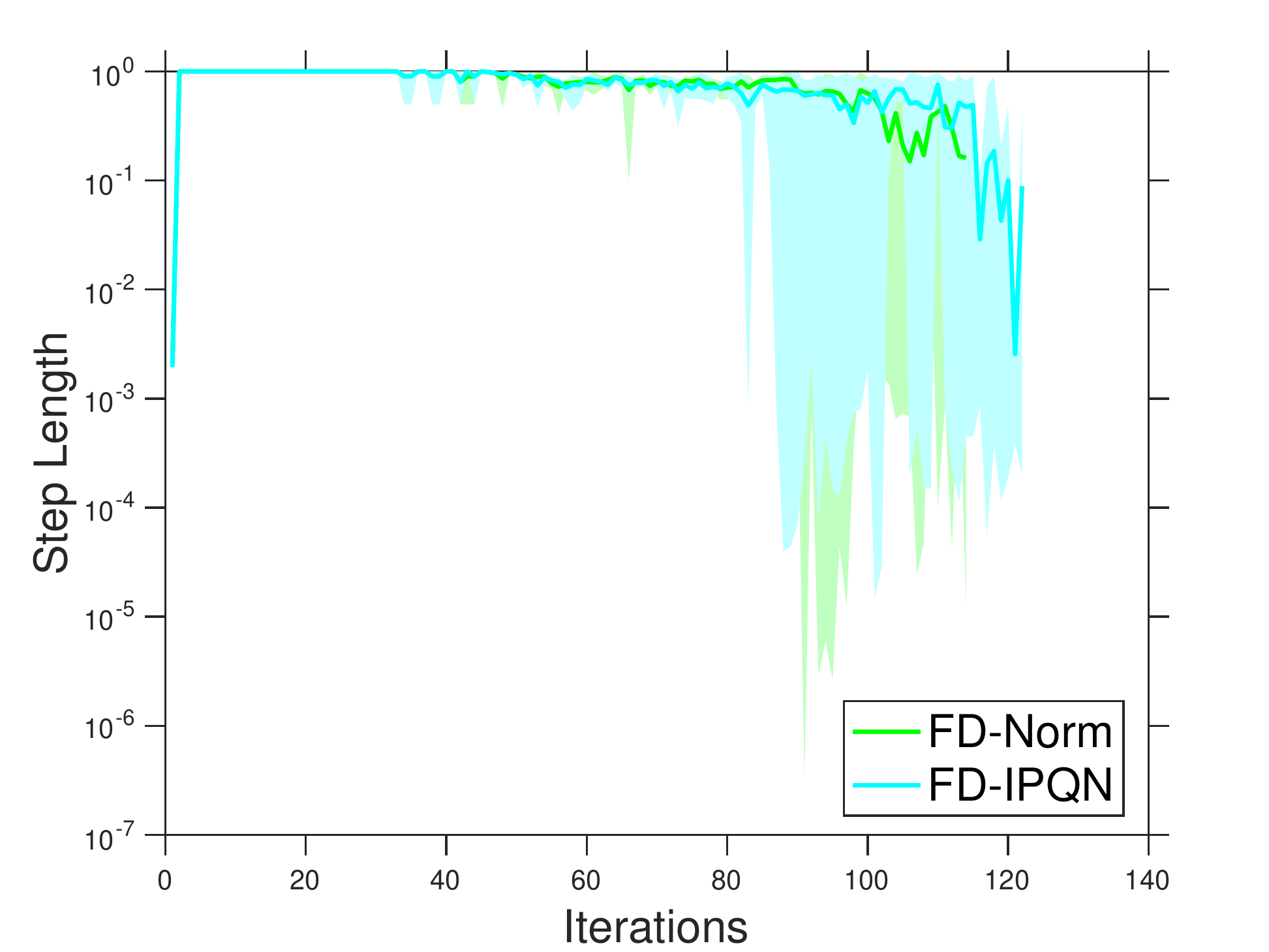}
		\par\end{centering}	
	\caption{Chebyquad function results showing the accepted step length over the iterations for which all five trials were running: 
		Using $f_{\rm abs}$ (left column) and $f_{\rm rel}$ (right column) with $\sigma=10^{-3}$ (top row) and $\sigma=10^{-5}$ (bottom row). 
		\label{fig:Expt1_15_steps}}
\end{figure}

Results for the other problems listed in Table~\ref{tb:data} are given in Appendix~\ref{app:numerical}.

\subsection{Nonsmooth Problems}
\label{sec:nonsmoothexp}
We also conducted an experiment on a synthetic nonsmooth problem to illustrate the robustness of the proposed algorithm with respect to nonsmoothness of the stochastic functions. We considered the stochastic nonsmooth function 
\begin{equation}
f(x,\zeta) = \|Ax-b-\zeta\|_1 = \sum_{i=1}^{p} \left| a_i^Tx - b_i - \zeta_i \right|
\label{eq:nsfunc}
\end{equation}
where $\zeta \in \R^p$ is a uniform random vector $[- 1, 1]^p$. We note that the expected function $\E[\zeta]{f(\cdot,\zeta)}$ is continuously differentiable and strongly convex; see Appendix~\ref{app:nonsmoothtest} for details. We set $A \in \R^{50\times50}$ as a symmetric normal random matrix and $b = Ax^*$, where $x^*\in \R^{50}$ is a normal random vector. For this problem, the optimal function value is $F^*=25$. 

Figure~\ref{fig:Expt1_rand} reports results for a random instance of this problem. We observe that both variants of our finite-difference quasi-Newton method are more efficient than the tuned finite-difference stochastic gradient method and the tuned sphere-smoothing stochastic gradient method. We further note that because of the high variance arising due to the nonsmoothness, the methods converge at a slower rate.

 \begin{figure}[!tb]
 	\begin{centering}
 		\includegraphics[width=0.495\linewidth]{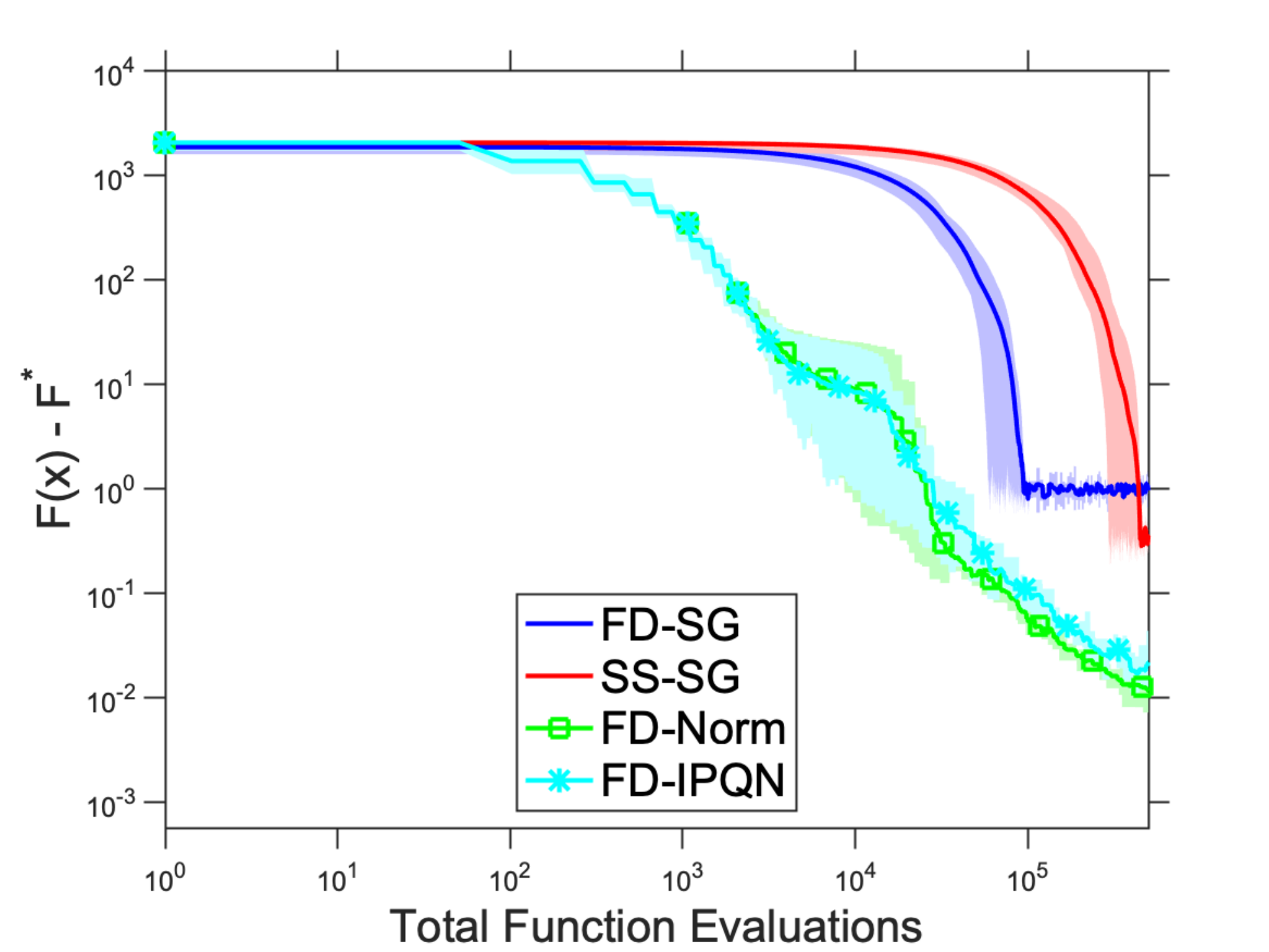} \hfill 
 		\includegraphics[width=0.495\linewidth]{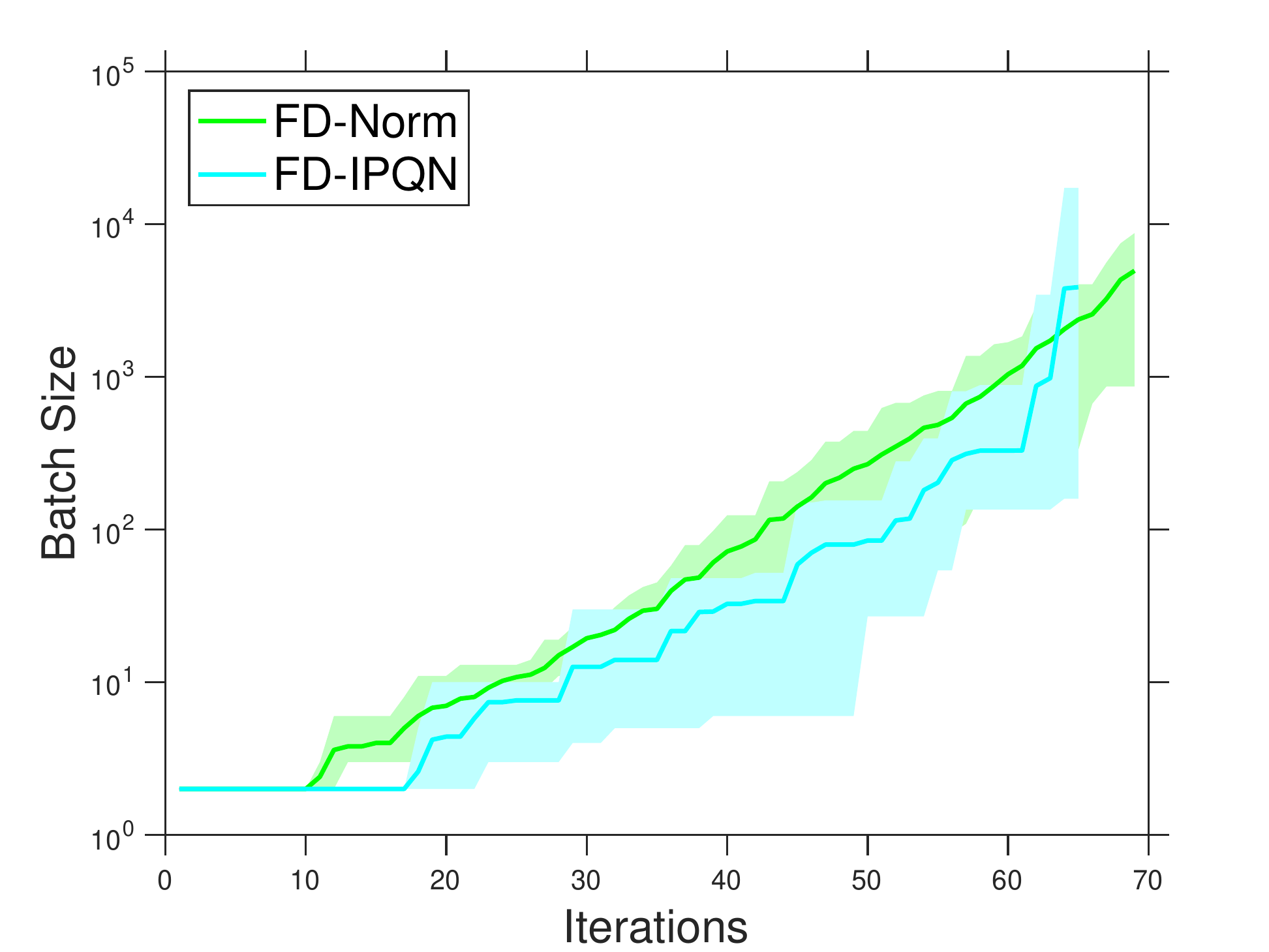} \hfill 
 		\par\end{centering}	
 	\caption{Results for a random instance of the nonsmooth function \eqref{eq:nsfunc} with $d=p=50$. }	
 		\label{fig:Expt1_rand}
 \end{figure}

\section{Final Remarks}
\label{sec:discussion}
We presented finite-difference quasi-Newton methods for solving derivative-free stochastic optimization problems where the sample sizes used in finite-difference gradient estimators are controlled by a modified norm test or an inner product quasi-Newton test. The numerical results show that the modified tests have  potential for stochastic problems where the CRN approach is feasible. 
Early results on a challenging class of simulation-based finite-sum problems illustrate that such methods can be competitive even in settings where the batch size adaptivity is severely limited 
\cite{BMNORW2020}.

In this work, we considered forward finite differences in all the coordinate directions to estimate the gradients. It is interesting to consider other derivative-free techniques that estimate the gradients in smaller subspaces $(<d)$ that might result in lower computational effort. However, these approaches are challenging and require special attention to the curvature information used in quasi-Newton updates. %

\subsubsection*{Acknowledgments}
	This material was based upon work supported by the U.S.\ Department of
	Energy, Office of Science, Office of Advanced Scientific Computing
	Research, applied mathematics and SciDAC programs under Contract No.\
	DE-AC02-06CH11357. 

\phantomsection
\addcontentsline{toc}{section}{References}
\bibliographystyle{spmpsci}      %

 \bibliography{smw-bigrefs.bib}

\begin{thebibliography}{10}
\providecommand{\url}[1]{{#1}}
\providecommand{\urlprefix}{URL }
\expandafter\ifx\csname urlstyle\endcsname\relax
  \providecommand{\doi}[1]{DOI~\discretionary{}{}{}#1}\else
  \providecommand{\doi}{DOI~\discretionary{}{}{}\begingroup
  \urlstyle{rm}\Url}\fi

\bibitem{Agarwal2010}
Agarwal, A., Dekel, O., Xiao, L.: Optimal algorithms for online convex
  optimization with multi-point bandit feedback.
\newblock In: 23rd Conference on Learning Theory, pp. 28--40 (2010).
\newblock
  \urlprefix\url{http://www.learningtheory.org/colt2010/conference-website/papers/037agarwal.pdf}

\bibitem{StoMADS2021}
Audet, C., Dzahini, K.J., Kokkolaras, M., {Le Digabel}, S.: Stochastic mesh
  adaptive direct search for blackbox optimization using probabilistic
  estimates.
\newblock Computational Optimization and Applications \textbf{79}(1), 1--34
  (2021).
\newblock \doi{10.1007/s10589-020-00249-0}

\bibitem{AudetHare2017}
Audet, C., Hare, W.L.: Derivative-Free and Blackbox Optimization.
\newblock Springer (2017).
\newblock \doi{10.1007/978-3-319-68913-5}

\bibitem{Balasubramanian2018}
Balasubramanian, K., Ghadimi, S.: Zeroth-order (non)-convex stochastic
  optimization via conditional gradient and gradient updates.
\newblock In: S.~Bengio, H.~Wallach, H.~Larochelle, K.~Grauman,
  N.~Cesa-Bianchi, R.~Garnett (eds.) Advances in Neural Information Processing
  Systems 31, pp. 3455--3464. Curran Associates, Inc. (2018).
\newblock
  \urlprefix\url{http://papers.nips.cc/paper/7605-zeroth-order-non-convex-stochastic-optimization-via-conditional-gradient-and-gradient-updates.pdf}

\bibitem{Balasubramanian2019}
Balasubramanian, K., Ghadimi, S.: Zeroth-order nonconvex stochastic
  optimization: Handling constraints, high dimensionality, and saddle points.
\newblock Foundations of Computational Mathematics  (2021).
\newblock \doi{10.1007/s10208-021-09499-8}.
\newblock To appear

\bibitem{BBN2018}
Berahas, A.S., Byrd, R.H., Nocedal, J.: Derivative-free optimization of noisy
  functions via quasi-{Newton} methods.
\newblock SIAM Journal on Optimization \textbf{29}(2), 965--993 (2019).
\newblock \doi{10.1137/18m1177718}

\bibitem{BCCS2021a}
Berahas, A.S., Cao, L., Choromanski, K., Scheinberg, K.: A theoretical and
  empirical comparison of gradient approximations in derivative-free
  optimization.
\newblock Foundations of Computational Mathematics  (2021).
\newblock \doi{10.1007/s10208-021-09513-z}.
\newblock To appear

\bibitem{Berahas2016}
Berahas, A.S., Nocedal, J., Tak{\'{a}}{\v{c}}, M.: A multi-batch {L-BFGS}
  method for machine learning.
\newblock In: D.D. Lee, M.~Sugiyama, U.V. Luxburg, I.~Guyon, R.~Garnett (eds.)
  Advances in Neural Information Processing Systems 29, pp. 1055--1063. Curran
  Associates, Inc. (2016).
\newblock
  \urlprefix\url{http://papers.nips.cc/paper/6145-a-multi-batch-l-bfgs-method-for-machine-learning.pdf}

\bibitem{Bertsimas2019b}
Bertsekas, D.P.: Reinforcement Learning and Optimal Control.
\newblock Athena Scientific (2019)

\bibitem{Bertsekas2003convex}
Bertsekas, D.P., Nedi{\'c}, A., Ozdaglar, A.E.: Convex Analysis and
  Optimization.
\newblock Athena Scientific (2003)

\bibitem{BCMS2018}
Blanchet, J., Cartis, C., Menickelly, M., Scheinberg, K.: Convergence rate
  analysis of a stochastic trust-region method via supermartingales.
\newblock INFORMS Journal on Optimization \textbf{1}(2), 92--119 (2019).
\newblock \doi{10.1287/ijoo.2019.0016}

\bibitem{Blum1954}
Blum, J.R.: Multidimensional stochastic approximation methods.
\newblock The Annals of Mathematical Statistics \textbf{25}(4), 737--744
  (1954).
\newblock \doi{10.1214/aoms/1177728659}

\bibitem{Bollapragada2018}
Bollapragada, R., Byrd, R., Nocedal, J.: Adaptive sampling strategies for
  stochastic optimization.
\newblock SIAM Journal on Optimization \textbf{28}(4), 3312--3343 (2018).
\newblock \doi{10.1137/17m1154679}

\bibitem{BMNORW2020}
Bollapragada, R., Menickelly, M., Nazarewicz, W., O'Neal, J., Reinhard, P.G.,
  Wild, S.M.: Optimization and supervised machine learning methods for fitting
  numerical physics models without derivatives.
\newblock Journal of Physics G: Nuclear and Particle Physics \textbf{48}(2),
  024001 (2021).
\newblock \doi{10.1088/1361-6471/abd009}

\bibitem{BollapragadaICML18}
Bollapragada, R., Nocedal, J., Mudigere, D., Shi, H.J., Tang, P.T.P.: A
  progressive batching {L-BFGS} method for machine learning.
\newblock In: J.~Dy, A.~Krause (eds.) Proceedings of the 35th International
  Conference on Machine Learning, vol.~80, pp. 620--629. PMLR (2018).
\newblock \urlprefix\url{http://proceedings.mlr.press/v80/bollapragada18a.html}

\bibitem{Byrd2012}
Byrd, R.H., Chin, G.M., Nocedal, J., Wu, Y.: Sample size selection in
  optimization methods for machine learning.
\newblock Mathematical Programming \textbf{134}(1), 127--155 (2012).
\newblock \doi{10.1007/s10107-012-0572-5}

\bibitem{Cartis2018global}
Cartis, C., Scheinberg, K.: Global convergence rate analysis of unconstrained
  optimization methods based on probabilistic models.
\newblock Mathematical Programming \textbf{169}(2), 337--375 (2018)

\bibitem{Chang2012}
Chang, K.H.: Stochastic {N}elder-{M}ead simplex method - {A} new globally
  convergent direct search method for simulation optimization.
\newblock European Journal of Operational Research \textbf{220}(3), 684--694
  (2012).
\newblock \doi{10.1016/j.ejor.2012.02.028}

\bibitem{Chen2017}
Chen, R., Menickelly, M., Scheinberg, K.: Stochastic optimization using a
  trust-region method and random models.
\newblock Mathematical Programming \textbf{169}(2), 447--487 (2018).
\newblock \doi{10.1007/s10107-017-1141-8}

\bibitem{Chen2016a}
Chen, X., Kelley, C.T.: Optimization with hidden constraints and embedded
  {Monte Carlo} computations.
\newblock Optimization and Engineering \textbf{17}(1), 157--175 (2016).
\newblock \doi{10.1007/s11081-015-9302-1}

\bibitem{Chen2018}
Chen, X., Kelley, C.T., Xu, F., Zhang, Z.: A smoothing direct search method for
  {M}onte {C}arlo-based bound constrained composite nonsmooth optimization.
\newblock {SIAM} Journal on Scientific Computing \textbf{40}(4), A2174--A2199
  (2018).
\newblock \doi{10.1137/17m1116714}

\bibitem{ChenNeurIPS2019}
Chen, X., Liu, S., Xu, K., Li, X., Lin, X., Hong, M., Cox, D.: {ZO-AdaMM}:
  Zeroth-order adaptive momentum method for black-box optimization.
\newblock In: H.~Wallach, H.~Larochelle, A.~Beygelzimer, F.~d\textquotesingle
  Alch\'{e}-Buc, E.~Fox, R.~Garnett (eds.) Advances in Neural Information
  Processing Systems, vol.~32, pp. 7204--7215. Curran Associates, Inc. (2019).
\newblock
  \urlprefix\url{https://proceedings.neurips.cc/paper/2019/file/576d026223582a390cd323bef4bad026-Paper.pdf}

\bibitem{Deng2006aua}
Deng, G., Ferris, M.C.: Adaptation of the {UOBYQA} algorithm for noisy
  functions.
\newblock In: Proceedings of the Winter Simulation Conference, pp. 312--319
  (2006).
\newblock \doi{10.1109/wsc.2006.323088}

\bibitem{Deng09}
Deng, G., Ferris, M.C.: Variable-number sample-path optimization.
\newblock Mathematical Programming \textbf{117}, 81--109 (2009).
\newblock \doi{10.1007/s10107-007-0164-y}

\bibitem{Duchi2015}
Duchi, J.C., Jordan, M.I., Wainwright, M.J., Wibisono, A.: Optimal rates for
  zero-order convex optimization: The power of two function evaluations.
\newblock IEEE Transactions on Information Theory \textbf{61}(5), 2788--2806
  (2015).
\newblock \doi{10.1109/TIT.2015.2409256}

\bibitem{Fu2005}
Fu, M.C., Glover, F.W., April, J.: Simulation optimization: A review, new
  developments, and applications.
\newblock In: Proceedings of the Winter Simulation Conference. IEEE (2005).
\newblock \doi{10.1109/wsc.2005.1574242}

\bibitem{Gasnikov2017}
Gasnikov, A.V., Krymova, E.A., Lagunovskaya, A.A., Usmanova, I.N., Fedorenko,
  F.A.: Stochastic online optimization. {S}ingle-point and multi-point
  non-linear multi-armed bandits. {C}onvex and strongly-convex case.
\newblock Automation and Remote Control \textbf{78}(2), 224--234 (2017).
\newblock \doi{10.1134/S0005117917020035}

\bibitem{Ghadimi2019}
Ghadimi, S.: Conditional gradient type methods for composite nonlinear and
  stochastic optimization.
\newblock Mathematical Programming \textbf{173}(1--2), 431--464 (2019).
\newblock \doi{10.1007/s10107-017-1225-5}

\bibitem{Ghadimi2013}
Ghadimi, S., Lan, G.: Stochastic first- and zeroth-order methods for nonconvex
  stochastic programming.
\newblock SIAM Journal on Optimization \textbf{23}(4), 2341--2368 (2013).
\newblock \doi{10.1137/120880811}

\bibitem{Gould:2003:CSC}
Gould, N.I.M., Orban, D., Toint, P.L.: {CUTEr} and {SifDec}: A constrained and
  unconstrained testing environment, revisited.
\newblock ACM Transactions on Mathematical Software \textbf{29}(4), 373--394
  (2003).
\newblock \doi{10.1145/962437.962439}

\bibitem{Huang2019}
Huang, F., Gu, B., Huo, Z., Chen, S., Huang, H.: Faster gradient-free proximal
  stochastic methods for nonconvex nonsmooth optimization.
\newblock Proceedings of the {AAAI} Conference on Artificial Intelligence
  \textbf{33}, 1503--1510 (2019).
\newblock \doi{10.1609/aaai.v33i01.33011503}

\bibitem{pmlr-v119-huang20j}
Huang, F., Tao, L., Chen, S.: Accelerated stochastic gradient-free and
  projection-free methods.
\newblock In: H.D. III, A.~Singh (eds.) Proceedings of the 37th International
  Conference on Machine Learning, \emph{Proceedings of Machine Learning
  Research}, vol. 119, pp. 4519--4530. PMLR (2020).
\newblock \urlprefix\url{http://proceedings.mlr.press/v119/huang20j.html}

\bibitem{kelley2005ugi}
Kelley, C.T.: Users Guide for imfil version 0.5.
\newblock Available at \url{www4.ncsu.edu/~ctk/imfil.html}

\bibitem{KieferWolfowitz}
Kiefer, J., Wolfowitz, J.: Stochastic estimation of the maximum of a regression
  function.
\newblock The Annals of Mathematical Statistics \textbf{22}(3), 462--466
  (1952).
\newblock \doi{10.1214/aoms/1177729392}

\bibitem{Kim2014}
Kim, S., Pasupathy, R., Henderson, S.G.: A guide to sample average
  approximation.
\newblock In: M.~Fu (ed.) Handbook of Simulation Optimization,
  \emph{International Series in Operations Research \& Management Science},
  vol. 216, pp. 207--243. Springer (2015).
\newblock \doi{10.1007/978-1-4939-1384-8_8}

\bibitem{Kleinman1999}
Kleinman, N.L., Spall, J.C., Naiman, D.Q.: Simulation-based optimization with
  stochastic approximation using common random numbers.
\newblock Management Science \textbf{45}(11), 1570--1578 (1999).
\newblock \doi{10.1287/mnsc.45.11.1570}

\bibitem{Larson2016}
Larson, J., Billups, S.C.: Stochastic derivative-free optimization using a
  trust region framework.
\newblock Computational Optimization and Applications \textbf{64}(3), 619--645
  (2016).
\newblock \doi{10.1007/s10589-016-9827-z}

\bibitem{LMW2019AN}
Larson, J., Menickelly, M., Wild, S.M.: Derivative-free optimization methods.
\newblock Acta Numerica \textbf{28}, 287--404 (2019).
\newblock \doi{10.1017/s0962492919000060}

\bibitem{LEcuyerYin1998}
{L'Ecuyer}, P., Yin, G.: Budget-dependent convergence rate of stochastic
  approximation.
\newblock SIAM Journal on Optimization \textbf{8}(1), 217--247 (1998).
\newblock \doi{10.1137/S1052623495270723}

\bibitem{SLiu2018}
Liu, S., Kailkhura, B., Chen, P.Y., Ting, P., Chang, S., Amini, L.:
  Zeroth-order stochastic variance reduction for nonconvex optimization.
\newblock In: S.~Bengio, H.~Wallach, H.~Larochelle, K.~Grauman,
  N.~Cesa-Bianchi, R.~Garnett (eds.) Advances in Neural Information Processing
  Systems 31, pp. 3731--3741. Curran Associates, Inc. (2018).
\newblock
  \urlprefix\url{http://papers.nips.cc/paper/7630-zeroth-order-stochastic-variance-reduction-for-nonconvex-optimization.pdf}

\bibitem{Mania2018}
Mania, H., Guy, A., Recht, B.: Simple random search of static linear policies
  is competitive for reinforcement learning.
\newblock In: S.~Bengio, H.~Wallach, H.~Larochelle, K.~Grauman,
  N.~Cesa-Bianchi, R.~Garnett (eds.) Advances in Neural Information Processing
  Systems 31, pp. 1800--1809. Curran Associates, Inc. (2018).
\newblock
  \urlprefix\url{http://papers.nips.cc/paper/7451-simple-random-search-of-static-linear-policies-is-competitive-for-reinforcement-learning.pdf}

\bibitem{JJMSMW09}
Mor{\'e}, J.J., Wild, S.M.: Benchmarking derivative-free optimization
  algorithms.
\newblock SIAM Journal on Optimization \textbf{20}(1), 172--191 (2009).
\newblock \doi{10.1137/080724083}

\bibitem{more2011ecn}
Mor{\'e}, J.J., Wild, S.M.: Estimating computational noise.
\newblock SIAM Journal on Scientific Computing \textbf{33}(3), 1292--1314
  (2011).
\newblock \doi{10.1137/100786125}

\bibitem{more2011edn}
Mor{\'e}, J.J., Wild, S.M.: Estimating derivatives of noisy simulations.
\newblock ACM Transactions on Mathematical Software \textbf{38}(3), 19:1--19:21
  (2012).
\newblock \doi{10.1145/2168773.2168777}

\bibitem{Nesterov2015}
Nesterov, Y., Spokoiny, V.: Random gradient-free minimization of convex
  functions.
\newblock Foundations of Computational Mathematics \textbf{17}(2), 527--566
  (2017).
\newblock \doi{10.1007/s10208-015-9296-2}

\bibitem{nocedal2006no}
Nocedal, J., Wright, S.J.: Numerical Optimization, second edn.
\newblock Springer (2006).
\newblock \doi{10.1007/978-0-387-40065-5}

\bibitem{Pasupathy2013}
Pasupathy, R., Ghosh, S.: Simulation optimization: {A} concise overview and
  implementation guide.
\newblock In: Theory Driven by Influential Applications, pp. 122--150.
  {INFORMS} (2013).
\newblock \doi{10.1287/educ.2013.0118}

\bibitem{Pasupathy2018}
Pasupathy, R., Glynn, P., Ghosh, S., Hashemi, F.S.: On sampling rates in
  simulation-based recursions.
\newblock {SIAM} Journal on Optimization \textbf{28}(1), 45--73 (2018).
\newblock \doi{10.1137/140951679}

\bibitem{powell1976some}
Powell, M.J.: Some global convergence properties of a variable metric algorithm
  for minimization without exact line searches.
\newblock In: R.W. Cottle, C.E. Lemke (eds.) Nonlinear Programming,
  \emph{SIAM-AMS Proceedings}, vol.~9, pp. 53--72 (1976)

\bibitem{RobbinsMonro1951}
Robbins, H., Monro, S.: A stochastic approximation method.
\newblock The Annals of Mathematical Statistics \textbf{22}(3), 400--407
  (1951).
\newblock \doi{10.1214/aoms/1177729586}

\bibitem{Sahu19a}
Sahu, A.K., Zaheer, M., Kar, S.: Towards gradient free and projection free
  stochastic optimization.
\newblock In: K.~Chaudhuri, M.~Sugiyama (eds.) Proceedings of Machine Learning
  Research, \emph{Proceedings of Machine Learning Research}, vol.~89, pp.
  3468--3477. PMLR (2019).
\newblock \urlprefix\url{http://proceedings.mlr.press/v89/sahu19a.html}

\bibitem{Salimans2017}
Salimans, T., Ho, J., Chen, X., Sidor, S., Sutskever, I.: Evolution strategies
  as a scalable alternative to reinforcement learning.
\newblock Tech. Rep. 1703.03864, ArXiv (2017).
\newblock \urlprefix\url{https://arxiv.org/abs/1703.03864}

\bibitem{Shamir2017}
Shamir, O.: An optimal algorithm for bandit and zero-order convex optimization
  with two-point feedback.
\newblock Journal of Machine Learning Research \textbf{18}(52), 1--11 (2017).
\newblock \urlprefix\url{http://jmlr.org/papers/v18/16-632.html}

\bibitem{Shashaani2018}
Shashaani, S., Hashemi, F.S., Pasupathy, R.: {ASTRO}-{DF}: {A} class of
  adaptive sampling trust-region algorithms for derivative-free stochastic
  optimization.
\newblock {SIAM} Journal on Optimization \textbf{28}(4), 3145--3176 (2018).
\newblock \doi{10.1137/15m1042425}

\bibitem{Shashaani2016}
Shashaani, S., Hunter, S.R., Pasupathy, R.: {ASTRO}-{DF}: Adaptive sampling
  trust-region optimization algorithms, heuristics, and numerical experience.
\newblock In: 2016 Winter Simulation Conference ({WSC}). IEEE (2016).
\newblock \doi{10.1109/wsc.2016.7822121}

\bibitem{Wibisono2015}
Wibisono, A., Wainwright, M.J., Jordan, M.I., Duchi, J.C.: Finite sample
  convergence rates of zero-order stochastic optimization methods.
\newblock In: F.~Pereira, C.J.C. Burges, L.~Bottou, K.Q. Weinberger (eds.)
  Advances in Neural Information Processing Systems 25, pp. 1439--1447. Curran
  Associates, Inc. (2012).
\newblock
  \urlprefix\url{http://papers.nips.cc/paper/4550-finite-sample-convergence-rates-of-zero-order-stochastic-optimization-methods.pdf}

\bibitem{Xie2020analysis}
Xie, Y., Byrd, R.H., Nocedal, J.: Analysis of the {BFGS} method with errors.
\newblock {SIAM} Journal on Optimization \textbf{30}(1), 182--209 (2020).
\newblock \doi{10.1137/19m1240794}

\end{thebibliography}

\newpage  
\normalsize
\appendix

\section{Supplementary Proofs}

Here we collect proofs of several intermediate results.
 
\subsection{Bounded Variances in (\ref{eq:ideal_normFD})} 
\label{sec:boundedvar}
	The left-hand side of \eqref{eq:ideal_normFD} is difficult to compute but can be bounded by the true variance of individual finite-difference gradient estimators; that is,
\begin{equation*}
 \E[S_k]{\left\|\nablafd F_{S_k}(x_k) - \nablafd F(x_k)\right\|^2} 
 \leq 
 \frac{\E[\zeta_i]{\left\|\nablafd F_{\zeta_i}(x_k) - \nablafd F(x_k)\right\|^2}}{|S_k|}.
\end{equation*} 
This bound requires that the true variance is bounded, which is  Assumption~\ref{assum:varbd}.  The proof follows from 
\begin{align}
	&\E[\zeta_i]{\left\|\nablafd F_{\zeta_i}(x_k) - \nablafd F(x_k)\right\|^2} \nonumber \\
	&~~~~=  \sum_{j=1}^{d}\E[\zeta_i]{\left(\frac{f(x_k + \nu e_j, \zeta_i) - f(x_k,\zeta_i)}{\nu} - \frac{F(x_k + \nu e_j) - F(x_k)}{\nu}\right)^2} \nonumber \\
	&~~~~\leq \sum_{j=1}^{d}\E[\zeta_i]{2\left(\frac{f(x_k + \nu e_j, \zeta_i) - F(x_k + \nu e_j)}{\nu}\right)^2 +  2\left(\frac{f(x_k,\zeta_i) - F(x_k)}{\nu}\right)^2} \nonumber \\
	&~~~~= 2 \sum_{j=1}^{d} \left(\E[\zeta_i]{\left(\frac{f(x_k + \nu e_j, \zeta_i) - F(x_k + \nu e_j)}{\nu}\right)^2} +  \E[\zeta_i]{\left(\frac{f(x_k,\zeta_i) - F(x_k)}{\nu}\right)^2}\right) \nonumber \\
	&~~~~\leq2\sum_{j=1}^{d}\left( \frac{\omega_1^2 + \omega_2^2 \|\nabla F(x_k + \nu e_j)\|^2}{\nu^2} + \frac{\omega_1^2 + \omega_2^2 \|\nabla F(x_k)\|^2}{\nu^2}\right) \nonumber \\
	&~~~~\leq 2\sum_{j=1}^{d} \frac{\omega_1^2 + 2\omega_2^2 \left(\|\nabla F(x_k + \nu e_j) - \nabla F(x_k)\|^2 + \|\nabla F(x_k)\|^2\right)}{\nu^2} 
+2d\frac{\omega_1^2 + \omega_2^2 \|\nabla F(x_k)\|^2}{\nu^2}
 \nonumber \\
	&~~~~\leq \frac{4\omega_1^2d}{\nu^2} + \frac{3\omega_2^2d \|\nabla F(x_k)\|^2}{\nu^2} + 2\omega_2^2\LFg^2, \label{eq:popl_var_bd}
	\end{align}
	where the first and third inequalities are due to the fact $(a + b)^2 \leq 2(a^2 + b^2)$, the second inequality is due to Assumption~\ref{assum:varbd}, and the last inequality is due to Assumption~\ref{assum:lipschitz}. Therefore, for all iterations $k$ where $\|\nabla F(x_k)\| < \infty$, we have
\begin{align*}
 \E[\zeta_i]{\left\|\nablafd F_{\zeta_i}(x_k) - \nablafd F(x_k)\right\|^2} < \infty.
\end{align*}
In a similar manner, we can show that the true variance of the inner product quasi-Newton condition is also bounded. That is,
\begin{align*}
 &\E[\zeta_i]{\left(\left(H_k \nablafd F_{\zeta_i}(x_k)\right)^T H_k \nablafd F(x_k) - \left\|H_k \nablafd F(x_k)\right\|^2\right)^2} \\
 &~~~\qquad= \E[\zeta_i]{\left(\left(H_k \nablafd F_{\zeta_i}(x_k) - H_k \nablafd F(x_k)\right)^T H_k \nablafd F(x_k) \right)^2}  \\
 &~~~\qquad\leq \E[\zeta_i]{\left\|H_k\left( \nablafd F_{\zeta_i}(x_k) - \nablafd F(x_k)\right)\right\|^2\left\|H_k \nablafd F(x_k)\right\|^2}  \\
 &~~~\qquad\leq \lambda_{\max}^4(H_k)\E[\zeta_i]{\left\|\nablafd F_{\zeta_i}(x_k) - \nablafd F(x_k)\right\|^2} \left\| \nablafd F(x_k)\right\|^2  \\
 &~~~\qquad\leq 2 \lambda_{\max}^4(H_k)\E[\zeta_i]{\left\| \nablafd F_{\zeta_i}(x_k) - \nablafd F(x_k) \right\|^2}\left(\left\|\nablafd F(x_k) - \nabla F(x_k)\right\|^2 + \left\|\nabla F(x_k)\right\|^2\right) \\
 &~~~\qquad\leq\lambda_{\max}^4(H_k)\E[\zeta_i]{\left\| \nablafd F_{\zeta_i}(x_k) - \nablafd F(x_k)\right\|^2}\left(\left(\frac{\LFg\nu}{2}\right)^2d + \|\nabla F(x_k)\|^2\right),  
\end{align*}
where the third inequality is due to $(a + b)^2 \leq 2(a^2 + b^2)$, the fifth inequality is due to \eqref{eq:fdbound}, and $\lambda_{\max}(H_k)$ is the largest eigenvalue of $H_k$. Therefore, from \eqref{eq:popl_var_bd}, for all iterations $k$ where $\|\nabla F(x_k)\|^2 < \infty$, we have
\begin{align*}
 \E[\zeta_i]{\left(\left(H_k \nablafd F_{\zeta_i}(x_k)\right)^TH_k \nablafd F(x_k) - \left\|H_k \nablafd F(x_k)\right\|^2\right)^2} < \infty.
\end{align*}
	Hence,
\begin{align*}
	&\E[S_k]{\left(\left(H_k \nablafd F_{S_k}(x_k)\right)^T H_k \nablafd F(x_k) - \left\|H_k \nablafd F(x_k)\right\|^2\right)^2}  \\
	&~~~~~~~~~~~~~~~~~~~~~~\leq \frac{\E[\zeta_i]{\left(\left(H_k \nablafd F_{\zeta_i}(x_k)\right)^TH_k \nablafd F(x_k) - \left\|H_k \nablafd F(x_k)\right\|^2\right)^2}}{|S_k|}.
\end{align*}

\subsection{Nonconstant Step Lengths}
\label{sec:generalsteps}
Generalizations of Lemma~\ref{lem:linear}, and subsequent lemmas and theorems, that allow for step lengths $\alpha_k$ that vary by iteration are readily available. Below we provide one such generalization of Lemma~\ref{lem:linear}. 
\begin{lem} %
	Suppose Assumption~\ref{assum:strnglycnvx} is satisfied. 
	For any $x_0$, let $\{x_k: k\in \Z_{++}\}$ be generated by iteration \eqref{eq:iter} with $\alpha_k>0$, and with $|S_{k}|$ chosen such that 
	\begin{align*}
	 \E[S_k]{F(x_{k+1})} & \leq F(x_k)  - \frac{a_1\alpha_k}{2}\|\nabla F(x_k)\|^2  
	  + a_2\alpha_k  %
	\end{align*}
	for some constants $a_1>0$ and $a_2 > 0$. Then,
	\begin{equation*} 
	\E{F(x_k) - F(x^*)} \leq \prod_{i=1}^k \left(1 -\mu a_1\alpha_k\right) \left(F(x_0) 
	- F(x^*) - \frac{a_2}{\mu a_1}\right) + \frac{a_2}{\mu a_1} 
	\qquad \forall k \in \Z_{++}.
	\end{equation*}
\end{lem}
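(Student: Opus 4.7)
My plan is to follow the proof of Lemma~\ref{lem:linear} almost verbatim, with the single change that the step length $\alpha_k$ now rides along with the descent and additive constants. First, I would apply Assumption~\ref{assum:strnglycnvx} to the hypothesis inequality to replace $\|\nabla F(x_k)\|^2$ by $2\mu(F(x_k) - F(x^*))$, obtaining
\begin{equation*}
\E[S_k]{F(x_{k+1})} \leq F(x_k) - \mu a_1 \alpha_k \bigl(F(x_k) - F(x^*)\bigr) + a_2 \alpha_k.
\end{equation*}
Subtracting $F(x^*)$ from both sides collects a one-step contraction of factor $(1 - \mu a_1 \alpha_k)$ on $F(x_k) - F(x^*)$ plus a residual of $a_2 \alpha_k$.

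The pivotal algebraic step, which is exactly the one used in the proof of Lemma~\ref{lem:linear}, is to notice that $\frac{a_2}{\mu a_1}$ is the fixed point of the recursion: for every value of $\alpha_k$,
\begin{equation*}
a_2 \alpha_k - \frac{a_2}{\mu a_1} = -(1 - \mu a_1 \alpha_k)\, \frac{a_2}{\mu a_1}.
\end{equation*}
Subtracting $\frac{a_2}{\mu a_1}$ from both sides of the previous inequality and using this identity yields the clean recursion
\begin{equation*}
\E[S_k]{F(x_{k+1}) - F(x^*)} - \frac{a_2}{\mu a_1} \leq (1 - \mu a_1 \alpha_k)\left(F(x_k) - F(x^*) - \frac{a_2}{\mu a_1}\right),
\end{equation*}
which is identical in form to equation \eqref{eq:c-lin-final} except that the contraction factor now depends on $k$.

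Finally I would take total expectation and unroll the recursion from iteration $0$ through iteration $k$. Since the factors $(1 - \mu a_1 \alpha_i)$ do not cancel into a single base-exponent expression, they accumulate as the product $\prod_{i=1}^k (1 - \mu a_1 \alpha_i)$, which matches the stated conclusion (where the $\alpha_k$ inside the product is evidently a typo for $\alpha_i$). The one point deserving care is the sign of the contraction factors when iterating a one-sided inequality: if each $1 - \mu a_1 \alpha_i$ is nonnegative, multiplication preserves the inequality direction, so the recursion telescopes without incident. This amounts to the standard stability requirement $\alpha_i \le 1/(\mu a_1)$, which is routinely satisfied in the regimes prescribed by \eqref{c-stepnorm} and \eqref{eq:c-stepform}. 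Beyond this bookkeeping detail, there is no real obstacle; the proof is a direct generalization of Lemma~\ref{lem:linear}.
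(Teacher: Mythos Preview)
Your proposal is correct and follows essentially the same approach as the paper's own proof: apply Assumption~\ref{assum:strnglycnvx}, subtract $F(x^*)$, then subtract the fixed point $\frac{a_2}{\mu a_1}$ and use the identity $a_2\alpha_k - \frac{a_2}{\mu a_1} = -(1-\mu a_1\alpha_k)\frac{a_2}{\mu a_1}$ to obtain the one-step contraction \eqref{eq:gen:c-lin-final}, which is then unrolled. Your observation that the telescoping requires each factor $1-\mu a_1\alpha_i$ to be nonnegative is a valid point the paper leaves implicit, and your identification of the $\alpha_k$/$\alpha_i$ typo in the product is also correct.
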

\begin{proof}
	Employing Assumption~\ref{assum:strnglycnvx} at iteration $k$, 
	substituting into \eqref{eq:lem_term}, and subtracting $F(x^*)$ from 
	both sides, we obtain
	\begin{align*}
	\E[S_k]{F(x_{k+1}) - F(x^*)} &\leq F(x_k) - F(x^*) 
	 - \mu a_1\alpha_k(F(x_k) - F(x^*)) + a_2\alpha_k.
	\end{align*}
	Subtracting the constant $\frac{a_2}{\mu a_1}$ from both sides and 	
	taking total expectation, we obtain
	\begin{align}
	\E{F(x_{k+1}) - F(x^*)} - \frac{a_2}{\mu a_1}
	&\leq 	(1 -\mu a_1\alpha_k ) \E{F(x_k) - F(x^*)} +  a_2\alpha_k - \frac{a_2}{\mu a_1}
	 \nonumber\\
	&= (1 -\mu a_1 \alpha_k) \left(\E{F(x_k) - F(x^*)} -  \frac{a_2}{\mu a_1}\right). 	
	 \label{eq:gen:c-lin-final}
	\end{align} 
	The lemma follows by applying \eqref{eq:gen:c-lin-final} repeatedly through 	
	iteration $k \in \Z_+$.	
\end{proof}

\subsection{Initial Heuristic Step Length Derivation}
\label{app:steplength}
Because of the stochasticity of the function values $f$, it is not guaranteed that a decrease in stochastic function realizations can ensure decrease in the true function $F$. A conservative strategy to address this issue is to choose the initial trial step length to be small enough such that the increase in function values when the stochastic approximations are not good is controlled. Bollapragada et al.~\cite{BollapragadaICML18} 
proposed a heuristic to choose the initial trial estimate for $\alpha_k$ such that there is a decrease in the expected function value. Following a similar strategy, we derive a heuristic to choose the initial step length as 
\begin{equation*}
\hat \alpha_k = \left(1 + \frac{\Var[i \in S_k^v]{\nablafd F_{\zeta_i}(x_k)}}{|S_k|\|\nablafd F_{S_k}(x_k)\|^2}\right)^{-1}.
\end{equation*}

By Assumptions~\ref{assum:sampling},~\ref{assum:lipschitz},~and~\ref{assum:varbd} 
and Lemma~\ref{lem:descentlemma}, for any deterministic $\alpha_k$ we have that
\begin{align} 
	\E[S_k]{F(x_{k+1}) } 
	& \leq F(x_k) - \E[S_k]{\alpha_k \left(H_k\nablafd F_{S_k}(x_k)\right)^T\nabla F(x_k)} \nonumber \\
	&\quad + \frac{\LFg}{2}\E[S_k]{\alpha_k^2 
	\left\|H_k \nablafd F_{S_k}(x_k)\right\|^2} \nonumber\\
	& = F(x_k) - \alpha_k \nablafd F(x_k)^TH_k\nabla F(x_k) + \frac{\LFg\alpha_k^2}{2} \left\|H_k \nablafd F(x_k) \right\|^2 \nonumber \\
	&\quad + \frac{\LFg\alpha_k^2}{2} \E[S_k]{\left\|H_k \nablafd F_{S_k}(x_k) - H_k \nablafd F(x_k) \right\|^2} \nonumber\\
	&\leq F(x_k)  - \alpha_k \nablafd F(x_k)^TH_k\nabla F(x_k) \nonumber \\
	&\quad+\frac{\LFg \alpha_k^2}{2} \left(1 + \frac{\E[\zeta_i]{\left\|H_k \nablafd F_{\zeta_i}(x_k) - H_k \nablafd F(x_k) \right\|^2} }{|S_k|\|H_k\nablafd F(x_k)\|^2}\right)\left\|H_k\nablafd F(x_k)\right\|^2.\nonumber
	\end{align}
	
	By using $\delta_k=\nablafd F(x_k) - \nabla F(x_k)$,
	$R_k=\frac{\E[\zeta_i]{\left\|H_k \nablafd F_{\zeta_i}(x_k) - H_k \nablafd F(x_k) \right\|^2} }{|S_k|\|H_k\nablafd F(x_k)\|^2}$,  
	$\hLg = \LFg\left(1 + R_k\right)$, and 		
	Assumption~\ref{assum:eigs}, we have that 
	\begin{align}	
	\E[S_k]{F(x_{k+1})} 
	&\leq F(x_k)  - \alpha_k (\nabla F(x_k) + \delta_k)^T H_k \nabla F(x_k) 
	+ \frac{\hLg \alpha_k^2}{2} \|H_k(\nabla F(x_k) + \delta_k)\|^2\nonumber \\
	&= F(x_k)  - \alpha_k \nabla F(x_k)^TH_k \nabla F(x_k) 
	+ \frac{\hLg \alpha_k^2}{2}(\|H_k\nabla F(x_k)\|^2 
	+ \|H_k\delta_k\|^2) \nonumber \\
	&\quad -\alpha_k (H_k^{1/2}\delta_k)^T(I 
	- \hLg\alpha_k H_k)(H_k^{1/2}\nabla F(x_k)). \nonumber 
	\end{align}
	If 
	\begin{equation*}
	W_k = I - \LFg\left(1 + \frac{\E[\zeta_i]{\left\|H_k \nablafd F_{\zeta_i}(x_k) - H_k \nablafd F(x_k) \right\|^2} }{|S_k|\|H_k\nablafd F(x_k)\|^2}\right)\alpha_k H_k
	\end{equation*}
	is a positive-definite matrix, then we have 
	\begin{align}
	\E[S_k]{F(x_{k+1}) }
	&\leq F(x_k)  - \alpha_k \nabla F(x_k)^TH_k \nabla F(x_k) 
	+ \frac{\tLg \alpha_k^2}{2}(\|H_k\nabla F(x_k)\|^2 
	+ \|H_k\delta_k\|^2) \nonumber \\
	&\quad + \frac{\alpha_k}{2} \left((H_k^{1/2}\nabla F(x_k))^T(I 
	- \tLg\alpha_k H_k)(H_k^{1/2}\nabla F(x_k))\right) \nonumber \\
	&\quad + \frac{\alpha_k}{2}\left((H_k^{1/2}\delta_k)^T(I 
	- \tLg\alpha_k H_k)(H_k^{1/2}\delta_k)\right) \nonumber \\
	&= F(x_k) - \frac{\alpha_k}{2}\nabla F(x_k)^TH_k \nabla F(x_k) 
	+ \frac{\alpha_k}{2}\delta_k^TH_k\delta_k, \nonumber %
	\end{align}
	where the first inequality is due to the assumption that 
	$W_k$ is a positive-definite matrix, 
	and $2|x^TAy| \leq x^TAx + y^TAy$ for any positive-definite matrix $A$. Therefore, to obtain a decrease in the expected function value (to a certain neighborhood), the matrix $W_k$ must be positive definite. The only difference between the deterministic case and the stochastic case is the presence of the additional variance term in the matrix $W_k$. In the deterministic case, for a reasonably good quasi-Newton matrix $H_k$, one expects that $\alpha_k = 1$ will result in a decrease in the function (to a certain neighborhood), and therefore the initial trial step-length parameter should be chosen to be 1. In the stochastic case, the initial trial value
	\begin{equation*}
	\hat \alpha_k = \left(1 + \frac{\E[\zeta_i]{\left\|H_k \nablafd F_{\zeta_i}(x_k) - H_k \nablafd F(x_k) \right\|^2}}{|S_k|\|H_k\nablafd F(x_k)\|^2}\right)^{-1}
	\end{equation*}
	will most likely result in the decrease in expected function value (to a certain neighborhood). However, since this formula involves the expensive computation of the individual matrix-vector products $H_k\nablafd F_{\zeta_i}(x_k)$, we approximate the variance-bias ratio as follows:
	\begin{equation*}
	\hat \alpha_k = \left(1 + \frac{\Var[i \in S_k^v]{\nablafd F_{\zeta_i}(x_k)}}{|S_k|\|\nablafd F_{S_k}(x_k)\|^2}\right)^{-1},
	\end{equation*}
	where $S_k^v \subseteq S_k$.

\subsection{Assumption~\ref{assum:eigs} can be Guaranteed to Hold Algorithmically}
\label{app:Hcondition}
Assumption~\ref{assum:eigs} can be shown to hold both for convex and nonconvex functions by updating $H_k$ only when $y_k^T s_k \geq \beta \|s_k\|_2^2$, where $\beta > 0$ is a predetermined constant \cite{Berahas2016}. We first provide the following technical lemma, which is similar to Lemma $3.1$ in \cite{Berahas2016}.

\begin{lem}
	If Assumption \ref{assum:subgrad} is satisfied, and the quasi-Newton matrix update is skipped whenever one of \eqref{eq:curv} and \eqref{eq:length} is not satisfied, then there exist constants 
	$\Lambda_2 \geq \Lambda_1 > 0$ 
	such that
	\begin{equation*}
	\Lambda_1 I \preceq H_k \preceq \Lambda_2 I, \qquad \forall k\in \Z_{++}.
	\end{equation*}
\end{lem}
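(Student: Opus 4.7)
The plan is to follow the classical trace–determinant argument for BFGS, applied at each iteration to the L-BFGS reconstruction of $H_k$ from the (at most $m$) accepted curvature pairs and the scaled identity $H_0^k = \kappa_k I$ with $\kappa_k = y_k^T s_k/(y_k^T y_k)$. The acceptance conditions \eqref{eq:curv} and \eqref{eq:length} are designed precisely to make every accepted pair $(y_k,s_k)$ satisfy uniform upper and lower bounds on $\|y_k\|^2/(y_k^T s_k)$ and $y_k^T s_k/\|s_k\|^2$, so that the usual BFGS invariant estimates transfer to the L-BFGS setting.

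The first step is to bound the accepted curvature pairs. Condition \eqref{eq:curv} gives $y_k^T s_k \geq \beta_1 \|s_k\|^2$ directly; combining this with the algebraic estimate \eqref{eq:curvbound} (which used Assumption~\ref{assum:subgrad}) and the lengthening condition \eqref{eq:length} yields
\[
\frac{\|y_k\|^2}{y_k^T s_k} \;\leq\; \frac{3 \Lfg^2}{\beta_1} + \frac{3 \Lfg^2 \nu^2 d}{2 \beta_1 \beta_2^2} \;=:\; M_1,
\]
while Cauchy–Schwarz together with \eqref{eq:curv} forces $\|y_k\|^2/(y_k^T s_k) \geq y_k^T s_k/\|s_k\|^2 \geq \beta_1$. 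In particular the L-BFGS scaling satisfies $\kappa_k \in [1/M_1, 1/\beta_1]$, so the direct-Hessian initialization $B_0^k := (H_0^k)^{-1} = \kappa_k^{-1} I$ has eigenvalues in $[\beta_1, M_1]$ at every $k$.

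The second step is the standard BFGS trace–determinant recursion, performed on the (conceptual) direct-Hessian side $B_{j+1} = B_j - B_j s_j s_j^T B_j/(s_j^T B_j s_j) + y_j y_j^T/(y_j^T s_j)$. Dropping the negative semidefinite rank-one term, the trace satisfies $\operatorname{tr}(B_{j+1}) \leq \operatorname{tr}(B_j) + \|y_j\|^2/(y_j^T s_j) \leq \operatorname{tr}(B_j) + M_1$, so after at most $m$ updates from $B_0^k$,
\[
\lambda_{\max}(B_k) \;\leq\; \operatorname{tr}(B_k) \;\leq\; (d+m) M_1.
\]
Using the classical identity $\det(B_{j+1}) = \det(B_j)\,(y_j^T s_j)/(s_j^T B_j s_j)$ together with $y_j^T s_j \geq \beta_1 \|s_j\|^2$ and the trace bound $s_j^T B_j s_j \leq \lambda_{\max}(B_j)\|s_j\|^2 \leq (d+m)M_1\|s_j\|^2$ gives a uniform multiplicative lower bound on $\det(B_{j+1})/\det(B_j)$; combined with $\det(B_0^k) \geq \beta_1^d$, this yields a uniform positive lower bound on $\det(B_k)$, and hence $\lambda_{\min}(B_k) \geq \det(B_k)/\lambda_{\max}(B_k)^{d-1}$ is uniformly positive. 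Inverting via $H_k = B_k^{-1}$ produces the claimed constants $\Lambda_1, \Lambda_2 > 0$.

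The main subtlety, rather than any deep difficulty, is that L-BFGS rebuilds $H_k$ from scratch at every iteration using only the most recent $m$ pairs, so one cannot simply iterate a single recursion globally in $k$. This is handled by noting that the bounds on $\kappa_k$ and on each accepted $(y_j,s_j)$ are uniform in $k$; the same $m$-step trace–determinant analysis therefore yields the same constants $\Lambda_1, \Lambda_2$ at every iteration, independent of which specific pairs happen to be stored.
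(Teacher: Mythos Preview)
Your proposal is correct and follows essentially the same route as the paper's proof: both bound $\|y_k\|^2/(y_k^Ts_k)$ above and below using \eqref{eq:curv}, \eqref{eq:length}, and \eqref{eq:curvbound}, then run the classical trace--determinant argument on the direct Hessian $B_k=H_k^{-1}$ over the at most $m$ L-BFGS inner updates starting from the scaled identity $B_k^{(0)}=\kappa_k^{-1}I$. Your write-up is slightly more explicit about the constants (e.g., $(d+m)M_1$ for the trace and the $\det(B_k)/\lambda_{\max}(B_k)^{d-1}$ lower bound on $\lambda_{\min}$), but the structure and ingredients are identical.
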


\begin{proof}
	From \eqref{eq:curvbound} and \eqref{eq:length}, we have
	\begin{align}
	\frac{\|y_k\|^2}{y_k^Ts_k} &\leq \frac{3L^2}{\beta_1} + \frac{3\nu^2d}{2\beta_1\|s_k\|^2}
	\leq \frac{3L^2}{\beta_1} + \frac{3\nu^2d}{2\beta_1\beta_2^2}. \label{eq:cuv_up}
	\end{align}
	From \eqref{eq:curv}, we have
	\begin{align*}
		\beta_1 \|s_k\|^2 \leq y_k^Ts_k \leq \|y_k\|s_k\|,
	\end{align*}
	and therefore
	\begin{align*}
	\|s_k\| \leq \frac{1}{\beta_1}\|y_k\|.
	\end{align*}
	It follows that
	\begin{align*}
		y_k^Ts_k \leq \|y_k\| \|s_k\| \leq \frac{1}{\beta_1}\|y_k\|^2
	\end{align*}
	and hence
	\begin{equation}\label{eq:curv_low}
	\frac{\|y_k\|^2}{y_k^Ts_k} \geq \beta_1.
	\end{equation}
	Let $\Lambda_{l} = \beta_1$ and $\Lambda_{u} =  \frac{3L^2}{\beta_1} + \frac{3\nu^2d}{2\beta_1\beta_2^2}$. Combining upper bound \eqref{eq:cuv_up} and lower bound \eqref{eq:curv_low}, we get 
	\begin{equation}\label{eq:curv_lowup}
	\Lambda_{l} \leq \frac{\|y_k\|^2}{y_k^Ts_k} \leq \Lambda_{u}.
	\end{equation}
	The rest of the proof follows directly from the proof of Lemma $3.1$ in \cite{Berahas2016}. We provide it here for the sake of completeness. 
	Now, consider the direct Hessian approximation $B_k = H_k^{-1}$. The limited memory quasi-Newton updating formula is given as follows
	\begin{enumerate}
		\item[1.] Set $B_k^{(0)} = \frac{y_k^Ty_k}{s_k^Ty_k}I$ and $\tilde{m} = \min\{k,m\}$; where $m$ is the memory in L-BFGS.
		\item[2.] For $i = 0, \ldots, \tilde{m} - 1$ set $j=k - \tilde{m} + i$ and compute
			\begin{equation*}
		    	B_k^{(i+1)} = B_k^{(i)} - \frac{B_k^{(i)}s_js_j^TB_k^{(i)}}{s_j^TB_k^{(i)}s_j} + \frac{y_jy_j^T}{y_j^Ts_j}.
			\end{equation*}
		\item[3.] Set $B_{k+1} = B_k^{(\tilde{m})}$.	
	\end{enumerate}
	Due to \eqref{eq:curv_lowup}, the eigenvalues of the matrices $B_k^{(0)} = \frac{y_k^Ty_k}{s_k^Ty_k}I$ at the start of the L-BFGS update cycles are bounded above and away from zero, for all $k$. We now use a Trace-Determinant argument to show that the eigenvalues of $B_k$ are bounded above and away from zero. 
	
	Let $Tr(B)$ and $det(B)$ denote the trace and determinant of matrix $B$, respectively, and set $j_i = k - \tilde{m} + i$. The trace of the matrix $B_{k+1}$ can be expressed as
	\begin{align}
		Tr(B_{k+1}) &= Tr(B_k^{(0)}) - Tr \sum_{i=1}^{\tilde{m}} \left(\frac{B_k^{(i)}s_{j_i}s_{j_i}^TB_k^{(i)}}{s_{j_i}^TB_k^{(i)}s_{j_i}}\right) + Tr\sum_{i=1}^{\tilde{m}} \frac{y_{j_i}y_{j_i}^T}{y_{j_i}^Ts_{j_i}} \nonumber \\
		&\leq Tr(B_k^{(0)}) + \sum_{i=1}^{\tilde{m}} \frac{\|y_{j_i}\|^2}{y_{j_i}^Ts_{j_i}} \nonumber \\
		&\leq Tr(B_k^{(0)}) + \tilde{m}\Lambda_{u} \nonumber \\
		&\leq  C_1,	\label{eq:trace}
	\end{align}
	for some constant $C_1>0$, where the first inequality is due to positive semi-definiteness of $B_k^{(i)}$ update formula, the second inequality is due to \eqref{eq:curv_lowup} and the last inequality is due to the fact that the eigenvalues of the initial L-BFGS matrix $B_k^{(0)}$ are bounded above and away from zero. 
	
	Using a result due to Powell~\cite{powell1976some}, the determinant of the matrix $B_{k+1}$ generated by the proposed algorithm can be expressed as,
	\begin{align}
		det(B_{k+1}) &= det(B_k^{0})\Pi_{i=1}^{\tilde{m}} \frac{y_{j_i}^Ts_{j_i}}{s_{j_i}^TB_k^{(i-1)}s_{j_i}} \nonumber \\
		&= det(B_k^{0})\Pi_{i=1}^{\tilde{m}} \frac{y_{j_i}^Ts_{j_i}}{s_{j_i}^Ts_{j_i}}\frac{s_{j_i}^Ts_{j_i}}{s_{j_i}^TB_k^{(i-1)}s_{j_i}} \nonumber \\
		&\geq det(B_k^{0}) \left(\frac{\beta_1}{C_1}\right)^{\tilde{m}} \nonumber \\
		&\geq C_2, \label{eq:det}
	\end{align} 
	for some constant $C_2>0$, where the first inequality is due to \eqref{eq:curv} and the fact that the largest eigenvalue of $B_k^{(i)}$ is less than $C_1$, and the last inequality is due to the fact that the eigenvalues of the initial L-BFGS matrix $B_k^{(0)}$ are bounded above and away from zero.
	
	The trace \eqref{eq:trace} and determinant \eqref{eq:det} inequalities derived above imply that largest eigenvalues of all matrices $B_k$ are bounded above, uniformly, and the smallest eigenvalues of all matrices $B_k$ are bounded away from zero, uniformly. Therefore, the inverse Hessian approximation $H_k$ also has eigenvalues bounded above and away from zero.   		
\end{proof}		

\section{Additional Numerical Results}
\label{app:numerical}

Here we include numerical results for the smooth problems in Table~\ref{tb:data}; see Section~\ref{sec:smoothexp} for further details.

\begin{figure}[!tb]
	\begin{centering}
		\includegraphics[width=0.45\linewidth]{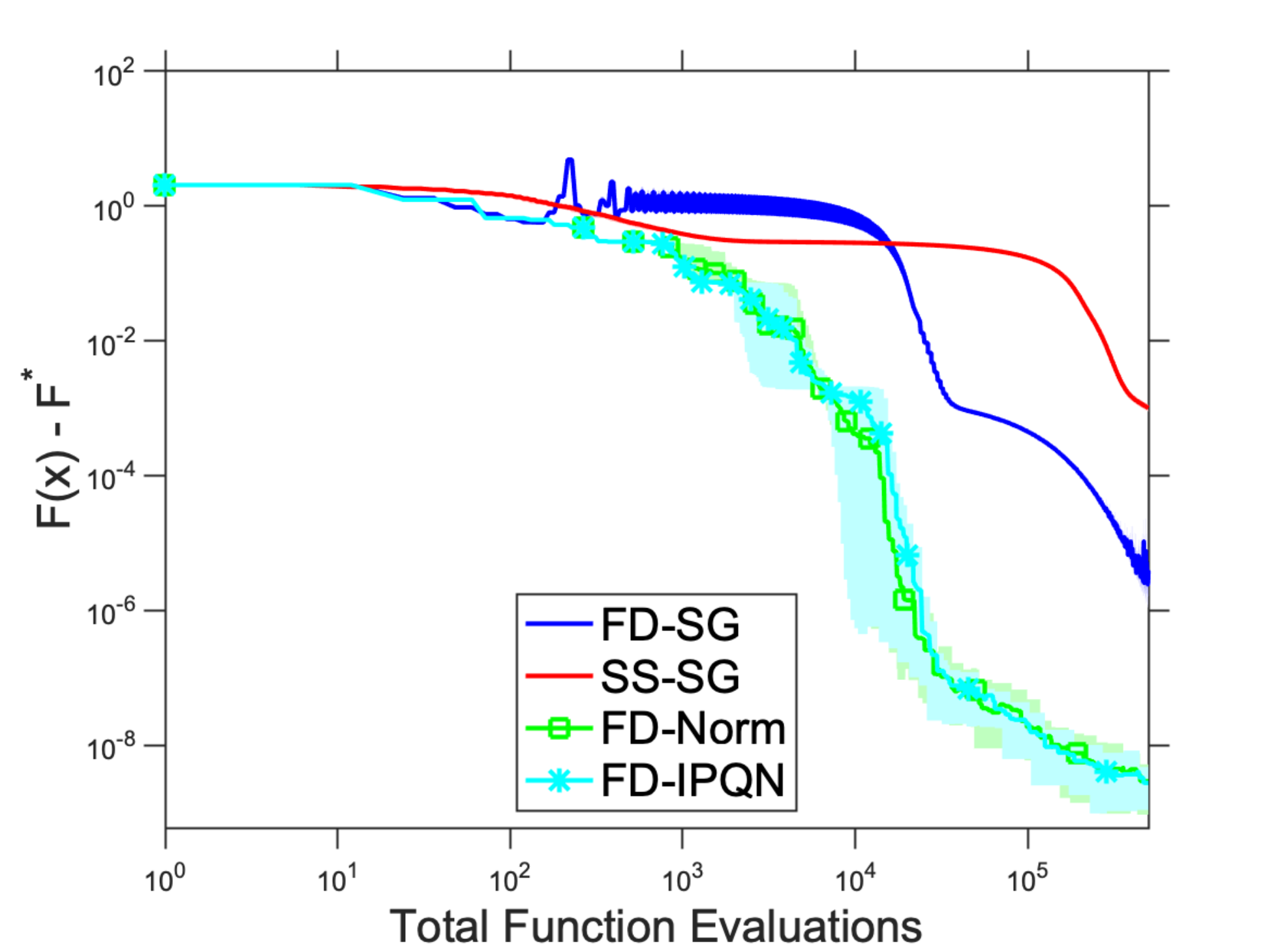}
		\includegraphics[width=0.45\linewidth]{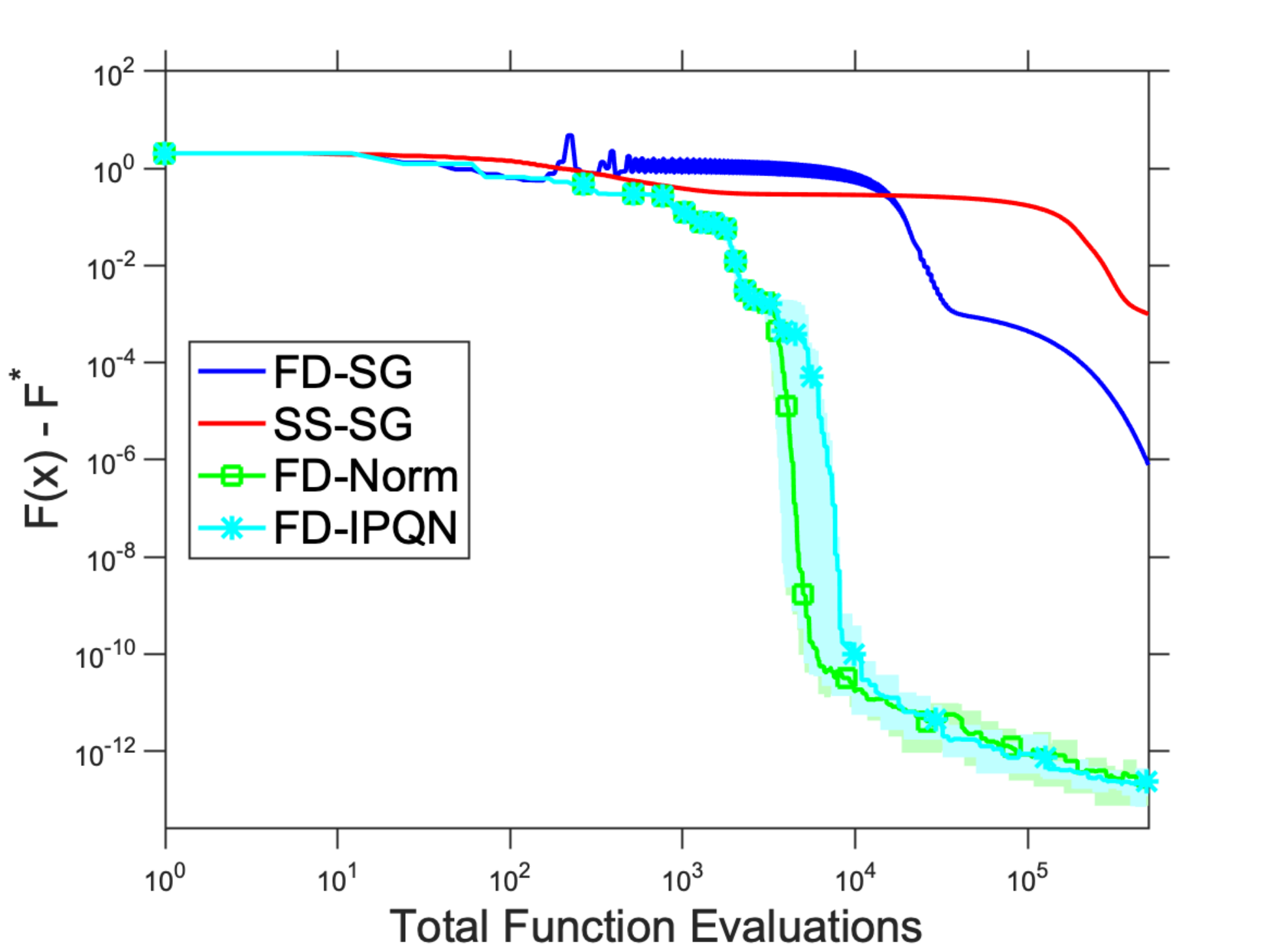}
		\includegraphics[width=0.45\linewidth]{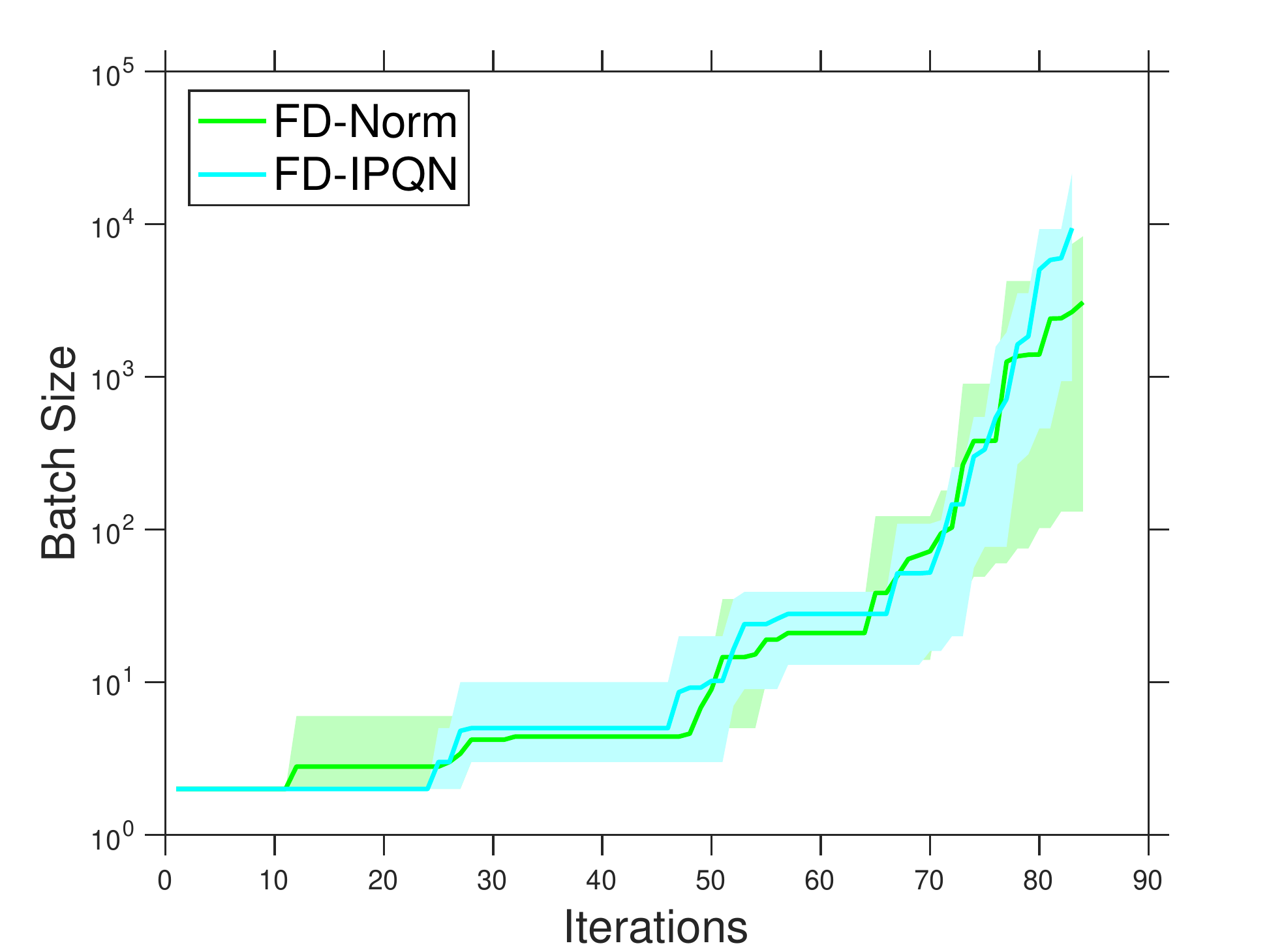}
		\includegraphics[width=0.45\linewidth]{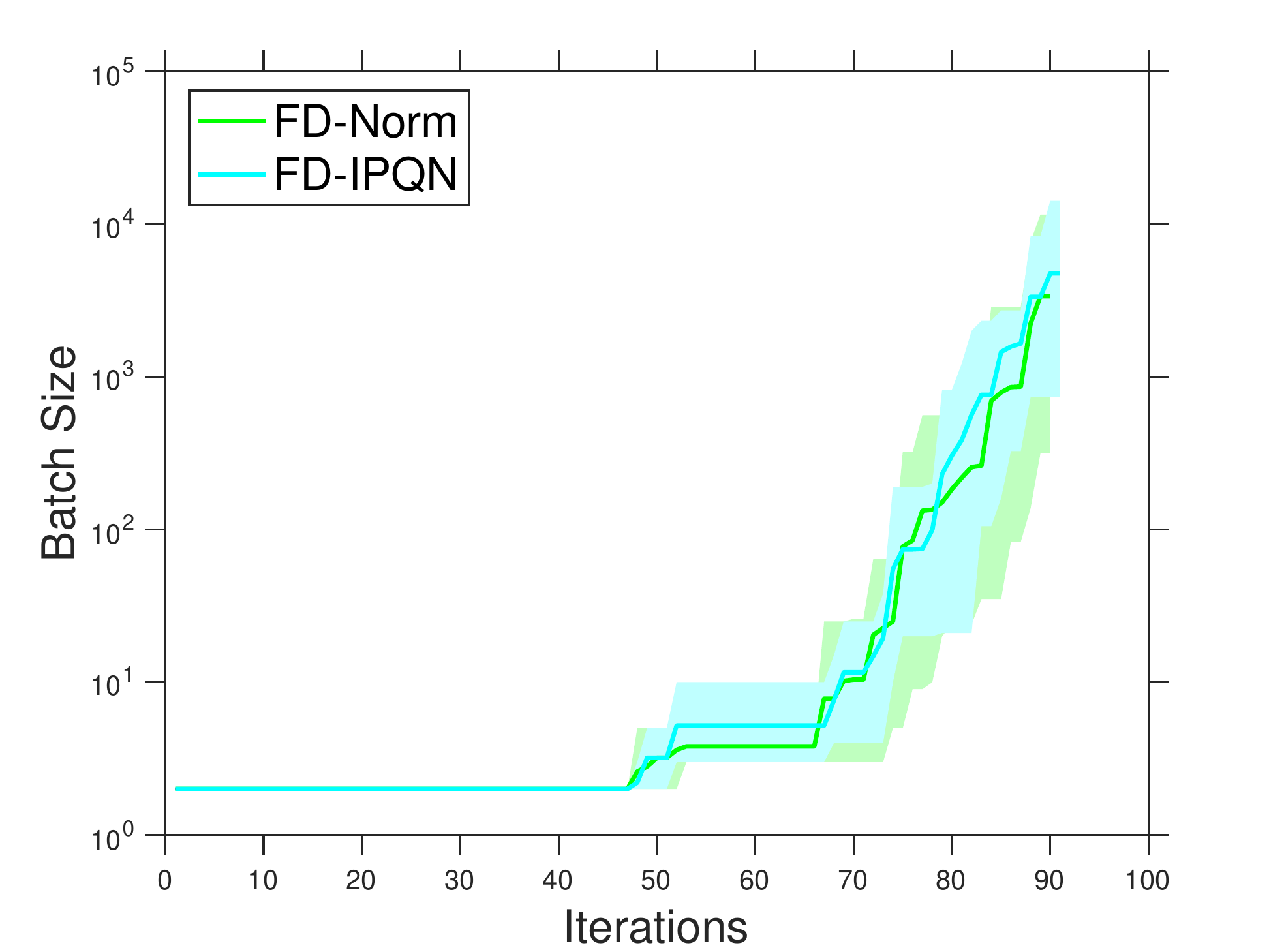} \hfill
		\includegraphics[width=0.45\linewidth]{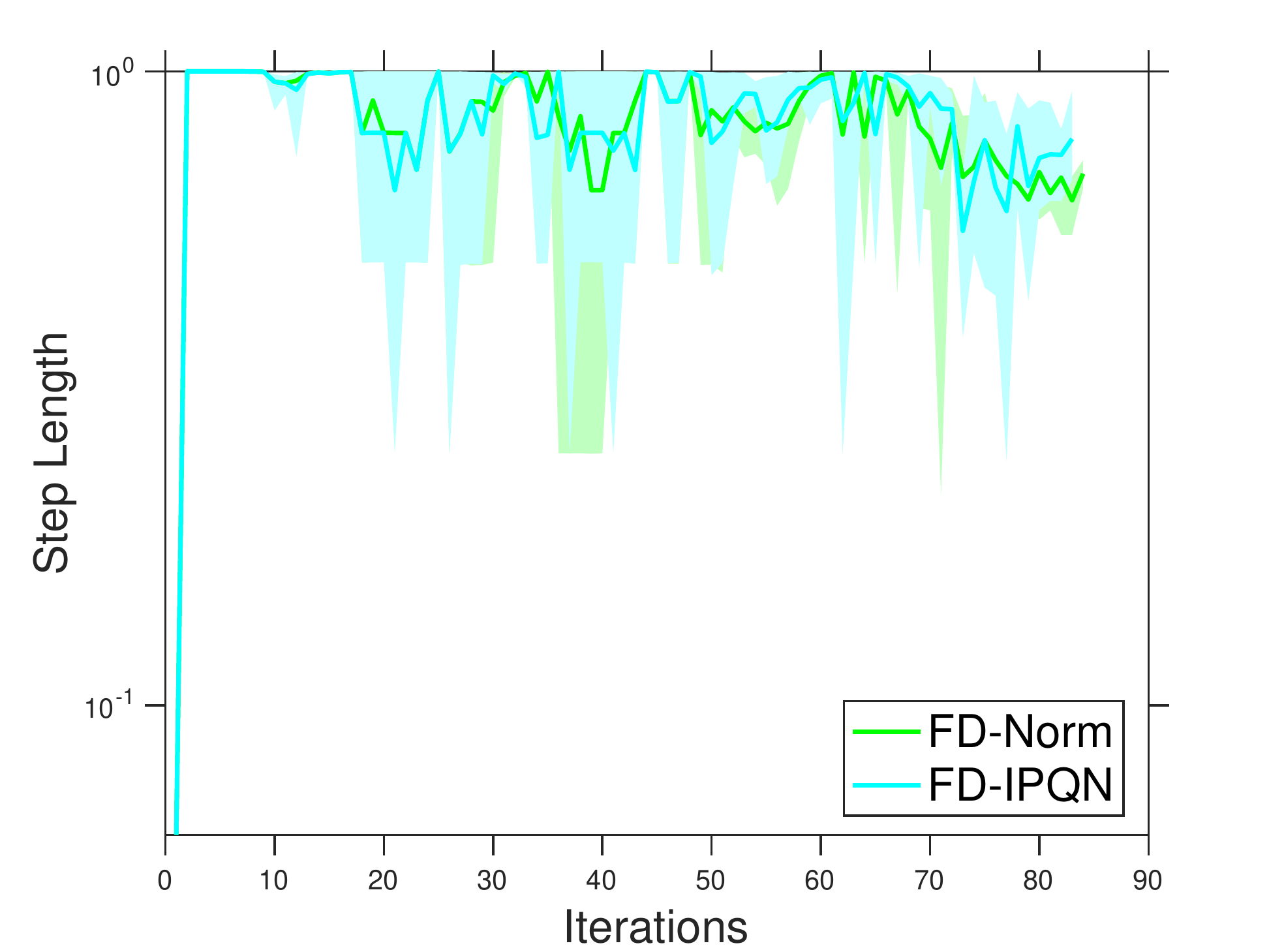} 
		\includegraphics[width=0.45\linewidth]{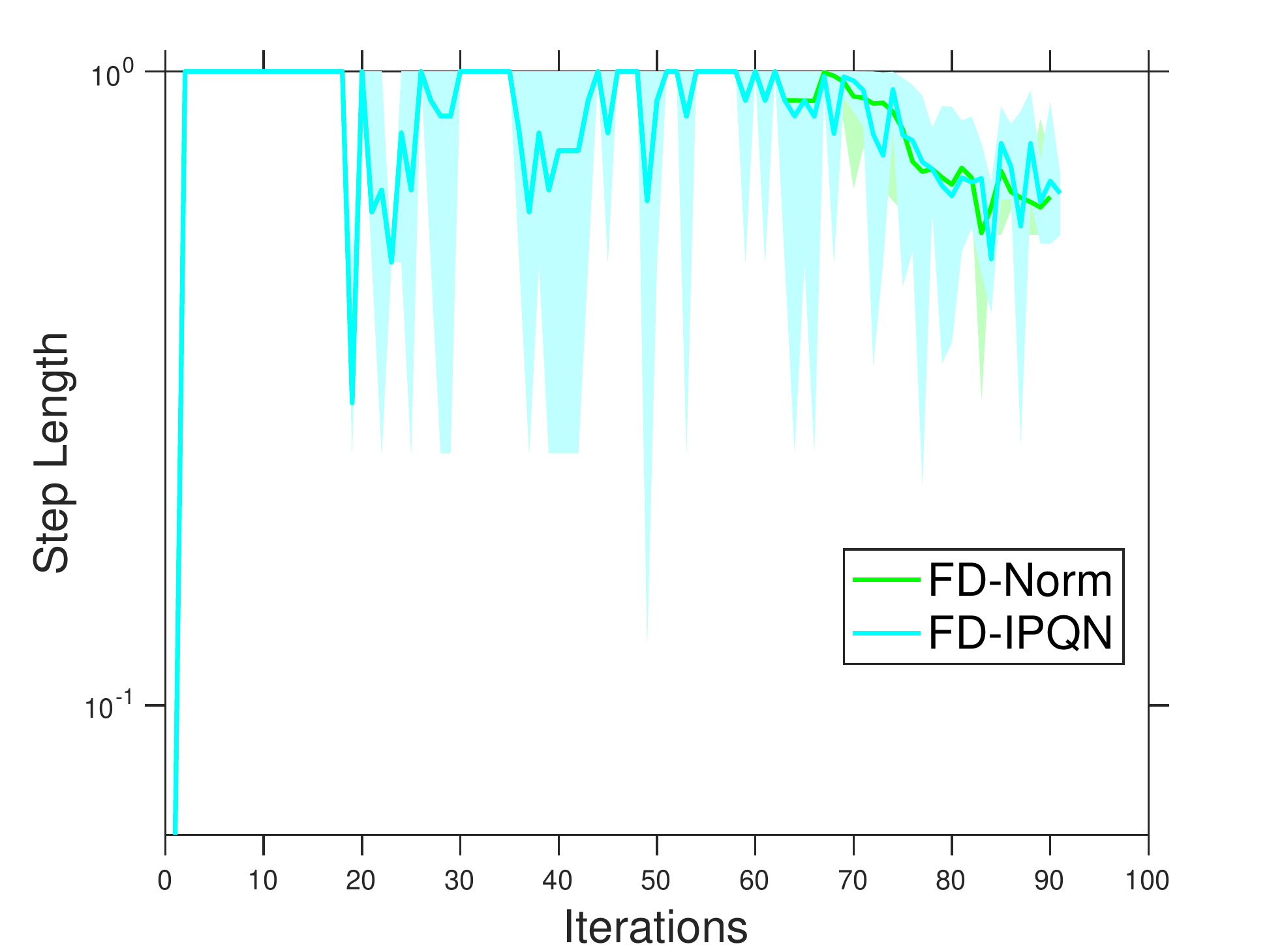} \hfill 
		\par\end{centering}	
	\caption{Osborne function ($d=11$, $p=65$) results: 
		Using $f_{\rm abs}$ with $\sigma=10^{-3}$ (left column) and $\sigma=10^{-5}$ (right column). Top row: $F-F^*$ value versus number of $f$ evaluations. Middle row: Batch size versus number of iterations. Bottom row: Step length versus number of iterations.
		}
\end{figure}

\begin{figure}[!tb]
	\begin{centering}
		\includegraphics[width=0.45\linewidth]{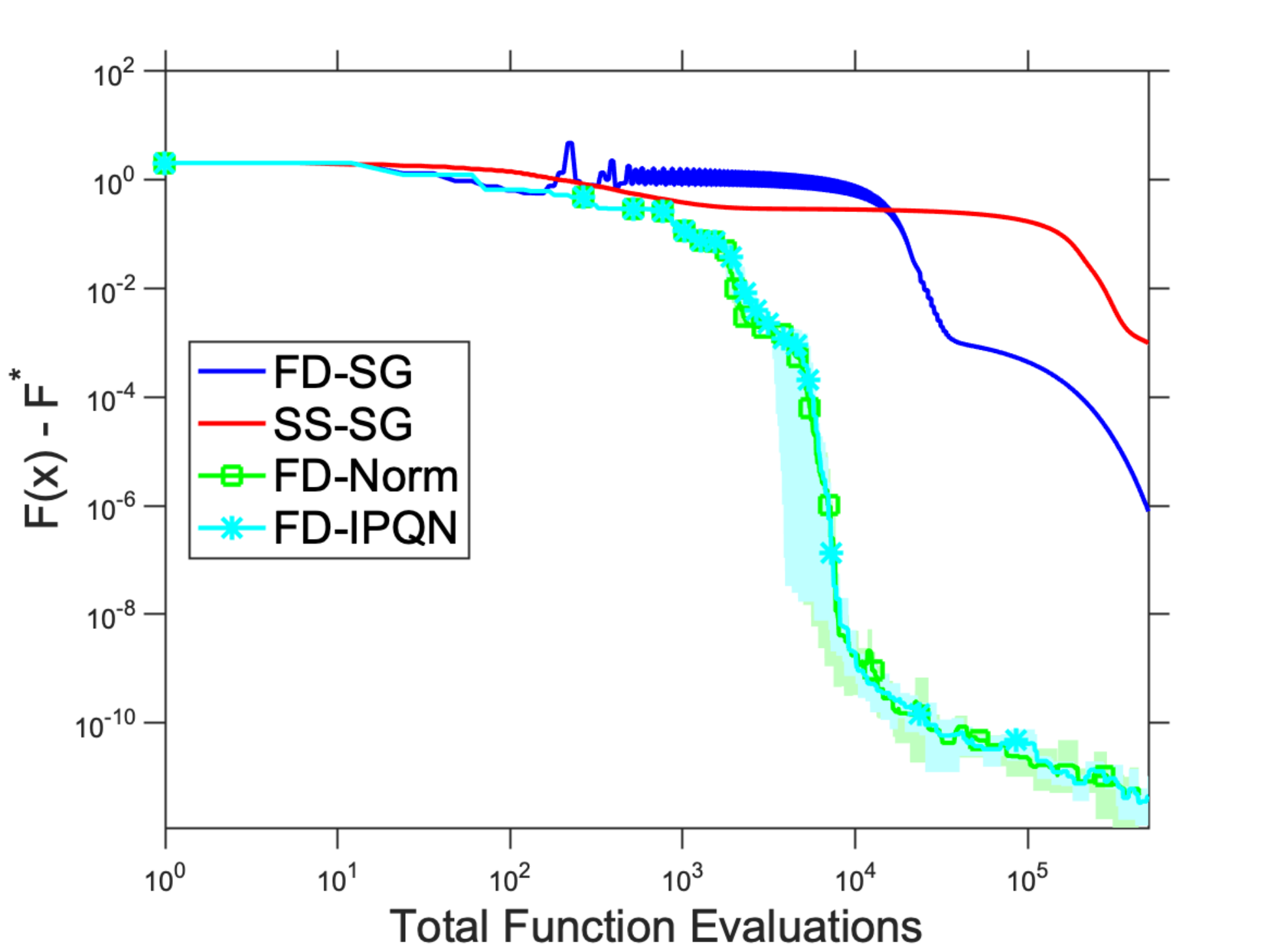}
		\includegraphics[width=0.45\linewidth]{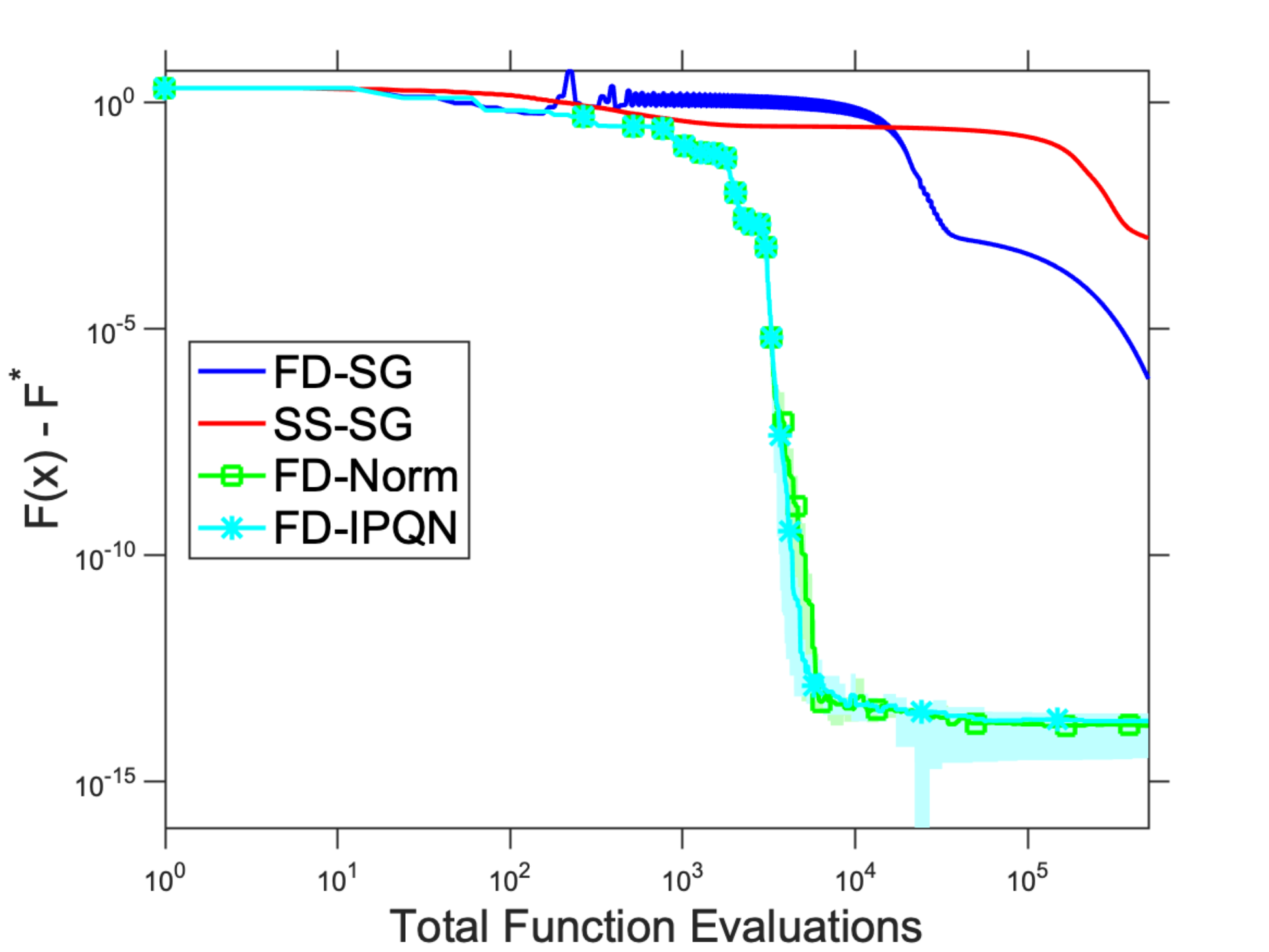}
		\includegraphics[width=0.45\linewidth]{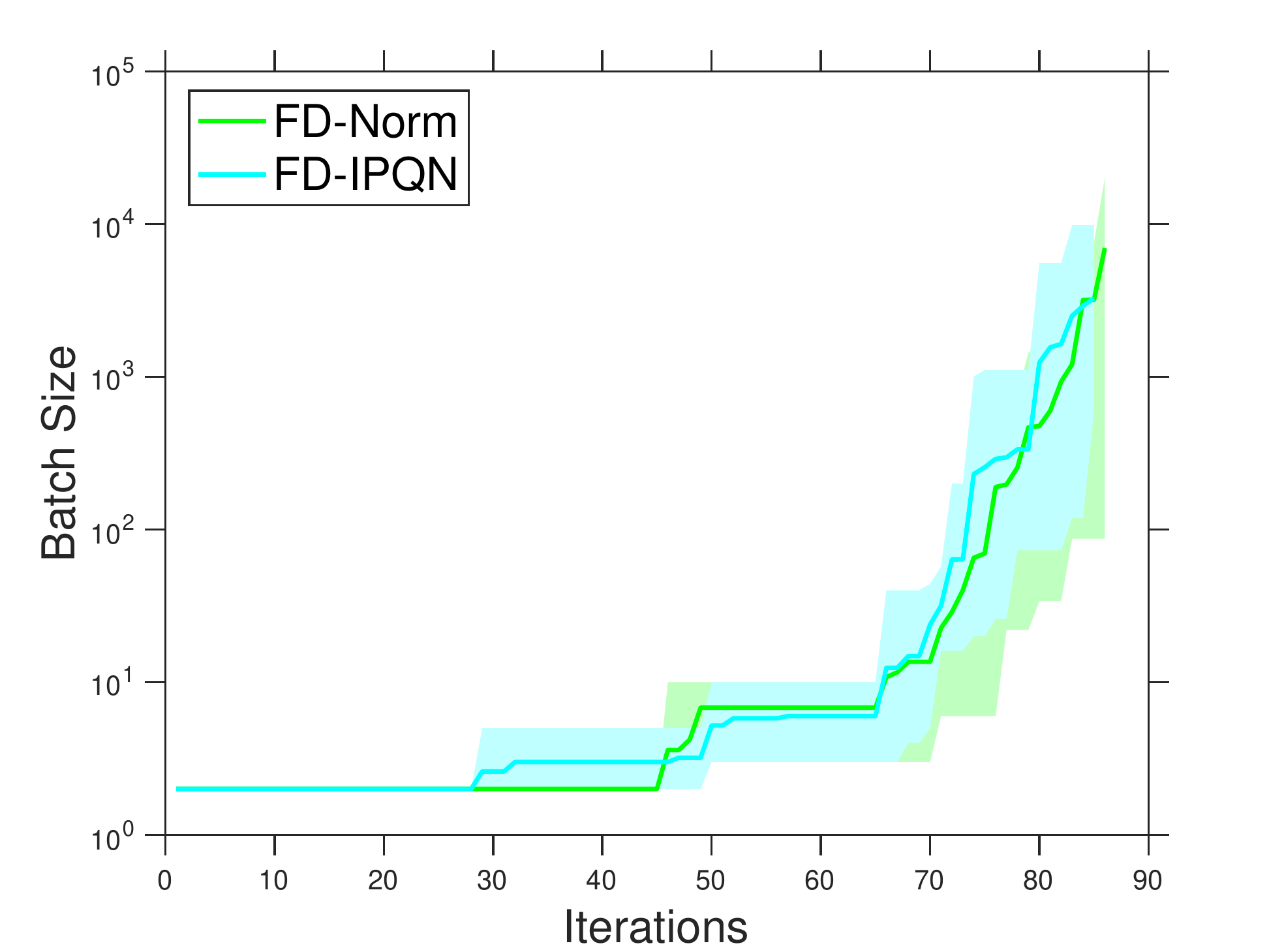}
		\includegraphics[width=0.45\linewidth]{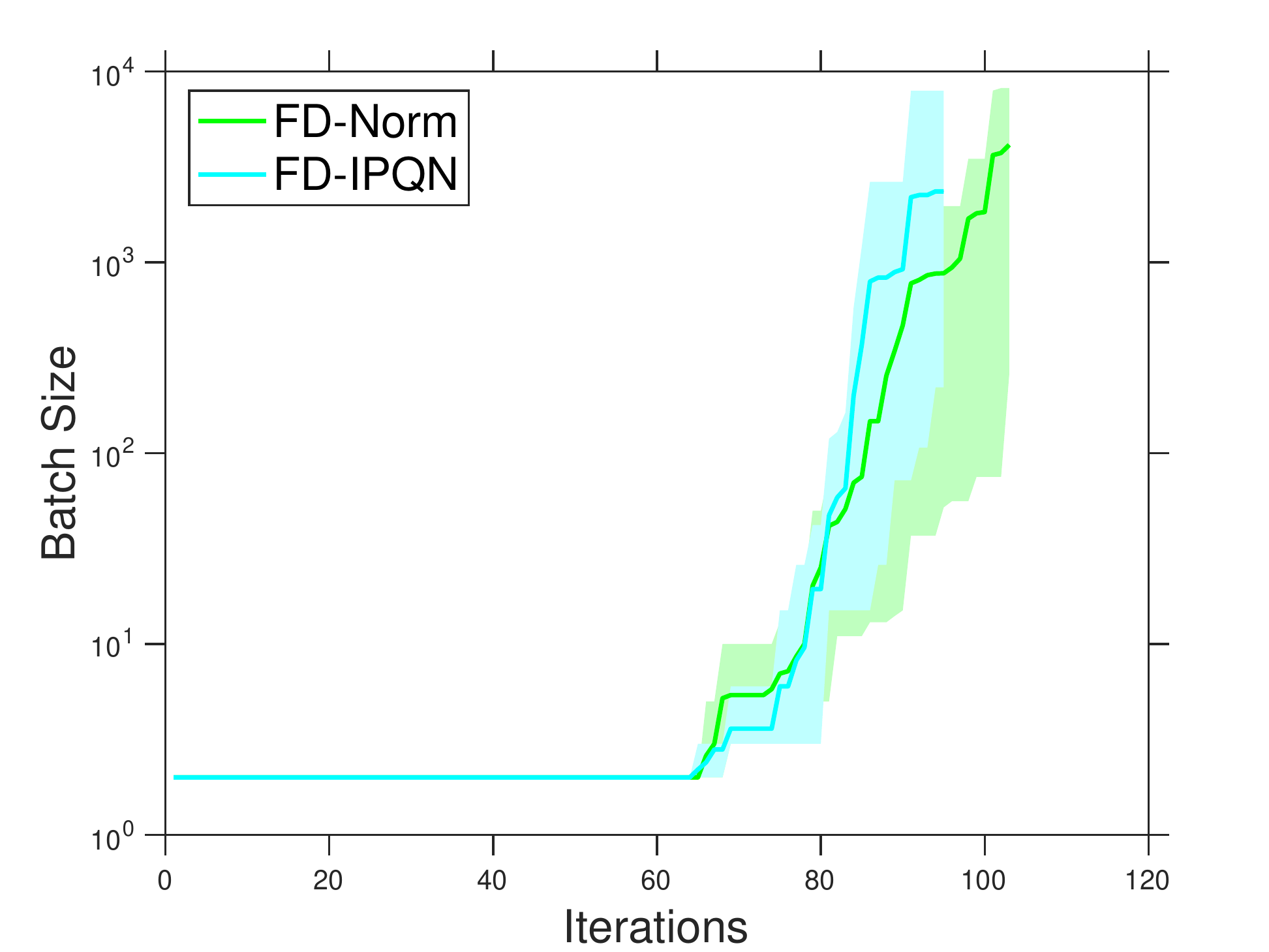} \hfill
		\includegraphics[width=0.45\linewidth]{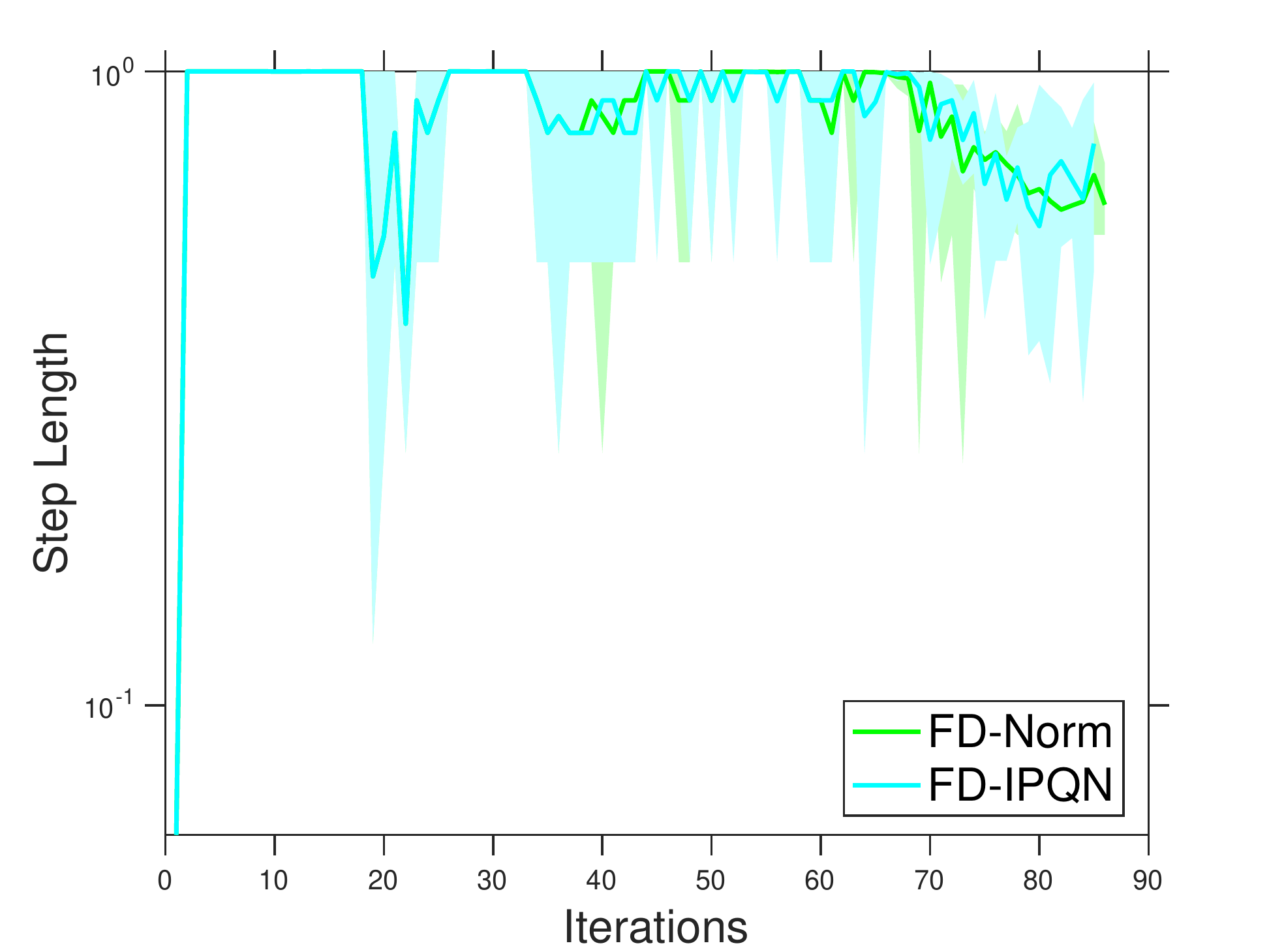} 
		\includegraphics[width=0.45\linewidth]{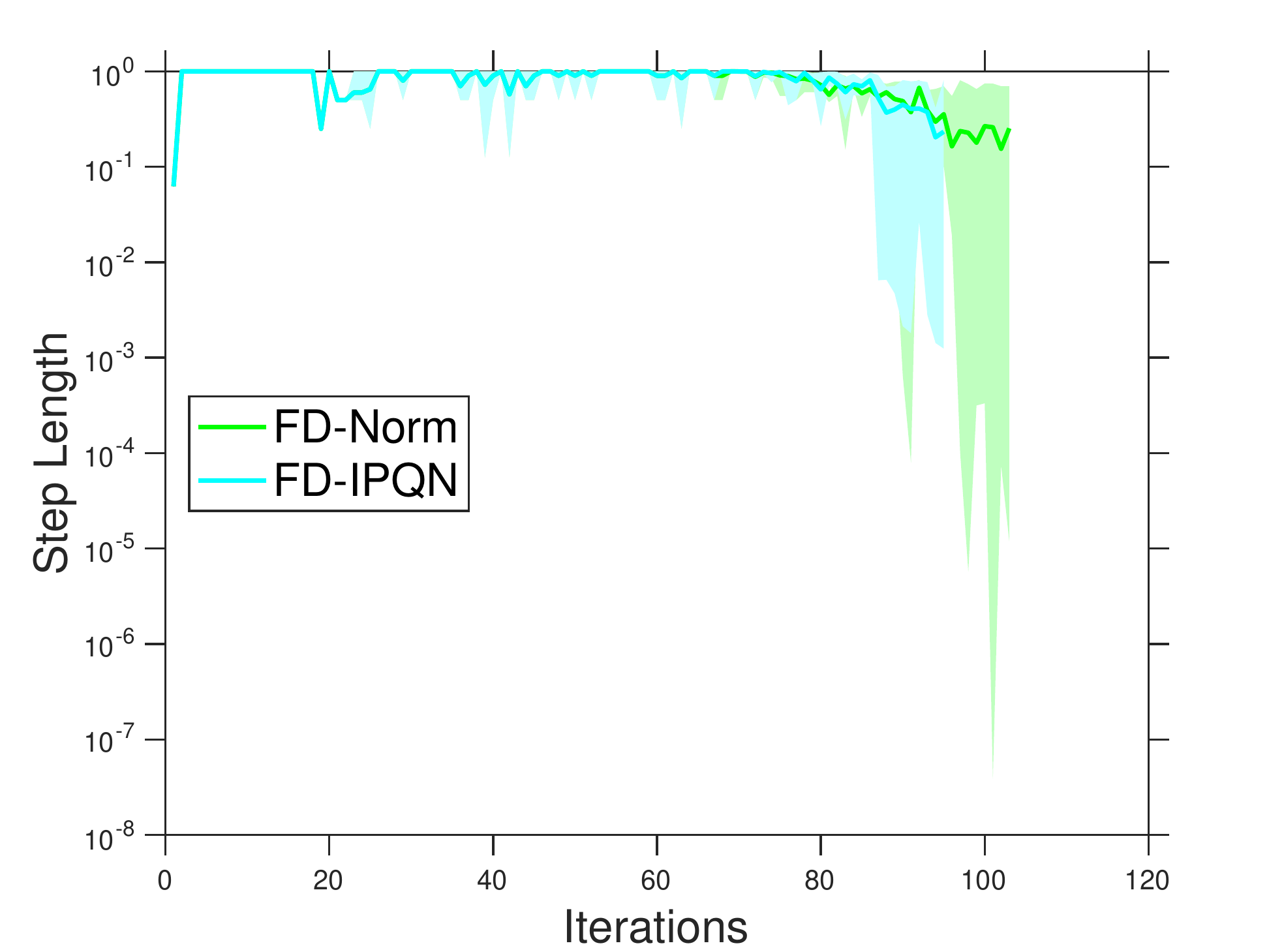} \hfill 
		\par\end{centering}	
	\caption{Osborne function ($d=11$, $p=65$) results: 
		Using $f_{\rm rel}$ with $\sigma=10^{-3}$ (left column) and $\sigma=10^{-5}$ (right column). Top row: $F-F^*$ value versus number of $f$ evaluations. Middle row: Batch size versus number of iterations. Bottom row: Step length versus number of iterations.
		}
\end{figure}

\begin{figure}[!tb]
	\begin{centering}
		\includegraphics[width=0.45\linewidth]{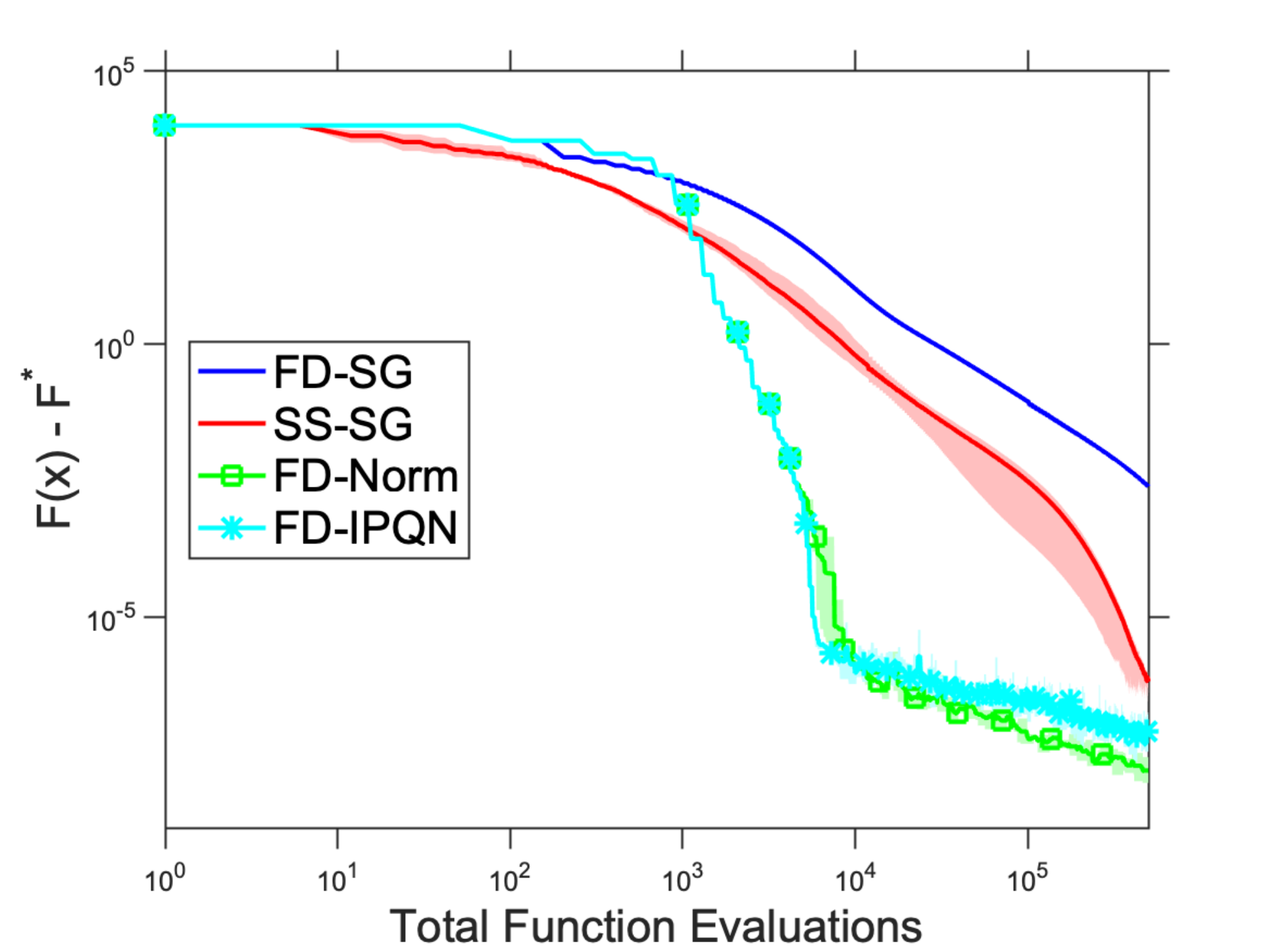}
		\includegraphics[width=0.45\linewidth]{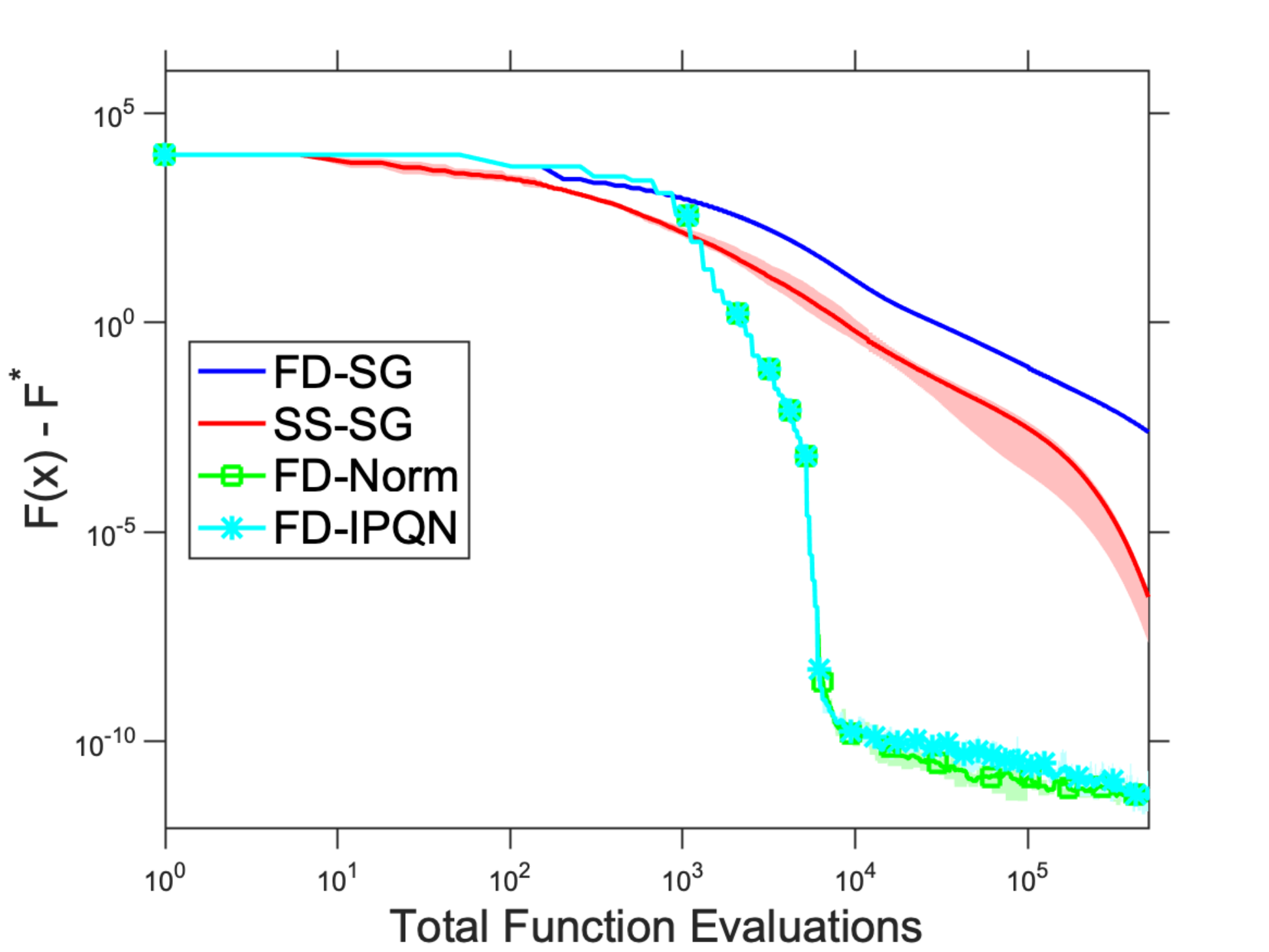}
		\includegraphics[width=0.45\linewidth]{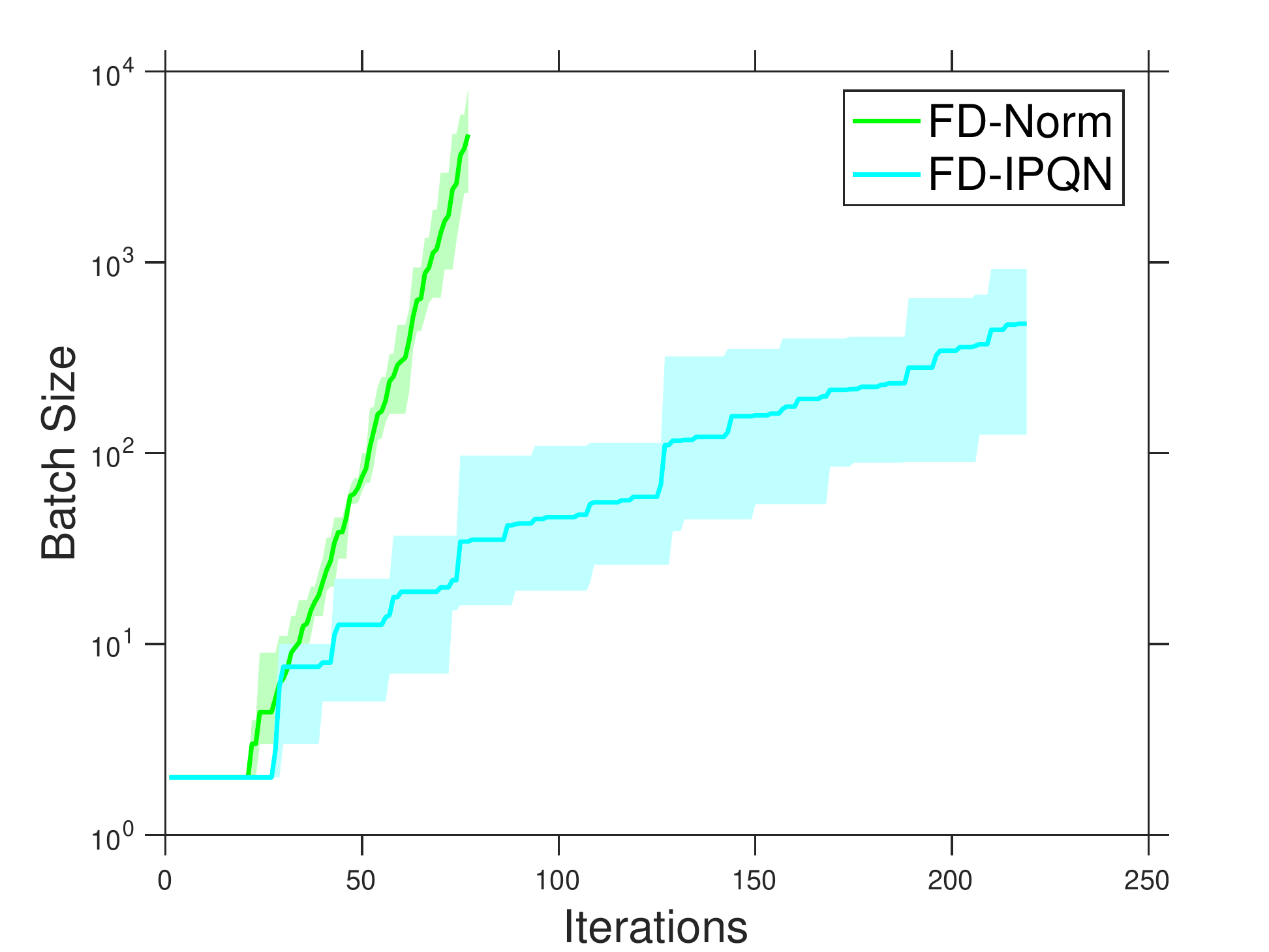}
		\includegraphics[width=0.45\linewidth]{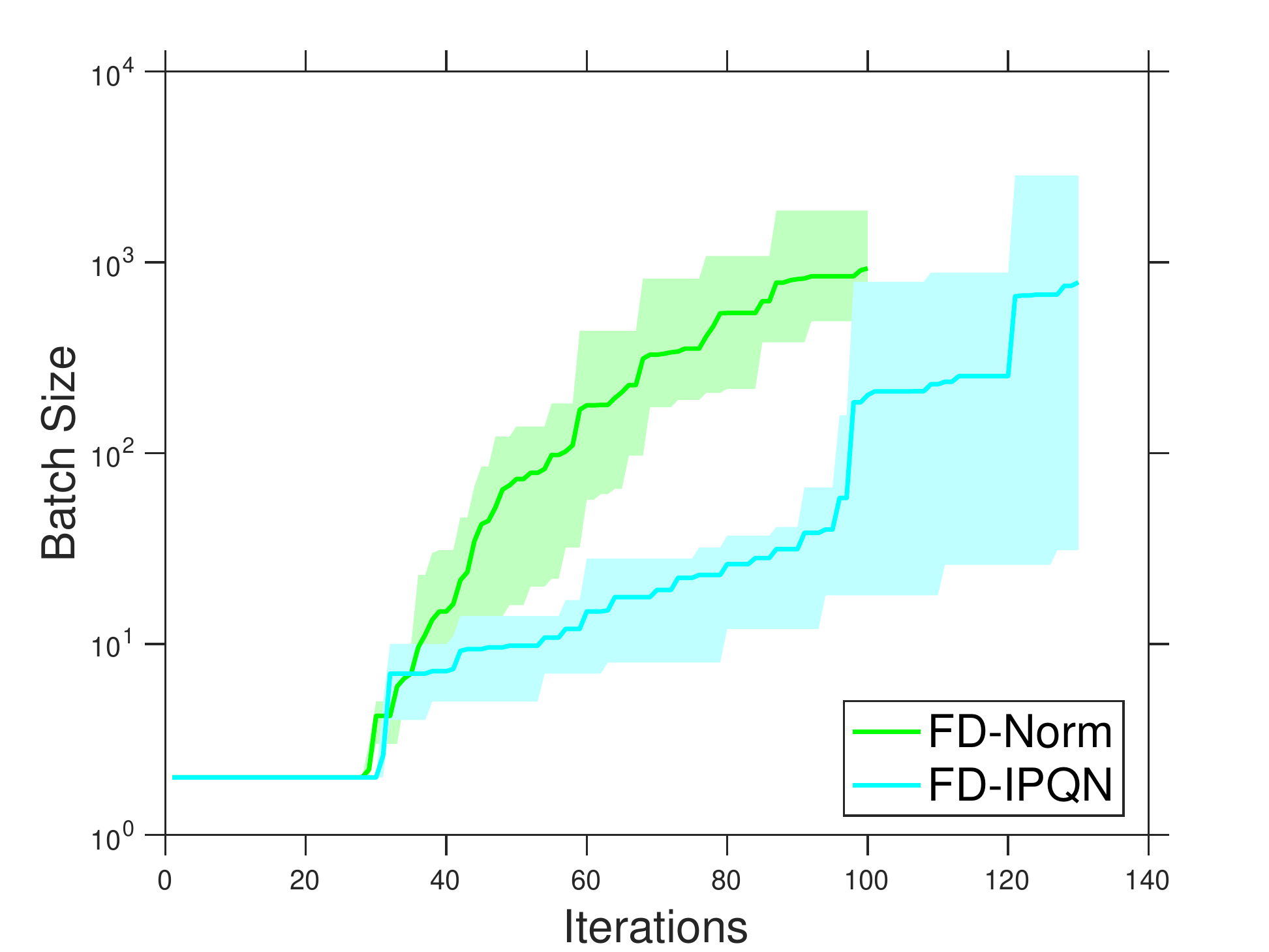} \hfill
		\includegraphics[width=0.45\linewidth]{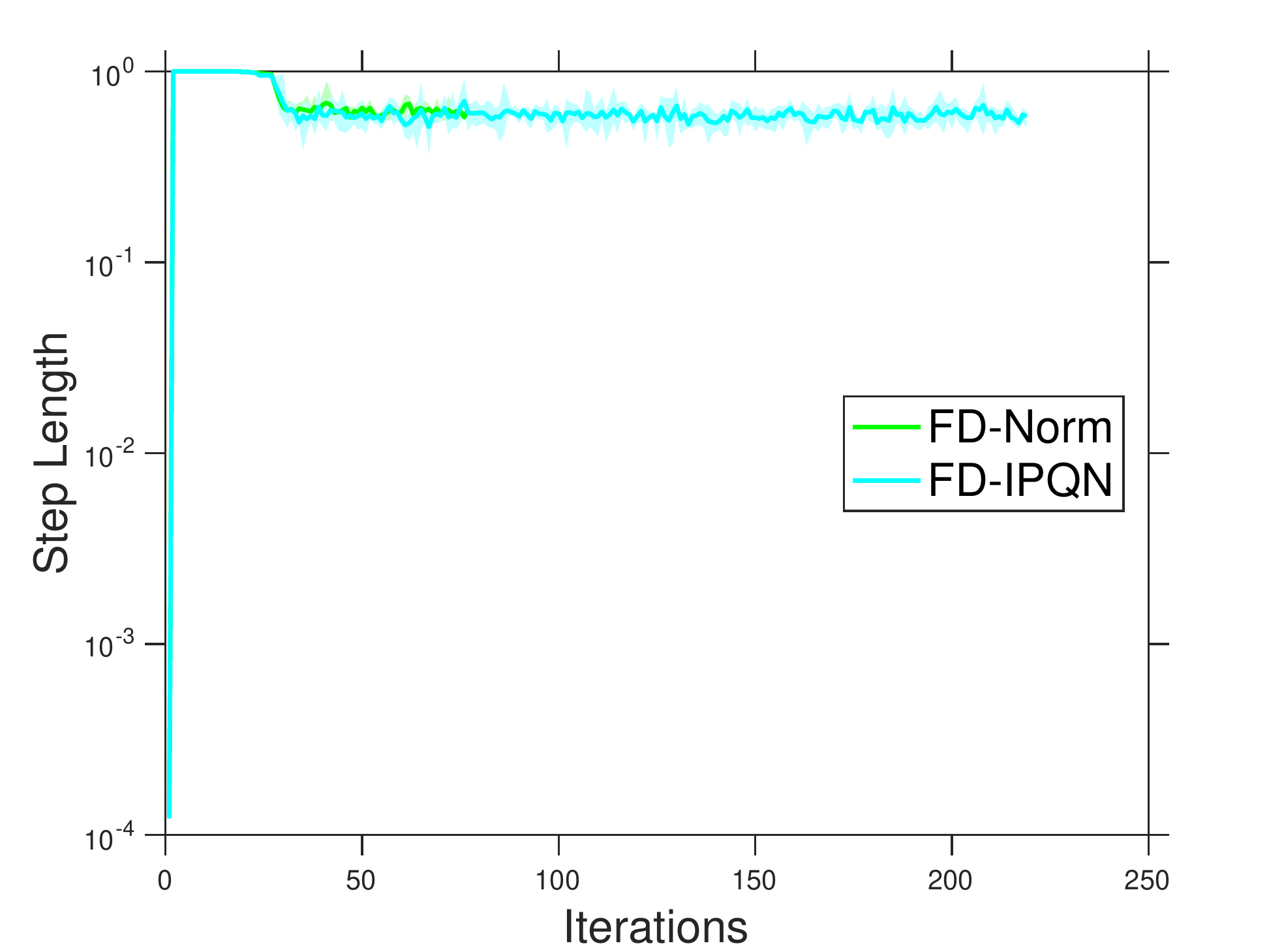} 
		\includegraphics[width=0.45\linewidth]{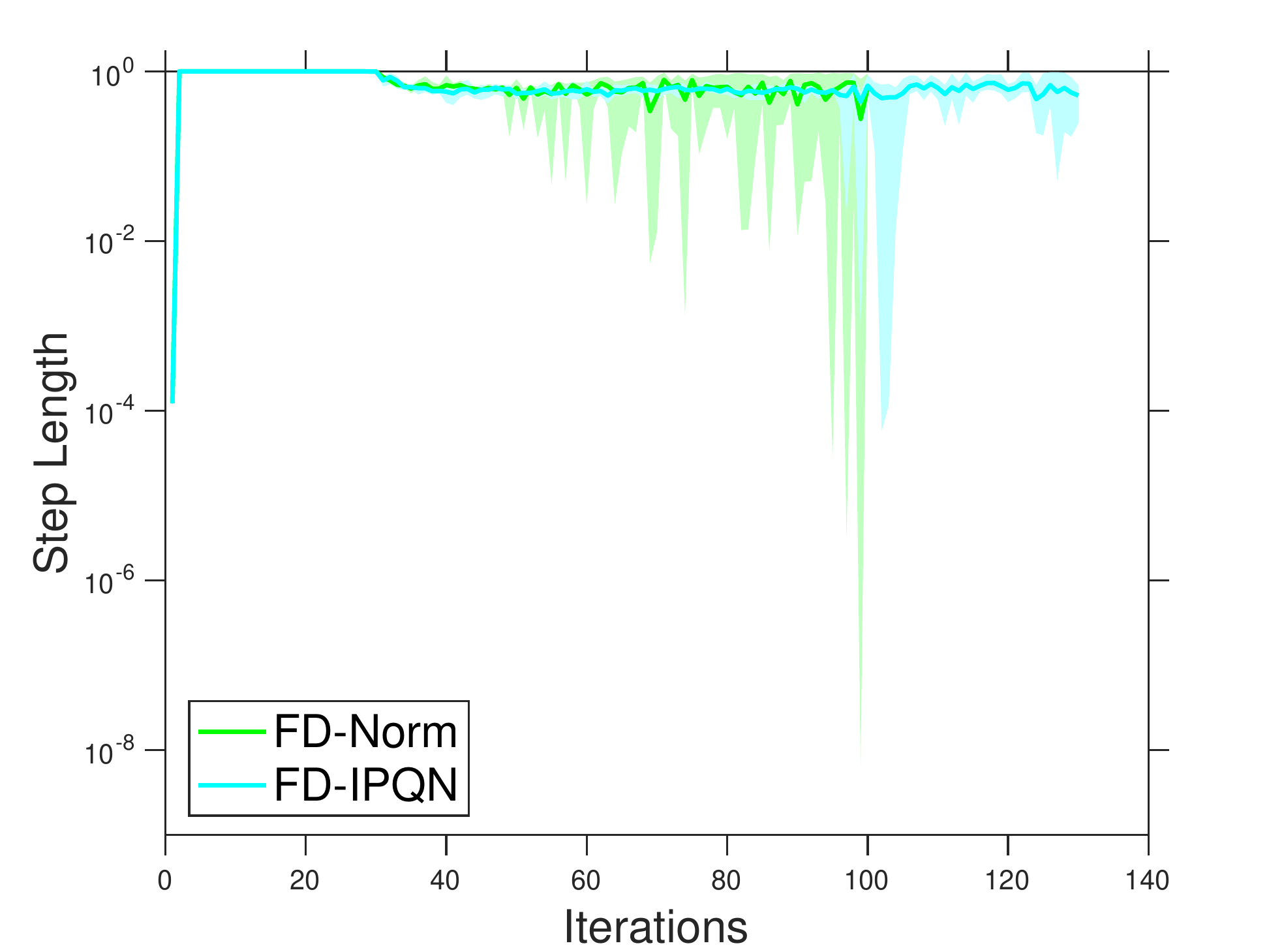} \hfill 
		\par\end{centering}	
	\caption{Bdqrtic function ($d=50$, $p=92$) results: 
		Using $f_{\rm abs}$ with $\sigma=10^{-3}$ (left column) and $\sigma=10^{-5}$ (right column). Top row: $F-F^*$ value versus number of $f$ evaluations. Middle row: Batch size versus number of iterations. Bottom row: Step length versus number of iterations.
		}
\end{figure}

\begin{figure}[!tb]
	\begin{centering}
		\includegraphics[width=0.45\linewidth]{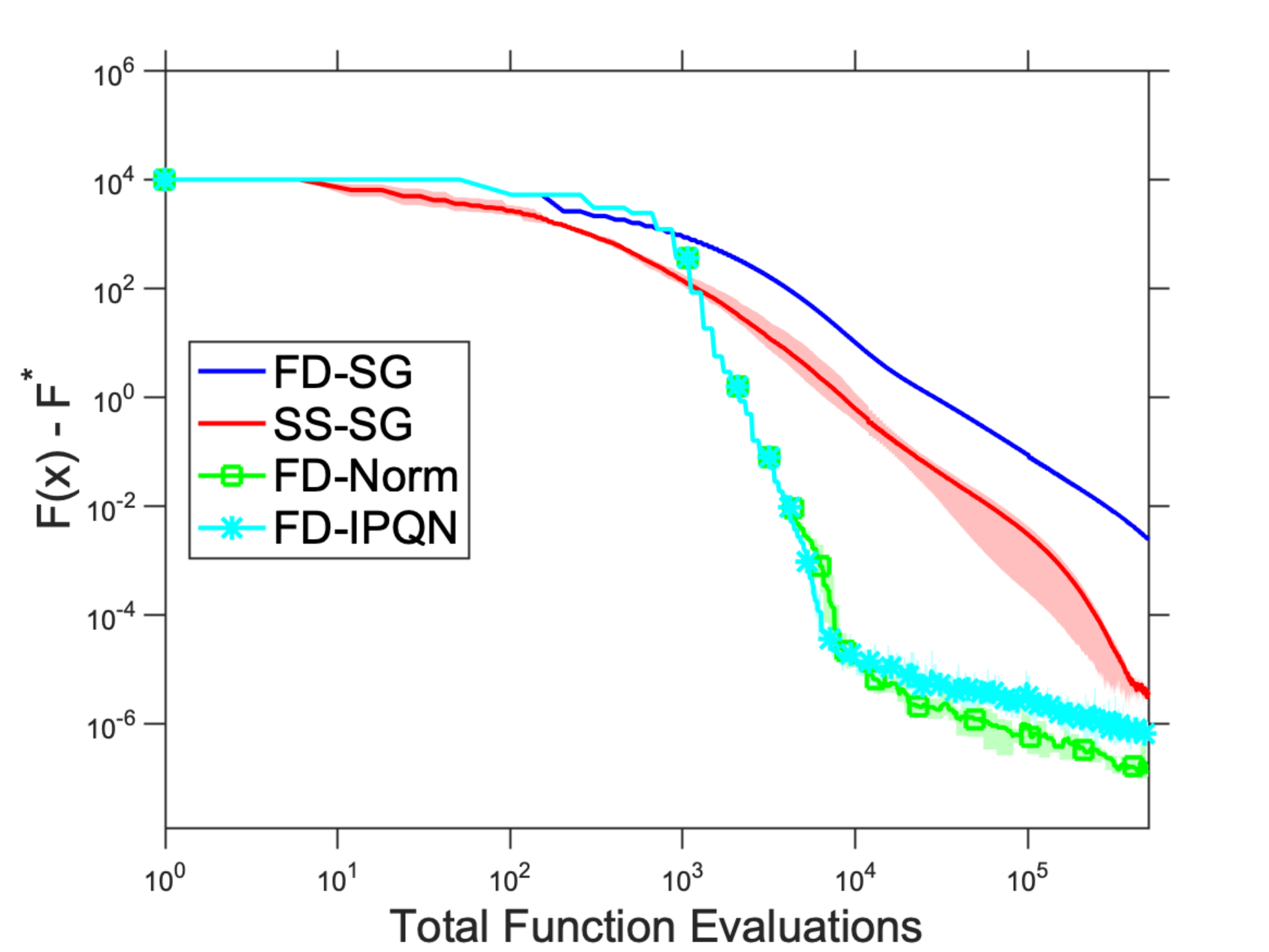}
		\includegraphics[width=0.45\linewidth]{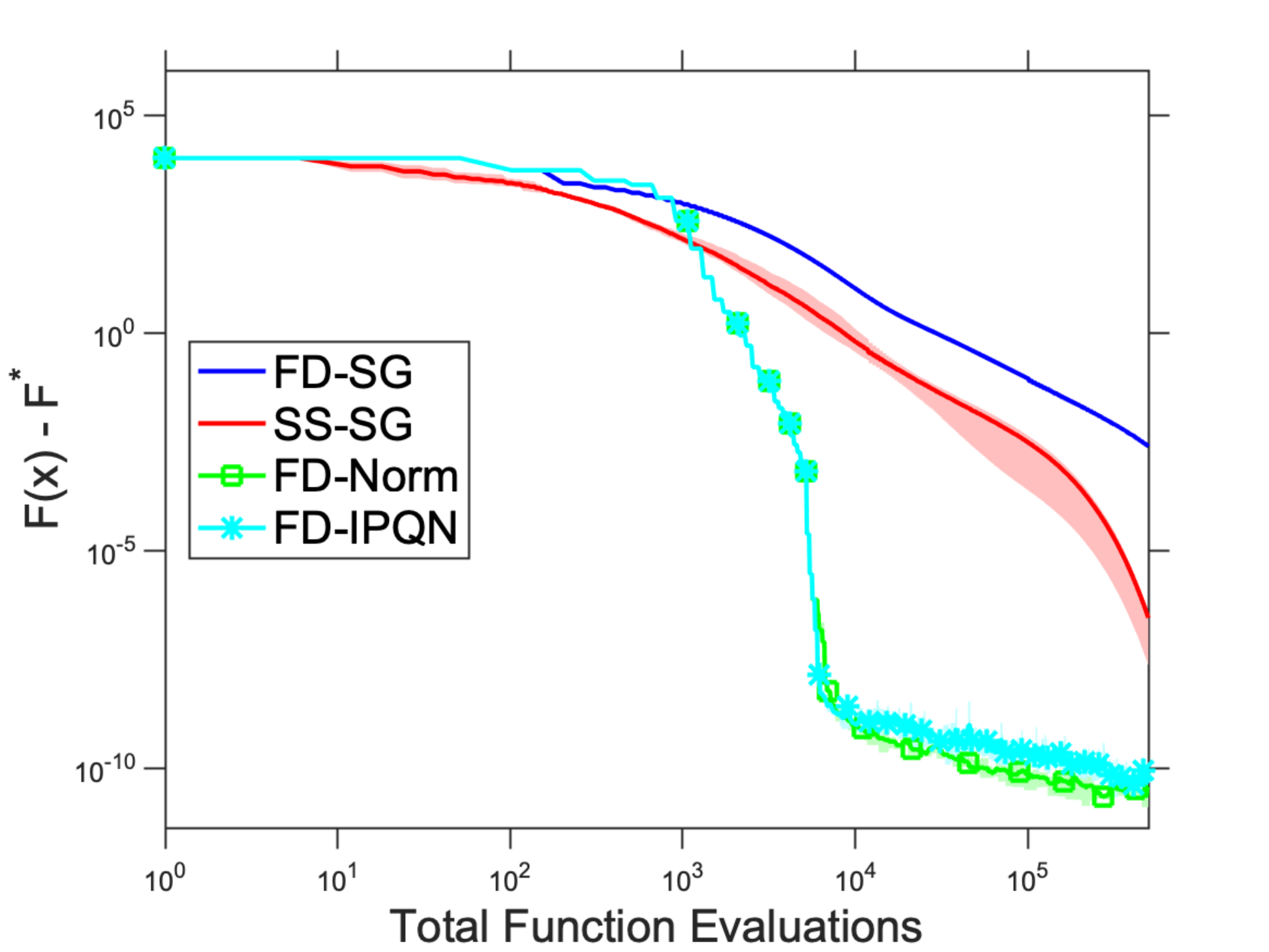}
		\includegraphics[width=0.45\linewidth]{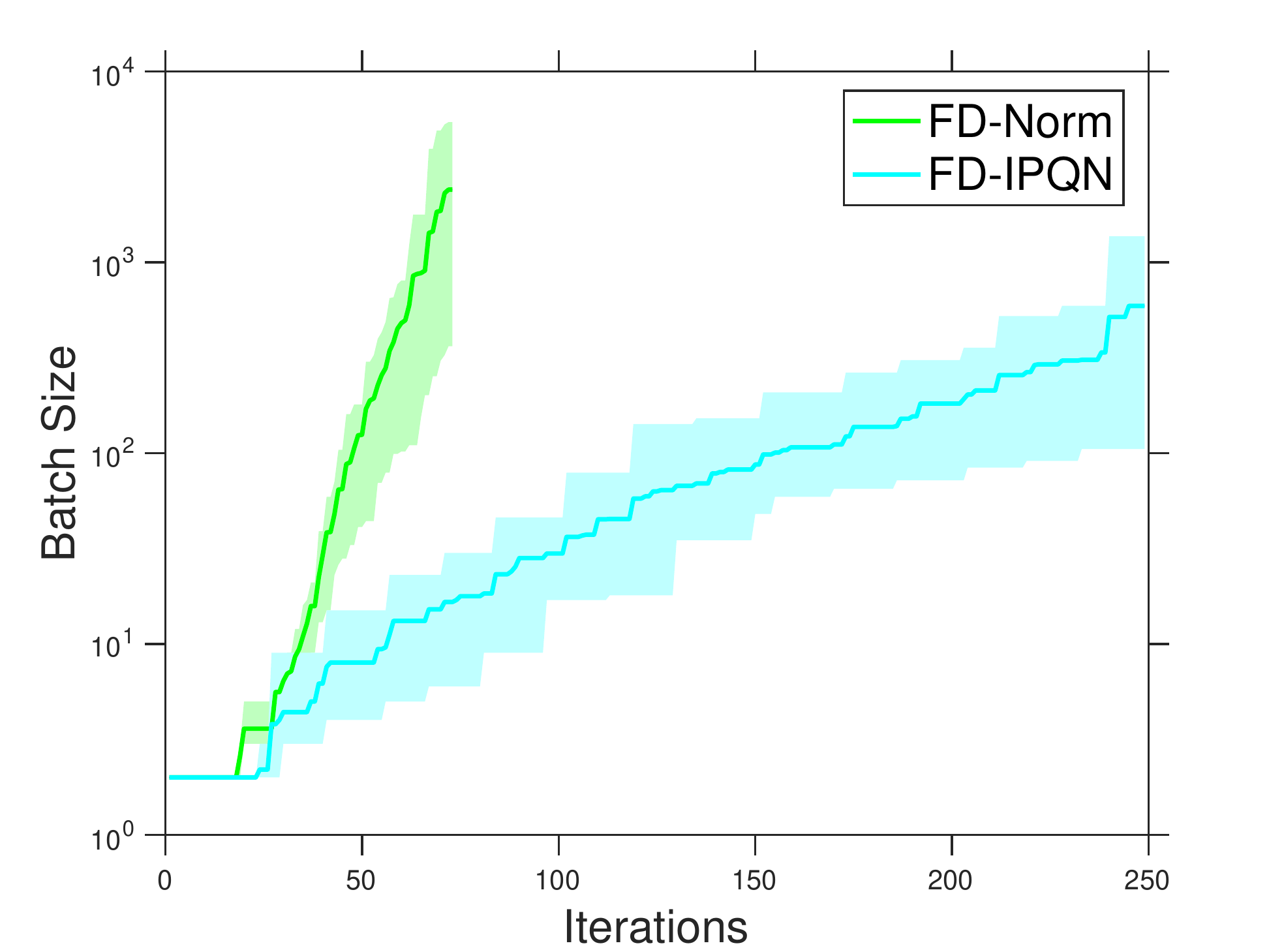}
		\includegraphics[width=0.45\linewidth]{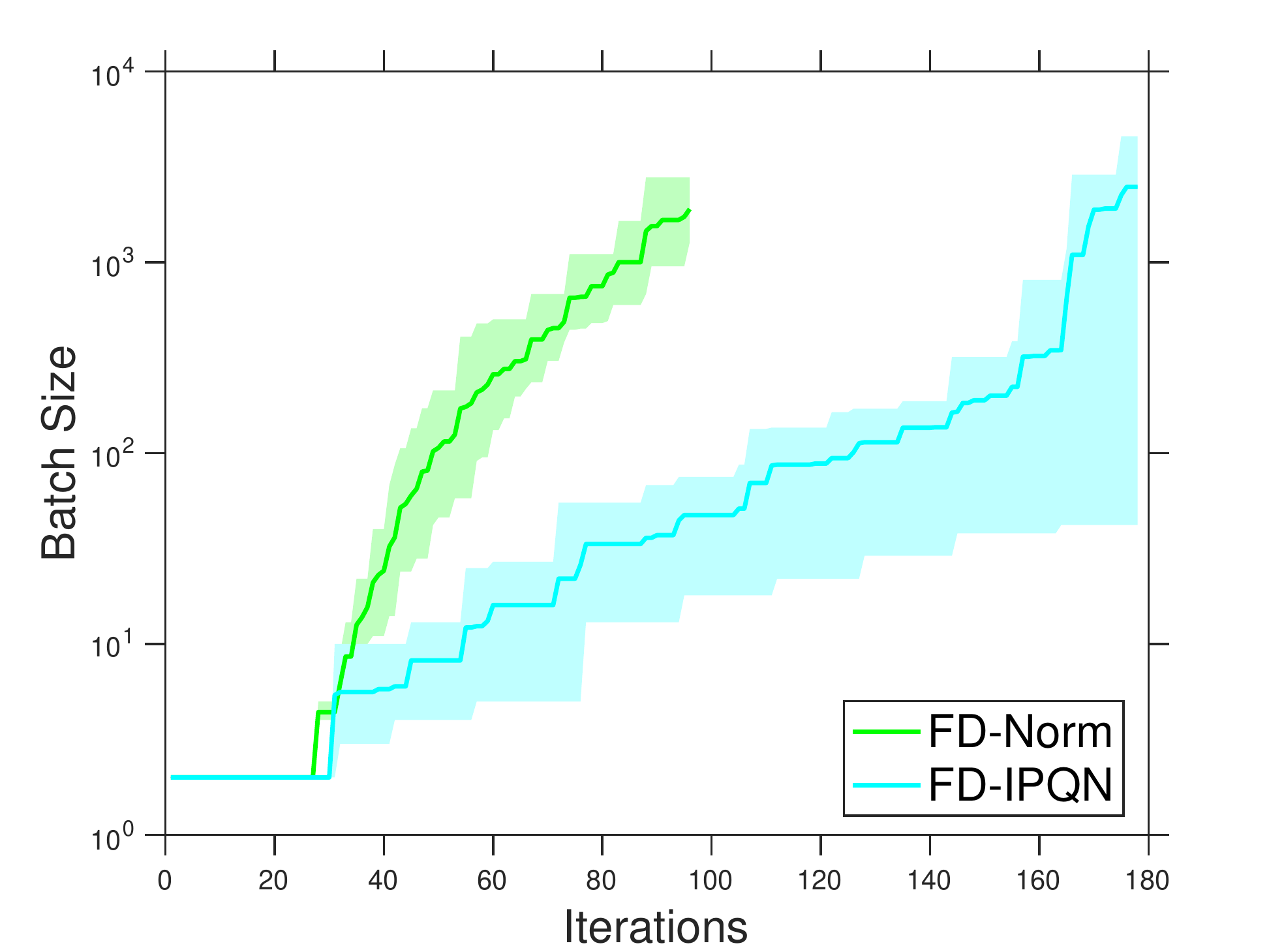} \hfill
		\includegraphics[width=0.45\linewidth]{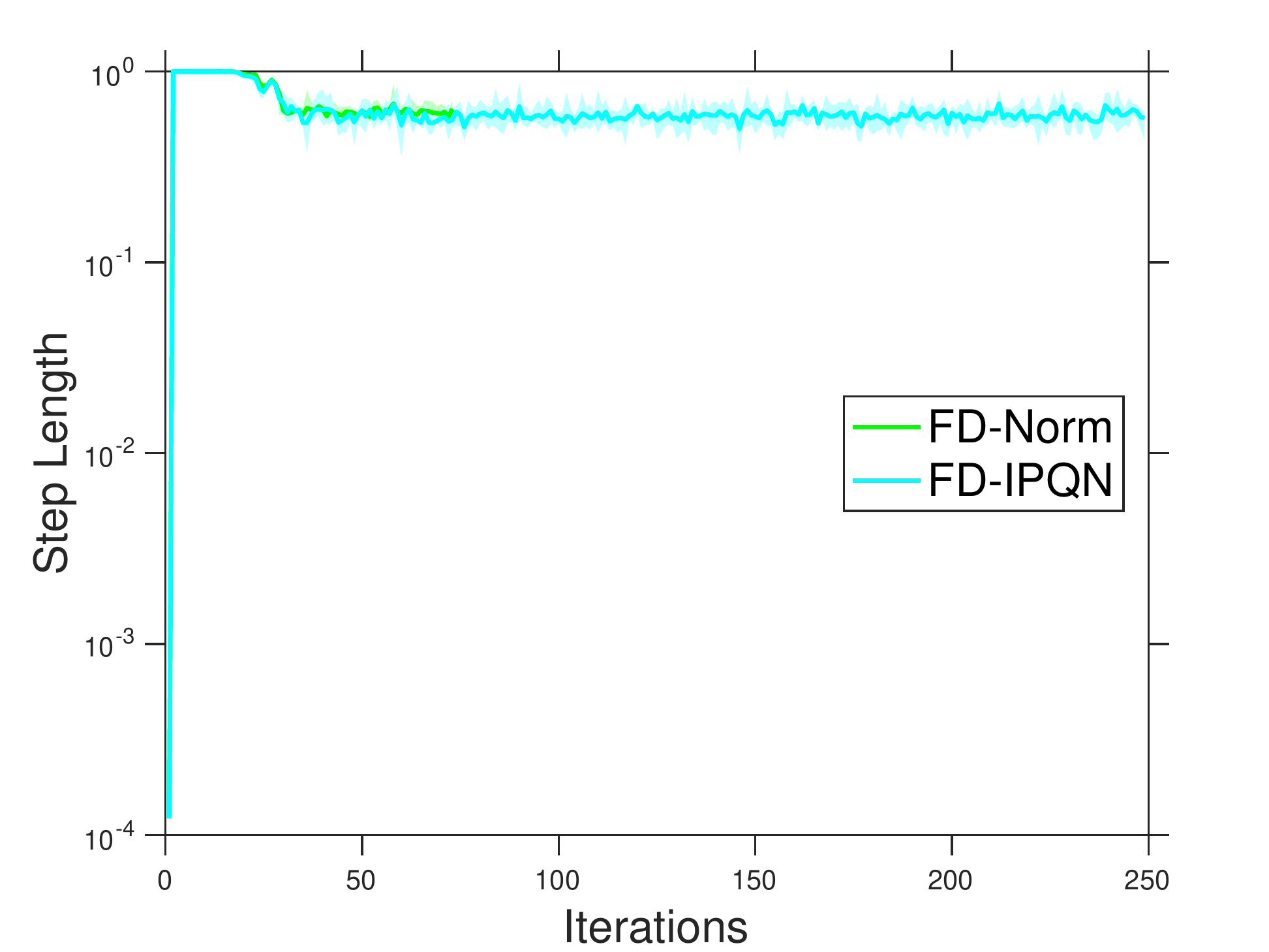} 
		\includegraphics[width=0.45\linewidth]{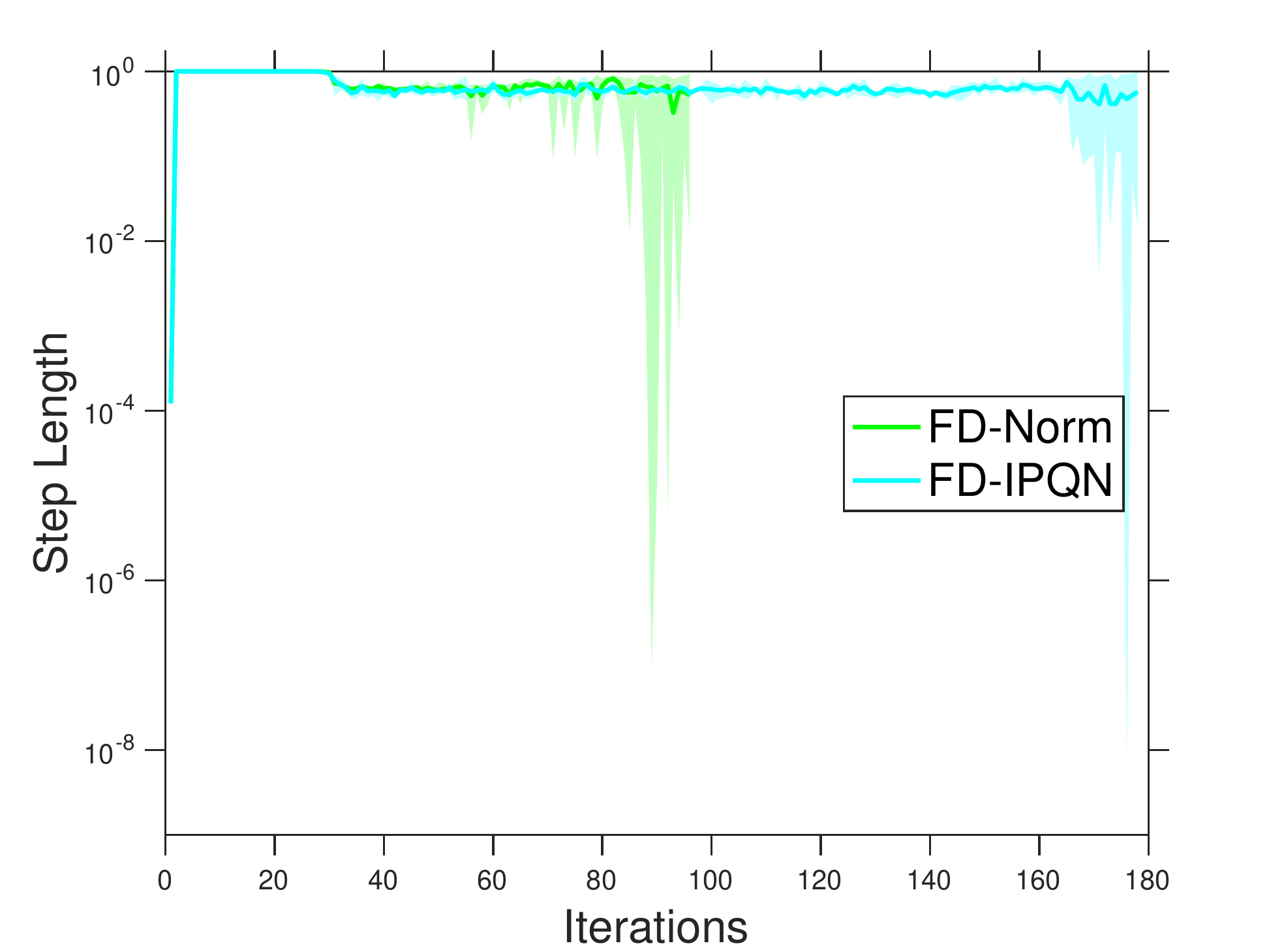} \hfill 
		\par\end{centering}	
	\caption{Bdqrtic function ($d=50$, $p=92$) results: 
		Using $f_{\rm rel}$ with $\sigma=10^{-3}$ (left column) and $\sigma=10^{-5}$ (right column). Top row: $F-F^*$ value versus number of $f$ evaluations. Middle row: Batch size versus number of iterations. Bottom row: Step length versus number of iterations.
		}
\end{figure}

\begin{figure}[!tb]
	\begin{centering}
		\includegraphics[width=0.45\linewidth]{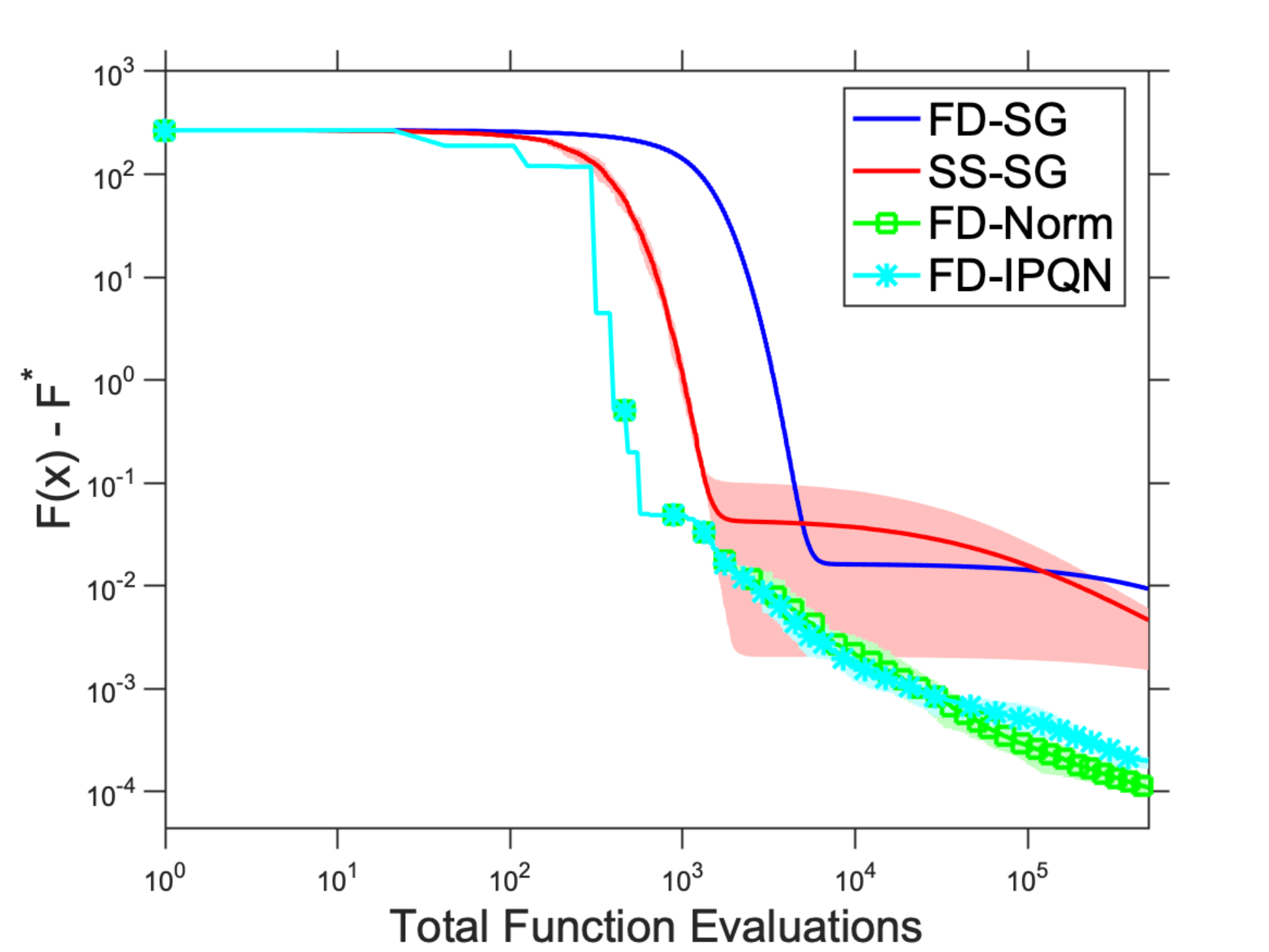}
		\includegraphics[width=0.45\linewidth]{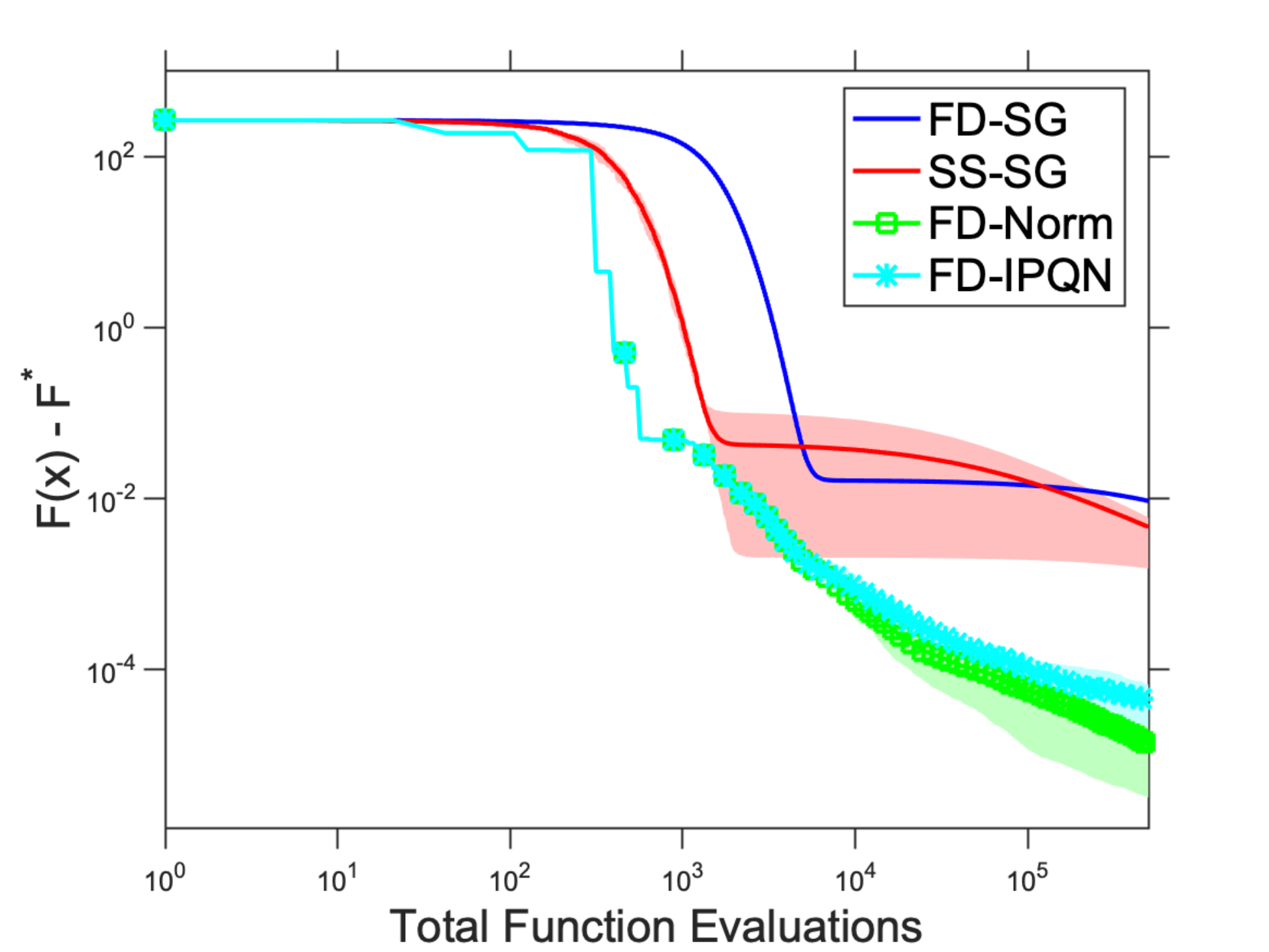}
		\includegraphics[width=0.45\linewidth]{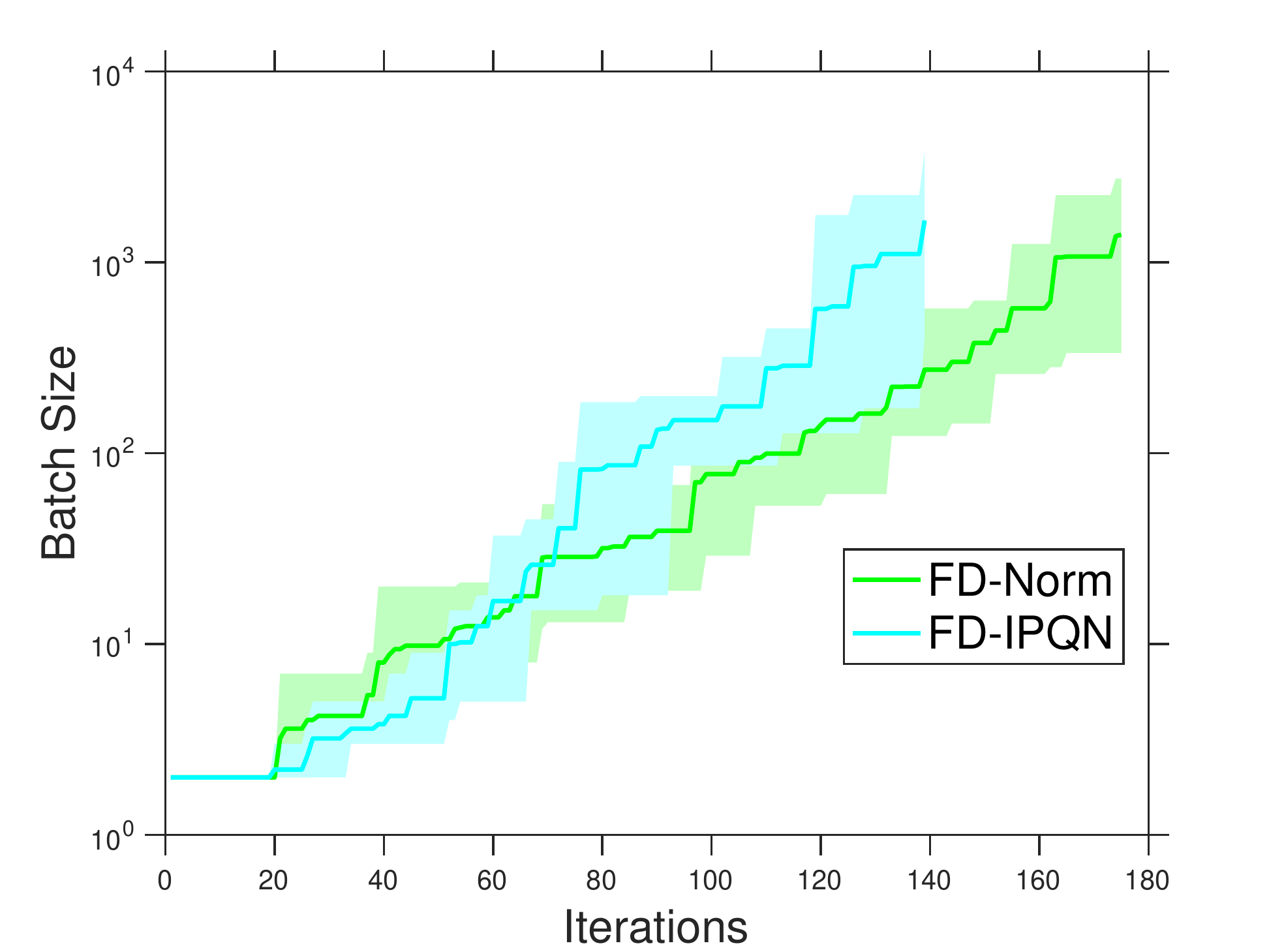}
		\includegraphics[width=0.45\linewidth]{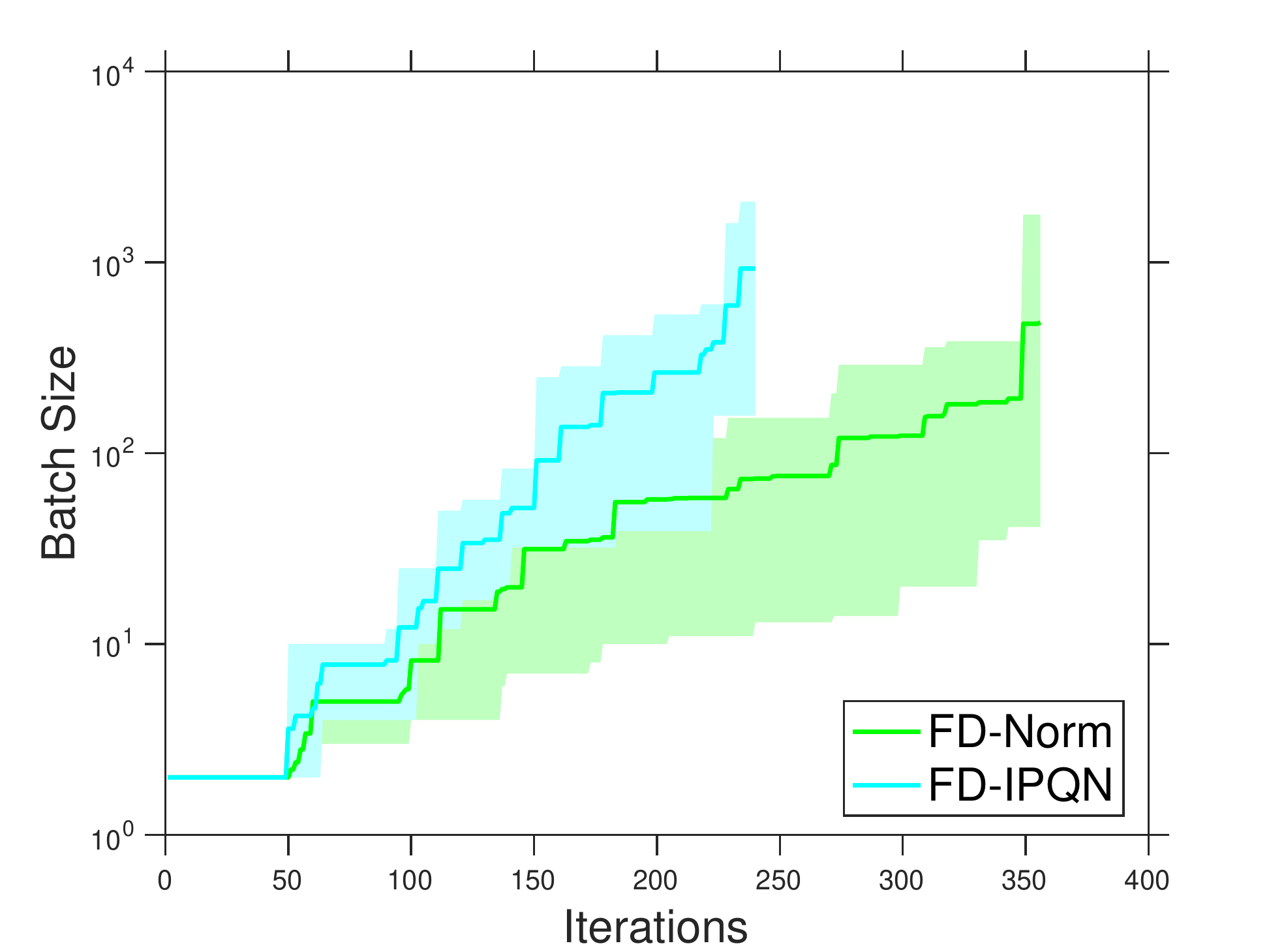} \hfill
		\includegraphics[width=0.45\linewidth]{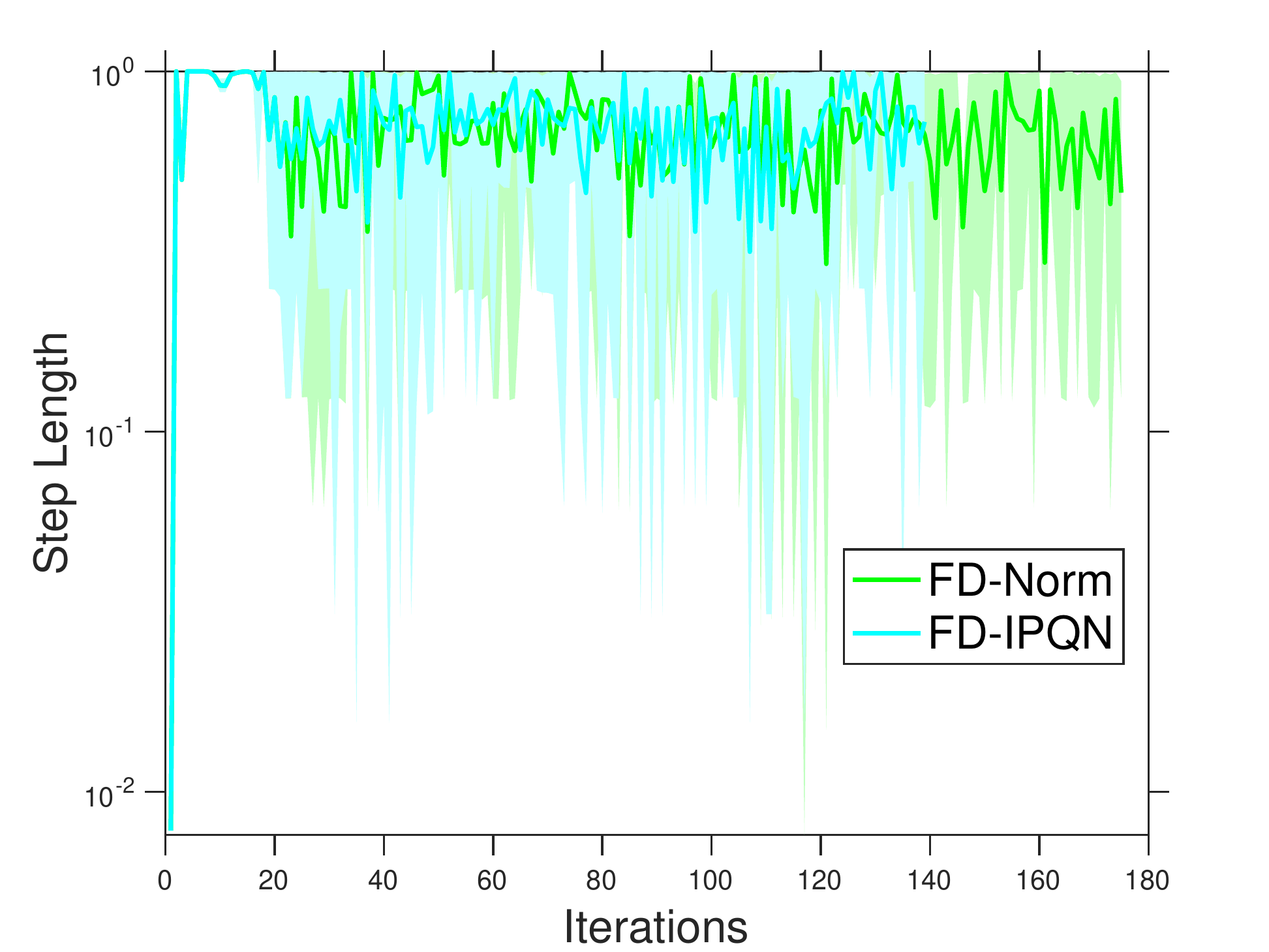} 
		\includegraphics[width=0.45\linewidth]{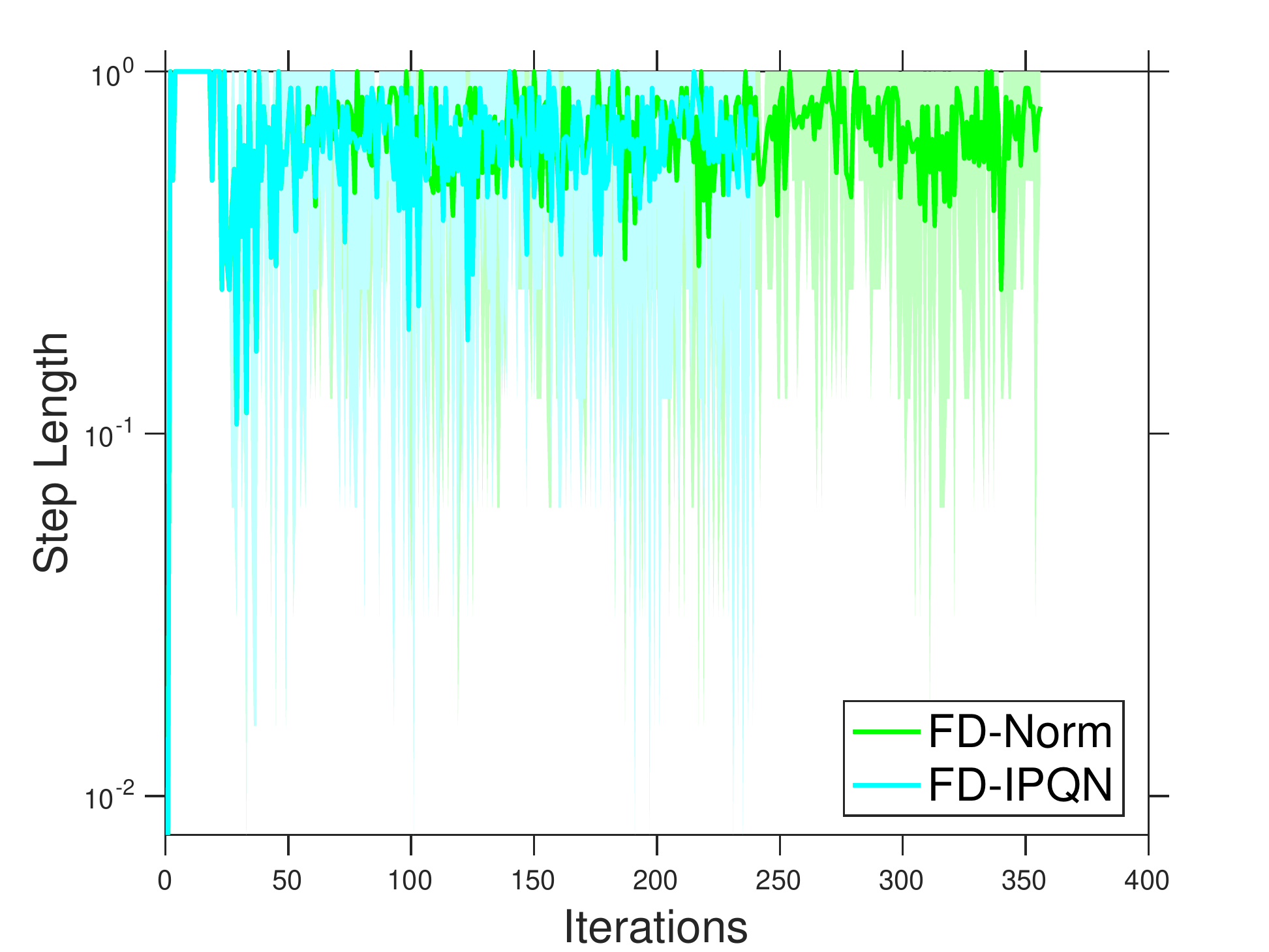} \hfill 
		\par\end{centering}	
	\caption{Cube function ($d=20$, $p=30$) results: 
		Using $f_{\rm abs}$ with $\sigma=10^{-3}$ (left column) and $\sigma=10^{-5}$ (right column). Top row: $F-F^*$ value versus number of $f$ evaluations. Middle row: Batch size versus number of iterations. Bottom row: Step length versus number of iterations.
		}
\end{figure}

\begin{figure}[!tb]
	\begin{centering}
		\includegraphics[width=0.45\linewidth]{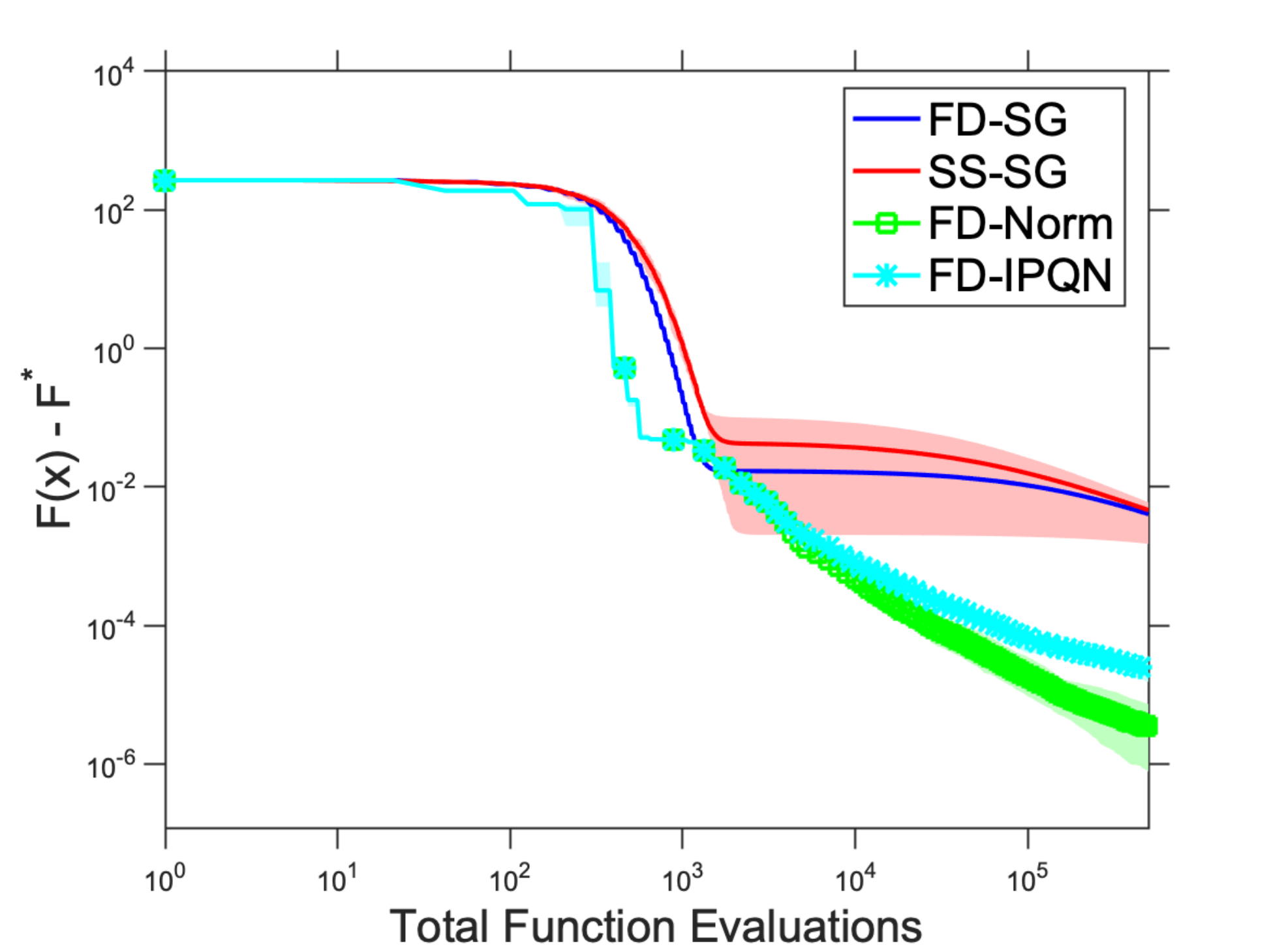}
		\includegraphics[width=0.45\linewidth]{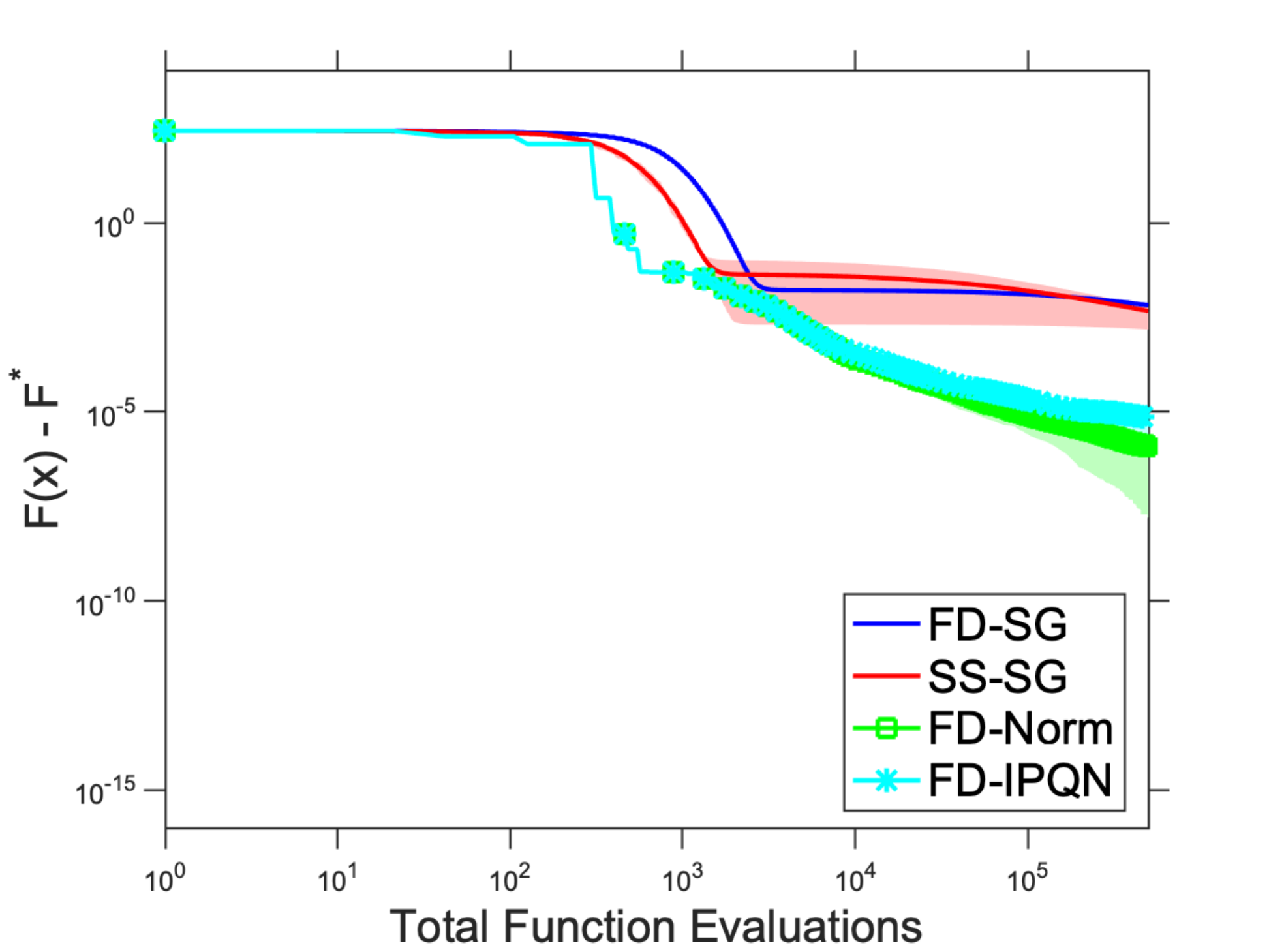}
		\includegraphics[width=0.45\linewidth]{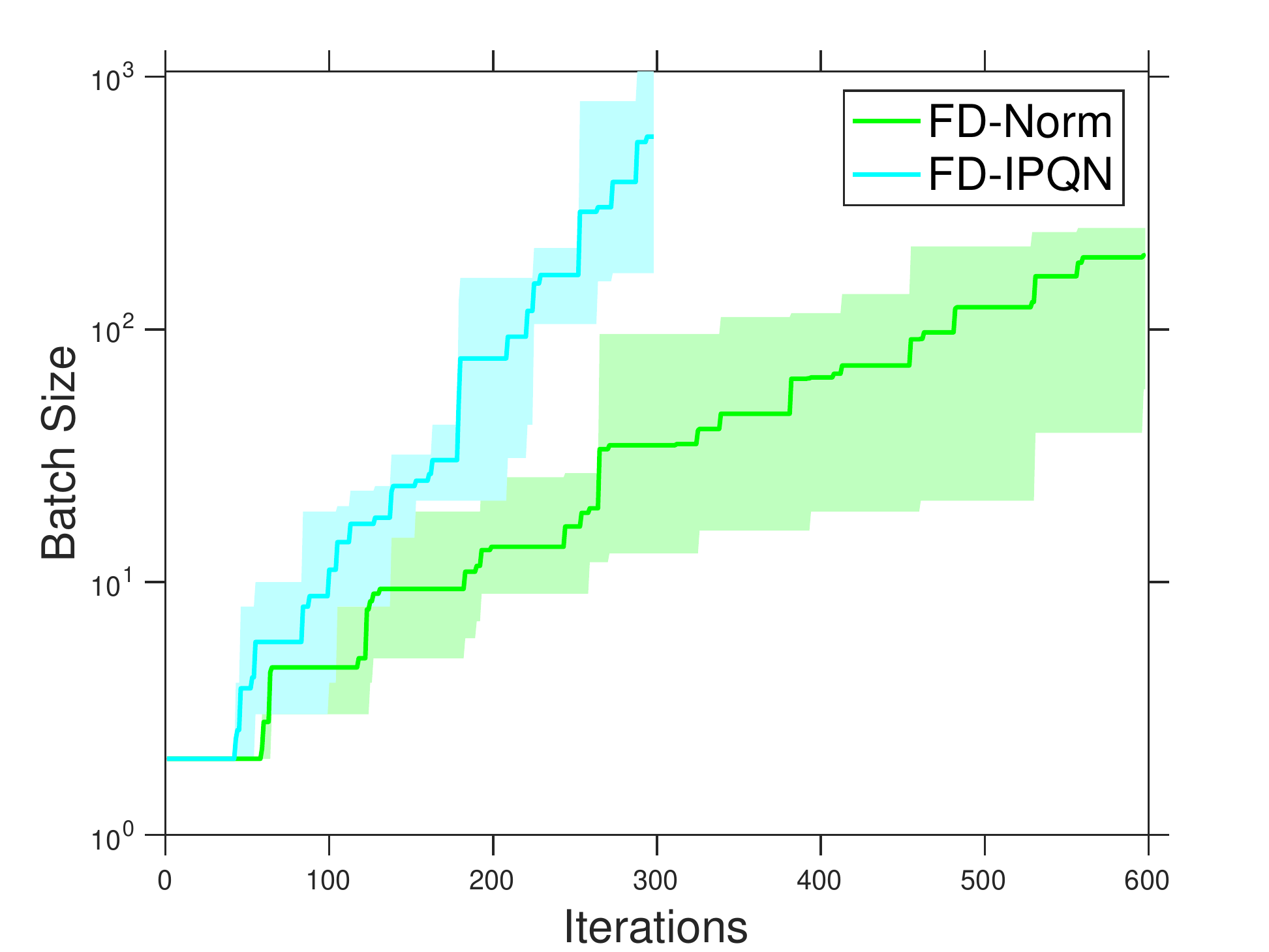}
		\includegraphics[width=0.45\linewidth]{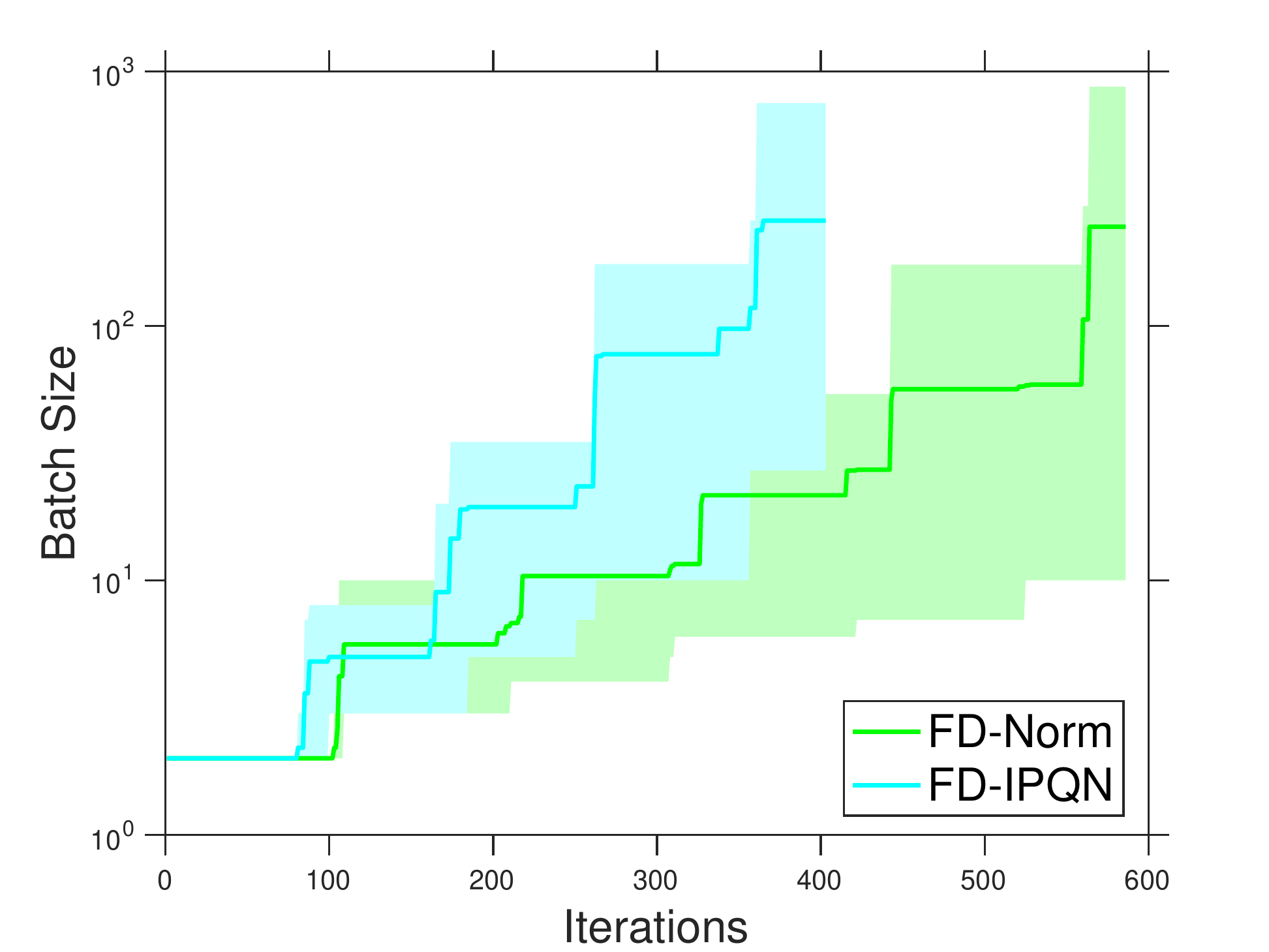} \hfill
		\includegraphics[width=0.45\linewidth]{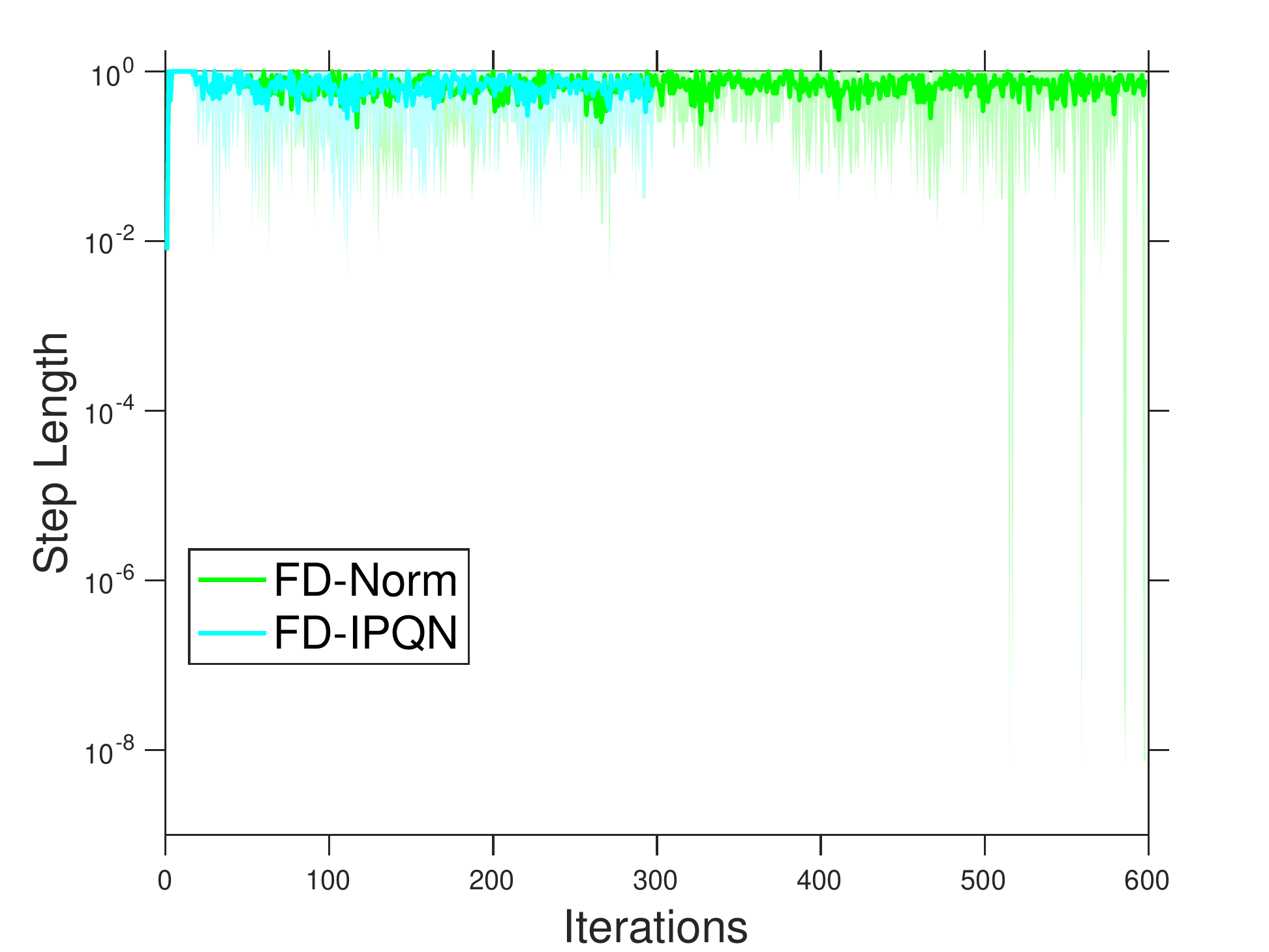} 
		\includegraphics[width=0.45\linewidth]{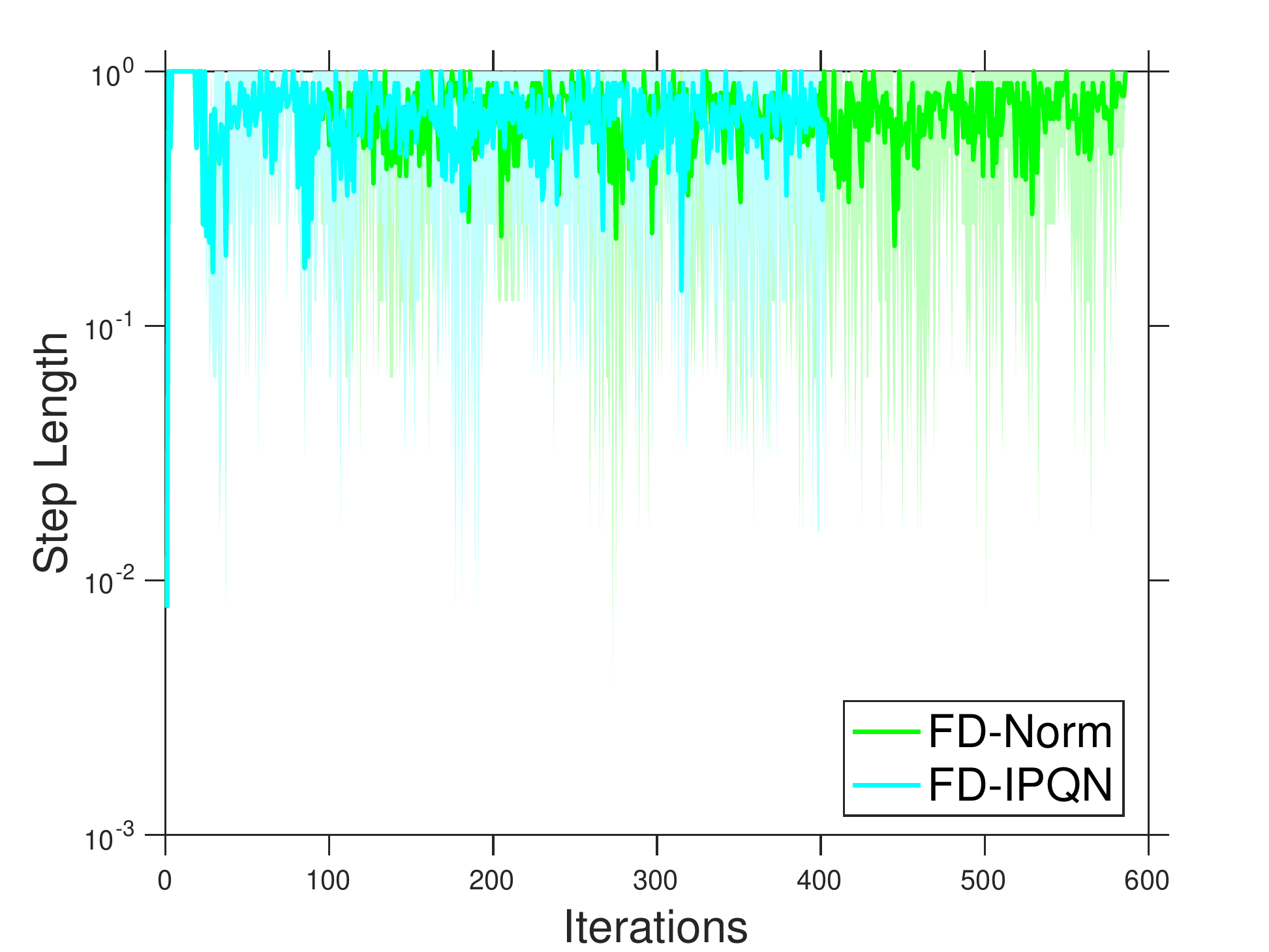} \hfill 
		\par\end{centering}	
	\caption{Cube function ($d=20$, $p=30$) results: 
		Using $f_{\rm rel}$ with $\sigma=10^{-3}$ (left column) and $\sigma=10^{-5}$ (right column). Top row: $F-F^*$ value versus number of $f$ evaluations. Middle row: Batch size versus number of iterations. Bottom row: Step length versus number of iterations.
		}
\end{figure}

\begin{figure}[!tb]
	\begin{centering}
		\includegraphics[width=0.45\linewidth]{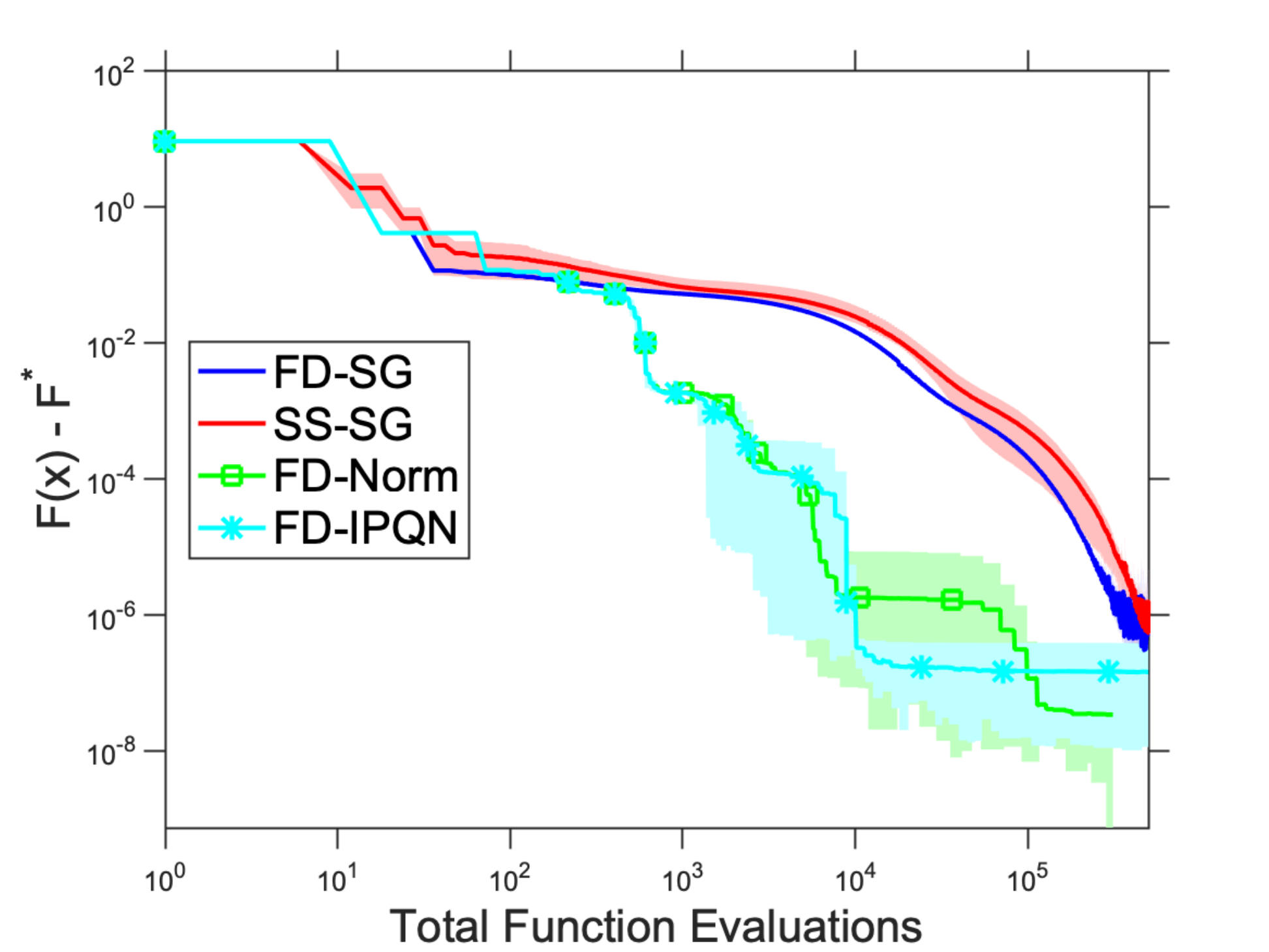}
		\includegraphics[width=0.45\linewidth]{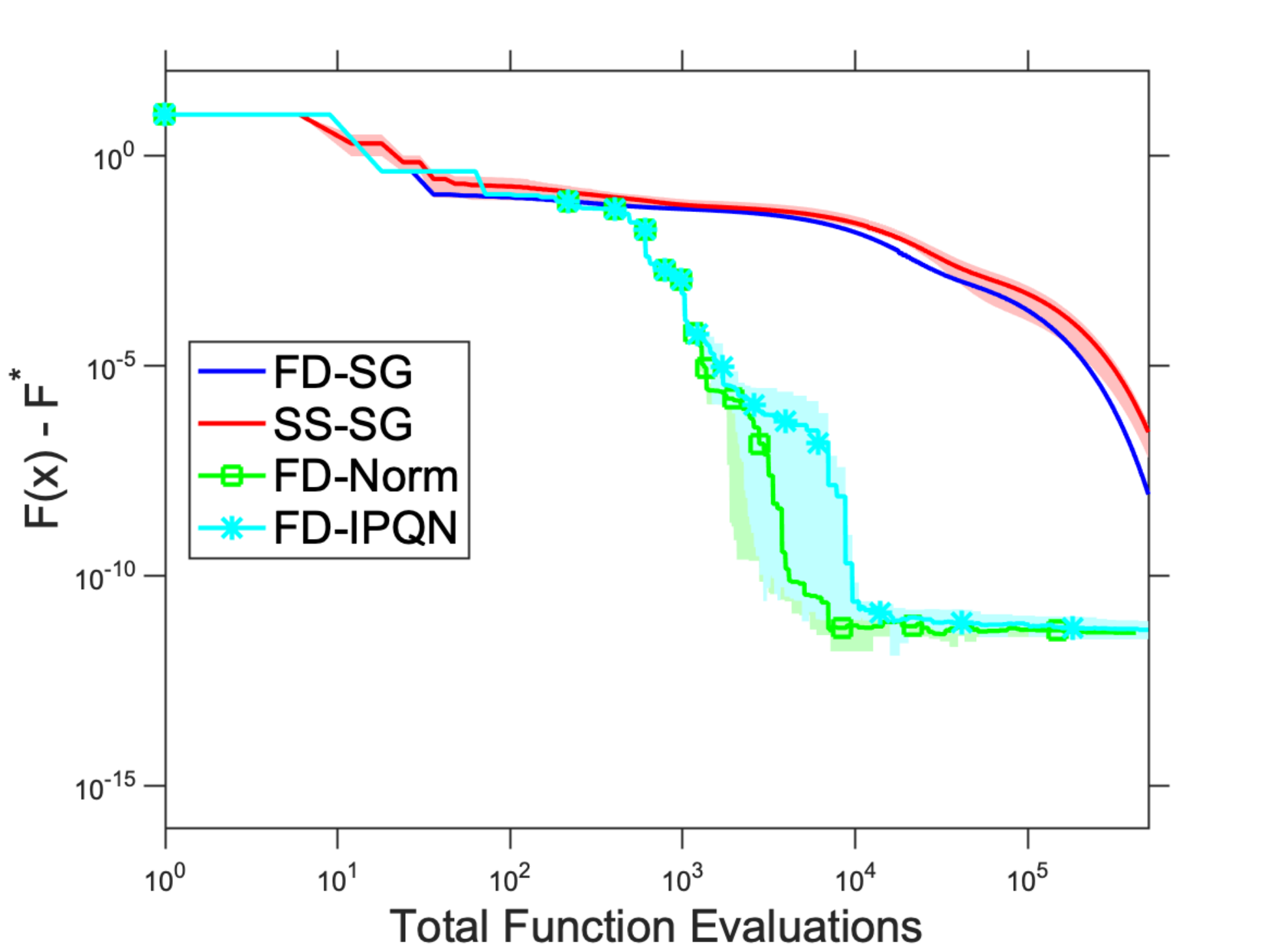}
		\includegraphics[width=0.45\linewidth]{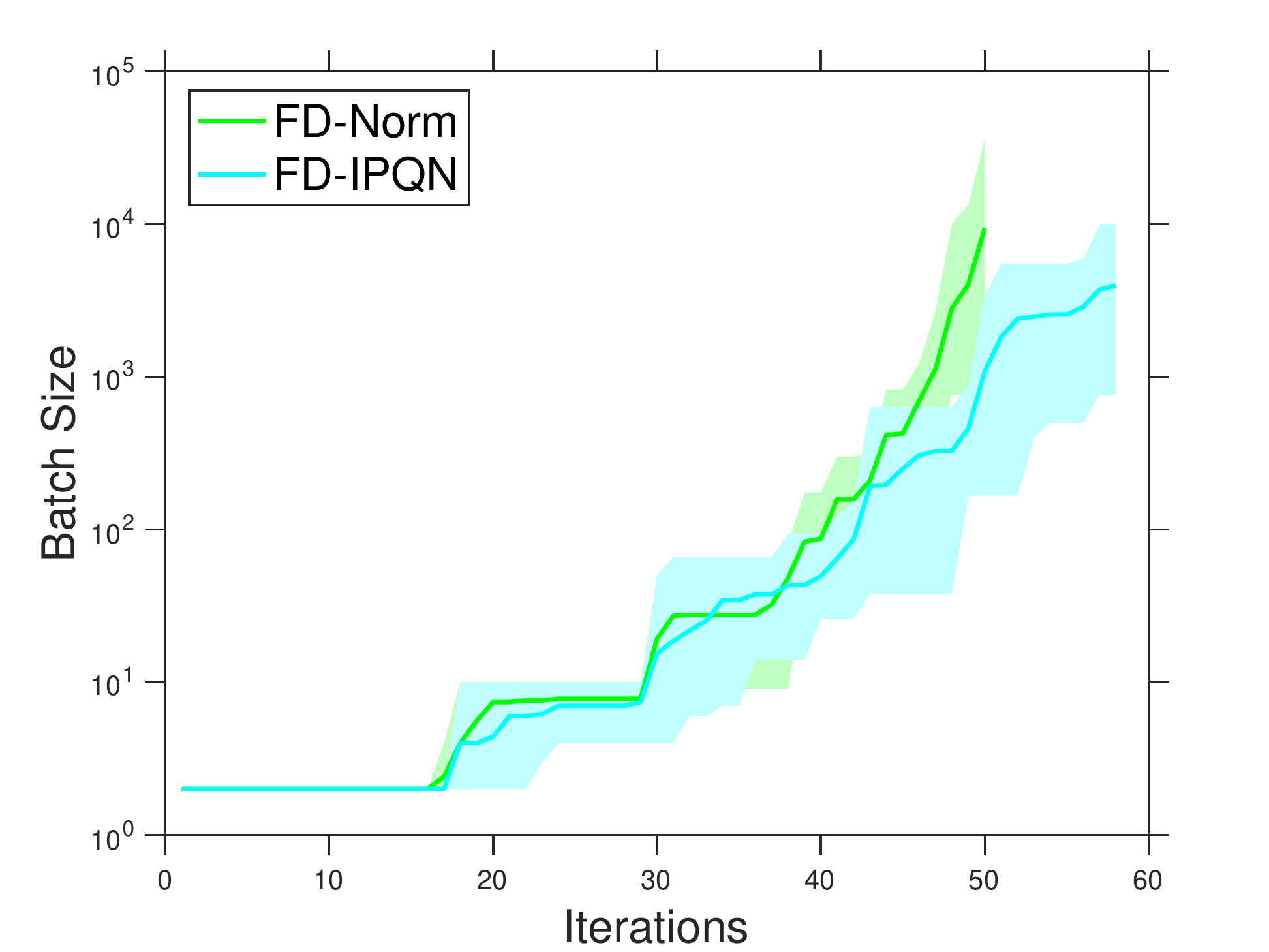}
		\includegraphics[width=0.45\linewidth]{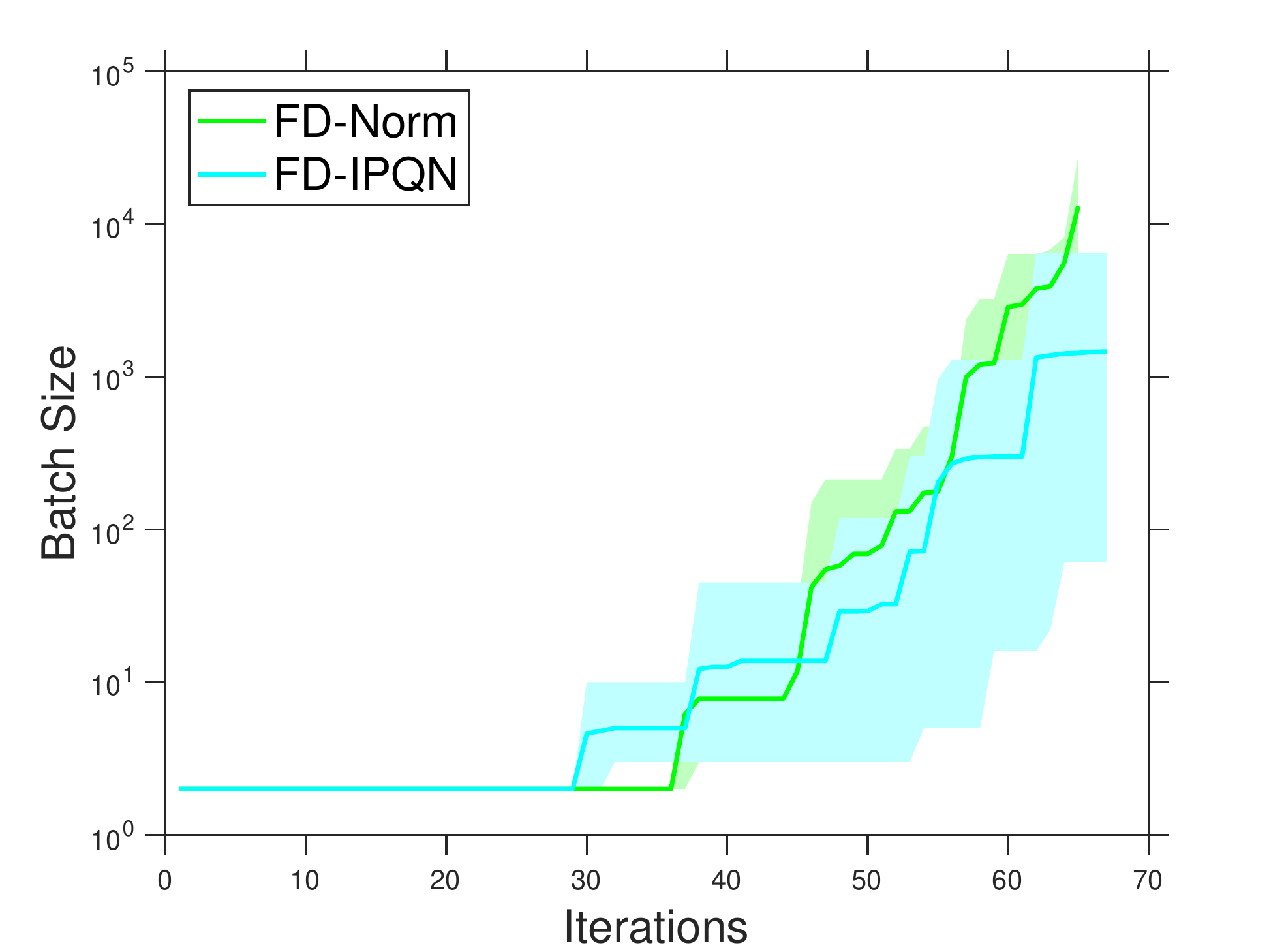} \hfill
		\includegraphics[width=0.45\linewidth]{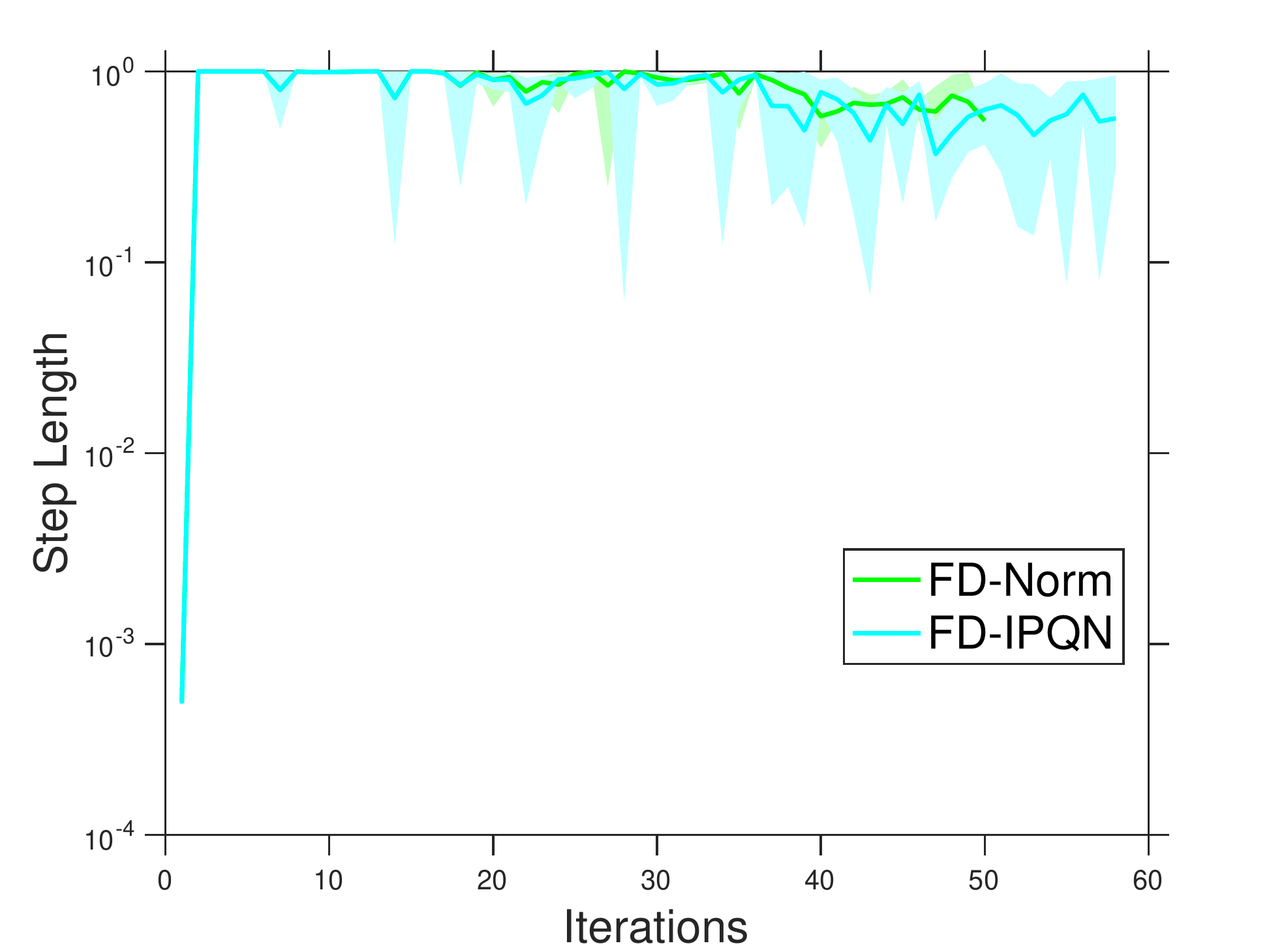} 
		\includegraphics[width=0.45\linewidth]{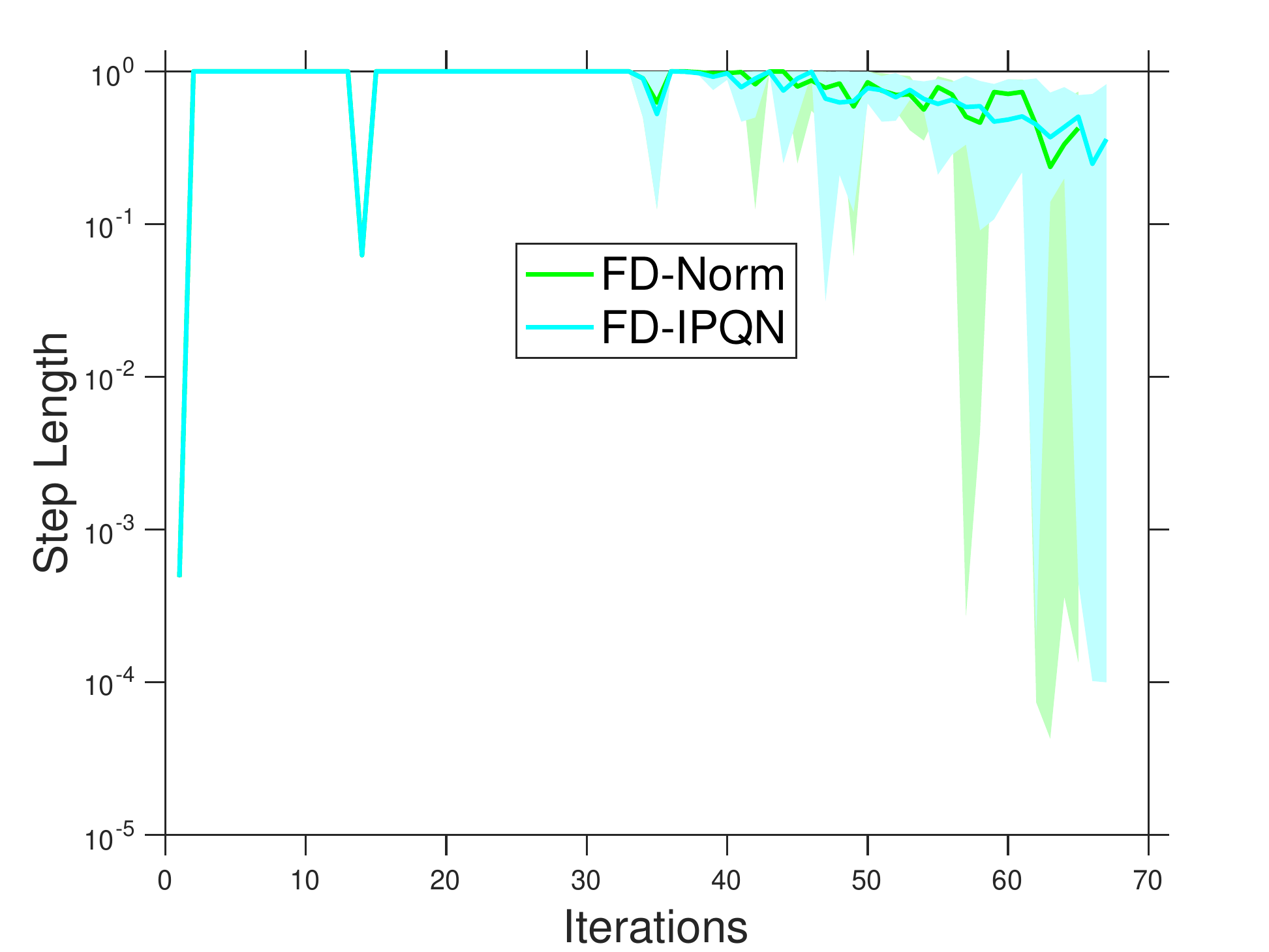} \hfill 
		\par\end{centering}	
	\caption{Heart8ls function ($d=8$, $p=8$) results: 
		Using $f_{\rm abs}$ with $\sigma=10^{-3}$ (left column) and $\sigma=10^{-5}$ (right column). Top row: $F-F^*$ value versus number of $f$ evaluations. Middle row: Batch size versus number of iterations. Bottom row: Step length versus number of iterations.
		}
\end{figure}

\begin{figure}[!tb]
	\begin{centering}
		\includegraphics[width=0.45\linewidth]{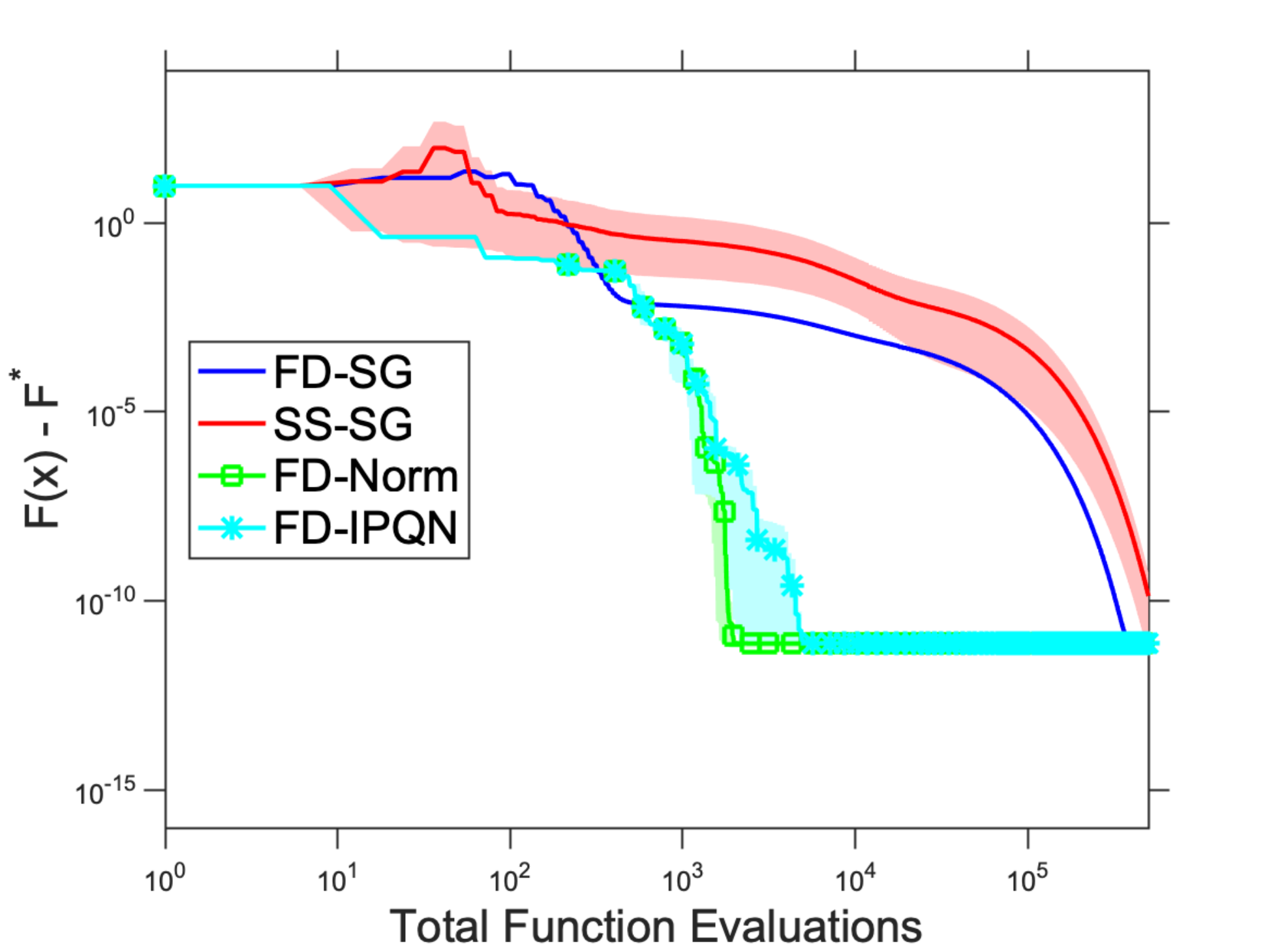}
		\includegraphics[width=0.45\linewidth]{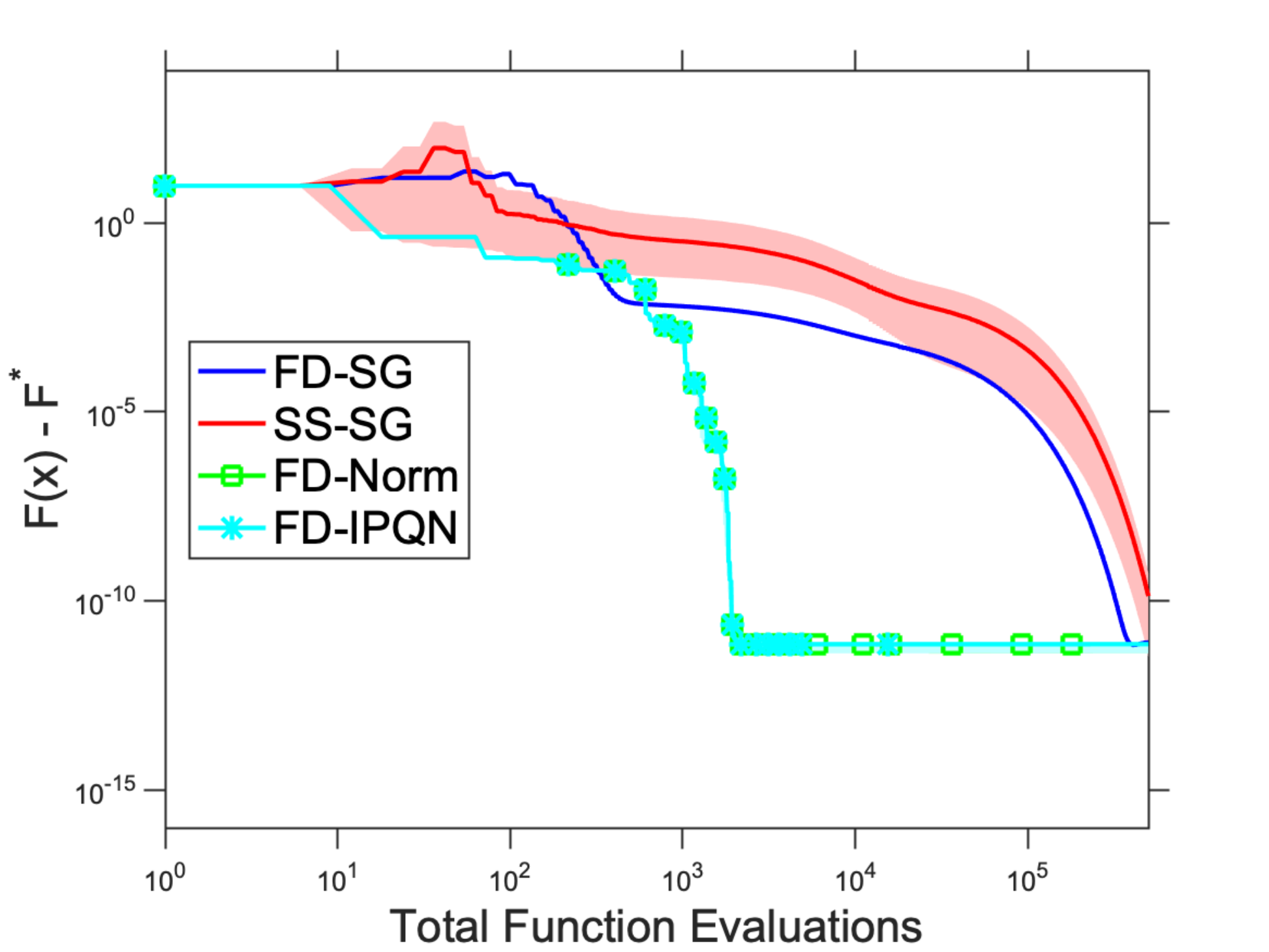}
		\includegraphics[width=0.45\linewidth]{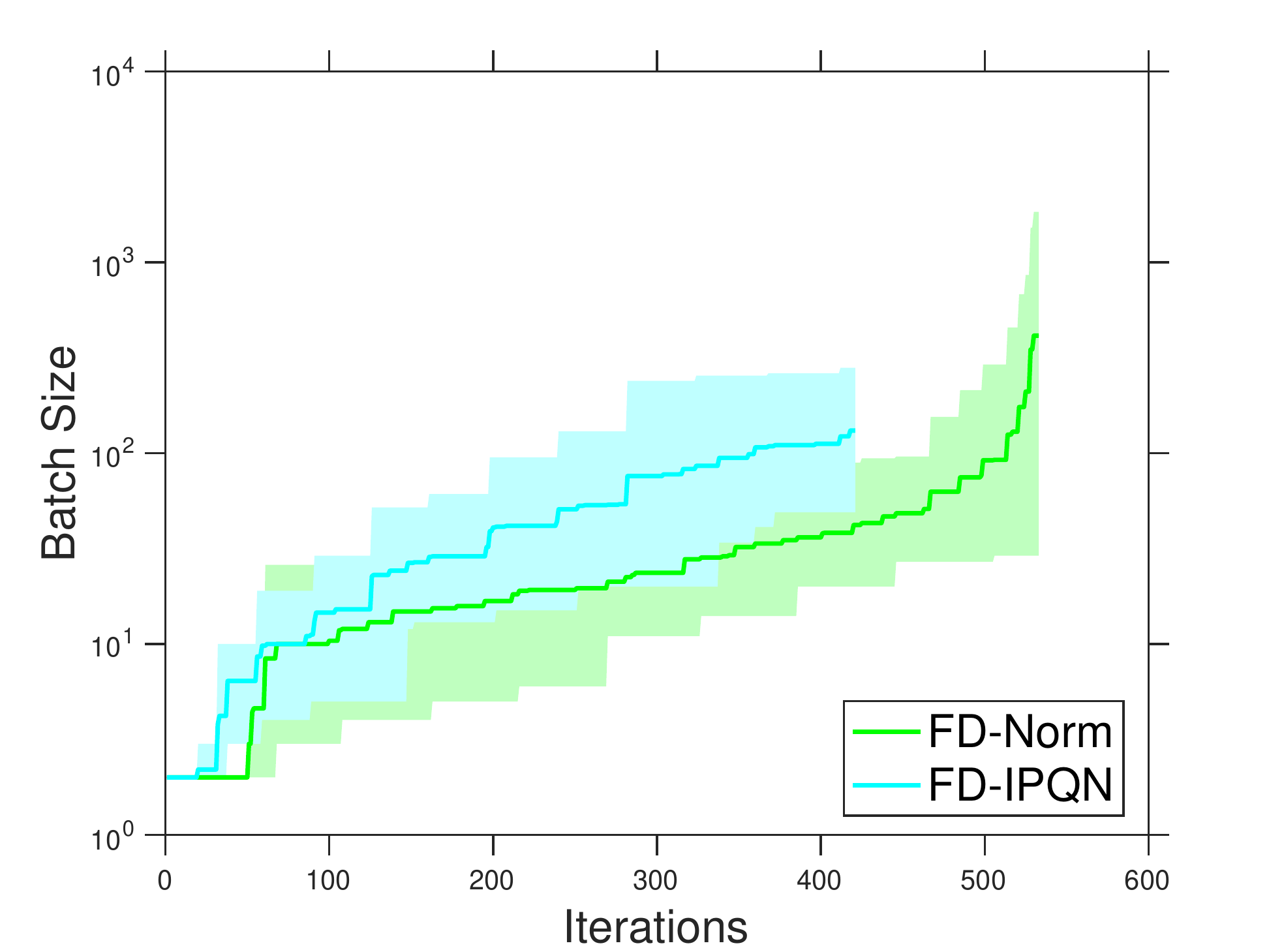}
		\includegraphics[width=0.45\linewidth]{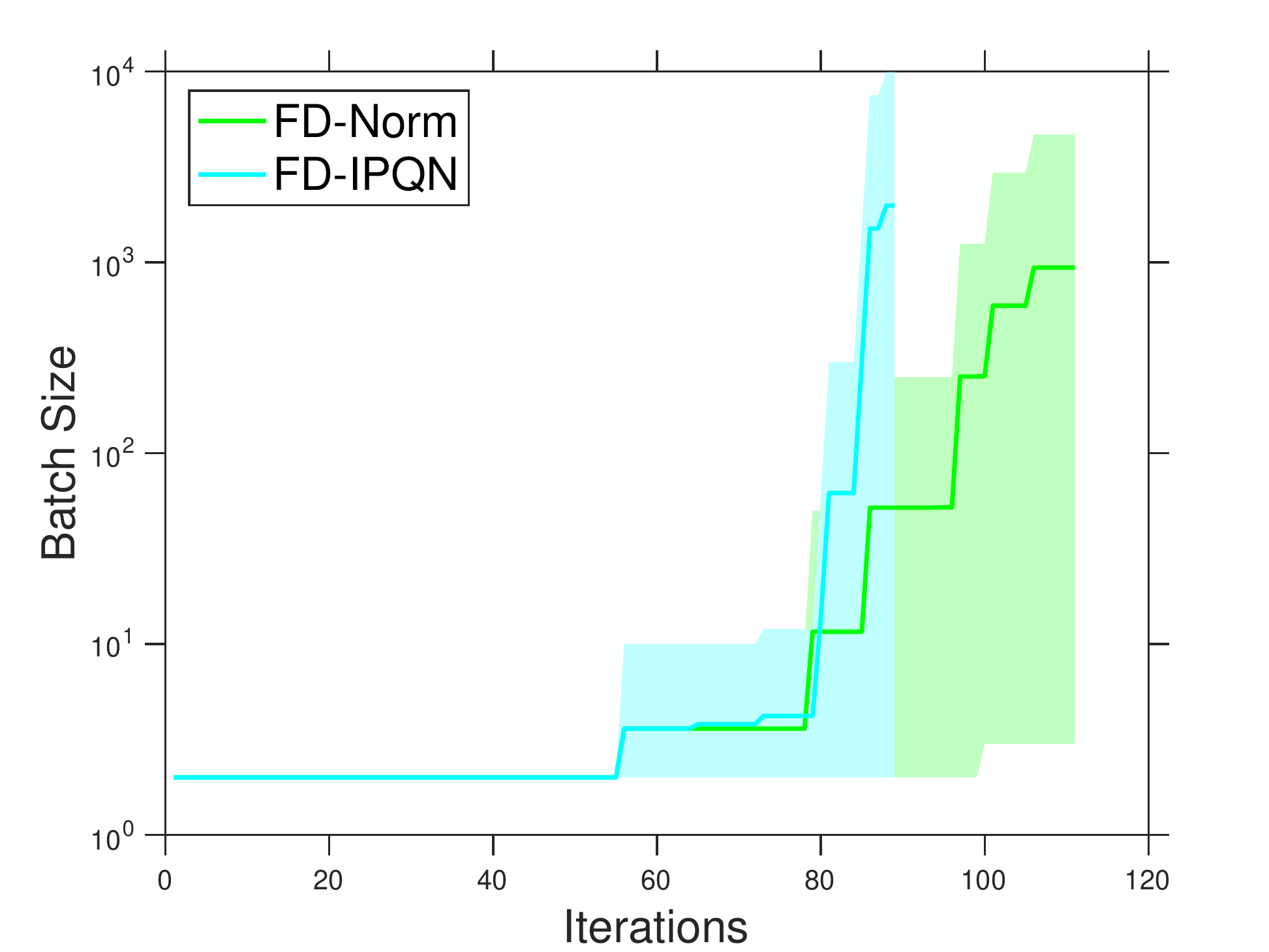} \hfill
		\includegraphics[width=0.45\linewidth]{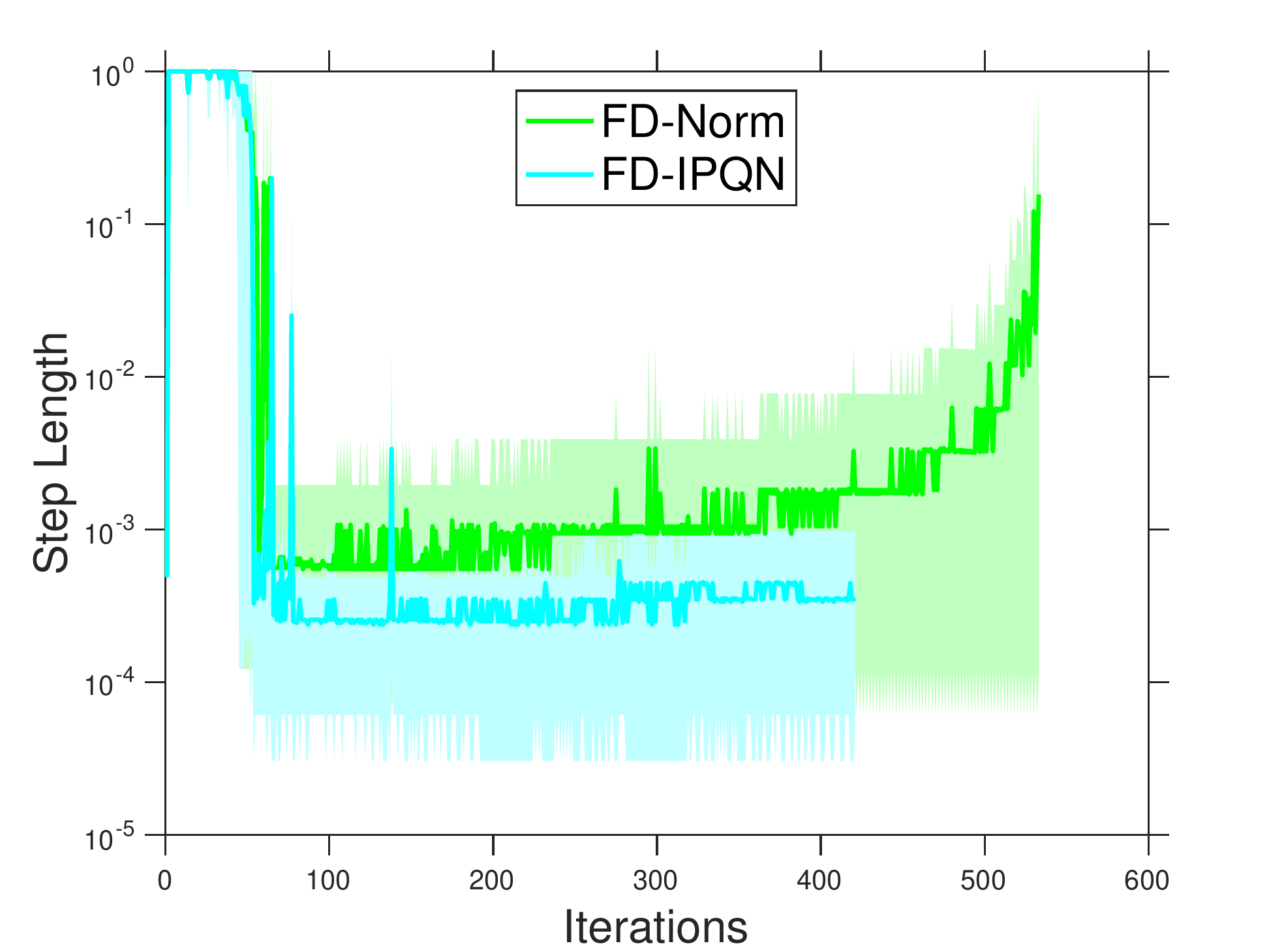} 
		\includegraphics[width=0.45\linewidth]{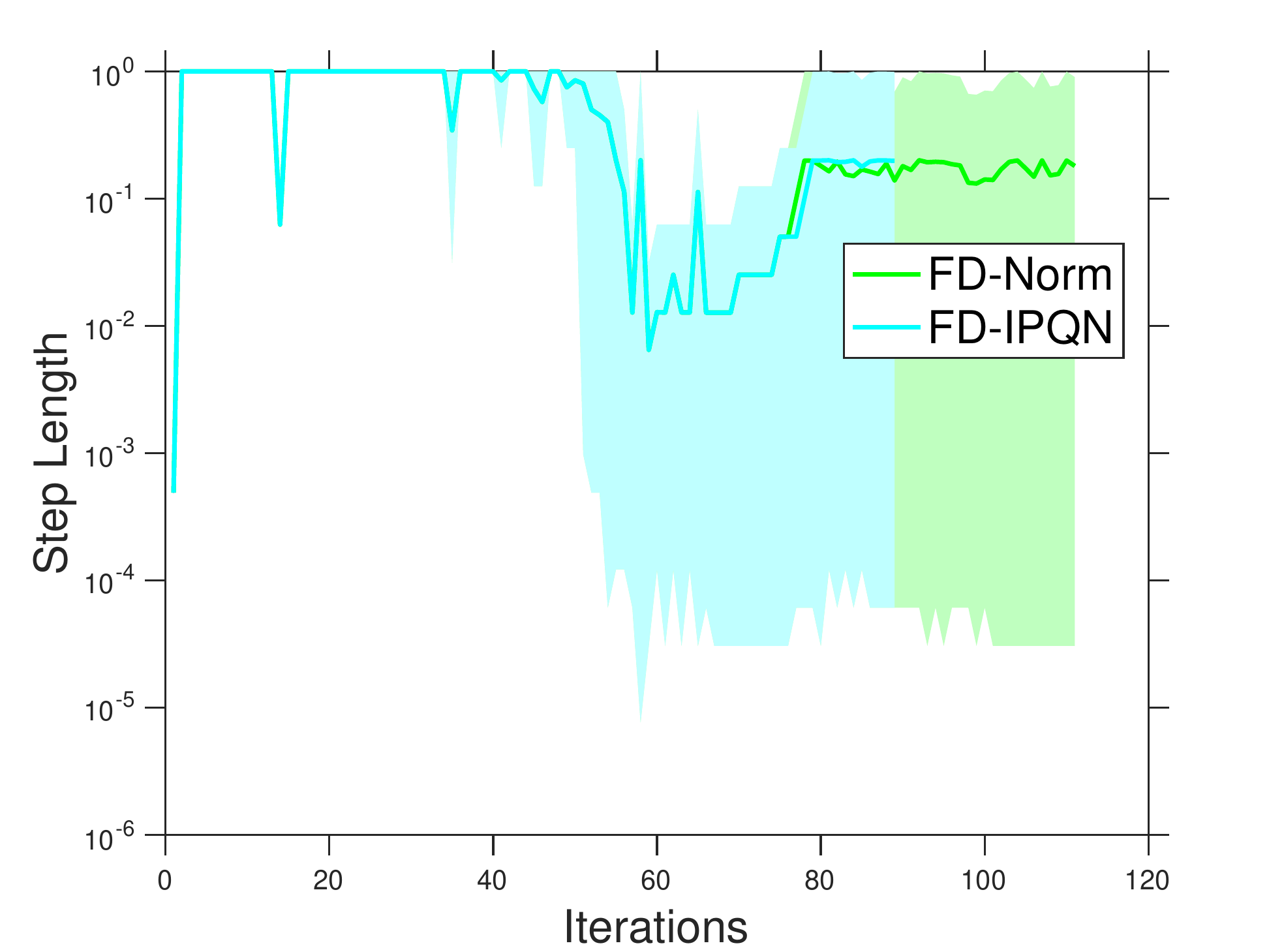} \hfill 
		\par\end{centering}	
	\caption{Heart8ls function ($d=8$, $p=8$) results: 
		Using $f_{\rm rel}$ with $\sigma=10^{-3}$ (left column) and $\sigma=10^{-5}$ (right column). Top row: $F-F^*$ value versus number of $f$ evaluations. Middle row: Batch size versus number of iterations. Bottom row: Step length versus number of iterations.
		}
\end{figure}

\clearpage
\section{Properties of the Nonsmooth Test Function}
\label{app:nonsmoothtest}

\renewcommand{\sgn}[1]{\mbox{\rm sgn}\left[#1\right]}

\newcommand{\ind}[1]{\mathbb{I}_{[#1]}}

Here we collect properties of the nonsmooth stochastic function
\eqref{eq:nsfunc}
and its expectation
\begin{equation}
F(x) = E_{\zeta}\left[f(x,\zeta)\right] = \sum_{i=1}^{p} E_{\zeta_i}\left[\left| a_i^Tx - b_i -  \zeta_i \right| \right] = \frac{1}{2}\sum_{i=1}^{p} \int_{-1}^{1} \left| a_i^Tx - b_i -  \zeta_i \right| d\zeta_i
\label{eq:Ensfunc}
\end{equation}
in the case where $\zeta_1,\ldots, \zeta_p$ are i.i.d.\ and uniformly distributed over the interval $[-1,1]$.

\begin{lem}
For any $c\in\R$, when $\zeta\sim$Unif$[-1,1]$, we have that:
\begin{equation}
2 E_{\zeta}\left[\left| c -  \zeta \right| \right]
= 
\int_{-1}^{1} \left| c -  \zeta \right| d\zeta = 
\begin{cases} 
c^2 + 1 & \mbox{\rm if } |c|\leq 1\\
2 |c| & \mbox{\rm if } |c|> 1.
\end{cases}
\label{eq:basicabs}
\end{equation}
\end{lem}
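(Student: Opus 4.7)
The plan is to verify the identity in \eqref{eq:basicabs} by direct case analysis on the sign of $c-\zeta$ over the interval of integration. Since the integrand $|c-\zeta|$ is piecewise affine in $\zeta$, the only difficulty is locating the kink $\zeta=c$ relative to $[-1,1]$, which naturally gives two cases: $|c|\le 1$ (kink inside the domain) and $|c|>1$ (kink outside).

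First I would handle the case $|c|>1$. Here $c-\zeta$ has constant sign throughout $[-1,1]$, equal to $\sgn(c)$, so $|c-\zeta| = \sgn(c)(c-\zeta)$. Integrating the linear function $c-\zeta$ over $[-1,1]$ gives $2c$, hence $\int_{-1}^{1}|c-\zeta|\,d\zeta = \sgn(c)\cdot 2c = 2|c|$, as claimed.

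Next I would handle the case $|c|\le 1$. Splitting the integral at $\zeta=c$,
\begin{equation*}
\int_{-1}^{1}|c-\zeta|\,d\zeta
= \int_{-1}^{c}(c-\zeta)\,d\zeta + \int_{c}^{1}(\zeta-c)\,d\zeta,
\end{equation*}
each piece evaluates to $\tfrac{1}{2}(c+1)^2$ and $\tfrac{1}{2}(1-c)^2$ respectively, summing to $c^2+1$. The factor of $2$ on the left-hand side of \eqref{eq:basicabs} matches the uniform density $\tfrac{1}{2}$ on $[-1,1]$, so the expectation equality follows immediately.

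I do not anticipate any real obstacle here; the lemma is an elementary one-dimensional integration. The only thing to be careful about is the boundary case $|c|=1$, where both formulas agree (giving the value $2$), so the piecewise definition is consistent. A compact way to present the proof is as two short displays, one per case, with a single remark that continuity at $|c|=1$ validates the classification.
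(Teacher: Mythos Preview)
Your proof is correct and follows essentially the same direct case analysis as the paper; the only cosmetic difference is that the paper treats $c<-1$ and $c>1$ separately while you handle them together via $\operatorname{sgn}(c)$.
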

\begin{proof}
If $|c|\leq 1$, then 
\begin{eqnarray*}
\int_{-1}^{1} \left| c -  \zeta \right| d\zeta
& = & 	\int_{-1}^{c} \left( c -  \zeta \right) d\zeta - \int_{c}^1 \left( c -  \zeta \right) d\zeta
\\
& = & 	\left( c^2 -  \frac{1}{2}c^2 + c +  \frac{1}{2} \right) 
	 - \left( c -  \frac{1}{2} - c^2 +  \frac{1}{2}c^2 \right) 
	= c^2 + 1.
\end{eqnarray*}
If $c< -1$, then 
\begin{eqnarray*}
\int_{-1}^{1} \left| c -  \zeta \right| d\zeta
& = & 	-\int_{-1}^{1} \left( c -  \zeta \right) d\zeta 
	= -\left( c -  \frac{1}{2} + c +  \frac{1}{2} \right) 
	= - 2 c.
\end{eqnarray*}
If $c> 1$, then 
\begin{eqnarray*}
\int_{-1}^{1} \left| c -  \zeta \right| d\zeta
& = & 	\int_{-1}^{1} \left( c -  \zeta \right) d\zeta 
	= \left( c -  \frac{1}{2} + c +  \frac{1}{2} \right) 
	= 2 c.
\end{eqnarray*}
\end{proof}

We observe from \eqref{eq:basicabs} that 
\begin{eqnarray*}
E_{\zeta_i}\left[\left| a_i^Tx - b_i -  \zeta_i \right| \right] 
& = & \frac{1}{2}
	\int_{-1}^{1} \left| a_i^Tx - b_i -  \zeta_i \right| d\zeta_i 
	\left(\ind{|a_i^Tx-b_i|\leq 1} + \ind{|a_i^Tx-b_i|> 1}\right) 
\\
& = & \frac{1}{2}
	\left(\left(a_i^Tx - b_i\right)^2 + 1 \right)\ind{|a_i^Tx-b_i|\leq 1}
   + 	 \left|a_i^Tx-b_i\right| \ind{|a_i^Tx-b_i|> 1}, 
\end{eqnarray*}
where $\ind{\cdot}$ is the Dirac delta function. Thus,
\begin{eqnarray*}
\nabla_x E_{\zeta_i}\left[\left| a_i^Tx - b_i -  \zeta_i \right| \right] 
& = & 
	a_i\left(a_i^Tx - b_i\right)\ind{|a_i^Tx-b_i|\leq 1}
   + 	a_i\sgn{a_i^Tx-b_i} \ind{|a_i^Tx-b_i|> 1}
\\
& = & 
	a_i\left(\left(a_i^Tx - b_i\right)\ind{|a_i^Tx-b_i|\leq 1}
   + 	\sgn{a_i^Tx-b_i} \ind{|a_i^Tx-b_i|> 1}\right)
\end{eqnarray*}
and, for $|a_i^Tx-b_i|< 1$,
\begin{eqnarray*}
\nabla^2_{xx} E_{\zeta_i}\left[\left| a_i^Tx - b_i -  \zeta_i \right| \right] 
& = & 
	a_ia_i^T.
\end{eqnarray*}

As a consequence of the above and from the definition \eqref{eq:Ensfunc} we have thus shown that
\begin{eqnarray*}
F(x) 
& = & 	\sum_{i: \, |a_i^Tx-b_i|\leq 1} \frac{\left(a_i^Tx - b_i\right)^2 + 1}{2}
   + 	\sum_{i: \, |a_i^Tx-b_i|> 1} \left|a_i^Tx-b_i\right| 
\\
& = & 	\sum_{i=1}^p  \left(\frac{\left(a_i^Tx - b_i\right)^2 + 1}{2}\ind{|a_i^Tx-b_i|\leq 1}
   + 	\left|a_i^Tx-b_i\right| \ind{|a_i^Tx-b_i|> 1}\right)
\\
\nabla_x F(x) 
& = & 	\sum_{i: \, |a_i^Tx-b_i|\leq 1} a_i \left(a_i^Tx - b_i\right)
   + 	\sum_{i: \, |a_i^Tx-b_i|> 1} a_i \sgn{a_i^Tx-b_i} 
\\
& = & 	\sum_{i=1}^p a_i \left( \left(a_i^Tx - b_i\right)\ind{|a_i^Tx-b_i|\leq 1} 
   + 	\sgn{a_i^Tx-b_i} \ind{|a_i^Tx-b_i|> 1}\right)
\\
\nabla^2_{xx} F(x)
& = & 	\sum_{i: \, |a_i^Tx-b_i|< 1}	a_ia_i^T,
\end{eqnarray*}
where the last expression is only well defined when there is no $a_i\neq 0$ for which $|a_i^Tx-b_i|=1$.
We conclude that $F$ is continuously differentiable.

Furthermore, at any $x^*$ for which $Ax^*=b$, we have that $F(x^*)=\frac{p}{2}$.

\vspace{3em}

\small

\framebox{\parbox{\linewidth}{
The submitted manuscript has been created by UChicago Argonne, LLC, Operator of 
Argonne National Laboratory (``Argonne''). Argonne, a U.S.\ Department of 
Energy Office of Science laboratory, is operated under Contract No.\ 
DE-AC02-06CH11357. 
The U.S.\ Government retains for itself, and others acting on its behalf, a 
paid-up nonexclusive, irrevocable worldwide license in said article to 
reproduce, prepare derivative works, distribute copies to the public, and 
perform publicly and display publicly, by or on behalf of the Government.  The 
Department of Energy will provide public access to these results of federally 
sponsored research in accordance with the DOE Public Access Plan. 
http://energy.gov/downloads/doe-public-access-plan.}}

\end{document}